 \numberwithin{equation}{section}
 \theoremstyle{plain}
  \newtheorem{thm}{Theorem}
 \newtheorem{prop}{Proposition}[section]
 \newtheorem{lem}[prop]{Lemma}
 \theoremstyle{definition}
 \theoremstyle{remark}
 \newtheorem{remark}[prop]{Remark}
 \let\pa=\partial
 \let\al=\alpha
 \let\b=\beta
 \let\d=\delta
 \let\g=\gamma
 \let\e=\varepsilon
 \let \kp = \kappa
 \let\lam=\lambda
 \let\f=\frac
 \let\inf = \infty
 \let \les = \lesssim
  \let \gtr = \gtrsim
 \let\om=\omega
 \let \th = \theta
 \let \pr = \prime
 \let \vp = \varphi
 \let\G= \Gamma
\let\B = \Big
 \let\D=\Delta
 \let\Om=\Omega
 \let\td = \tilde
 \let\wt=\widetilde
 \let\wh=\widehat
 \let \olin = \overline
 \let\teq \triangleq
 \let\pa=\partial
 \let \bsh = \backslash
 \let \vs = \vspace
\newcommand{\BT}{\mathbb{T}}
\newcommand{\BZ}{\mathbb{Z}}
\def\proofsketch{2.3}
\def\secsharp{3}
\def\secapprvel{{4.3}}
\def\secEE{{5}}
\def\secvelPI{5.1}
\def\secASS{{7}}
\def\secWtwo{{5.7}}
\def\seccombvelerr{{5.8}}
\def\appwgPI{C.1}
\def\secineq{D}
\def\secsinguasym{5.1.5}
\def\secholconstant{5}
\def\secintker{{5.1}}
\def\seclinfsing{{6.1}}
\def\secudxsupp{6.2}
\def\secKNNerr{6.4.1}
\def\seccwerr{6.4.2}
\def\secuoneD{{7}}
\def\secholRtwo{{7.4}}
\def\applinfestsupp{8}
\def \seceulerblowup{6}
\def \seclinest{5.1-5.5}
\def \secnonlem{A}
\def \seclinfunc{5.6}
\def \secnonend{5.9}
\def \appsharp{B}
\def \secASSget{7}
\def \supptwosharp{5}
\def \supptwovel{6-7}
\def \supponenon{8}
 \def\cB{{\mathcal B}}
 \def\cF{{\mathcal F}}
 \def\cK{{\mathcal K}}
 \def\cL{{\mathcal L}}
 \def\cR{{\mathcal R}}
 \def\na{\nabla}
\def\lt{\left}
\def\rt{\right}
\def\one{\mathbf{1}}
 \def \weg{\wedge}
 \newcommand{\beq}{\begin{equation}}
 \newcommand{\eeq}{\end{equation}}
  \newcommand{\bal}{\begin{aligned} }
  \newcommand{\eal}{\end{aligned}}
 \newcommand{\ben}{\begin{eqnarray}}
 \newcommand{\een}{\end{eqnarray}}
 \newcommand{\beno}{\begin{eqnarray*}}
 \newcommand{\eeno}{\end{eqnarray*}}
 \newcommand{\Den}{\mathrm{Den}}
 \newcommand{\uu}{\mathbf{u}}
 \newcommand{\xx}{\mathbf{x}}
 \newcommand{\R}{\mathbb{R}}
 \newcommand{\vom}{\boldsymbol\omega}
\newcommand{\sgn}{\mathrm{sgn}}
 \author{Jiajie Chen and Thomas Y. Hou}
 \address{Courant Institute and Applied and Computational Mathematics, Caltech, Pasadena, CA 91125. Emails: jiajie.chen@cims.nyu.edu, hou@cms.caltech.edu}
 \date{\today}
\title[Stable  blowup of 2D Boussinesq equations]{Stable nearly self-similar blowup of the 2D Boussinesq and 3D Euler equations with smooth data II: Rigorous Numerics}
\begin{document}
 \begin{abstract}
This is Part II of our paper in which we prove finite time blowup of the 2D Boussinesq and 3D axisymmetric Euler equations with smooth initial data of finite energy and boundary. In Part I of our paper \cite{ChenHou2023a}, we establish an analytic framework to prove nonlinear stability of an approximate self-similar blowup profile using a combination of weighted $L^\infty$ and weighted $C^{1/2}$ energy estimates. We reduce proving nonlinear stability to verifying several inequalities for the constants in the energy estimate which  depend on the approximate steady state and the weights in the energy functional only.
In Part II of our paper, we construct approximate space-time solutions with rigorous error control, which are used to obtain sharp stability estimates of the linearized operator in Part I.
We also obtain sharp estimates of the regular part of the velocity using numerical integration with computer assistance. These results enable us to verify that the constants in the energy estimate obtained in Part I \cite{ChenHou2023a} indeed satisfy the inequalities for nonlinear stability. The nonlinear stability further implies the finite time singularity of the axisymmetric 3D Euler equations with smooth initial data and boundary.

\end{abstract}
 \maketitle

\vspace{-0.2in}
\section{Introduction}

The three dimensional incompressible Euler equations are one of the most fundamental nonlinear partial differential equations that govern the motion of the ideal inviscid fluid flow. It is closely related to the incompressible Navier-Stokes equations. Due to the presence of nonlinear vortex stretching, the global regularity of the 3D incompressible Euler equations with smooth initial data and finite energy has been one of the longstanding open questions in nonlinear partial differential equations.
Let $\uu$ be the divergence free velocity field and we define $\vom = \nabla \times \uu$ as the \emph{vorticity vector}. The 3D Euler equations governing the vorticity $\vom$ are given by
\begin{equation}
   \vom_{t} + \uu \cdot \nabla \vom = \vom \cdot \nabla \uu,
  \label{eqn_eu_w}
\end{equation}
where $\uu$ is related to $\vom$ via the \emph{Biot-Savart law}. The velocity gradient $\nabla \uu$ formally has the same scaling as vorticity $\vom$. Thus the vortex stretching term, $\vom \cdot \nabla \uu$, has a nonlocal quadratic nonlinearity in terms of vorticity.  Although many experts tend to believe that the 3D Euler equations would form a finite time singularity from smooth initial data, the nonlocal nature of the vortex stretching term could lead to dynamic depletion of nonlinearity, thus preventing a finite time blowup, see e.g. \cite{constantin1996geometric,deng2005geometric,hou2006dynamic}. The interested readers may consult the excellent surveys \cite{constantin2007euler,gibbon2008three,hou2009blow,kiselev2018,majda2002vorticity}  and the references therein. 

Our work is inspired by the computation of Luo-Hou \cite{luo2014potentially,luo2013potentially-2} in which they presented some convincing numerical evidence that the 3D axisymmetric Euler equations with smooth initial data and boundary develop a potential finite time singularity. In Part I of our paper \cite{ChenHou2023a}, we establish an analytic framework and obtain the essential stability estimates to prove finite time singularity of the 2D Boussinesq and 3D axisymmetric Euler equations with smooth initial data and boundary.
The main results of this paper are stated by the two informal theorems below. The more precise and stronger statement of Theorem \ref{thm1a} can be found in Theorem \ref{thm:main} in Section \ref{sec:lin}.

\begin{thm}\label{thm1a}
Let $\th$, $\uu$ and $\om$ be the density, velocity and vorticity in the 2D Boussinesq equations \eqref{eq:bous1}-\eqref{eq:biot}, respectively.
There is a family of smooth initial data $(\th_0, \om_0)$ with $\th_0(x,y)$ being even and $\om_0(x,y)$ being odd in $x$, such that the solution of the 2D Boussinesq equations develops a singularity in finite time $T<+\infty$. The velocity field $\uu_0$ has finite energy. 
The blowup solution $(\th(t), \om(t))$ is nearly self-similar in the sense that $(\th(t), \om(t))$ with suitable dynamic rescaling is  close to an approximate blowup profile $ (\bar \th, \bar \om)$ up to the blowup time. Moreover, the blowup is stable for initial data $ (\th_0, \om_0)$ close to $(\bar \th, \bar \om)$ in some weighted $L^{\inf}$ and $C^{1/2}$ norm.

 \end{thm}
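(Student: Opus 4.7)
The plan is to work in the dynamic rescaling formulation of the 2D Boussinesq system, where a nearly self-similar blowup corresponds to a near-steady state in rescaled variables $(x,\tau)$. First I would fix the approximate profile $(\bar\th,\bar\om)$ constructed (and sharpened in Part II) from refinements of the Luo--Hou near-singular computation, compute its residual $\bar F$ under the rescaled equations, and arrange that $\bar F$ is numerically tiny in the same weighted $L^\infty$ and $C^{1/2}$ norms that drive the stability analysis.

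For initial data $(\th_0,\om_0)$ close to $(\bar\th,\bar\om)$, I would write the rescaled solution as $(\bar\th+\tilde\th,\bar\om+\tilde\om)$, so that the perturbation $(\tilde\th,\tilde\om)$ satisfies an equation with a linearized operator $\cL$ (transport, scaling, vortex stretching, linearized velocity) plus a quadratic nonlinearity $\cN$ plus the fixed residual $\bar F$. The stability lemma from Part I reduces the whole problem to two inputs: a coercivity estimate for $\cL$ in the combined weighted $L^\infty$ and $C^{1/2}$ norm, in which the damping from the transport part strictly dominates the growth coming from vortex stretching and the linearized velocity; and a sharp estimate of the nonlocal perturbed velocity in terms of $\tilde\om$ in the same norms. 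With these in place, a continuity/bootstrap argument in rescaled time $\tau$ closes and keeps $(\tilde\th,\tilde\om)$ of size $O(\|\bar F\|)$ for all $\tau\ge 0$. Translating back to physical variables yields finite-time singularity at some $T<\infty$ with the advertised nearly self-similar behavior and stability in the weighted topology; finite energy for $\uu_0$ is obtained by spatially truncating the rescaled initial data outside a neighborhood of the blowup point, using the locality of the singular behavior.

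The real core of the work is to verify that the stability constants computed from $(\bar\th,\bar\om)$ actually satisfy the strict inequalities demanded by the stability lemma. The local contributions to $\cL$ depend only on $(\bar\th,\bar\om)$ and its derivatives, and can be estimated by pointwise weighted arguments on finitely many grid nodes, supplemented by explicit modulus-of-continuity bounds between nodes via interval arithmetic. The harder piece is the nonlocal contribution from the Biot--Savart law and the linearized velocity: I would split the associated kernel into a near-field part, treated by explicit Taylor expansion at the singular point with computer-assisted bounds on the remainder in both $L^\infty$ and $C^{1/2}$, and a far-field part, controlled by decay against the chosen weights. Combining these sharp kernel estimates with interval-arithmetic evaluation of the numerical profile is what yields rigorous upper bounds on every constant appearing in the stability lemma.

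The main obstacle is precisely this final verification: the inequalities in the stability lemma are strict but tight, so no lossy estimate on the nonlocal velocity will close them. In particular, the $C^{1/2}$ Hölder seminorm of $\nabla\uu$ near the singular point sits exactly at the borderline needed to control vortex stretching in $C^{1/2}$, and it is here that any slack in the kernel splitting or in the interval-arithmetic evaluation would destroy the argument. Passing from a numerical profile known only on a finite grid to rigorous bounds on these nonlocal quantities in infinite-dimensional weighted function spaces is therefore the central technical challenge of Part II, and it is where the bulk of the effort should be concentrated.
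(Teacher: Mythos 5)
Your plan relies on directly establishing coercivity of the full linearized operator $\cL$ in the combined weighted $L^\infty$/$C^{1/2}$ norm, with the transport damping strictly dominating vortex stretching and the linearized velocity. The paper states explicitly that this does not hold: the nonlocal terms in the linearized equations are \emph{not} small, and with the energy used in the proof the stability conditions fail for $\cL$ itself in both the weighted $L^\infty$ and the weighted H\"older estimates. The central idea you are missing is that one must first subtract a finite rank perturbation, $\cL = \cL_0 + \cK$ with $\mathrm{rank}\,\cK < 50$, chosen so that $\cL_0$ alone has the needed coercivity after sharp weighted and H\"older estimates. The contribution of $\cK$ is then handled separately by splitting the perturbation $\wt W = \wt W_1 + \wh W_2$, where $\wt W_1$ evolves under (essentially) $\cL_0$ and $\wh W_2$ is driven by the rank-one forcing; via Duhamel, $\wh W_2$ is expressed through solutions $e^{\cL t}a(x)$ of the full linear PDE, which must be constructed as approximate space-time solutions with rigorous \emph{a posteriori} error control. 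Section~\ref{sec:lin_evo} of this paper is devoted entirely to that construction (corrections to enforce vanishing orders at the origin, cubic-in-time interpolation, compact support in time, diagonalized exact ODEs for the correction coefficients, decomposition of the residual into local and nonlocal pieces), and Lemma~\ref{lem:main_EE} packages its output. Without this piece, the bootstrap you describe cannot close, no matter how tight the kernel-splitting and interval-arithmetic bounds on the Biot--Savart contribution are.

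Your treatment of the nonlocal velocity is closer to what the paper does, but it also misses that the relevant estimates in Lemma~\ref{lem:main_vel} are for $f - \hat f$ — the velocity \emph{minus a finite rank approximation} $\hat\uu$, $\widehat{\na\uu}$ — not for $f$ alone. The singular weights (e.g.\ $\rho_{10}\sim|x|^{-3}$, $\psi_1\sim|x|^{-2}$) make $\rho_f f$ unbounded near the origin unless the approximation is subtracted, so the kernel-splitting you outline would again not produce finite constants without the finite rank correction being built in from the start. The remaining structure you describe (rescaled equations, small residual of the profile, continuity argument, truncation for finite energy, interval arithmetic on grid cells, near/far-field splitting of the kernel) does match the paper, but the finite rank subtraction and the space-time solution construction constitute the core novelty of Part~II and cannot be omitted.
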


\begin{thm}\label{thm1b}

Consider the 3D axisymmetric Euler equations in the cylinder $r,z \in [0, 1] \times \BT$. Let $u^\th$ and $\om^\th$ be the angular velocity and angular vorticity, respectively. The solution of the 3D Euler equations \eqref{eq:euler10}-\eqref{eq:euler20} develops a nearly self-similar blowup (in the sense described in Theorem \ref{thm1a}) in finite time for some smooth initial data $\om_0^{\th}$, $u_0^{\th}$ supported away from the symmetry axis $r=0$. The initial velocity field has finite energy, $u_0^{\th}$ and $\om_0^{\th}$ are odd and periodic in $z$.  The blowup is stable for initial data $ (u^\th_0, \om^\th_0)$ that are close to the approximate blowup profile $({\bar u}^\th, {\bar \om}^\th)$ after proper rescaling subject to some constraint on the initial support size.

\end{thm}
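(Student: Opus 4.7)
\textbf{Proof proposal for Theorem \ref{thm1b}.} The plan is to obtain Theorem \ref{thm1b} from Theorem \ref{thm1a} by exploiting the classical formal correspondence between the 2D Boussinesq equations and the 3D axisymmetric Euler equations near the cylindrical boundary $r=1$. Setting $\rho = 1-r$ and rescaling about the corner $(r,z)=(1,0)$, the axisymmetric Euler system for the angular velocity $u^\th$ and the angular vorticity $\om^\th$ reduces, to leading order in the support scale, to the 2D Boussinesq system studied in Part I with the identifications $\th \leftrightarrow (u^\th)^2$ and $\om \leftrightarrow \om^\th$. Using the approximate Boussinesq profile $(\bar\th,\bar\om)$ of Theorem \ref{thm1a}, I would construct an approximate Euler profile $(\bar u^\th,\bar\om^\th)$ with $\bar u^\th = \sqrt{\bar\th}\cdot \chi$ and $\bar\om^\th = \bar\om\cdot\chi$, where $\chi$ is a smooth cutoff supported strictly inside $\{r>0\}$ and concentrated near $(1,0)$, and odd and periodic in $z$.

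Next I would set up the dynamic rescaling for the Euler system around this profile and write the linearized operator as the Boussinesq linearized operator plus three classes of correction: (i) geometric corrections from the $1/r$ factors in the axisymmetric transport and stretching terms; (ii) the difference between the 3D axisymmetric Biot-Savart kernel on the cylinder and the 2D half-plane Biot-Savart kernel used in Part I; and (iii) commutators with the cutoff $\chi$. Under the dynamic rescaling the support contracts toward the blowup point, so for initial data of sufficiently small support near $(1,0)$ each of these corrections can be made uniformly small in the weighted $L^\infty$ and $C^{1/2}$ norms used in Part I throughout the blowup time.

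With these estimates in hand, I would apply the stability lemma of Part I \cite{ChenHou2023a} to a set of stability constants that incorporates contributions from (i)--(iii) on top of the Boussinesq constants. Provided the extra contributions fit within the margin left by the Boussinesq stability inequalities (which Part II verifies with rigorous computer assistance), the stability lemma produces nonlinear stability of the rescaled profile, and unwinding the rescaling yields a stable nearly self-similar blowup for the Euler equations. Finite energy is immediate from compact support of the velocity field; oddness and $z$-periodicity lie in an invariant symmetry class for the equations and are respected by the approximate profile, so they persist up to the blowup time.

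The main obstacle will be making the corrections in (i)--(iii) quantitatively small enough to fit within the Boussinesq margin. The $C^{1/2}$ bounds on the Biot-Savart correction are the most delicate, because the correction is nonlocal and its H\"older seminorm must be estimated against a weight that degenerates at the blowup point. I expect this step to combine sharp analytic bounds that exploit the smallness of the initial support (to pull out a factor that tends to zero as the support parameter shrinks) with the rigorous numerical estimates on the Boussinesq stability constants produced in the present paper. A subsidiary issue, that the support of the evolving solution never reaches $r=0$ before the blowup time, reduces to a continuation argument using the uniform smoothness of the rescaled velocity field inside its support, guaranteed by the same stability estimates.
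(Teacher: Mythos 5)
Your overall strategy matches the paper's: reduce 3D axisymmetric Euler near the cylinder boundary $r=1$ to 2D Boussinesq, construct an Euler profile from the Boussinesq one, control the discrepancy in the dynamic-rescaling frame by exploiting that the support contracts to the boundary, and absorb the discrepancy into the margin left by the Boussinesq stability constants verified in this paper. That is the conceptual framework the paper uses. However, there are two concrete gaps in your construction that need fixing.

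First, your change of variables is imprecise. The paper uses $\td\theta = (r u^\theta)^2$ and $\td\omega = \omega^\theta/r$, not $\theta\leftrightarrow (u^\theta)^2$ and $\omega\leftrightarrow\omega^\theta$. The extra $r$ factors are not cosmetic: $r u^\theta$ is the conserved circulation and $\omega^\theta/r$ the transported quantity in axisymmetric Euler, and it is with these variables that the transport structure matches Boussinesq exactly (see \eqref{eq:euler10}). Dropping the $r$ factors produces extra lower-order source terms on top of the ones you list in (i)--(iii) and makes the ``leading order'' identification incorrect as written, even though $r\approx 1$ near the boundary saves the asymptotics.

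Second, the proposed Euler profile $\bar u^\theta=\sqrt{\bar\theta}\,\chi$ is neither smooth nor odd in $z$. The Boussinesq profile has $\bar\theta$ even in $x$ with $\bar\theta\sim c\,x^2$ near $x=0$ (this is forced by the normalization $\th_{xx}(0)\neq 0$ in \eqref{eq:normal}), so $\sqrt{\bar\theta}\sim\sqrt{c}\,|x|$: Lipschitz but not $C^1$ at $x=0$, and even rather than odd. To obtain a smooth odd $u^\theta$ you need to factor $\bar\theta=x^2\,g(x,y)$ with $g>0$ smooth and set $\bar u^\theta = x\sqrt{g}\,\chi$ (after translating $x\to z$ and inserting the $1/r$ factors); this fixes both the regularity and the parity. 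The same remark applies at the level of initial data $u_0^\theta$. Once these two issues are corrected, the remainder of your plan --- decomposing the linearized Euler operator as the Boussinesq one plus small geometric, Biot--Savart, and cutoff corrections, and invoking Lemma \ref{lem:PDE_nonstab} with a stability margin --- is consistent with the paper's argument; the part you flag as hardest (the $C^{1/2}$ control of the nonlocal Biot--Savart correction) is indeed the delicate step, and the rigorous verification in this paper concerns precisely providing enough margin in the Boussinesq constants for that argument to close.
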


We first review some main ideas in our stability analysis of the linearized operator presented in Part I \cite{ChenHou2023a}. We use the 2D Boussinesq system as an example. Let $\bar{\omega}$, $\bar{\theta}$ be an approximate steady state of the dynamic rescaling formulation. We denote $W = (\omega, \theta_x, \theta_y)$ and decompose $W = \overline{W} +\widetilde{W}$ with $\overline{W} = (\bar{\omega},\bar{\theta}_x, \bar{\theta}_y)$. We further denote by $\cL$ the linearized operator around $\overline{W}$ that governs the perturbation $\widetilde{W}$ in the dynamic rescaling formulation (see Section \ref{sec:lin}):
\begin{equation}
\label{eq:model_nloc_0}
\widetilde{W}_t = \cL(\widetilde{W}).
\end{equation}
We decompose the linearized operator $\cL$ into a leading order operator $\cL_0$ plus a finite rank perturbation operator $\cK$, i.e 
$\cL = \cL_0 + \cK$.
The leading order operator $\cL_0$ is constructed in such way that we can obtain sharp stability estimates using weighted estimates and sharp functional inequalities.

In Part I \cite{ChenHou2023a}, we have performed the weighted energy estimates using a combination of weighted $L^{\infty}$ and $C^{1/2}$ norm.
In our analysis, we decompose $\widetilde{W} = \widetilde{W}_1 + \widetilde{W}_2$, where $\widetilde{W}_1$ is the main part of the perturbation, which is essentially governed by the leading order operator $\cL_0$ with a weak coupling to $\widetilde{W}_2$ through nonlinear interaction. The perturbation $\widetilde{W}_2$ captures the contribution from the finite rank operator.
The key is to show that the energy estimate of the main part $\widetilde{W}_1$ satisfies the inequalities stated in our stability Lemma \ref{lem:PDE_nonstab} (see Section \ref{sec:lin}). For this purpose, we need to obtain relatively sharp energy estimates for the leading order operator $\cL_0$ by subtracting a finite rank operator $\cK$. Without subtracting the finite rank operator, we would not be able to obtain linear and nonlinear stability of the approximate self-similar profile. 


The constants in the weighted energy estimates obtained in Part I \cite{ChenHou2023a} depend on the approximate self-similar profile that we constructed numerically in Section {\secASS} of Part I \cite{ChenHou2023a} and the singular weights we use. In this paper and in the supplementary material  \cite{ChenHou2023bSupp}, we will provide sharp and rigorous upper bounds for these constants by estimating the higher order derivatives and then using interpolation estimates from numerical analysis. 
We also obtain sharp estimates of the regular part of the velocity, which is more regular than the vorticity, by bounding various integrals using numerical integration with computer assistance. These sharp estimates of the constants enable us to prove that the inequalities in our stability lemma hold for our approximate self-similar profile. Thus we can complete the stability analysis of the approximate self-similar profile and complete our blowup analysis for the 2D Boussinesq and 3D Euler equations. See Section \ref{proof-sketch} for more discussion of the main steps in our blowup  analysis.


We use the following toy model to illustrate the main ideas of our stability analysis by considering $\cK$ as a rank-one operator $\cK(\widetilde{W}) = a(x) P(\widetilde{W})$ for some 
operator $P$ satisfying (i) $P(\widetilde{W})$ is constant in space; 
(ii) $\|P(\widetilde{W})\| \leq c \|\widetilde{W}\| $.   
Given initial data $\widetilde{W}_0$, we decompose \eqref{eq:model_nloc_0} as follows 
\beq\label{eq:model2_decoup_0}
\bal
\partial_t \widetilde{W}_1(t) &= \cL_0 \widetilde{W}_1,  \quad \widetilde{W}_1(0) = \widetilde{W}_0 ,  \\
\partial_t \widetilde{W}_2(t) &= \cL \widetilde{W}_2 + a(x) P(\widetilde{W}_1(t)), \quad \widetilde{W}_2(0) = 0.
\eal
\eeq
It is easy to see that $\widetilde{W} = \widetilde{W}_1 + \widetilde{W}_2$ solves \eqref{eq:model_nloc_0} with initial data $\widetilde{W}_0$ since $\cL = \cL_0 + a(x) P$. 
By construction, the leading operator $\cL_0$ has the desired structure that enables us to obtain sharp stability estimates. The second part $\widetilde{W}_2$ is driven by the rank-one forcing term $a(x) P(\widetilde{W}_1(t))$. Using Duhamel's principle and the fact that $P(\widetilde{W}_1(t))$ is constant in space, we yield 
\beq\label{eq:model_nloc4_0}
\widetilde{W}_2(t) = \int_0^t P(\widetilde{W}_1(s)) e^{\cL (t-s) } a(x) ds.
\eeq
If $\wt W_1$ is linearly stable in some $L^{\inf}(\vp)$ space, by checking the decay of $e^{\cL (t)}a(x)$ in the energy space for large $t$, we can obtain the stability estimate of $\wt W_2$. 
Note that $e^{\cL (t) } a(x)$ is equivalent to solving the linear evolution equation $v_t = \cL(v)$ with initial data $v_0 = a(x)$. We can solve this initial value problem by constructing a space-time solution with rigorous error control.

We remark that our stability analysis is performed mainly for $\widetilde{W}_1$ since $\wt W_2$ is driven by $\wt W_1$. The approximation errors in constructing the space-time approximation to $\widetilde{W}_2$ can be controlled by the decay estimate of $\widetilde{W}_1$. Moreover, the region where we need to modify the linearized operator by a finite rank operator is mainly located in a small sector near the boundary where we have the smallest amount of damping. The total rank is less than 50. 
In our construction of approximate solution to $\widetilde{W}_2$, we need to solve the linear PDE \eqref{eq:model_nloc_0} in space-time with a number of initial data, which can be implemented in full parallel. 
There has been a lot of effort in studying 3D Euler singularities.  The most exciting recent development is Elgindi's breakthrough result in which he proved finite time singularity of the axisymmetric Euler equation with no swirl for $C^\alpha$ initial vorticity \cite{elgindi2019finite} (see also \cite{elgindi2019stability}). In \cite{chen2019finite2}, we established finite time blowup of the 2D Boussinesq and the 3D axisymmetric Euler equations with $C^{1,\al}$ velocity, large swirl and boundary in a setting similar to the Hou-Luo scenario \cite{luo2013potentially-2,luo2014potentially}. See also \cite{chen2024remarks} for further developments. Earlier efforts include the Constantin-Lax-Majda (CLM) model \cite{CLM85}, the De Gregorio (DG) model \cite{DG90}, the generalized CML (gCLM) model \cite{OSW08} and the Hou-Li model \cite{hou2008dynamic}. See also \cite{CLM85,Elg17,chen2020singularity,chen2020slightly,chen2019finite,Elg19,Cor10,chen2021regularity} for the De Gregorio model and for the gCLM model with various parameters. Inspired by their work on the vortex sheet singularity \cite{caflisch1989a}, Caflisch and Siegel have studied complex singularity for 3D Euler equation, see \cite{caflisch1993,caflisch2009} and also \cite{Frisch2006} for the complex singularities for 2D Euler equation.
 
In \cite{choi2014on}, the authors proved the blowup of the Hou-Luo model proposed in \cite{luo2014potentially}. In \cite{chen2021HL}, Chen-Hou-Huang proved the asymptotically self-similar blowup of the Hou-Luo model by extending the method of analysis established for the finite time blowup of the De Gregorio model by the same authors in \cite{chen2019finite}. In \cite{choi2015finite,kiselev2018finite,hoang2018blowup,hoang2020singular,kiselev2021}, the authors proposed several simplified models to study the Hou-Luo blowup scenario \cite{luo2014potentially,luo2013potentially-2} and established finite time blowup of these models.  In \cite{elgindi2017finite,elgindi2018finite}, Elgindi and Jeong proved finite time blowup for the 2D Boussinesq and 3D axisymmetric Euler equations in a domain with a corner using $\mathring{C}^{0,\al}$ data.

The rest of the paper is organized as follows. In Section \ref{sec:lin}, we review the analytic framework that we established in Part I \cite{ChenHou2023a} and state 
the key lemmas which we use to prove the finite time blowup of the 2D Boussinesq and 3D Euler equations with smooth initial data. 
In Section \ref{sec:lin_evo}, we discuss the construction of the approximate space-time solution to the linearized operator $\cL$. This is crucial to obtain sharp estimates of the perturbed operator $\cL - \cK$ in the stability analysis. In Section \ref{sec:vel_comp}, we show how to estimate the $L^\infty$ and H\"older norms of the regular part of the velocity. Some technical estimates and derivations are deferred to the Appendix.

\section{Review of the analytic framework from Part I \cite{ChenHou2023a}}\label{sec:lin}

In this section, we will review some main ingredients in our analytic framework to establish stability analysis that we presented in Part I \cite{ChenHou2023a}. We will mainly focus on the 2D Boussinesq equations since the difference between the 3D Euler and 2D Boussinesq equations is asymptotically small.
As in our previous works \cite{chen2019finite,chen2019finite2,chen2021HL}, we will use the dynamic rescaling formulation for the 2D Boussinesq equations to study the linear stability for the linearized operator around the approximate steady state of the dynamic rescaling equations. 
Passing from linear stability to nonlinear stability is relatively easier by treating the nonlinear terms and the residual error as small perturbations to the linear damping terms.

Denote by $\om^{\th}$, $u^{\th}$ and $\phi^\th$  the angular vorticity, angular velocity, and angular stream function, respectively.
The 3D axisymmetric Euler equations are given below:
\beq\label{eq:euler10}
\pa_t (ru^{\th}) + u^r (r u^{\th})_r + u^z (r u^{\th})_z = 0, \quad 
\pa_t (\f{\om^{\th}}{r}) + u^r ( \f{\om^{\th}}{r} )_r + u^z ( \f{\om^{\th}}{r})_z = \f{1}{r^4} \pa_z( (r u^{\th})^2 ),
\eeq
where the radial velocity $u^r$ and the axial velocity $u^\th$ are given by the Biot-Savart law:
\beq\label{eq:euler20}
-(\pa_{rr} + \f{1}{r} \pa_{r} +\pa_{zz}) {\phi^\th} + \f{1}{r^2} {\phi^\th} = \om^{\th}, 
\quad  u^r = -\phi^\th_z, \quad u^z = \phi^\th_r + \f{1}{r} {\phi^\th} , 
\eeq
with the no-flow boundary condition ${\phi^\th}(1, z ) = 0$ on the solid boundary $r = 1$ 
and a periodic boundary condition in $z$. For 3D Euler blowup that occurs at the boundary $r=1$, we know that the scaling properties of 
the axisymmetric Euler equations are asymptotically the same as those  of the 2D Boussinesq equations \cite{majda2002vorticity}. Thus, we also study the 2D Boussinesq equations on the upper half space:
\begin{align}
\om_t +  \uu \cdot \na \om  &= \th_{x},  \label{eq:bous1}\\
\th_t + \uu \cdot  \na \th & =  0 , \label{eq:bous2} 
\end{align}
where the velocity field $\uu = (u , v)^T : \R_+^2 \times [0, T) \to \R^2_+$ is determined via the Biot-Savart law
\beq\label{eq:biot}
 - \D \phi = \om , \quad  u =  - \phi_y , \quad v  = \phi_x,
\eeq
where $\phi$ is the stream function with the no-flow boundary condition $\phi(x, 0 ) = 0$ at $y=0$. By making the change of variables $ \td{\th} \teq (r u^{\th})^2,  \td{\om} = \om^{\th} / r$, we can see that $\td{\th}$ and $\td{\om}$ satisfy the 2D Boussinesq equations up to the leading order for $r \geq r_0 >0$.

\subsection{Dynamic rescaling formulation}

Following \cite{chen2019finite,chen2019finite2,chen2021HL}, we consider the dynamic rescaling formulation of the 2D Boussinesq equations. Let $ \om(x, t), \th(x,t) , \uu(x, t)$ be the solutions of \eqref{eq:bous1}-\eqref{eq:biot}. Then it is easy to show that 
\beq\label{eq:rescal1}
\bal
  \td{\om}(x, \tau) &= C_{\om}(\tau) \om(   C_l(\tau) x,  t(\tau) ), \quad   \td{\th}(x , \tau) = C_{\th}(\tau)
  \th( C_l(\tau) x, t(\tau)),  \\
    \td{\uu}(x, \tau) &= C_{\om}(\tau)  C_l(\tau)^{-1} \uu(C_l(\tau) x, t(\tau)) , 
\eal
\eeq
are the solutions to the dynamic rescaling equations
 \beq\label{eq:bousdy0}
\bal
\td{\om}_{\tau}(x, \tau) + ( c_l(\tau) \xx + \td{\uu} ) \cdot \na \td{\om}  &=   c_{\om}(\tau) \td{\om} + \td{\th}_x , \qquad 
\td{\th}_{\tau}(x , \tau )+ ( c_l(\tau) \xx + \td{\uu} ) \cdot \na \td{\th}  = c_{\th} \td \th,
\eal
\eeq
where $\td \uu = (\td u, \td v)^T = \na^{\perp} (-\D)^{-1} \td{\om}$, $\xx = (x, y)^T$, 
\beq\label{eq:rescal2}
\bal
  C_{\om}(\tau) = \exp\lt( \int_0^{\tau} c_{\om} (s)  d \tau\rt), \ C_l(\tau) = \exp\lt( \int_0^{\tau} -c_l(s) ds    \rt) , \  C_{\th}  =  \exp\lt( \int_0^{\tau} c_{\th} (s)  d \tau\rt),
\eal
\eeq
$  t(\tau) = \int_0^{\tau} C_{\om}(\tau) d\tau $ and the rescaling parameters $c_l(\tau), c_{\th}(\tau), c_{\om}(\tau)$ satisfy \cite{chen2019finite2}
\beq\label{eq:rescal3}
c_{\th}(\tau) = c_l(\tau ) + 2 c_{\om}(\tau).
\eeq


To simplify our presentation, we still use $t$ to denote the rescaled time in \eqref{eq:bousdy0} and simplify $\td \om, \td \th$ as $\om, \th$
\beq\label{eq:bousdy1}
\bal
&\om_t + (c_l x + \uu) \cdot \na \om = \th_x + c_{\om} \om ,\quad  \th_t + (c_l x + \uu)\cdot \na \th =  c_{\th} \th .
\eal
\eeq
Following \cite{chen2021HL}, we impose the following normalization conditions on $c_{\om}, c_l$
\beq\label{eq:normal}
c_l = 2 \f{\th_{xx}(0) }{\om_x(0) }, \quad c_{\om} = \f{1}{2} c_l + u_x(0), \quad c_{\th} = c_l + 2 c_{\om}.
\eeq
For smooth data, these two normalization conditions play the role of enforcing 
\beq\label{eq:normal1}
\theta_{xx}(t,0)=\theta_{xx}(0,0), \quad \omega_x(t,0)=\omega_x(0,0)
\eeq
for all time. 

We remark that the dynamic rescaling formulation was introduced in \cite{mclaughlin1986focusing,  landman1988rate} to study the self-similar blowup of the nonlinear Schr\"odinger equations. This formulation is closely related to the modulation technique in the literature and has been developed by Merle, Raphael, Martel, Zaag, and others, see, e.g.  \cite{merle1997stability,kenig2006global,merle2005blow,
martel2014blow,merle2015stability,buckmaster2019formation,buckmaster2019formation2}. 
Moreover, it is related to the method of modulation equations developed by 
Soffer and Weinstein \cite{weinstein1985modulational,soffer1990multichannel,soffer2006soliton}. 
Recently, this method has been applied to study singularity formation in incompressible fluids \cite{chen2019finite2,elgindi2019finite} and related models 
\cite{chen2019finite,chen2020slightly,chen2021regularity,chen2020singularity}. The more precise statement of our Theorem \ref{thm1a} is stated as follows.
\begin{thm}\label{thm:main}
Let $(\bar{\th},\bar{\om}, \bar \uu,  \bar c_l, \bar c_{\om})$ be the approximate self-similar profile constructed in Section {\secASS} of Part I \cite{ChenHou2023a} and $E_* = 5 \cdot 10^{-6}$. For initial data $\th_0(x, y)$ even in $x$ and $\om_0(x, y)$ odd in $x$ of \eqref{eq:bousdy1} satisfying $ E ( \om_0 - \bar \om,  \th_{0,x} - \bar \th_x ,  \th_{0, y} - \bar \th_y ) <  E_*$, 
we have 
\beq\label{eq:thm_est}
 || \om - \bar \om ||_{L^{\inf}}, \  || \th_x - \bar \th_x ||_{L^{\inf}} ,
\  || \th_y - \bar \th_y ||_{\inf} < 200 E_* , \quad | u_x(t, 0) - \bar u_x(0)| , \  | \bar c_{\om} - c_{\om}| < 100 E_*
 \eeq
for all time. In particular, we can choose smooth initial data $\om_0, \th_0 \in C_c^{\inf}$ in this class with finite energy $||\uu_0||_{L^2} < +\infty$ such that the solution to the physical equations \eqref{eq:bous1}-\eqref{eq:biot} with these initial data  blows up in finite time $T$. 
\end{thm}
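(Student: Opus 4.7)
The plan is to reduce the theorem, via the stability Lemma of Part I \cite{ChenHou2023a}, to verifying a finite list of quantitative inequalities on constants that depend only on $(\bar\om,\bar\th)$, the singular weights, and the operator decomposition $\cL = \cL_0 + \cK$. Writing the perturbation $\widetilde W = (\om-\bar\om,\th_x-\bar\th_x,\th_y-\bar\th_y)$, Part I already provides a weighted $L^\infty$ and $C^{1/2}$ energy $E$ and an inequality of the form $\frac{d}{dt} E \le -\mu E + Q(E) + \|F\|$, where $\mu$ is the linear damping, $Q$ is the quadratic nonlinear coupling, and $F$ is the residual error of the approximate profile. Once $\mu$, the coefficients of $Q$, and $\|F\|$ are shown to meet the hypotheses of that lemma, a standard bootstrap on $\{E(t)<E_*\}$, together with the assumed smallness $E(0)<E_*$, yields $E(t)<E_*$ globally, and the pointwise bounds \eqref{eq:thm_est} follow from the equivalence of the weighted norm with $L^\infty$ on the relevant support.

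The core of the work is then to compute \emph{rigorously sharp} upper bounds for the stability constants, and I would organize this in three layers. First, for the leading-order part $\cL_0$ I would carry out the weighted energy estimates of Part I on a discrete mesh, replacing pointwise suprema by $L^\infty$ bounds obtained through interpolation from mesh values together with controlled bounds on the higher derivatives of $(\bar\om,\bar\th)$; the required interpolation inequalities are those collected in Section \secineq. Second, to treat the finite-rank piece $\cK$ (of rank $<50$, supported mainly in the boundary sector where damping is weakest), I would exploit the Duhamel representation \eqref{eq:model_nloc4_0}: for each generating profile $a(x)$ of $\cK$, I would construct a space-time approximation $v_{\mathrm{app}}(\cdot,t)$ to $e^{\cL t}a$ with a certified error bound in the energy norm, following the strategy of Section \ref{sec:lin_evo}. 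Because $P(\widetilde W_1(s))$ decays in $s$ at the rate proved for $\cL_0$, the convolution in \eqref{eq:model_nloc4_0} is controlled by the decay of $v_{\mathrm{app}}$ plus its error, and the whole $\widetilde W_2$ contribution reduces to a finite, embarrassingly parallel computation over the basis. Third, the velocity quantities entering all of these constants in the regular region are bounded by the rigorous numerical integration scheme developed in Section \ref{sec:vel_comp}.

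The main obstacle will be closing the margin: the linear damping $\mu$ produced by $\cL_0$ alone is only barely dominant over the combined losses from the $\cK$-correction, the quadratic term $Q$, and the residual $\|F\|$. Keeping $\cK$ of low rank while subtracting enough non-coercive modes near the boundary requires a delicate choice of the basis $\{a_j\}$, and the certified semigroup errors for $v_{\mathrm{app}}$ must remain small uniformly in time, which in turn demands that the decay rate established for $e^{\cL t}$ dominate the error-accumulation rate of the space-time solver. Once the inequalities of the stability lemma are verified in this way, the passage to the physical equations \eqref{eq:bous1}--\eqref{eq:biot} is by the rescaling \eqref{eq:rescal1}--\eqref{eq:rescal2}: the uniform bound on $|c_\om - \bar c_\om|$ in \eqref{eq:thm_est}, together with $\bar c_\om<0$, forces $\int_0^\infty C_\om(\tau)\,d\tau<\infty$, hence a finite physical blowup time $T$. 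Finally, $C_c^\infty$ finite-energy initial data in the prescribed class are obtained by mollifying and truncating $(\bar\om,\bar\th)$ far from the blowup region, where the spatial decay of the profile established in Part I controls both the truncation error in $E$ and the energy norm $\|\uu_0\|_{L^2}$.
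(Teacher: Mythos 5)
Your proposal is correct and takes essentially the same approach as the paper: reduction to the stability Lemma of Part I via the decomposition $\cL=\cL_0+\cK$ and $W=W_1+\widehat W_2$, certified construction of space-time solutions $e^{\cL t}\bar F_i$ for the finite-rank modes (Section \ref{sec:lin_evo}), rigorous numerical integration of the velocity integrals for the sharp constants (Section \ref{sec:vel_comp}), a bootstrap in the weighted $L^\infty$/$C^{1/2}$ energy, and finite blowup time from $c_\om$ being bounded away from $0$ in the rescaling together with truncation/mollification to obtain $C_c^\infty$ finite-energy data. The one cosmetic difference worth noting is that the paper's Lemma \ref{lem:PDE_nonstab} is a pointwise barrier argument on the transport equations for the $f_i$ (using $v_i|_{x_1=0}=v_i|_{z_1=0}=0$), rather than a Gronwall inequality $\tfrac{d}{dt}E\le -\mu E+Q(E)+\|F\|$ as you write it; the conclusion $E(t)<E_*$ is the same, and your characterization of the quantitative inputs ($\mu$, $Q$, $\|F\|$) that must be verified is accurate.
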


The energy $E$ is quite complicated, and we refer to Section {\proofsketch} in Part I \cite{ChenHou2023a} for its formula.

\vspace{0.1in}
\paragraph{\bf{Nearly self-similar blowup and the blowup time}}
Based on the main Theorem \ref{thm:main}, the vorticity in the physical space \eqref{eq:bous1},\eqref{eq:bous2} $\om_{phy}$ has the following form 
\[
  \om_{phy}(x, t(\tau)) = C_{\om}^{-1}(\tau) \om_{ss}( C_l(\tau)^{-1} x, \tau ),
  \quad || \om_{ss}(\tau) - \bar \om ||_{L^{\infty}} \ll 1 ,
\]
where $\om_{ss}$ is the self-similar variable ($\td \om$ in \eqref{eq:rescal1}). 
We can generalize the rescaling parameters $C_{\om}, C_l, C_{\th}$ \eqref{eq:rescal2} to $C_{\om}(\tau)
= C_{\om}(0) \exp( \int_0^{\tau} c_{\om}(s) ds ), 
 C_l(\tau) = C_l(0) \exp\lt( \int_0^{\tau} -c_l(s) ds  \rt) $, $ C_{\th}  = C_{\om}^2 C_l^{-1},
 t(\tau) = \int_0^{\tau} C_{\om}(\tau) d\tau $. Using the estimates \eqref{eq:thm_est} and $c_l(\tau) \equiv \bar c_l$ \eqref{eq:normal},\eqref{eq:normal1}, we obtain
 \[
 \bal
 & || \om_{phy}(\tau)||_{L^{\infty}} \approx C_{\om}^{-1}(\tau) || \bar \om ||_{L^{\infty}}, 
 \quad C_{\om}(\tau) \approx C_{\om}(0) e^{ \bar c_{\om} \tau},
 \quad C_l(\tau) = C_l(0) e^{ - \bar c_l \tau}, \\
& T = t(\infty)  \approx C_{\om}(0) |\bar c_{\om}|^{-1}
=  \f{ || \bar \om ||_{L^{\infty}} }{  || \om_{phy}(0)||_{L^{\infty}}   |\bar c_{\om}| } ,
\quad T - t(\tau) \approx C_{\om}(\tau) |\bar c_{\om}|^{-1}, \\
&  C_l(\tau)^{-1} \approx C (T- t(\tau))^{\bar c_l / \bar c_{\om}}, 
 \quad  \om_{phy}(\tau) \approx  (T-t(\tau))^{-1} |\bar c_{\om}|^{-1} 
 \bar \om( C x (T- t(\tau))^{\bar c_l / \bar c_{\om}}) ,
\eal
 \]
 with $\bar c_l / \bar c_{\om} \approx -2.92 < 0$, for some $C>0$ depending on $\bar c_l, \bar c_{\om}, C_l(0), C_{\om}(0)$. The notation $\approx$ means that the relation holds approximately. The exact relation can be inferred from \eqref{eq:thm_est}, \eqref{eq:rescal1}, \eqref{eq:rescal2}. The blowup time is approximately inversely proportional to $|| \om_{phy}(0)||_{L^{\infty}}$. Since we only prove that $\om_{ss}(\tau)$ is sufficiently close to the approximate profile $\bar \om$ and do not prove convergence of $\om_{ss}(\tau)$ as $\tau \to \infty$, Theorem \ref{thm:main} does not imply \textit{asymptotically} self-similar blowup.

\subsection{The main steps in the proof of Theorem \ref{thm:main}}
\label{proof-sketch}

We will follow the framework in \cite{chen2019finite,chen2019finite2,chen2021HL} to establish finite time blowup by proving the nonlinear stability of an approximate steady state to \eqref{eq:bousdy1}. 
We divide the proof of Theorem \ref{thm:main} into proving the following lemmas. The energy norm below is defined in Section 5 in Part I \cite{ChenHou2023a} for energy estimates, and the requirement of smallness is incorporated in the conditions \eqref{eq:PDE_nondiag}, e.g. the term $a_{ij, 3}$, in Lemma \ref{lem:main_stab}.


The upper bar notation is reserved for the approximate steady state, e.g. $\bar \om, \bar \th$. 
Given the approximate steady state $\bar \om, \bar \th, \bar c_l, \bar c_{\om}$, we denote by $\olin \cF_i$ and $\bar F_{\om}, \bar F_{\th}$ the residual error
\beq\label{eq:bous_err}
\bal
 \bar F_{\om}  & = - (\bar c_l x + \bar \uu ) \cdot \na \bar \om + \bar \th_x + \bar c_{\om} \bar\om   , \quad  \bar F_{\th} = - (\bar c_l x + \bar \uu ) \cdot \na \bar \th + \bar c_{\th} \bar \th , \\
\olin \cF_1 & \teq \bar F_{\om}, \quad \olin \cF_2 \teq \pa_x \bar F_{\th}, \quad \olin \cF_3 \teq \pa_y \bar F_{\th}, \quad \bar c_{\th} = \bar c_l + 2 \bar c_{\om}.
 \eal
 \eeq

We have the following nonlinear stability Lemma for  $L^{\inf}$-based energy estimate, which
is proved in Appendix A.1 of Part I \cite{ChenHou2023a}.

\begin{lem}\label{lem:PDE_nonstab}
Suppose that $f_i(x, z, t) : \R^2_{++} \times \R^2_{++} \times [0, T] \to \R, 1\leq i \leq n$, satisfies 
\beq\label{eq:PDE_nonstab_1}
\pa_t f_i + v_i(x, z) \cdot \na_{x, z} f_i = -a_{ii}(x, z, t)  f_i+ B_i(x, z, t) + N_i(x, z, t) + \bar \e_i,
\eeq
where $v_i(x, z, t)$ are some vector fields Lipschitz in $x, z$ with $v_i |_{x_1 = 0} = 0, v_i |_{z_1 = 0} = 0$. For some $\mu_i > 0$, we define the energy 
\[
E(t) = \max_{1 \leq i \leq n} (\mu_i || f_i||_{L^{\inf}}).
\]
Suppose that $B_i, N_i$ and $\bar \e_i$ satisfy the following estimate 
\beq\label{eq:PDE_nonstab_2}
\bal
\mu_i (|B_i(x, z, t) | + |N_i(x, z, t)| +| \bar e_i|) \leq  \sum_{j \neq i} ( |a_{ij}(x, z, t)| E(t) 
+ | a_{ij, 2}(x, z, t)| E^2(t) + | a_{ij, 3}(x, z, t)|  ).
\eal
\eeq

If there exists some $E_*, \e_0, M > 0$ such that 
\beq\label{eq:PDE_nondiag}
\bal
&  a_{ii}(x, z, t) E_*  - \sum_{j \neq i} ( |a_{ij}|  E_* + |a_{ij, 2}| E_*^2 
+ |a_{ij,3}(x, z, t) | ) >  \e_0 ,   \\
&  \sum_{j \neq i} ( |a_{ij}|  E_* + |a_{ij, 2}| E_*^2 
+ |a_{ij,3}(x, z, t) | ) < M ,
\eal
\eeq
for all $x, z$ and $t \in [0, T]$. Then for $E(0)< E_*$, we have $E(t) < E_*$ for $t \in [0, T]$. 
\end{lem}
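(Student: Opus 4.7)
The plan is a continuity (bootstrap) argument combined with the Duhamel representation along the characteristics of each transport field $v_i$. I would define $T^{*} := \sup\{t \in [0,T] : E(s) \le E_{*} \text{ for all } s \in [0, t]\}$; since $E(0) < E_*$ and $t \mapsto E(t)$ is continuous (from continuity of $f_i$), $T^* > 0$. The goal is to prove $T^* = T$ by showing $E(t) < E_*$ strictly on $[0, T^*]$, which together with continuity rules out $T^* < T$.

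First I would exploit the boundary-vanishing of $v_i$: because $v_i$ is Lipschitz with $v_i|_{x_1=0} = v_i|_{z_1=0} = 0$, the backward characteristic $s \mapsto X_i(s; x, z, t)$ solving $\dot X_i = v_i(X_i, \cdot)$ with $X_i(t) = (x, z)$ stays in the domain $\R^2_{++} \times \R^2_{++}$ for all $s \in [0, t]$. Along such a characteristic, \eqref{eq:PDE_nonstab_1} becomes a scalar ODE in $f_i$, and Duhamel gives
$$
f_i(x, z, t) = f_i(X_i(0), 0)\, e^{-A(0)} + \int_{0}^{t} e^{-A(s)}\, (B_i + N_i + \bar\varepsilon_i)(X_i(s), s)\, ds,
$$
with $A(s) := \int_{s}^{t} a_{ii}(X_i(\tau), \tau)\, d\tau \ge 0$. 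For $s \in [0, T^*]$ one has $E(s) \le E_*$, so hypothesis \eqref{eq:PDE_nonstab_2} and then \eqref{eq:PDE_nondiag} yield
$$
\mu_i |B_i + N_i + \bar\varepsilon_i|(X_i(s), s) \le \sum_{j \neq i}\bigl(|a_{ij}| E_* + |a_{ij,2}| E_*^2 + |a_{ij,3}|\bigr) \le a_{ii}(X_i(s), s)\, E_* - \varepsilon_0.
$$
Inserting this into Duhamel, multiplying by $\mu_i$, using $\mu_i |f_i(X_i(0), 0)| \le E(0)$, and exploiting the elementary identity $\int_0^t a_{ii} e^{-A}\, ds = 1 - e^{-A(0)}$, I obtain the pointwise estimate
$$
\mu_i |f_i(x, z, t)| \le E_* - (E_* - E(0))\, e^{-A(0)} - \varepsilon_0 \int_{0}^{t} e^{-A(s)}\, ds < E_*.
$$

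Taking the supremum in $(x, z)$ and the maximum in $i$ gives $E(t) \le E_*$ on $[0, T^*]$, with a strictly positive pointwise deficit. The hard part will be upgrading this to a \emph{uniformly} positive deficit so that $E(T^*) < E_*$ strictly, contradicting $T^* < T$. On characteristics where $A(0)$ is small the first deficit term is bounded below by a multiple of $E_* - E(0)$, while on those where $A(0)$ is large the second term $\varepsilon_0 \int_0^t e^{-A(s)}\, ds$ is controlled from below via an $L^\infty$ upper bound on $a_{ii}$; this upper bound is the only ingredient not explicit in the statement, and is implicit in the paper's setting where all coefficients come from a smooth approximate steady state. If one wishes to avoid this regularity remark entirely, I would alternatively run the argument with $E_*$ replaced by any $E_*' \in (E(0), E_*)$, since \eqref{eq:PDE_nondiag} is stable under small perturbations of $E_*$ by its strict-inequality form; strict inequality at $T^*$ then follows automatically and the bootstrap closes.
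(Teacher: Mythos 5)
Your Duhamel-along-characteristics-plus-bootstrap argument is the standard route for this type of weighted $L^\infty$ stability lemma, and the main computation is correct: $\int_0^t a_{ii}\,e^{-A(s)}\,ds = 1-e^{-A(0)}$ combined with \eqref{eq:PDE_nonstab_2} and the first line of \eqref{eq:PDE_nondiag} gives $\mu_i|f_i(x,z,t)| \le E_* - (E_*-E(0))e^{-A(0)} - \varepsilon_0\int_0^t e^{-A(s)}\,ds$ pointwise while the bootstrap hypothesis $E(s)\le E_*$ holds. (Note that the paper itself defers the proof of this lemma to Appendix~A.1 of Part~I \cite{ChenHou2023a}, so there is no in-text proof here to compare against line by line; I am assessing your argument on its own terms.)

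The step you flag as ``the hard part'' is a genuine gap, and neither of your two repairs closes it from the hypotheses alone. To rule out $E(T^*)=E_*$ you need the deficit $(E_*-E(0))e^{-A(0)}+\varepsilon_0\int_0^{T^*}e^{-A(s)}\,ds$ to be bounded below \emph{uniformly} over the maximizing sequence of characteristics, and both terms can degenerate simultaneously if $A(0)=\int_0^{T^*}a_{ii}(X(\tau),\tau)\,d\tau$ is unbounded along that sequence; an $L^\infty$ upper bound on $a_{ii}$ fixes this, but the lemma supplies no such bound — the constant $M$ in \eqref{eq:PDE_nondiag} controls only the off-diagonal sum, not $a_{ii}$. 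Your fall-back of replacing $E_*$ by $E_*'\in(E(0),E_*)$ has the same hidden requirement: shrinking $E_*$ to $E_*'$ lowers the diagonal term $a_{ii}E_*$ by $a_{ii}(E_*-E_*')$, which dominates the corresponding decrease of the off-diagonal sum whenever $a_{ii}$ is large, so a \emph{uniform} $\varepsilon_0'>0$ for the perturbed inequality \eqref{eq:PDE_nondiag} is available only if $a_{ii}$ is bounded above — the very assumption your first fix needed. In the application the $a_{ii}$ are finite combinations of explicit weights and piecewise bounds on a $C^{4,1}$ approximate steady state, so boundedness is automatic; if you want the lemma to be self-contained, add $a_{ii}\in L^\infty([0,T];L^\infty)$ (or continuity on a compactification of the domain) as a hypothesis, after which either of your repairs succeeds.
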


\begin{lem}\label{lem:main_ASS}
There exists a nontrivial approximate steady state $(\bar \om, \bar \th, \bar c_l, \bar c_{\om})$ to \eqref{eq:bousdy1}, \eqref{eq:normal} with $\bar \om, \bar \th \in C^{4, 1}$ and residual errors $\bar \cF_i, i=1,2,3$ \eqref{eq:bous_err} sufficiently small in some energy norm. 
\end{lem}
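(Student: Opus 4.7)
The plan is to combine the numerical construction carried out in Section 7 of Part I \cite{ChenHou2023a} with a rigorous a posteriori verification of the resulting profile, since Lemma \ref{lem:main_ASS} is not an abstract existence statement but rather a claim that a specific, computable object has the stated properties. Concretely, a steady state of \eqref{eq:bousdy1}, \eqref{eq:normal} would satisfy $\bar F_\om = 0$, $\bar F_\th = 0$, so the task is to exhibit $(\bar\om,\bar\th,\bar c_l,\bar c_\om)$ for which $\bar{\mathcal F}_1,\bar{\mathcal F}_2,\bar{\mathcal F}_3$ in \eqref{eq:bous_err} are small in the energy norm from Section \secEE{} of Part I.

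To produce the profile I would take the standard dynamic rescaling iteration seeded by the Hou--Luo scenario: march \eqref{eq:bousdy1} numerically, at each step updating $c_l,c_\om$ by \eqref{eq:normal} (so that \eqref{eq:normal1} is enforced and the symmetry $\bar\th$ even, $\bar\om$ odd is preserved), stop when the discrete time derivative is below a chosen tolerance, and then freeze the output. I would then post-process the discrete data to obtain a piecewise polynomial (or piecewise rational) representation of $(\bar\om,\bar\th)$ on an adaptive mesh designed in a change of variables that both resolves the near-tip behavior and matches the expected algebraic far-field decay, with smoothness high enough that the representation belongs to $C^{4,1}$. The rescaling constants $\bar c_l,\bar c_\om$ are then read off from \eqref{eq:normal} applied to this explicit profile.

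The verification step is the heart of the lemma. The local terms $\bar c_l x\cdot\na\bar\om$, $\bar\th_x$, $\bar c_\om\bar\om$ and their $\bar\th$ counterparts can be evaluated pointwise from the piecewise polynomial data, together with explicit modulus-of-continuity estimates for the fifth derivative to pass from mesh samples to uniform bounds via the interpolation inequalities mentioned in Section \ref{proof-sketch}. The nonlocal pieces $\bar\uu\cdot\na\bar\om$ and $\bar\uu\cdot\na\bar\th$ require the Biot--Savart law \eqref{eq:biot}, and this is where we invoke the sharp $L^\infty$ and H\"older estimates for the regular-case velocity developed in Section \ref{sec:vel_comp}, combined with rigorous interval-arithmetic quadrature on the same mesh. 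Once $\bar{\mathcal F}_i$ are known with controlled precision at the mesh points, interpolation gives uniform bounds, which are then assembled into the weighted $L^\infty$/$C^{1/2}$ energy norm from Part I and numerically checked to fall below the tolerance required by \eqref{eq:PDE_nondiag} of Lemma \ref{lem:PDE_nonstab}.

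The main obstacle is precisely the nonlocal residual: bounding $\bar\uu$ and its derivatives sharply in the weighted $L^\infty$ and $C^{1/2}$ norms used in the energy, uniformly in space including the tip $x=0$ and the far field. The singular kernel of \eqref{eq:biot}, combined with the anisotropic weights and the need for matching asymptotic behavior at infinity, makes the velocity contribution to $\bar{\mathcal F}_1,\bar{\mathcal F}_2,\bar{\mathcal F}_3$ the only component that cannot be handled by naive quadrature; it is exactly the machinery of Sections \ref{sec:lin_evo} and \ref{sec:vel_comp} that will allow this part of the residual to be controlled, after which the lemma reduces to a large but mechanical computer-assisted verification.
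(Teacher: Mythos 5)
Your plan matches the paper's overall strategy: Lemma~\ref{lem:main_ASS} is established by pointing to the numerical construction in Section~7 of Part~I together with the a posteriori residual estimates carried out in Appendix~\ref{sec:resid} of this paper, and the two-step scheme you describe --- compute the profile by dynamic rescaling, then rigorously verify the weighted residual with the local and nonlocal pieces treated separately using Section~\ref{sec:vel_comp} and interval arithmetic --- is exactly that division of labor.

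There is, however, a gap in the ``mechanical'' final verification. The energy weights are singular of order roughly $|x|^{-3}$ at the origin (see $\varphi_1,\varphi_{evo,2}$ in \eqref{wg:linf}, \eqref{wg:lin_evo}), while the raw residual $\bar\cF_i$ of \eqref{eq:bous_err} vanishes only quadratically there: the normalization \eqref{eq:normal} cancels the linear part but not the $xy$ (resp.\ $x^2$) term. Consequently $\varphi_i\bar\cF_i$ is unbounded near the corner and the a posteriori check cannot run as you describe. The paper resolves this by subtracting a rank-one correction $D_i^2\bar\cF_i(0)\,f_{\chi,i}$, with $f_{\chi,i}$ the cutoffs of \eqref{eq:cutoff_near0_all}, so that the quantity actually estimated, $\bar\cF_{loc,i}$ of \eqref{eq:bous_errM}, vanishes to order $|x|^3$; this correction is matched against the finite-rank operator $\cK$ on the other side of the stability ledger and is therefore structural, not cosmetic. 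Two smaller implementation differences worth noting: the far field is handled not by a change of variables but by an explicit semi-analytic tail $\bar\om_1=\chi(r)r^{-\alpha}g_1(\beta)$ with $\alpha=-\bar c_\om/\bar c_l$, added to a compactly supported B-spline part (see \eqref{eq:ASS_solu}), so that the decay exponent is exact and the far-field contribution to the residual is in closed form; and the velocity is obtained not by quadrature of the Biot--Savart integral but by a B-spline finite element solve for $\bar\phi^N$, with the machinery of Section~\ref{sec:vel_comp} invoked only for the velocity of the Poisson-solve error $\bar\e=\bar\om-(-\D)\bar\phi^N$.
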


The construction of an approximate self-similar profile with a small residual error stated in Lemma \ref{lem:main_ASS} is provided in Section {\secASS} of Part I \cite{ChenHou2023a} and
the properties of $(\bar \om, \bar \th, \bar c_l, \bar c_{\om})$ are described in Section 2.4 of Part I \cite{ChenHou2023a}. We will estimate the local part of the residual error in Appendix \ref{sec:resid}. We linearize \eqref{eq:bousdy1} around $(\bar \om, \bar \th, \bar c_l, \bar c_{\om})$ 
and perform energy estimate of the perturbation $W= (\om, \th_x, \th_y )$ in Section {\secEE} in Part I \cite{ChenHou2023a}. In our estimates, we need to control a number of nonlocal terms.


\begin{lem}\label{lem:main_vel}

Let $\om$ be odd in $x_1$. Denote $\d(f, x, z) = f(x) - f(z)$.  
There exists finite rank approximations $\hat \uu, \wh{\na \uu}$ for $\uu(\om), \na \uu(\om)$ with rank less than $50$ such that we have the following weighted $L^{\inf}$ and directional H\"older estimate for $f = u, v, \pa_l u, \pa_l v, x, z \in \R_2^{++}, i = 1, 2,\g_i >0$
\beq\label{eq:main_vel}
\bal
  | \rho_f( f - \hat f)(x) | &\leq C_{ f, \inf}(x, \vp,  \psi_1,   \g) \max(  || \om \vp||_{\inf} , s_f \max_{j=1,2} \g_j  [ \om \psi_1]_{C_{x_j}^{1/2}(\R_2^{+}) } ), \\
   \f{ | \d(\psi_f (f - \hat f), x, z ) | }{ |x-z|^{1/2}} 
  & \leq C_{ f, i}(x, z, \vp, \psi_1,  \g)\max(  || \om \vp||_{\inf} , 
  s_f \max_{j=1,2} \g_j  [ \om \psi_1]_{C_{x_j}^{1/2}(\R_2^{+}) } ), 
 \eal
\eeq
with $ x_{3-i} = z_{3-i}$, where $s_f = 0$ for $f = u, v$, $s_f =1$ for $f =\pa_l u, \pa_l v$, the functions $C(x), C(x, z)$ depend on $\g$, the weights, and the approximations, the singular weights $\vp = \vp_1, \vp_{g, 1}, \vp_{elli},  \psi_{\pa u} = \psi_1, \psi_u$  are defined in \eqref{wg:linf},  the weight $\rho_{10}$ for $\uu$ and the weight for $\rho_{ij} $ for $\na \uu$ with $i+j=2$ are given in \eqref{wg:linf}. In the estimate of $f = u, v$, we do not need the H\"older semi-norm and we set $s_f = 0$. Moreover, $C(x), C(x, z)$ are bounded in any compact domain of $\R_2^{++}$. We have an additional estimate for $\rho_4 (u - \hat u)$ similar to the above with $\rho_4$ \eqref{wg:linf} singular along $x_1 = 0$.


Furthermore, we have the following estimate using the localized norm. There exist $D_1, D_2, .. D_n \subset \R_2^{++}$ and $ D_S \in \R^+_2 $ depending on $x$ in the $L^{\inf}$ estimate and $x, z$ in the $C_{x_i}^{1/2}$ estimate, such that
\[
\bal
   | \rho_f( f - \hat f)(x) | & \leq 
\sum_{j}  C_{ f, \inf, j}(x, \vp, \psi_1, \g) || \om \vp||_{L^{\inf}(D_j) } +  C_{f, \inf, S}(x, \vp, \psi_1, \g) 
\max_{l =1,2}(    \g_l  [ \om \psi_1]_{C_{x_l}^{1/2}( D_S ) }), \\
    \f{ | \d(\psi_f (f - \hat f), x, z ) | }{ |x-z|^{1/2}} 
  & \leq \sum_{j}  C_{ f, i, j}(x,z, \vp, \psi_1, \g) || \om \vp||_{L^{\inf}(D_j) }  +  C_{f, i, S}(x,z,  \vp, \psi_1, \g)  \max_{l=1,2}(    \g_l  [ \om \psi_1]_{C_{x_j}^{1/2}( D_S ) }), 
 \eal
 \]
 for $x_{3-i} = z_{3-i}, \vp = \vp_{elli}$ and the same notation as above, where $C_{f,\inf,S}, C_{f,i, S} = 0$ for $f = u, v$. Similarly, we have an estimate for $\rho_4 (u - \hat u)$ using localized norm with $C_{f, \infty, S} = 0$ similar to the above. 

\end{lem}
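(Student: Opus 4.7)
\medskip

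\noindent
\textbf{Proof proposal for Lemma \ref{lem:main_vel}.} The plan is to build the finite rank approximations $\hat\uu$ and $\wh{\na\uu}$ explicitly from the Biot-Savart law on $\R_+^2$, and then estimate the remainder $\uu-\hat\uu$ by decomposing the relevant kernel into a near-field piece controlled by the weighted $C^{1/2}$ norm of $\om$ and a far-field piece controlled by the weighted $L^\infty$ norm of $\om$. Concretely, I would start from
\[
\phi(x) = \int_{\R^2_+} G(x,y)\,\om(y)\,dy, \qquad u=-\phi_{x_2},\quad v=\phi_{x_1},
\]
where $G$ is the Dirichlet Green's function on the half plane, obtained by odd reflection across $x_2=0$, and use the odd symmetry of $\om$ in $x_1$ to further reflect and obtain a full-plane representation. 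This yields explicit singular kernels $K_f(x,y)$ for $f\in\{u,v,\pa_l u,\pa_l v\}$; for $\na\uu$ a principal-value regularization plus a local term from the $\delta$-contribution must be handled carefully.

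Next I would construct $\hat f$ as a finite linear functional of $\om$ by replacing $K_f(x,\cdot)$, outside a fixed reference set $S\subset\R_+^2$, by a rank$\leq 50$ surrogate. A natural implementation is to fix a partition $\{B_k\}_{k=1}^N$ of a large computational box and set
\[
\hat f(x)\;=\;\sum_{k} K_f(x,y_k)\,\Big(\int_{B_k}\om(y)\,dy\Big)\,\chi_{\{\text{far from }x\}},
\]
possibly after subtracting a low order Taylor polynomial in $y$ around $y_k$, so that the full rank (counting the global functionals $\int_{B_k}\om$) stays below $50$ over all $x$. Once $\hat f$ is fixed, the main task is to estimate
\[
(f-\hat f)(x) = \int K_f^{\mathrm{near}}(x,y)\,\om(y)\,dy + \int K_f^{\mathrm{quad}}(x,y)\bigl(\om(y)-\text{Taylor}(\om)\bigr)\,dy,
\]
where $K_f^{\mathrm{near}}$ is supported where the kernel is singular or where we did not use the surrogate, and $K_f^{\mathrm{quad}}$ is the quadrature error kernel.

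For the $L^\infty$ estimate I would bound the far-field integral by $\|\om\vp\|_\infty\cdot \int |K_f^{\mathrm{quad}}|/\vp$, producing the constant $C_{f,\infty}(x,\vp,\psi_1,\g)$. For the velocity itself ($f=u,v$) the kernel is integrable across the diagonal, so the Hölder norm is not needed and $s_f=0$. For $f=\pa_l u,\pa_l v$ the diagonal is genuinely singular; on a small disk $B_r(x)$ I would write
\[
\int_{B_r(x)} K_f^{\mathrm{near}}(x,y)\bigl(\om(y)-\om(x)\bigr)\,dy
\]
using odd symmetry to subtract $\om(x)$, and bound the difference by $[\om\psi_1]_{C_{x_j}^{1/2}}$ times an explicit weight. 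For the directional Hölder estimate along $x_{3-i}=z_{3-i}$ I would split $|x-z|$ into small and large scales: on the small scale use the Hölder semi-norm directly with $\int |\pa_s K_f|\,|x-z|^{1/2}\,ds$, and on the large scale use the difference $K_f(x,y)-K_f(z,y)$, which is bounded by $|x-z|^{1/2}$ times an integrable kernel. This produces $C_{f,i}(x,z,\vp,\psi_1,\g)$. Boundedness of $C_{f,\infty}, C_{f,i}$ on compact subsets of $\R_2^{++}$ is clear because the kernels are smooth away from $y\in\{x,\,\text{reflections of }x\}$ and the weights $\vp,\psi_1$ are bounded there. The auxiliary estimate for $\rho_4(u-\hat u)$ near $x_1=0$ follows from the extra odd-in-$x_1$ factor in $u$, which gives an additional $x_1$ gain allowing a weight singular on $\{x_1=0\}$.

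The localized version is obtained by keeping track, in the above splitting, of the specific subsets $D_j$ on which the integrand is supported: the near-field disk becomes $D_S$, while the far-field pieces, after the quadrature error bound, are supported on disjoint $D_j$, and the integral on each $D_j$ is majorized by $\|\om\vp\|_{L^\infty(D_j)}$ times the corresponding explicit constant. The main obstacle, and what requires computer assistance, is proving that the constants $C_{f,\infty}, C_{f,i}$ are small enough to feed into Lemma \ref{lem:PDE_nonstab}; this reduces to rigorous numerical integration of explicit kernels against explicit weights, which I would carry out on a dyadic grid with interval arithmetic and a priori tail bounds, balancing the rank of the surrogate ($<50$) against the size of the quadrature error.
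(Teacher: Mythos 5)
Your high-level outline is in the right spirit: build a finite-rank surrogate, decompose the remainder into a near-field piece controlled by $[\om\psi_1]_{C^{1/2}_{x_j}}$ and a far-field piece controlled by $\|\om\vp\|_\infty$, and invoke rigorous numerical quadrature for the resulting explicit integrals. However, there is a concrete gap between your proposal and what the lemma actually requires, on two fronts.

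First, your construction of $\hat f$ as a box-quadrature surrogate $\sum_k K_f(x,y_k)\int_{B_k}\om$ is different from the approximation actually used, and it is unlikely to satisfy the stated rank bound together with sharp constants. The paper's $\hat\uu,\wh{\na\uu}$ (built in Part~I, Section~4.3) are not generic quadrature surrogates: near the origin they are built from a few explicit functionals of $\om$ tied to the Taylor structure of the kernel, namely $u_x(\om)(0)$ (i.e.\ $-\pa_{xy}(-\D)^{-1}\om(0)$) and the functional $\cK_{00}(\om)$ in \eqref{eq:u_appr_near0_coe}, multiplied by explicit coefficient functions $C_{f0}(x)$, $C_f(x)$; in the bulk they consist of a small number of localized approximation terms with specific centers $x_i$ and singular-region parameters $t_i$ chosen to mesh with the dyadic rescaling of the $y$-integral. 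This design is what makes the weighted remainder bounded near $x=0$ despite $\rho_{10}\sim|x|^{-3}$, $\psi_1\sim|x|^{-2}$: one needs the exact cancellation from subtracting the Taylor-type terms, not a quadrature error bound $\|\om\vp\|_\infty$ times a generic $h^k$. With your surrogate the weighted remainder $\rho_f(f-\hat f)$ would still blow up at the origin unless you add exactly these Taylor functionals, and in the bulk you would need many more than 50 boxes for the constants to be usable, since the paper emphasizes that the inequalities in the stability lemma are tight.

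Second, and more decisively for the Hölder part, your treatment of the diagonal singularity by subtracting $\om(x)$ and bounding by the $C^{1/2}$ seminorm will not produce sharp enough constants $C_{f,i}$. The paper states explicitly that without the sharp $C^{1/2}$ estimates of $\na\uu$ developed in Part~I, Section~3 (Lemmas~3.1--3.5, via optimal transport), the stability conditions fail in the weighted Hölder estimate. In the actual proof, the integral is split as $I_1+\cdots+I_5$ (see \eqref{int:decom_ux}): $I_2$, where the weight is moved inside, is bounded by those sharp lemmas; $I_5$, the commutator $K(x-y)(\psi(x)-\psi(y))$, is only log-Lipschitz and requires a decomposition in which the size of the inner singular region is optimized as a function of $|x-z|$ (Section~\ref{sec:int_loglip} and~\ref{sec:int_holx_I5}), plus a second-order Taylor expansion of $\psi$ to isolate the leading integral. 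Your proposal collapses this into a single small/large-scale split, which gives \emph{some} constant but not one that is small enough, and does not separate the ``sharp'' singular contribution from the commutator contribution. So the argument as written would establish the qualitative form of the estimates but would not carry through the quantitative program the lemma is serving.
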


Since the weights $\rho_{10} \sim |x|^{-3}, \psi_1 \sim |x|^{-2}, \psi_u $ are singular near $x=0$, without subtracting the approximation $\hat f$ from $f$, $\rho_f f$ is not bounded near $x=0$. We design the finite rank approximations $\hat \uu, \wh{\na \uu}$ in Section {\secapprvel} in Part I \cite{ChenHou2023a}.

Based on these finite rank approximations, we can decompose the perturbations. 

\begin{lem}\label{lem:main_EE}
There exists $m < 50$ approximate solutions $\hat F_i$ to the linearized equations $\pa_t W = \cL W$ of \eqref{eq:bousdy1} around $(\bar \om, \bar \th, \bar c_l, \bar c_{\om})$ in Lemma \ref{lem:main_ASS} from given initial data $\bar F_i(0)$
with residual error $\cR$ small in the energy norm. Further  we can decompose the perturbation $ W = W_1 + \widehat W_2$ with the following properties. (a) $\hat W_2$ is constructed based on $\widehat F_i$, see Section 4.2.4 of Part I \cite{ChenHou2023a}; (b) $W_1$ satisfies the equations with the leading order linearized operator $(\cL -  \cK) W_1$ up to the small residual error $\cR$ for some finite rank operator $\cK$, and $W_1$ depends on $\widehat W_2$ weakly at the linear level via $\cR$. 
The functionals $a_i(W_1), a_{nl, i}(W)$ in the construction of $\widehat W_2$ and $\cK$ (see Section 4.2.4 of Part I \cite{ChenHou2023a}) are related to the finite rank approximations in Lemma \ref{lem:main_vel}.

Moreover, there exists an energy $E_4(t)$ for $W_1, W$ (see Section 5.6.3. of Part I \cite{ChenHou2023a}) that controls the weighted $L^{\inf}$ and $C^{1/2}$ seminorm of $W_1$ such that under the bootstrap assumption $E_4(t) < E_{*0}$ with $E_{*0} > 0$, we can establish nonlinear energy estimates for $E_4(t)$ using the estimates in Lemma \ref{lem:main_vel}. 

\end{lem}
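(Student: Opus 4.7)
The plan is to build the decomposition in the three linked steps that were previewed in the toy model \eqref{eq:model2_decoup_0}--\eqref{eq:model_nloc4_0}: identify a finite rank operator $\cK$ capturing the ``bad'' nonlocal part of $\cL$, numerically propagate its range forward in time to produce the $\hat F_i$, and then assemble $\hat W_2$ by Duhamel.

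First, I would read off $\cK$ directly from Lemma \ref{lem:main_vel}. The nonlocal contributions in $\cL$ enter only through $\uu$ and $\nabla \uu$, and that lemma produces finite rank surrogates $\hat u,\hat v, \widehat{\pa_l u}, \widehat{\pa_l v}$, each a finite linear combination $\sum_i a_i(\om)\, \chi_i(x)$ of functionals $a_i$ applied to $\om$ times fixed profiles $\chi_i$. Replacing $(\uu,\nabla\uu)$ by $(\hat \uu, \widehat{\nabla \uu})$ inside $\cL$ produces a finite rank operator $\cK W = \sum_{i=1}^{m} a_i(W)\, \psi_i(x)$ with $m<50$, and I set $\cL_0 \teq \cL - \cK$. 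The functionals $a_i(W_1)$ in the statement are precisely these finite rank coefficients, and the nonlinear correctors $a_{nl,i}(W)$ encode the analogous finite rank split when $\uu$ is the transport velocity in a quadratic term.

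Second, for each profile $\psi_i$ I would use the construction in Section \ref{sec:lin_evo} to build a rigorous space-time approximation $\hat F_i(t,x)$ to the solution of $\pa_t F_i = \cL F_i$ with initial data $\psi_i$, together with an a posteriori residual error $\cR_i \teq \pa_t \hat F_i - \cL \hat F_i$ that is small in the weighted norms from Section \ref{secEE} of Part I. Given these, define
\[
\hat W_2(t) \teq \sum_{i=1}^m \int_0^t a_i(W_1(s))\, \hat F_i(t-s,\cdot)\, ds, \qquad W_1 \teq W - \hat W_2.
\]
Differentiating and using $\pa_t \hat F_i = \cL \hat F_i - \cR_i$ together with $\hat F_i(0)=\psi_i$ gives
\[
\pa_t \hat W_2 = \cL \hat W_2 + \cK W_1 - \sum_i \int_0^t a_i(W_1(s))\, \cR_i(t-s,\cdot)\, ds,
\]
so that $W_1$ satisfies $\pa_t W_1 = (\cL-\cK) W_1 + \cR$ with $\cR$ consisting of the propagated residuals $\cR_i$, the finite rank approximation error from Lemma \ref{lem:main_vel}, and the residual $\bar\cF$ from Lemma \ref{lem:main_ASS}. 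The coupling of $W_1$ to $\hat W_2$ at the linear level is only through $\cR$, which is the weak coupling asserted.

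Finally, I would set up the energy $E_4(t)$ as the maximum over the components of $W_1$ of the weighted $L^\infty(\vp_i)$ norms plus the weighted directional $C^{1/2}(\psi_1)$ seminorms used in Section \ref{secEE} of Part I. Under the bootstrap $E_4(t) < E_{*0}$, the transport, stretching, and nonlocal contributions produced by $(\cL - \cK) W_1$ are estimated pointwise using the sharp weighted and localized bounds of Lemma \ref{lem:main_vel}; these feed directly into the off-diagonal terms $a_{ij}, a_{ij,2}, a_{ij,3}$ of Lemma \ref{lem:PDE_nonstab}, while the diagonal damping $a_{ii}$ comes from the good part of $\cL_0$ now that the obstructing finite rank piece has been removed. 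The main obstacle is quantitative rather than structural: one must verify that the combined residual $\cR$ (from the numerical $\hat F_i$, from Lemma \ref{lem:main_vel}, and from Lemma \ref{lem:main_ASS}) is small enough in the weighted $L^\infty$ and $C^{1/2}$ norms so that \eqref{eq:PDE_nondiag} holds with the concrete constants computed in Part I. This is exactly where the rigorous numerics from Sections \ref{sec:lin_evo} and \ref{sec:vel_comp} of the present paper are essential, and it is the heart of the argument.
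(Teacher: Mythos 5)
Your proposal follows the paper's own approach, which the Introduction previews exactly via the toy model \eqref{eq:model2_decoup_0}--\eqref{eq:model_nloc4_0}: read off $\cK$ from the finite rank velocity approximations of Lemma \ref{lem:main_vel}, propagate the corresponding profiles forward by a rigorously controlled numerical solution of the linearized PDE (Section \ref{sec:lin_evo}), and assemble $\hat W_2$ by Duhamel. The structural steps you lay out---identification of $\cK$, construction of $\hat F_i$ with a posteriori error $\cR_i = \pa_t \hat F_i - \cL \hat F_i$, the Duhamel formula for $\hat W_2$, and the bootstrap on $E_4$ fed by Lemma \ref{lem:PDE_nonstab}---are the ones the paper uses, with the full details deferred to Sections 4.2.4 and 5.6.3 of Part I.

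One correction on the bookkeeping, because it matters for closing the bootstrap. You list ``the finite rank approximation error from Lemma \ref{lem:main_vel}'' as a constituent of the residual $\cR$. The error $f - \hat f$ bounded in that lemma is not part of $\cR$: it is precisely the nonlocal remainder that survives in $(\cL - \cK) W_1$ after the finite rank subtraction, and it is of size $O(\|W_1\|)$, not small. In the scheme of Lemma \ref{lem:PDE_nonstab} it populates the off-diagonal coefficients $a_{ij}$ that multiply $E(t)$, whereas the genuine residuals---the initial-data interpolation and time-stepping errors in $\hat F_i$, the Poisson-solver errors $\bar\e,\hat\e$, and the steady-state residuals $\bar\cF_i$---go into the constant terms $a_{ij,3}$. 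Lumping $f - \hat f$ into $\cR$ would misplace a non-small contribution into a slot that must be small and the inequality \eqref{eq:PDE_nondiag} would not be satisfiable. A smaller remark: your explicit Duhamel formula only carries the linear functionals $a_i(W_1)$; the coefficients $c_i(t)$ appearing in \eqref{eq:lin_evo_main1} are functionals of both $W_1$ and $\hat W_2$ (the $a_{nl,i}(W)$ piece you mention in words), and it is through these that $W_1$ picks up the asserted weak linear coupling to $\hat W_2$ via $\cR$.
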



If the bounds in Lemma \ref{lem:main_vel} are tight, and the residual error in the constructions of $(\bar \om, \bar \th), \widehat F_i$ are small enough, we can use Lemma \ref{lem:PDE_nonstab} to obtain nonlinear stability. 


\begin{lem}\label{lem:main_stab}


For $E_* = 5 \cdot 10^{-6}$, the coefficients in the nonlinear energy estimates of $E_4(t)$ satisfy the conditions \eqref{eq:PDE_nondiag}, and the statements in Theorem \ref{thm:main} hold true.



\end{lem}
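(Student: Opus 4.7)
The plan is to reduce the statement to a rigorous pointwise check of the inequalities \eqref{eq:PDE_nondiag} for each of the three components $f_i \in \{W_{1,1}, W_{1,2}, W_{1,3}\}$ of $W_1$, and then to invoke Lemma \ref{lem:PDE_nonstab} to close the bootstrap $E_4(t) < E_*$. Given the nonlinear energy estimate for $E_4(t)$ produced in Lemma \ref{lem:main_EE}, each diagonal damping coefficient $a_{ii}(x,z,t)$ is an explicit function of $(\bar\om,\bar\th,\bar c_l,\bar c_\om)$, the singular weights, and the chosen Hölder parameters $\gamma_i$, while the off-diagonal coefficients $a_{ij}$, $a_{ij,2}$, $a_{ij,3}$ aggregate four distinct contributions: (i) the coupling inside the leading operator $\cL-\cK$ (already controlled by the sharp bounds of Part I), (ii) the velocity coefficients $C_{f,\infty}$, $C_{f,i}$, $C_{f,\infty,j}$, $C_{f,i,S}$ from Lemma \ref{lem:main_vel}, (iii) the residual error $\cR$ coming from the approximate solutions $\widehat F_i$ to $\partial_t W = \cL W$ constructed in Section \ref{sec:lin_evo}, and (iv) the local residual $\bar\cF_i$ of the approximate profile of Lemma \ref{lem:main_ASS}, bounded in Appendix \ref{sec:resid}.

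First I would assemble, for each $(x,z,t)$, an explicit upper bound for the right-hand side of \eqref{eq:PDE_nondiag} and a matching lower bound for $a_{ii}E_*$. The contributions of type (i) are piecewise-smooth expressions in $\bar\om, \bar\th, \bar c_l, \bar c_\om$ and their derivatives; for these, I would substitute the approximate profile from Section 7 of Part I, exploit the $C^{4,1}$ regularity of $(\bar\om,\bar\th)$, and use interval arithmetic together with interpolation error estimates to bound them on a fine dyadic mesh of the upper half-plane. The contributions of type (ii) come from Lemma \ref{lem:main_vel} in two forms — the global bound with $C_{f,\infty}, C_{f,i}$ and the localized bound with $D_j, D_S$; I would use whichever gives the tighter coefficient at the point $(x,z)$, typically the localized form in regions where $\om\vp$ is known to be small from the profile, and use Section \ref{sec:vel_comp} to compute these constants rigorously with computer assistance. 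The contributions of type (iii) reduce to the error control on $\widehat F_i$ from Section \ref{sec:lin_evo}, which is made small enough by taking the space-time approximation sufficiently accurate and the rank $m<50$ sufficiently large. The contributions of type (iv) are already small by Lemma \ref{lem:main_ASS} and Appendix \ref{sec:resid}.

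The verification itself is then a rigorous numerical inequality check on $\R_{++}^2 \times \R_{++}^2$ (the full domain of $(x,z)$ arising in the Hölder norms), organized so that on each mesh cell $Q$ I produce uniform upper bounds for $\sum_{j\neq i}(|a_{ij}|E_* + |a_{ij,2}|E_*^2 + |a_{ij,3}|)$ and uniform lower bounds for $a_{ii}E_*$ using the modulus of continuity of each ingredient, and check both inequalities in \eqref{eq:PDE_nondiag} with a quantitative margin $\e_0>0$ and an upper threshold $M$. Far-field behavior is handled separately: the decay of $\bar\om,\bar\th$ and of the singular weights $\rho_f, \vp_1, \vp_{g,1}, \vp_{elli}, \psi_1, \psi_u$ at infinity, as well as their blow-up rate at the origin and along $\{x_1=0\}$, is used to reduce the unbounded domain to a bounded compact region up to an explicit tail error, after which the rigorous mesh verification applies.

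The main obstacle is tightness: the constants in the energy estimate of Part I, the velocity coefficients of Lemma \ref{lem:main_vel}, and the residuals of the profile must all be sharp enough that at every mesh point the left side of the first inequality in \eqref{eq:PDE_nondiag} beats the right side by a fixed positive margin. This is delicate in a small sector near the boundary $x_1=0$ where the damping from $\cL_0$ is weakest — precisely the region where the finite rank modification $\cK$ was introduced — so that the rank-$m$ subtraction must be chosen to cancel exactly the obstruction that would otherwise violate the inequality. Once \eqref{eq:PDE_nondiag} is verified with this margin, Lemma \ref{lem:PDE_nonstab} gives $E_4(t)<E_*$ for all $t\in[0,T]$, which in turn implies \eqref{eq:thm_est} for $\om-\bar\om$, $\th_x-\bar\th_x$, $\th_y-\bar\th_y$ and the rescaling parameters, and the finite time blowup statement of Theorem \ref{thm:main} then follows from a standard rescaling argument combined with an initialization argument for compactly supported finite-energy data, as in \cite{chen2019finite2,chen2021HL}.
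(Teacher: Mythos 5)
Your proposal correctly reproduces the paper's approach: assemble rigorous upper bounds for the off-diagonal coefficients from the velocity estimates of Lemma \ref{lem:main_vel}, the space-time residuals of $\hat F_i$ constructed in Section \ref{sec:lin_evo}, and the profile residual $\bar\cF_i$, combine them with the lower bounds on the diagonal damping on a rescaled mesh with interval arithmetic, verify \eqref{eq:PDE_nondiag} pointwise (with special care in the weak-damping sector near $x_1=0$ where the finite-rank correction $\cK$ is essential), apply Lemma \ref{lem:PDE_nonstab} to close the bootstrap $E_4(t)<E_*$, and finish with a rescaling argument following \cite{chen2019finite,chen2019finite2,chen2021HL}. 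This matches the strategy the paper lays out in Section \ref{proof-sketch} and implements throughout Sections \ref{sec:lin_evo}--\ref{sec:vel_comp} and the appendices, with the concrete verified inequality data collected in Appendix {\secineq} of Part I.
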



The main purpose of Part II of our paper is the following. Firstly, we construct the approximate $\hat F_i(t)$ in Lemma \ref{lem:main_EE} numerically, and estimate its piecewise derivatives and the local residual error in Section \ref{sec:lin_evo}. 
Secondly, in Section \ref{sec:vel_comp}, we obtain sharp estimates of the constants in Lemma \ref{lem:main_vel}, which only depend on the weights. 
Thirdly, we estimate piecewise bounds of the approximate steady state in Appendix \ref{app:solu}, the singular weights in Appendix \ref{app:wg_tot}, some explicit functions related to the approximate solutions in Appendix \ref{app:explcit}. We remark that all of these estimates and constants depend on the given weights, some operators and functions, e.g. the approximate steady state and the specific initial conditions.
With these estimates and constants, we obtain the concrete values of the inequalities in \eqref{eq:PDE_nondiag} and Lemma \ref{lem:main_stab}, which are given in Appendix {\secineq} in Part I \cite{ChenHou2023a}. We further verify the inequalities for the stability conditions in Lemma \ref{lem:main_stab}. 

Let us make a few comments on the above lemmas. Firstly, our energy estimate is based on weighted $L^{\inf}$ functional spaces, which is crucial for extracting the damping terms for the energy estimate. 
See Section 2.7 of Part I \cite{ChenHou2023a} for the motivations. 
Given $\om \in C^{1/2}$, we have $\uu \in C^{3/2}, \na \uu \in C^{1/2}$. To establish the nonlinear stability conditions \eqref{eq:PDE_nondiag} in Lemma \ref{lem:main_stab}, we need sharp constants in the estimates in Lemma \ref{lem:main_vel}. We use some techniques from optimal transport to obtain sharp $C^{1/2}$ estimate of $\na \uu$ in Section {\secsharp} of Part I \cite{ChenHou2023a}. This corresponds to the limiting case in the $C_{x_i}^{1/2}$ estimate in Lemma \ref{lem:main_vel} for a fixed $x$ with $|x-z| \to 0$ and captures the most singular part in the estimates in Lemma \ref{lem:main_vel}. The constants in the sharp $C^{1/2}$ estimate established in Part I \cite{ChenHou2023a} are given by several integrals. In Section {\secholconstant} of the supplementary material II \cite{ChenHou2023bSupp}, we estimate these integrals.

Other parts of the estimates in Lemma \ref{lem:main_vel} are more regular since we work with the regular part of the velocity integral with a desingularized kernel. Given $\om \in C^{1/2}$, we can reduce the estimates of these more regular terms to estimate some explicit $L^1$ integrals.
We can obtain sharp estimates of these more regular integrals using some numerical quadrature with computer assistance. See Section \ref{sec:vel_comp}.

By designing $\cK$ to approximate the nonlocal terms, we can obtain much better linear stability estimates for $\cL - \cK$.  
After we have shown that the stability conditions \eqref{eq:PDE_nondiag} are satisfied, we have 
nonlinear stability estimate $E_4(t)< E_*$ for all $t>0$ using Lemma \ref{lem:PDE_nonstab}, which implies the bounds in Theorem \ref{thm:main}. 
The remaining steps of obtaining finite time blowup from smooth initial data and finite energy follow \cite{chen2019finite} and a rescaling argument. We remark that the variable $\widehat W_2$ in Lemma \ref{lem:main_EE} (see full definition in Section 4.2.4 of Part I \cite{ChenHou2023a}) plays an auxiliary role, and we do not perform energy estimate on $\widehat W_2$ directly.


Note that all the nonlocal terms in the linearized equations are not small. Without the sharp $C^{1/2}$ estimate, with the choice of energy $E_4$, the stability conditions in \eqref{eq:PDE_nondiag} and Lemma \ref{lem:main_stab} fail in the weighted H\"older estimate. Without the finite rank approximations for the nonlocal terms in Lemma \ref{lem:main_vel}, \ref{lem:main_EE}, the stability conditions for weighted $L^{\inf}$ estimate also fail. 

\vs{0.1in}
\paragraph{\bf{Rigorous numerics}}
We need to track two types of errors for rigorous numerics. 
The first type is the discretization error, e.g. the error terms in the Trapezoidal rule and in the interpolating polynomials. The second type is the round-off error in the computation. 
We use numerical analysis to estimate {\it all} the discretization errors, and use {\it only} the basic interval arithmetic \cite{moore2009introduction,rump2010verification}, see e.g. \eqref{eq:func_intval}, \eqref{eq:func_intval2}, in the INTLAB package \cite{Ru99a} from MatLab to track the round-off error.

In our nonlinear estimates, we use a singular weight $\vp$ like $|x|^{-3}$ near $x=0$ to measure the residual error $\bar \cF_i$. 
 To obtain a small weighted residual error $|\vp \bar \cF_i|$ near $x=0$, we choose the mesh $y_i$ \eqref{eq:ASS_mesh} representing the approximate profile to be exact floating point 
numbers to reduce the round-off error near $x =0$. 

The codes for the computations are implemented in MatLab and can be found in \cite{ChenHou2023code}. 
The estimates of the constants in Lemma \ref{lem:main_vel}, integrals in Section \ref{sec:vel_comp}, and the constructions and estimates of the approximate space-time solutions in Lemma \ref{lem:main_EE} and in Section \ref{sec:lin_evo} are performed in parallel using the Caltech High Performance Computing\footnote{See more details for Caltech HPC Resources \url{https://www.hpc.caltech.edu/resources}}. Other computer-assisted estimates and the main part of the verifications are done in Mac Pro (Rack,2019) with 2.5GHz 28‑core Intel Xeon W processor and 768GB (6x128GB) of DDR4 ECC memory.

\subsection{Dependency tree}

The following tree schematizes various intermediate steps and related sections that lead to the main stability result Theorem \ref{thm:main}, which implies the blowup result Theorem \ref{thm1a} for the 2D Boussinesq equations. The blowup for the Euler equations in Theorem \ref{thm1b} is proved by a perturbative argument in Section {\seceulerblowup} in Part I \cite{ChenHou2023a}. 

Below, \textit{Thm, Lem, App, Sec, P1, P2, Supp1, Supp2} are short for Theorem, Lemma, Appendix, Section, Paper I \cite{ChenHou2023a}, Paper II (the current paper), the Supplementary material for Paper I \cite{ChenHou2023aSupp} and Paper II \cite{ChenHou2023bSupp}, respectively. 
We present a few more detailed derivations in the Supplementary materials \cite{ChenHou2023aSupp}, \cite{ChenHou2023bSupp}, which expand and generalize discussions in the main papers and are less essential. Moreover, we present several explicit formulas we used in our computer-assisted 
estimates for the quantities derived in the main papers. 



\begin{forest}
 for tree={l sep=3pt,
    s sep = 1pt,
    grow=east,
    parent anchor=east,
    child anchor=west,
    }
      [Thm \ref{thm:main}: Proved by  \\
      Nonlinear stability 
        lems: App \secnonlem{,} P1 (or Lem \ref{lem:PDE_nonstab}{,} P2){,}\\
        \& inequalities: \\
        App \secineq{,} P1 (summarized in Lem \ref{lem:main_stab}{,} P2),  text width= 3cm 
              [Approximate \\ profile: \\  Lem \ref{lem:main_ASS}{,} P2, text width=2cm                                                                          
                    [Estimate residual error{:} \\
                    Sec \ref{sec:err_idea}(ideas){,} \\
                    App \ref{sec:resid}{,} \ref{app:explcit} \& \ref{app:linf_est} P2, 
                    text width=4cm
                    ]
                    [Estimate profile: \\
                     App \ref{app:piece_pol}{,} \ref{sec:est_appr_far}{,} \ref{app:explcit}{,} P2, text width=4cm]    
                    [Construct profile{:} \\ Sec \secASSget {,} P1{,} \\
                    App \ref{app:solu_rep} \& App \ref{app:explcit}{,} P2, text width=4cm ]  
              ]
              [Nonlinear \\ estimates,  text width=2cm 
                  [Estimate nonlinear \\ terms:
                    Sec \seclinfunc-\secnonend{,} P1, text width=4cm
                        [Estimate similar\\ nonlinear
                         \& error terms: \\Sec {\supponenon} Supp1, text width=4cm
                        ]
                   ]
                  [Estimate finite rank \\
                    part $\hat W_2${,} Sec \secWtwo{,} P1, text width=4cm      
                       [Estimate the residual  \\
                       error for $\hat W_2$ \\
                        Sec \ref{sec:err_idea} (ideas){,} \\
                        App \ref{sec:resid}{,} \ref{app:explcit} \& \ref{app:linf_est} P2, text width=4cm]                            
                        [Construct finite rank \\ part $\hat W_2${:} Lem \ref{lem:main_EE}{,}  \\
                        Sec \ref{sec:lin_evo} \& App \ref{app:explcit}{,} P2, text width=4cm] 
                  ]
              ]     
              [Linear \\
              stability,   text width=1.8cm    
                  [Estimate nonlocal terms: \\
                   Lem 2.3{,} \\ Sec \ref{sec:vel_comp} \& App \ref{app:ker}{,} P2, text width=4cm
                      [A few more similar cases \\
                      and explicit formulas: Sec {\supptwovel} Supp2, text width=4cm]
                  ]
                  [Linear energy estimates{:}\\ Sec \seclinest{,} P1,  text width=4cm]
                  [Finite rank perturbation{:} \\ Sec 4{,} P1,  text width=4cm]
                  [Sharp H\"older estimates: \\ Sec {\secsharp} \& App {\appsharp}{,} P1, text width= 4cm
                      [Compute the sharp \\ constant:
                      Sec {\supptwosharp} Supp2, text width= 4cm]
                  ]
              ]                                 
            ]
      ]
\end{forest}

In Section {\applinfestsupp} in Supp2, we generalize the standard interpolation estimate in numerical analysis to derive higher order interpolation estimates, which are used to estimate the residual error effectively. See the discussion in Section \ref{sec:err_idea}.
In Appendix \ref{app:wg_tot}, we derive piecewise bounds for various weights, which are used 
in the weighted estimates of the nonlocal terms, the residual error, and in the linear, nonlinear estimates for stability.

In Appendix \ref{app:nota}, we collect the main notations used in this paper.

\section{Construct and estimate the approximate solution to the linearized equations}\label{sec:lin_evo}


As we described in Section 2 of Part I \cite{ChenHou2023a} (see also the Introduction), we need to construct the approximate solutions to $e^{\cL t} F_0$ for several initial data $\bar F_i, \bar F_{\chi, i}$. In this section, we discuss how to construct these space-time solutions numerically with some vanishing properties at the origin with rigorous error control.

The linearized equations associated with $\cL$ 
 read 
\beq\label{eq:lin_evo}
\bal
\pa_t \om &=  - (\bar c_l x +\bar u) \cdot \na \om + \eta + \bar c_{\om} \om 
 -  \uu \cdot \na \bar \om + c_{\om} \bar  \om  =  \cL_1 (\om, \eta, \xi) ,\\  
 \pa_t \eta & =  - (\bar c_l x +\bar u) \cdot \na \eta +
(2 \bar c_{\om} - \bar u_x) \eta  - \bar v_x \xi -  \uu_x \cdot \na \bar \th   
- \uu \cdot \na \bar \th_x  + 2 c_{\om} \bar \th_x =  \cL_2( \om, \eta, \xi )  , \\
  \pa_t \xi & = -  (\bar c_l x +\bar u) \cdot \na  \xi +
(2 \bar c_{\om} + \bar u_x) \xi - \bar u_y \eta  - \uu_y \cdot \na \bar \th   
- \uu \cdot \na \bar \th_y  + 2 c_{\om} \bar \th_y  = \cL_3( \om, \eta, \xi) , \\
\eal
\eeq
with normalization condition 
\beq\label{eq:normal_pertb}
c_{\om} =  u_x(0), \quad  c_l \equiv 0 .
\eeq
Although $\eta, \xi$ represent $\th_x, \th_y$ in the Boussinesq equations, 
we will consider initial data $(\om_0, \eta_0, \xi_0)$ with $ \pa_y \eta_0 \neq \pa_x \xi_0$. Thus, we do not have the relation $\pa_y \eta = \pa_x \xi$ and will treat $\eta, \xi$ as two independent variables. The solutions $\om(x), \eta(x)$ are odd in $x_1$, $\xi(x)$ is even in $x_1$ with $\xi(0, y) = 0$. We consider initial data $(\om_0, \eta_0, \xi_0 ) = O(|x|^2)$ near $x=0$. 
Using a direct calculation, we can show that these vanishing conditions are preserved in time
\beq\label{eq:lin_evo_order}
\om(t, x),  \ \eta(t, x) , \  \xi(t, x) = O(|x|^2) .
\eeq

We introduce the bilinear operator $B_{op,i}( (\uu, M), G)$ for $(\uu, M), G = (G_1, G_2, G_3) $
\beq\label{eq:Blin_gen}
\bal
\cB_{op, 1}  & = - \uu \cdot \na G_1 + M_{11}(0) G_1, 
\quad \cB_{op, 2}= - \uu \cdot \na G_2 +2 M_{11}(0) G_2 - M_{11} G_2 - M_{21} G_3, \\
\cB_{op, 3} & = - \uu \cdot \na G_3 + 2 M_{11}(0) G_3 - M_{12} G_2 - M_{22} G_3.
\eal
\eeq
If $M=  \na \uu, M_{11} = u_x, M_{12} = u_y, M_{21} = v_x, M_{22} = v_y$, then we drop $M$ to simplify the notation 
\beq\label{eq:Blin}
\bal
\cB_{op, 1}(\uu, G )  & = - \uu \cdot \na G_1 + u_x(0) G_1, 
\quad  \cB_{op, 2}
= - \uu \cdot \na G_2 +2 u_x(0) G_2 - u_x G_2 - v_x G_3, \\
\cB_{op, 3}
& = - \uu \cdot \na G_3 + 2 u_x(0) G_3 - u_y G_2 - v_y G_3.
\eal
\eeq




The main result in this section is the following. Given $n$ initial data $ \bar G_i = (\bar G_{i, 1}, \bar G_{i, 2}, \bar G_{i, 3})$ and $n$ functions $c_i(t) (i=1,2,..,n)$ which are Lipschitz and bounded in $t$, we construct approximate space-time solution $\hat W_i  =(\hat W_{i, 1}, \hat W_{i, 2}, \hat W_{i, 3}), \hat G = (\hat G_1, \hat G_2, \hat G_3)$ and the approximate stream functions $\hat \phi_i^N, \hat \phi^N$ and the error $\hat \e$ associated with $ \hat W_{i,1}, \hat G_1$
\beq\label{eq:lin_evo_main1}
 \hat G = \sum_{ i\leq n} \int c_i(t- s) \hat W_i(s) ds , \quad \hat \phi^N = \sum_{ i\leq n} \int c_i(t- s) \hat \phi^N_i(s) ds ,  \quad \hat \e =  \sum_{ i\leq n} \int c_i(t- s) ( \hat W_{i,1} + \D \hat \phi^N_i )(s) ds ,
\eeq
with residual error 
\beq\label{eq:lin_evo_err_def}
\cR =   \sum_{i \leq n}  c_i(t) ( \hat W_i(0) - \bar W_i ) +  \int_0^t c(t-s) (\pa_t - \cL) \hat W_i(s) ds, 
\eeq
vanishing $O(|x|^3)$ near $x=0$. Moreover, we can decompose $\cR = (\cR_1, \cR_2, \cR_3)$ as follows 
\beq\label{eq:lin_evo_main2}
\bal
& \cR_j(t) =  \cR_{loc, 0, j}(t) + \cR_{nloc, j}(t), \quad   \cR_{loc,0, j} = \sum_{i\leq n} \int_0^t c_i(t-s) \cR_{num,i,  j}(s) ds , \ \cR_{num, i, j} = O(|x|^3) ,  \\
& \cR_{nloc, j} = P_j - D_j^2 P_j(0) \chi_{j, 2}, \quad P_j = -  \cB_{op, j}( \uu( \bar \e), \hat G  ) - \cB_{op, j}( \uu(\hat \e), (\bar \om, \bar \th_x, \bar \th_y )  ,  
\eal
\eeq
where $\chi_{j 2}$ is given in \eqref{eq:cutoff_near0}, and $\bar \e = \bar \om - (-\D) \bar \phi^N$ is the error of the approximate stream function for $(-\D)^{-1} \bar \om$, $\cR_{num,j}(t, x)$ depends on $\hat W_i, \hat \phi_i$ in $x$ locally. We have absorbed the initial error in $\cR_{num}$. We derive the above decompositions and estimates of $\cR_{loc, 0, j}, \hat G, \hat \phi^N, \hat \e$, in Section \ref{sec:stop}-\ref{sec:vel_err}. See \eqref{eq:W2_est1}, \eqref{eq:lin_evo_err2}. The error in constructing the stream function $\hat \phi_i^N$ associated with $\hat W_{i, 1}$ leads to nonlocal error, e.g. $\uu(\hat \e)$, in constructing the velocity. We combine the estimate of the nonlocal error in $P_j$ and perturbation in Section {\seccombvelerr} in Part I \cite{ChenHou2023a}. Furthermore, we track the piecewise bounds of the following quantities  
\beq\label{eq:lin_evo_main3}
\bal
 & \int_0^{\inf} |\pa_x^k \pa_y^l F(t)| dt, \
F =\hat W_{i, j},  \quad  F = 
\hat \phi^N_{i}, 
\ F =  \hat \phi^N_{i} - \pa_{xy}\phi^N_i(0) xy  , 
\   F=  \hat W_{i, 1 } + \D \phi_i^N, 
  \\
& F =   c_j \hat W_{i, j} - x \pa_x  \hat W_{i, j} + y \pa_y \hat W_{i, j} - D_j^2 \hat W_{i, j}(0) f_{\chi, j} , \ D^2 = (\pa_{xy}, \pa_{xy}, \pa_{xx}), c = (1,1,3), 
 \eal
\eeq
for $j =1,2,3, i=1,2,.., n$, where $f_{\chi, j}$ is defined in \eqref{eq:cutoff_near0_all}. We track the $C^2$ bound of $\hat W_{i, j}$ and $C^4$ bounds for others following \eqref{eq:W2_est1}, and use these bounds to control $\wh W_2$ in Lemma \ref{lem:main_EE} and use them in the nonlinear energy estimates in Section {\secEE} in Part I \cite{ChenHou2023a}.

In practice, we choose the initial data $ \bar F_i$ given in Appendix C.2.1 in Part I \cite{ChenHou2023a}, and $c_i(t)$ some functionals of the perturbation $W_1, \hat W_2$ related to the finite rank perturbation.


\vs{0.1in}
\paragraph{\bf{Numerical methods}}
We solve \eqref{eq:lin_evo} using the numerical method outlined in Section {\secASS} of Part I \cite{ChenHou2023a} to obtain the solution $ (\om_k, \eta_k, \xi_k)$ at discrete time $t_k$. Since $\xi$ is even with $\xi(0, y) =0$, we write $\xi = x \zeta$ for an odd function $\zeta$. We use the adaptive mesh discussed in Appendix \ref{app:solu_rep} to discretize the spatial domain. Then we represent $\om, \eta, \zeta$ using the piecewise 6-th order B-spline \eqref{eq:w_spline}. See Appendix \ref{app:solu_rep}. To solve the stream function $-\D \phi = \om$ numerically, we use the B-spline based finite element method and obtain the numerical approximation $\phi^N$ for $(-\D)^{-1} \om$. Then we can construct the velocity $\uu^N = \na^{\perp} \phi^N $.

The gradients of several initial conditions $\bar F_i$ are relatively large and the linearized equations \eqref{eq:lin_evo} involve $\na \hat W$. 
To obtain a better approximation of the solution, we represent $ \om, \eta,  \zeta$ using a 
mesh $Y \times  Y$ with $Y$ refining the mesh $y$ \eqref{eq:ASS_mesh} in Appendix \ref{app:solu_rep} by a factor of three:
\[
Y_{3i + j} = y_i + (y_{i+1} - y_i) j/3 , \quad 0 \leq j \leq 3.
\]
Since solving the Poisson equation is the main computational cost in each time step, we still represent $\phi^N$ using the coarse mesh $y \times y$ and solve it from source term with grid points value $ \om(y_i, y_j)$.

In the temporal variable, we use a third order Runge-Kutta method to update the PDE. To reduce the round-off error near $x=0$, where we require a very small error in solving the linear PDE, we use a multi-level representation. We refer more details to Section {\secASS} in Part I \cite{ChenHou2023a}. To keep the residual error smooth near $x=0$, we apply a weak numerical filter near $x=0$ every three steps. We do not add the semi-analytic part in constructing $(\om_k, \eta_k, \xi_k)$ for efficiency consideration and that the far-field behavior of the solutions is changing over time.

After we obtain the numerical solution $(\om_k, \eta_k, \xi_k, \phi_{k, 1}^N)$ at discrete time, 
we will perform two rank-one corrections and interpolate the solution in time using a cubic polynomial to obtain the approximate space time solution $\hat W$, and estimate residual error 
in the energy space {\textit{a-posteriori}}.


\subsection{A posteriori error estimates: decomposition of errors}\label{sec:deomp_err}

Since we cannot solve the Poisson equation exactly, we decompose  the stream function $\bar \phi, \phi$ as follows 
\beq\label{eq:elli_err0}
\bar \phi = (-\D)^{-1}\bar \om = \bar \phi^N + \bar \phi^e,  \quad  \phi =(-\D)^{-1} \om = \phi^N + \phi^e,
\eeq
where $\bar \phi^N, \phi^N$ constructed using finite element method are the numeric approximation of the stream function, and the short hands $N, e$ denote \textit{numeric, error}, respectively. We use similar notations below for other nonlocal terms since we cannot construct them exactly.  We will construct $ \bar \phi^N , \phi^N$ numerically and treat $\bar \phi^e, \phi^e$ as error. The reader should not confuse $\phi^N$ with the $N$-th power of $\phi$. We will never use power of $\phi$ throughout the paper. Similarly, we denote by $\uu^N, \uu^e$ the velocities corresponding to $ \phi^N, \phi^e$. For example, we have 
\beq\label{eq:u_Ne}
\uu^N =  \na^{\perp}  \phi^N, \quad \uu^e = \na^{\perp} \phi^e
= \na^{\perp} (-\D)^{-1}( \om - (-\D) \phi^N),  \quad c_{\om}^N = u^N_x(0), 
\quad c_{\om}^e = u^e_x(0).
\eeq

The above decomposition leads to the following decomposition of the operator $\cL$
\beq\label{eq:decomp_L}
\bal
\cL_1   &= \cL_1^N+ \cL_1^{ e} +  \cL_1^{\bar e},  \quad \cL_{2}  = \cL_2^N  + \cL_2^e  +  \cL_2^{ \bar e}, \quad \cL_3   = \cL_3^N + \cL_3^e + \cL_3^{ \bar e}, \\
  \cL_1^N &= \eta + \bar c_{\om}^N \om  - (\bar c_l x + \bar \uu^N ) \cdot \na \om
+ c_{\om}^N   \bar \om- \uu^N  \cdot \na \bar \om
, \\
\cL_1^e  & = c_{\om}^e \bar \om -  \uu^e \cdot \na \bar \om, \quad
\cL_1^{\bar e}  = \bar c_{\om}^e \om - \bar \uu^e \cdot \na \om, \\
 \cL_2^N  & =  - (\bar c_l x +\bar \uu^N ) \cdot \na \eta + (2 \bar c^N_{\om} - \bar u^N_x) \eta  - \bar v^N_x \xi
 - \uu_x^N  \cdot \na \bar \th   - \uu^N \cdot \na \bar \th_x  + 2 c_{\om}^N  \bar  \th_x ,\\
\cL_2^e & = - \uu_x^e  \cdot \na \bar \th  - \uu^e  \cdot \na \bar \th_x  + 2 c^e_{\om}  \bar \th_x  , \quad 
\cL_2^{\bar e}  =  - \bar \uu^e \cdot \na \eta + (2 \bar c^e_{\om} - \bar u^e_x) \eta 
- \bar v^e_x \xi ,
\\
 \cL_3^N  & =  - (\bar c_l x +\bar \uu^N ) \cdot \na \xi + (2 \bar c^N_{\om} - \bar v^N_y) \xi  - \bar u^N_y \eta
 - \uu_y^N   \cdot \na \bar \th   - \uu^N   \cdot \na \bar \th_y + 2 c_{\om}^N  \bar \th_y ,\\
\cL_3^e & = - \uu_y^e   \cdot \na \bar \th  - \uu^e   \cdot \na \bar \th_y  + 2 c^e_{\om}  \bar \th_y , \quad 
\cL_3^{\bar e }
= - \bar \uu^e \cdot \na \xi + (2 \bar c^e_{\om} - \bar v^e_y ) \xi - \bar u^e_y \eta,
\eal
\eeq
where $\cL_i^e, \cL_i^{\bar e}$ denote the errors from $\psi^e, \bar \psi^{e}$, respectively. These operators depend on $\om, \eta, \xi$, and we drop the dependence in \eqref{eq:decomp_L} to simplify the notations.

\subsection{First correction and the construction of $\phi^N$}\label{sec:lin_evo_1stcor}

According to the normalization condition and \eqref{eq:lin_evo_order}, the solution to \eqref{eq:lin_evo} satisfies $\om_x(0, t) = \eta_x(0, t) =0$. To obtain an approximate solution with this condition, we make the first correction 
\beq\label{eq:lin_evo_1stcor}
\om_k \to \om_k - \om_{k, x}(0,0) \chi_{11} ,  \quad \eta_k  \to \eta_k - \eta_{k, x}(0, 0) \chi_{21} ,
\eeq
where $\chi_{ij}$ are cutoff functions defined in \eqref{eq:solu_cor} with $\chi_{ij} = x + O(|x|^4)$ near 0. We do not modify $\xi_k$ since $\xi_k$ already vanishes quadratically near $(0,0)$. 
We remark that the first correction does not change the second order derivatives of the solution near $0$ and $c_{\om}$ since 
\[
\pa_{xy} \chi_{11} (0) = \pa_{xy} \chi_{21}(0) = 0, \quad c_{\om}(\chi_{11})
= -\pa_{xy}\phi_1(0) = 0.
\]
where $\phi_1$ is defined below
\beq\label{eq:solu_cor1}
\phi_1 = -\f{x y^2}{2} \kp_*(x) \kp_*(y),
\eeq
where $\kp_*(x)$ is the cutoff function chosen in \eqref{eq:cutoff_near0} in Appendix \ref{app:cutoff_near0} satisfying $\kp_*(x) = 1 + O(|x|^4)$ near $x=0$, and $\phi_1$ satisfies $-\D \phi_1 = x + O(|x|^4)$. For the numeric stream function $\phi^N_{k, 1}$ constructed at the beginning of this Section \ref{sec:lin_evo}, we correct it as follows 
\[
\phi_{k, 1}^N \to \phi_{k, 1}^N + \pa_{x} \Delta \phi_{k,1}^N( 0 ) \phi_1 \teq \phi_k^N. 
\]
Since $\pa_x \D \phi_1(0) = -1$, this allows us to obtain 
\beq\label{eq:psi_err_2nd}
\bal
& \pa_x (-\D) \phi_k^N (0)= - \pa_x  \D \phi_{k, 1}^N (0) + \pa_{x} \Delta \phi_{k,1}^N(0) =0, \\
& \D \phi_k^N = O(|x|^2), \quad  \om_k - (-\D) \phi_k^N = O( |x|^2).
\eal
\eeq

We further extend it to Lipschitz continuous solutions $\wh W^{(1)} \teq ( \hat \om^{(1)}(t), \hat \eta^{(1)}(t), \hat \xi^{(1)}(t)), \hat \phi^{N, (1)}$ in time using a cubic polynomial interpolation in $t$. See section \ref{sec:cubic_time} for more details. Here, we use $\hat f^{(1)}$ to denote the solution with the first correction.

\subsection{The second correction}\label{sec:lin_evo_2ndcor}
The error 
\[
( \pa_t -\cL_i ) ( \hat \om^{(1)}(t) ,  \hat \eta^{(1)}(t), \hat \xi^{(1)}(t) )
\]
may not vanish to the order $O(|x|^3)$, which is a property 
that we require in the energy estimate. Then we add the second correction 
\[
( \hat \om^{(1)}(t) ,   \hat \eta^{(1)},  \hat \xi^{(1)}(t),  \hat \phi^{N, (1)})   \to 
( \hat \om^{(1)}(t) + a_1(t) \chi_{12},   \hat \eta^{(1)} + a_2(t) \chi_{22},  \hat \xi^{(1)}(t) + a_3(t) \chi_{32},  \hat \phi^{N, (1)} + a_1(t) \phi_2),
\]
so that the error satisfies
\beq\label{eq:lin_evo_cor2}
\e^{(2)}_i \teq ( \pa_t -\cL_i ) (\hat \om^{(1)}(t) + a_1(t) \chi_{12}, \hat \eta^{(1)}(t) + a_2(t)\chi_{22}, \hat \xi^{(1)}(t) + a_3(t) \chi_{32} ) = O(|x|^3)
\eeq
near $x = 0$.
We use the following functions for these two corrections 
\beq\label{eq:solu_cor}
\bal
\chi_{11} &= - \D \phi_{1}, \quad \phi_1 = -\f{x y^2}{2} \kp_*(x) \kp_*(y) , \quad
  \chi_{21} = x \kp_*(x)\kp_*(y),  \\
\chi_{12}  & = - \D \phi_{2}, \quad \phi_2 = -\f{x y^3}{ 6} \kp_*(x) \kp_*(y) , 
 \quad    \chi_{22} = xy \kp_*(x ) \kp_*(y), \quad \chi_{32} = \f{x^2}{2}  \kp_*(x) \kp_*(y),
 \eal
\eeq
where $\kp_*(x)$ is chosen in \eqref{eq:cutoff_near0}, $\chi_{\cdot, 1}$ is used for the first correction, and $\chi_{\cdot, 2}$ for the second correction. We do not have $\chi_{31}$ since we do need the first correction for $\xi$ \eqref{eq:lin_evo_1stcor}. Since $\kp_*(x) $ satisfies $ \kp_*(x) = 1 + O(|x|^4)$ near $x =0$, the behaviors of the above functions near $x =0$ are given by 
\[
 \chi_{11} = y + l.o.t., \  \chi_{21} = x + l.o.t., \ \chi_{12} = xy + l.o.t., \ \chi_{22} = xy + l.o.t., \ \chi_{32} = x^2 / 2 + l.o.t.
\]
We choose $\chi_{1j} = -\D \phi_j$ for the correction of $\om$ so that its associated velocity $\na^{\perp} (-\D)^{-1} \chi_{1j}$ can be obtained explicitly. We do not need such form for the correction of $\eta, \xi$ since we do not compute the velocity of $\eta, \xi$.


For cutoff functions $\chi_1, \chi_2 , \chi_3$ with 
\beq\label{eq:solu_cor2}
c_{\om}(\chi_1) = -\pa_{xy}(-\D)^{-1} \chi_1 = 0,
\eeq
e.g. $\chi_i = \chi_{i2}$ chosen above, we have the following formulas of $\cL_i( a_1(t) \chi_1, a_2(t) \chi_2, a_3(t) \chi_3) $ \eqref{eq:lin_evo}
\[
\bal
 \cL_1( a_1  \chi_1, a_2 \chi_2 , a_3 \chi_3)
&= a_1(t)\B( - ( \bar c_l x + \bar \uu) \cdot \na \chi_1 + \bar c_{\om} \chi_1 - \uu(\chi_1) \cdot \na \bar \om  \B)+ a_2(t) \chi_{2} , \\
  \cL_2( a_1  \chi_1, a_2 \chi_2 , a_3 \chi_3)
&= a_2( t) \B( - (\bar c_l x + \bar \uu) \cdot \na  \chi_2 + (2 \bar c_{\om} - \bar u_x) \chi_2\B)  
- a_3(t) \bar v_x \chi_3 - a_1( t)  \B( \uu(\chi_1) \cdot \na \bar \th \B)_x , \\
  \cL_3( a_1  \chi_1, a_2 \chi_2 , a_3 \chi_3)
&= a_3( t) \B( - (\bar c_l x + \bar \uu) \cdot \na  \chi_3 + (2 \bar c_{\om} + \bar u_x ) \chi_3\B)  
- a_2(t) \bar u_y \chi_2 - a_1( t)  \B( \uu(\chi_1) \cdot \na \bar \th \B)_y , \\
\eal
\]
where $\uu (\chi_1)$ is the velocity associated with $\chi_1$. We want to apply the above formulas to the second corrections $\chi_{i2}, i=1,2,3$ in \eqref{eq:solu_cor}. We use the Hadamard product  
\beq\label{eq:hada}
 (A \circ B)_i = A_i B_i ,
\eeq
and \eqref{eq:decomp_L} to simplify the notation as follows
\beq\label{eq:decomp_cor}
\bal
& \cL_i( a \circ \chi) = Cor_{ij}(x; \chi)  a_j(t) , \quad 
Cor_{ij}(x; \chi) = Cor_{ij}^N(x; \chi) + Cor^{\bar e }_{ij}(x; \chi) , \\
&\cL_i^N(a \circ \chi ) \teq Cor_{ij}^N(x; \chi) a_j(t),
\quad   \cL_i^{\bar e}(a \circ \chi ) \teq Cor^{\bar e }_{ij}(x; \chi) a_j(t).
\eal
\eeq

Note that $\cL_i^e( a \circ \chi) = 0$ since we can obtain $\uu(\chi_1)$ explicitly for $\chi_1 = \chi_{11}, \chi_{12}$ \eqref{eq:solu_cor}. 


Next, we derive the equations for $a_i(t),i=1,2,3$. Using \eqref{eq:lin_evo} and the condition 
\[
\pa_{xy} \e_1^{(2)}(0) = \pa_{xy} \e_2^{(2)}(0) = \pa_{xx} \e_3^{(2)}(0) = 0,
\]
from \eqref{eq:lin_evo_cor2}, we obtain the following ODEs for $a(t), b(t), c(t)$
\beq\label{eq:ODE_A}
\bal
\dot a_1(t)  & = ( -2 \bar c_l + \bar c_{\om}) a_1(t) + a_2(t) - F_1(t) , \\
\dot a_2(t)  & = ( -2 \bar c_l + 2 \bar c_{\om} - \bar u_x(0)  ) a_2(t) - F_2(t) , \\
\dot a_3(t)  & = ( -2 \bar c_l + 2 \bar c_{\om} - \bar u_x(0)) a_3(t) - F_3(t) , \\
\eal
\eeq
where $F(t) = (F_1(t), F_2(t), F_3(t))^T$ is the error associated to the second order derivatives of $(\pa_t - \cL) \hat W^{(1)}$ near $0$. More precisely, we have 
\beq\label{eq:ODE_err}
\bal
F_1(t) & = \pa_{xy} ( \pa_t - \cL_1) \wh W^{(1)} (0) = \f{d}{dt} \hat\om^{(1)}_{xy}(t, 0) - ( -2  \bar c_l + \bar c_{\om}) \hat \om^{(1)}_{xy}(t, 0) -  \hat \eta^{(1)}_{xy}(t, 0) 
- c_{\om}(t) \bar \om_{xy}(0) , \\
F_2(t) & = \pa_{xy} ( \pa_t - \cL_2) \wh W^{(1)} (0)  =  \f{d}{dt} \hat \eta^{(1)}_{xy}(t, 0) - ( -2  \bar c_l + 2 \bar c_{\om} -\bar u_x(0)) \hat \eta^{(1)}_{xy}(t, 0) -  c_{\om}(t) \bar \th_{xxy}(0) ,\\
F_3(t) & =  \pa_x^2 ( \pa_t - \cL_3) \wh W^{(1)} (0)
= \f{d}{dt} \hat \xi^{(1)}_{xx}(t, 0) -  ( -2  \bar c_l + 2 \bar c_{\om} -\bar u_x(0)) \hat \xi^{(1)}_{xx}(t, 0) -  c_{\om}(t) \bar \th_{xxy}(0) .
\eal
\eeq
Denote 
\beq\label{eq:diff_op}
D^2 = (D^2_1, D^2_2, D^2_3) = (\pa_{xy}, \pa_{xy}, \pa_x^2)^T.
\eeq
Then we can simplify \eqref{eq:ODE_err} as 
\beq\label{eq:ODE_err_dec}
F_i = D_i^2 ( \pa_t - \cL_i) \hat W^{(1)}(0)
= D_i^2 ( \pa_t - \cL_i^N - \cL_i^e - \cL_i^{\bar e}) \hat W^{(1)}(0) .
\eeq

Denote by $M$ the coefficients in \eqref{eq:ODE_A}
\beq\label{eq:decomp_M}
M = \lt( 
\begin{array}{ccc}
 -2 \bar c_l + \bar c_{\om}  & 1  &  0  \\
 0 &   -2 \bar c_l + 2 \bar c_{\om} - \bar u_x(0)   & 0 \\
 0 &  0 &  -2 \bar c_l + 2 \bar c_{\om} - \bar u_x(0)  . \\
\end{array}
\rt) \teq M^N +  M^{\bar e},
\eeq
where the last identity is based on the decomposition $\bar c_{\om} = \bar c_{\om}^N + \bar c_{\om}^e, \bar u_x(0) = \bar u_x^N(0) + \bar u_x^e(0)$, and $M^{\bar e}$ only contains the contribution from $ \bar c_{\om}^e, \bar u^{\bar \e}_x(0)$. According to the normalization condition \eqref{eq:normal_pertb}, we have $\bar u_x(0)^e = \bar c_{\om}^e$. It follows 
\beq\label{eq:decomp_Me}
M^{\bar e} =  \bar c_{\om}^e   I_3.
\eeq

We simplify the ODE for $a = (a_1, a_2,a_3)^T$ as 
\beq\label{eq:lin_ODE_a}
\dot a_i(t) = M_{ij} a_j(t) - F_i(t), \quad \dot a(t) = M a - F
= M a - e_i D_i^2 (\pa_t - \cL_i ) \hat W^{(1)} (0) .
\eeq

Recall $\chi_{\cdot 2} = (\chi_{12}, \chi_{22}, \chi_{32})$ from \eqref{eq:solu_cor}. 
In the $i-th$ equation, the overall error for the approximate solution $\wh W^{(1)} + a(t) \circ \chi_{\cdot 2} $ is 
\beq\label{eq:lin_evo_err1}
\bal
& (\pa_t - \cL_i ) (\wh W^{(1)} + a(t) \circ \chi_{\cdot 2}) 
=  (\pa_t - \cL_i^N) (  a(t) \circ \chi_{\cdot 2})  + \B( (\pa_t - \cL_i^N) \wh W^{(1)} \\ 
 & - \cL_i^e ( \wh W^{(1)} + a(t) \circ \chi_{\cdot 2}) 
 - \cL_i^{\bar e} ( \wh W^{(1)} + a(t) \circ \chi_{\cdot 2}) \B) \teq  J + I. 
\eal
\eeq

Note that in the above notation, $\pa_t$ acts on $ a_i(t) \chi_{i, 2}$. For $J$, using the ODE for $a(t)$ \eqref{eq:lin_ODE_a}, \eqref{eq:decomp_cor}, \eqref{eq:ODE_err_dec}, and \eqref{eq:decomp_M}, 
we get
\[
\bal
J & = (M_{ij} a_j - F_i) \chi_{ i 2} - Cor^N_{ij}(x; \chi_{\cdot 2}) a_j  \\
 & = ( M^N_{ij}  \chi_{i2} -  Cor^N_{ij}(x; \chi_{\cdot 2})  ) a_j
 + M^{\bar e}_{ij} a_j \chi_{i2} 
 - D_i^2 (\pa_t - \cL_i^N  - \cL_i^e - \cL_i^{\bar e}) \wh W^{(1)}(0)  \chi_{i 2} 
 \teq J_1 + J_2 + J_3 ,
 \eal
\]
where we have summation over $j=1,2,3$. Since $\cL^e( a(t) \circ \chi_{\cdot 2}) = 0$, using  the above decomposition and combining $I, J_2, J_3$  , we yield 
\beq\label{eq:lin_evo_err3}
\bal
I + J_2 + J_3 
 &= \B( (\pa_t - \cL_i^N) \wh W^{(1)} - D^2_i( \pa_t -\cL_i^N) \wh W^{(1)}(0) \chi_{i2}   \B)
-  \B( \cL_i^e  \wh W^{(1)} - D_i^2 \cL_i^e \wh W^{(1)}(0) \chi_{i2} \B)  \\
& \quad  - \B( \cL_i^{\bar e}(   \wh W^{(1)} + a(t) \circ \chi_{\cdot 2}) 
- D_i^2 \cL_i^{\bar e} \wh W^{(1)}(0) \chi_{i2}  - M_{ij}^{\bar e} a_j \chi_{i2} \B)
 \teq I_{i, N} + I_{i, e} + I_{i, \bar e}. 
\eal
\eeq

Next, we check that $J_1, I_{i, N}, I_{i, e}, I_{i, \bar e}$ have a vanishing order $O(|x|^3)$. This is clear for $I_{i, N}, I_{i, e} $. Since we correct the second order derivatives and $\hat \om^{(1)}, \hat \eta^{(1)}, \hat \zeta^{(1)}$ are odd with $\hat \xi^{(1)} = x \hat \zeta^{(1)} $, we get $ \pa_x^i \pa_y^j I_{i, N}$, $\pa_x^i \pa_y^j I_{i, e} = 0 ,i+j \leq 2$ at the origin. 
For $J_1$, we 
note that it is a linear combination of $a_j$ with given coefficients $ M_{ij}^N - Cor_{ij}^N$. Its cubic vanishing order follows from the definition. For example, when $i=j=1$, we have 
\[
S =   a_1(t) \cdot ( Cor^{\bar e}_{11}(x) - M^{\bar e}_{11} \chi_{12} ) 
 = a_1(t) \B( - \bar \uu^e  \cdot \na \chi_{12} + \bar c_{\om}^e \chi_{12} - \bar c_{\om}^e \chi_{12} \B) = a_1(t) \B( - \bar \uu^e \cdot \na \chi_{12} \B).
\]
Since $\chi_{12} = xy + O(|x|^4)$ \eqref{eq:solu_cor},  $\bar u^{ e} = \bar u_x^e (0) x + O(|x|^2), \bar v^e = - \bar u_x^e(0) y$ near $0$, we have $S = O(|x|^3)$ near $0$. The vanishing order of other terms in $J_1$ can be obtained similarly. Then for $J_1$, we estimate the weighted norm for $ Cor^{\bar e}_{ij}(x) - M^{\bar e}_{ij} \chi_{i2} $  and then apply the triangle inequality to further bound $J_1$. Similarly, for a fixed $i$, we have the following vanishing order
\[
\cL_i^{\bar e}( a(t) \circ \chi_{\cdot 2}) - M_{ij}^{\bar e} a_j \chi_{i2}
 = Cor_{ij}^{\bar e} a_j(t) -  M_{ij}^{\bar e} a_j \chi_{i2} = O( |x|^3),
 \quad D_i^2\cL_i^{\bar e}( a(t) \circ \chi_{\cdot 2})(0) = M_{ij}^{\bar e} a_j.
\]

Thus, we can rewrite $I_{i, \bar e}$ as follows 
\beq\label{eq:lin_evo_err1_Ie}
I_{i, \bar e} = - \B( \cL_i^{\bar e}(   \wh W^{(1)} + a(t) \circ \chi_{\cdot 2}) 
- D_i^2 \cL_i^{\bar e} ( \wh W^{(1)} + a(t) \circ \chi_{\cdot 2}) (0) \chi_{i2}  \B),
\eeq
which clearly has a cubic vanishing order. Note that $\wh W^{(1)} + a(t) \circ \chi_{\cdot 2}$ is our final approximate solution for solving \eqref{eq:lin_evo}.


In summary, to estimate the error $(\pa_t - \cL)( \wh W^{(1)} + a \circ \chi_{\cdot 2} )$, we will 
estimate $J_1, I_{i, N}, I_{i,e}, I_{i, \bar e}$ separately. 
The term $I_{i, N}$ is the local error of solving \eqref{eq:lin_evo} numerically, $I_{i,e}, I_{i, \bar e}$ are due to the error of solving the Poisson equations for $\om$ and $\wh \om^{(1)}$. Since we use a cubic polynomial interpolation to obtain the continuous function $\hat W^{(1)}(t)$, the errors
$I_{i, N}, I_{i, e}$ are piecewise cubic polynomials in time, and we track the coefficients of these polynomials to verify that they are small. We discuss the estimate of nonlocal error in Section \ref{sec:vel_err}.

\subsection{Cubic interpolation in time}\label{sec:cubic_time}

Given the numerical solution with the first correction $ \wh W_n^{(1)} = ( \hat \om_n^{(1)}, \hat \eta_n^{(1)}, \hat \xi_n^{(1)})$, we use a piecewise cubic interpolation to construct $ \wh W^{(1)}(t, x)$ over $(t, x) \in [0, T] \times \R_2^+$. We partition the whole time interval $[0, T]$ into small subintervals $[ 3mk, 3(m+1) k]$ with length $3k$. For $s \in [- 3k / 2, 3k /2]$ and $t_m = 3 mk$, we construct 
\[
\bal
\hat W^{(1)}( s  + t_m  + \f{3k}{2})
&= \f{1}{16} (- W_0 + 9 W_1 + 9 W_2 - W_3)
+ \f{1}{24} ( W_0 - 27 W_1 + 27 W_2 - W_3 )  \f{s}{k} \\
&\quad + \f{1}{4} (W_0 - W_1  - W_2 + W_3) ( \f{s}{k}  )^2
+ \f{1}{6}( - W_0 + 3 W_1 - 3 W_2 + W_3) ( \f{s}{k}  )^3 \\
& \teq  \sum_{ i \leq 3} C_i \cdot V \f{1}{i!} (\f{s}{k})^i , \quad V = (W_0, W_1, W_2, W_3), 
\eal
\]
where $k$ is the time step, $W_i = \hat W_{3 m+i}^{(1)}$ for $ t_m = 3 m k$, and $C_i \in \R^4$ is the coefficient determined by the interpolation formula.  A direct calculation yields 
\[
\bal
\pa_t \wh W^{(1)} - \cL \wh W^{(1)} &= \sum_{ 1\leq i \leq 3 }  \f{ C_i \cdot V}{k} \f{1}{ (i-1)!} (\f{s}{k})^{i-1}
- \sum_{i\leq 3} \cL( C_i \cdot V)  \f{1}{i!} (\f{s}{k})^i \\
& = \sum_{ i \leq 2} \B( \f{C_{i+1} \cdot V}{k} - \cL( C_i \cdot V) \B)  \f{1}{i!} (\f{s}{k})^i 
- \cL( C_4 \cdot V) \f{s^3}{6 k^3}.
\eal
\]
To estimate $\pa_t \wh W^{(1)} - \cL \wh W^{(1)}$, we will use the triangle inequality and estimate $ \f{C_{i+1} \cdot V}{k} - \cL( C_i \cdot V), \cL( C_4 \cdot W) $ rigorously using the methods in Section \ref{sec:err_idea}, \ref{sec:vel_err}.

Applying the triangle inequality and integrating the error over $s \in [ - \f{3k}{2}, \f{3k}{2}]$ yield 
\beq\label{eq:cubic_int}
\bal
& \int_{|s| \leq 3k/2} |\pa_t \hat W^{(1)} - \cL \hat W^{(1)}| ds 
\leq 
\sum_{ i \leq 2} \B| \f{C_{i+1} \cdot V}{k} - \cL( C_i \cdot V) \B| \int_{|s| \leq 3k/2} \f{1}{i!} |\f{s}{k}  |^i  \\
&  + |\cL( C_4 \cdot V) | \int_{ |s| \leq \f{3k}{2} } \f{1}{6} | \f{s}{k}|^3 
= k \B(  \sum_{ i \leq 2} \B| \f{C_{i+1} \cdot V}{k} - \cL( C_i \cdot V) \B| 
C_I(i) + |\cL( C_4 \cdot V) | C_I(3) \B),
\eal
\eeq
where 
\[
C_I = [3, \f{9}{4}, \f{9}{8},  \f{27}{64}  ].
\]

\subsubsection{Decomposing the time interval for parallel computing }

To verify that the posteriori error is small, we need to estimate the error rigorously 
at each time step, which takes a significant amount of time. Consider a partition of the time interval $0 = T_0 < T_1 < .. < T_n = T$, where $T$ is the final time of the computation.
To reduce the computational time, we first solve the equations on $[0, T]$ without any rigorous verification and save the solution $(\om_k, \eta_k, \xi_k, \phi_{k, 1}^N)$ at $t_k = T_i$. Since we do not need to perform verification at this step, the running time for each time step is short. Then we solve the equations on a smaller time interval $[T_i, T_{i+1}], i=0, 1,2..., n-1$ using $W(T_i)$ as the initial data and then perform the verification in each time interval in parallel. At the end of each time interval $[T_i, T_{i+1}]$, we use the pre-computed data $W(T_{i+1})$, which is the same as the initial data for next time interval $[T_{i+1}, T_{i+2}]$ for verification. This guarantees that we use the same discrete solution  $(\om_k, \eta_k, \xi_k, \phi_{k, 1}^N)$ for verification in $[T_i, T_{i+1}]$ and $[T_{i+1}, T_{i+2}]$.

\subsection{Compactly supported in time}\label{sec:stop}

To construct an approximate solution, we do not need to solve the linearized equations \eqref{eq:lin_evo} for all time. In fact, since the solution decays in certain norm as $t$ increases, we stop the computation at time $T$ if $ \hat W^{(1)} - D^2 \hat W^{(1)} \circ \chi$ is small in the energy norm. Then we extend $\hat W^{(1)}(t, \cdot) $ trivially for $t > T$
\[
  \wh W^{(1)}(t, \cdot) = 0, \quad t > T. 
\]

As a result, the error satisfies 
\[
  \cR_i =  (\pa_t - \cL_i) \wh W^{(1)} 
  =  (\pa_t  - \cL_i ) \wh W^{(1)} \one_{t \leq T} - \d_{T}(t) \wh W^{(1)}_i(T). 
\]
Let  $F = (F_1, F_2, F_3), F_i  = D_i^2 (\pa_t - \cL_i) \wh W^{(1)} \B|_{x=0}$ for $ t \leq T$, where $D^2 = (D_{xy}, D_{xy}, D_x^2)$. Then similarly, we get 
\[
  F_{ext} \teq D^2 (\pa_t - \cL) \wh W^{(1)} |_{x=0} \cdot \one_{t  \leq T} - D^2 \wh W^{(1)}(T, 0) \d_T   
  = F(t) \one_{t \leq T} - F_{end}(T) \d_T, \  F_{end}(T) \teq D^2 \wh W^{(1)}(T, 0).
\]

We will test the above formulas with some Lipschitz function in time and the above formulas are well defined. Recall that the coefficients of the second correction $a$ satisfy \eqref{eq:lin_ODE_a}. Although $\hat W$ only has finite support in time, to achieve the vanishing order \eqref{eq:lin_evo_cor2} for all time, we need to solve the ODE {\textit exactly} for all time. If we stop solving the ODE at time $T$, we cannot achieve \eqref{eq:lin_evo_cor2} at time $T$. 
Moreover, we cannot solve the ODE using a numerical method, e.g. the Runge-Kutta method, since it leads to an error. Instead, we solve the ODE exactly by diagonalizing the system. We introduce the following notations 
\beq\label{eq:ODE_modi_F}
\bal
&\lam_1 = -2 \bar c_l + \bar c_{\om}, \quad \lam_2 = \lam_3 = -2 \bar c_l + 2 \bar c_{\om} - \bar u_x(0), \quad \lam_1 - \lam_2 = - \bar c_l / 2, \\
&\td a_1 = a_1 + \f{a_2}{ \lam_1 -\lam_2}, \quad \td F_1 = F_1 + \f{F_2}{\lam_1  - \lam_2}, \quad \td a_i = a_i, \quad \td F_i = F_i, \ i = 2, 3,
\eal
\eeq
and similar notations for $\td F_{ext}$, where we have used \eqref{eq:normal} to get $\lam_1 - \lam_2 = -\bar c_l /2$. The coefficients satisfy $ \lam_1 \approx -7, \lam_2 = \lam_3 \approx -5.5$. We diagonalize \eqref{eq:ODE_A} as follows 
\[
\f{d}{dt} \td a_i = \lam_i \td a_i - \td F_{ext,i}.
\]

Using Duhamel's formula and the definition of $\td F_{ext, i}$, we yield 
\beq\label{eq:A_ode}
\bal
 \td a_j(t) & = e^{ \lam_j t } \td a_j(0) - \int_0^t e^{ \lam_j(t-s)}  \td F_{ext, j}(s) ds  \\
 & =  e^{\lam_j t} \td a_j(0) - \int_0^{ t \weg T } e^{\lam_j(t-s)} \td F_j(s) ds
 + \td F_{end}(T) e^{\lam_j(t-T) } \one_{t \geq T}  
 \teq S_1 + S_2 + S_3.
 \eal
\eeq

For rank one perturbation, the full solution $\hat W$ with two corrections in \eqref{eq:lin_evo_main1},\eqref{eq:lin_evo_err_def}  is given by 
\beq\label{eq:solu_full}
\wh W = \wh W^{(1)} + a \circ \chi_{\cdot 2},
\quad \hat \phi^{ N} =  \hat \phi^{N, (1)} + a_1(t) \phi_2 ,
\eeq
where $\chi_{\cdot 2}, \phi_2$ are defined in \eqref{eq:solu_cor}.
With the above extension and the decomposition of error \eqref{eq:lin_evo_err1}-\eqref{eq:lin_evo_err3}, the residual error for rank-one perturbation 
\eqref{eq:lin_evo_err_def} with $n=1$ is given by 
\beq\label{eq:lin_evo_err2}
\bal
  & \cR  = c(t) ( \wh W_0^{(1)} + a_0 \circ \chi_{\cdot 2} - \bar W_0  ) + \int_0^t c(t-s) ( \pa_t - \cL )( \wh W^{(1)} + a \circ \chi_{\cdot 2} ) ds = \cR_{loc, 0, \cdot} + \cR_{nloc}  \\
  & \cR_{loc, 0, \cdot}  = c(t) ( \wh W_0^{(1)} + a_0 \circ \chi_{\cdot 2} - \bar W_0  ) 
    - ( \wh W^{(1)}(T )- D^2 \wh W^{(1)}(T) \circ \chi_{\cdot 2} ) c(t - T) \one_{t \geq T} \\
       &  +   \int_0^{t \weg T} c(t- s) \sum_{ i\leq 3} 
e_i  I_{i, N}(s)  )  ds     + \int_0^t c(t - s) \sum_{i\leq 3} e_i  J_{1, i}(s)   )  ds 
 = \int_0^t c(t- s) \cR_{num}(s) ds, 
\\    
  & \cR_{nloc}  =    \int_0^{t \weg T} c(t- s) \sum_{  i\leq 3} 
e_i  I_{i, e}(s)   ds   + \int_0^t c(t - s) \sum_{i\leq 3} e_i  I_{i, \bar e}(s)  ds, \\
\eal
\eeq
where $I_{i, N}, I_{i, e}, I_{i, \bar e}$ are given in \eqref{eq:lin_evo_err3}, $J_{1,i}$ means $J_1$ \eqref{eq:lin_evo_err3} in the $i$-th equation, and $\cR_{num}$ is 
\beq\label{eq:lin_evo_num}
\bal
 \cR_{num}(s) \teq \d_0 \cdot ( \wh W^{(1)}_0 + a_0 \circ \chi_{\cdot 2} - \bar W_0  ) 
 - \d_T \cdot ( \wh W^{(1)}(T )- D^2 \wh W^{(1)}(T) \circ \chi_{\cdot 2} )
 + e_j( \one_{t \leq T} I_{j, N}  + J_{1, j}  ).
    \eal
    \eeq
We only integrate the integrals for $e_i I_{i, N}, e_i I_{i, e}$ up to $\min(t, T)$ since these two integrands \eqref{eq:lin_evo_err3} do not involve $a_i(t)$ and have compact support $[0, T]$ in time. 
We obtain the local part $\cR_{loc, 0,\cdot}$ in \eqref{eq:lin_evo_main2} for $n=1$.  
The first term is the initial interpolation error for $\bar W_0$, and we choose $a_0 \in \R^3$ to achieve vanishing order $ \wh W^{(1)}_0 + a_0 \circ \chi -\bar W_0 = O(|x|^3)$. We use $\cR_{loc, 0, \cdot}, \cR_{nloc}$ to denote the error that depends on the solution locally and nonlocally. 
We use the bootstrap assumption to obtain uniform control of $c(t)$ in $t$. See Section {\secWtwo} in Part I \cite{ChenHou2023a}.
The error estimate of the local part $\cR_{loc, 0, \cdot}$ follows Section \ref{sec:err_idea}. 
Moreover, we extract the essentially local part from $\cR_{nloc}$ and can estimate it with $\cR_{loc, 0 ,j}$ together \eqref{eq:resid_loc}. 
We decompose the nonlocal part $\cR_{nloc}$ in Section \ref{sec:vel_err}. 
To control the terms involving $a_i$, e.g. $J_{1, i}$ above \eqref{eq:lin_evo_err3}, 
we can estimate the weighted norm of the functions $ Cor^N_{ij}(x) - M^N_{ij} \chi_{i2}$ and then
 only need to estimate the integral of $\td a_j$. 

 Denote $x\weg y \teq \min(x, y)$. Since the factor $\lam_j < 0$, using the formula of $\td a_j$ \eqref{eq:A_ode}, we obtain 
\[
\bal
 & \int_0^{\inf} |S_1| dt = \f{1}{|\lam_j|} |\td a_j(0)|, \quad 
 \int_0^{\inf} |S_3(t)| dt = \int_T^{\inf} | \td F_{end,j}(T)| e^{\lam_j(t- T)} dt 
 = \f{1}{|\lam_j|} |\td F_{end,j}(T)|, \\
 & \int_0^{\inf} |S_2(t)| dt 
 \leq \int_0^{\inf} ( \int_0^{t\weg T} e^{\lam_j(t-s)} |\td F_j(s) | ds ) dt
 =  \int_0^T |\td F_j(s)| ( \int_s^{\inf} e^{\lam_j(t-s)} dt ) ds 
 = \f{1}{| \lam_j|} \int_0^T | \td F_j(s)|.
\eal
\]
It follows 
\[
\int_0^{\inf} |\td a_j(t)| d t \leq \f{1}{ |\lam_j|} \B( |\td a_j(0)| 
+ \int_0^T |\td F_j(s)| ds + |\td F_{end,j}(T)| \B).
\]

 In the estimate of integral of $\td F_j$, 
\eqref{eq:ODE_err}, \eqref{eq:ODE_modi_F}, we use $\bar c_{\om} = \bar c_{\om}^N + \bar c_{\om}^e,c_{\om} = c_{\om}^N + c_{\om}^e$ \eqref{eq:u_Ne} and track the terms involving  $\bar c_{\om}^N, c_{\om}^N$ in $I_{e1}$ and error separately, 
\[
\bal
F_{j,e1} &= \int_0^T | F_j^N(t) | dt , \
F_{j,e2} =\int_0^T |\bar c_{\om}^e D_i^2 \wh W^{(1)}(t, 0)| dt, \
F_{j,e3} =  \int_0^T |c_{\om}^e(t, 0) D_i^2 \bar W(0)| dt , \\
F_j^N  &= D_j^2 ( \pa_t - \cL_i^N) \wh W^{(1)}(0),  
\quad D^2 = (\pa_{xy}, \pa_{xy}, \pa_{xx}).
\eal
\] 
 From \eqref{eq:u_Ne}, we get $2 \bar c_{\om}^{e} - \bar u^e_x(0) = \bar u^e_x(0)$ and only $1$ unit of error $I_{e2}$ in $F_j(t), j=2,3$.  We track $\td F_j$ \eqref{eq:ODE_modi_F} similarly. 
Since $\wh W^{(1)},  F, \td F, F^N, \td F^N$ \eqref{eq:ODE_err} are cubic in time, we can estimate the above integrals  following \eqref{eq:cubic_int}. Note that $|\td F_{end,j}(T)|$ does not involve the nonlocal error. Using the linear relation between $a_j, \td a_j$, we can estimate $a_j$. 

Using the above estimates, we can represent the rank-one solution and estimate it as follows 
\beq\label{eq:W2_est1}
\bal
\hat G(t, x) & = \int_0^t c(t-s)  \hat W (s) ds , 
\quad 
\hat W = \hat W^{(1)} + a \circ \chi_{\cdot 2} 
 \\
  |\pa_x^i \pa_y^j G_l(t, x)| & \leq \sup_{ t>0 } |c(t)|  \B( \int_0^T |\pa_x^i \pa_y^j \hat W^{(1)}_l(t) | dt 
 + |\pa_x^i \pa_y^j \chi_{ l 2}| \int_0^{\inf} |a_l(t)| dt \B) .
 \eal
\eeq

Similarly, we can bound other quantities for $\hat G$ 
and complete the estimates in \eqref{eq:lin_evo_main1}.

We generalize the above formula and estimate directly to the finite rank perturbation operator using linearity. For different initial data $\bar W_0$ related to the finite rank perturbation, we choose a different stopping time  $T(\wh W^{(1)}_0)$ to save computation cost. In practice, we construct the numerical solution up to time $T( \wh W^{(1)}_0)\leq T =12$. At that time, the solution $\hat W^{(1)}(T)$ is very small, which can be treated as a small perturbation. See figures in Section 4.3 in Part I \cite{ChenHou2023a}.

\begin{remark}\label{rem:W2_para}
Using linearity and the triangle inequality, we can assemble the estimates for $\cR$ \eqref{eq:lin_evo_err_def} from the estimates of each mode $\hat W_i$ in \eqref{eq:lin_evo_main1}, \eqref{eq:lin_evo_err_def}.
In practice, this means that we can implement the above estimate for each individual mode completely in parallel. 
\end{remark}

\vs{0.1in}
\paragraph{\bf{Finite support of the $c_{\om}$ term in time}}

In Section 5 of Part I \cite{ChenHou2023a}, we need to use $c_{\om}(f )$, where $c_{\om}(f) = u_x(f)(0) = -\pa_{xy}(-\D)^{-1} f(0)$. Since we choose the cutoff function $\chi_{12}$ for the second correction of $\hat \om$ with properties \eqref{eq:solu_cor}, \eqref{eq:solu_cor2}, we get
\[
c_{\om}( \hat W_1^{(1)} + a_1(t) \chi_{12}  )
= c_{\om}( \hat W_1^{(1)}),
\]
and it is supported in $[0, T]$.

\subsection{Ideas of estimating the norm of the error}\label{sec:err_idea}

In this section, we discuss how to estimate the error derived in the previous section, e.g.  
$I_{i, N}$ \eqref{eq:lin_evo_err3}, a-posteriori. The general idea is to first evaluate $f$ on some grid points and estimate the higher order derivatives of $f$ in a domain $D$. Then we can construct an approximation $\hat f$ of $f$ by interpolating the values of $f$ at different points. The approximation error $ f - \hat f $ can be bounded by $ C_k || f||_{C^k} h^k$, where $h$ measures the size of the domain. If the mesh $h$ is sufficiently small, the error term is small.
See a simple second order error estimate in \eqref{est_2nd}.


To develop an efficient method for rigorous estimates, we have the following considerations. Firstly, we should evaluate as a small number of points as possible so that the method is efficient. Secondly, most functions $f$ in the verification are complicated, e.g. $I_{i, N}$ \eqref{eq:lin_evo_err3}, and it is difficult to obtain the sharp bound of the higher derivatives. Instead, we first estimate the piecewise derivatives of some simple functions, e.g. piecewise polynomials $(\hat \om, \hat \eta)$ or semi-analytic solutions following Appendix \ref{app:solu}, \ref{app:explcit}. Then we use the triangle inequality and the Leibniz rule to estimate the products of these simple functions, and their linear combinations.
Yet, in general, this approach overestimates the derivatives significantly. To compensate the overestimates, 
we use higher order interpolations and estimates with error bounds $C h^k, k = 3, 4, 5$, which provide the small factor $h^k$. We develop three estimates based on different interpolations: the Newton interpolation, the Lagrangian interpolation, and the Hermite interpolation in Section {\applinfestsupp} in the supplementary material II \cite{ChenHou2023bSupp} (attached to this paper). The 1D interpolating polynomials are standard, and we generalize them to construct 2D interpolating polynomials.


We want to estimate the constant $C$ in the error bound $C h^k$ as sharp as possible to reduce the computational cost and improve the efficiency. In fact, when $k=4$, if we can obtain an interpolation method and reduce the constant $C$ to $ \f{C}{16}$, to achieve the same level of error, we can increase $h$ to $2h$. In this verification step, since the domain is 2D, it means that we can evaluate only $\f{1}{4}$ of the grid point values of $f$, which 
can reduce the computational cost by $75\%$. 

Using the above method, we can obtain a sharp estimate of the derivatives of $f$. Using the method in Section {\applinfestsupp} in the supplementary material II \cite{ChenHou2023bSupp} and Taylor expansion, we can further estimate the weighted norm of $f$ with a singular weight near $0$. 
We discuss the estimate of the nonlocal error in Section \ref{sec:vel_err}. Using these $L^{\inf}$ estimates of $f  $ and its derivatives, we can further develop H\"older estimate for $f$. See Section \ref{app:hol_func1}. We remark that the numerical solutions are regular, e.g. the approximate steady state and the solutions to the linearized equations are $C^{4, 1}$. We use these methods to estimate piecewise $L^{\inf}(\vp_{evo,i})$ norm of the local residual error $\cR_{num, i}$ \eqref{eq:lin_evo_num} and the $C_{x_i}^{1/2}$ partial H\"older seminorm of $ \cR_{num,i} \psi_i $, where $\vp_{evo, i}, \psi_i$ are defined in \eqref{wg:lin_evo}.

We remark that the weights $\vp_{evo,i}$ and $\vp_i, i=2,3$ in the $L^{\inf}$ energy estimate 
(see Section {\secEE} in \cite{ChenHou2023a}) for $\eta, \xi$ are similar but with different coefficient $ p_{5, \cdot}, p_{6, \cdot}$. Since $\vp_i$ and $\vp_{evo,i}$ are equivalent, after we obtain the piecewise weighted $L^{\inf}(\vp_{evo,i})$ estimate of the error, we can obtain piecewise weighted $L^{\inf}(\vp_i)$ estimate by estimating the ratio $ \vp_i /\vp_{evo, i}$. Similarly, we can obtain weighted $L^{\inf}(\vp_{g, i})$ estimate of the error, where $\vp_{g, i}$ is another weight in the energy estimate in Section {\secEE} in \cite{ChenHou2023a}.  

\vs{0.1in}

\paragraph{\bf{Estimate the local part of the residual error}}
Using the above methods, we can estimate the local part of the residual error  $\bar \cF_i$ for the approximate steady state and discuss the estimate in Appendix \ref{sec:resid}.
We further extract the local part of $\cR_{nloc}$ \eqref{eq:lin_evo_err2}, which has the form \eqref{eq:lin_evo_main2} obtained in Section \ref{sec:vel_err}, and combine it with $\cR_{loc, 0, j}$ to get the essentially local residual error 
\beq\label{eq:resid_loc}
\bal
\cR_{loc, i} & = \cR_{loc, 0, i} + \cR_{dif, i}  + M  , \quad 
\cR_{dif, i} \teq  D_i^2 \cB_{op, i}( \uu(\bar \e ), \hat G)(0)\cdot
( \chi_{i2} - f_{\chi, i}  )  ,  \\
M & \teq  \cB_{op, i}( \uu(\hat \e), \bar W ) -  D_i^2 \cB_{op, i}( \uu(\hat \e), \bar W )(0) \chi_{i 2}
- \cB_{op,i}( \uu_A(\hat \e_1), (\na \uu)_A(\hat \e_1), \bar W).
\eal
\eeq
where $\chi_{i2}$ is defined in \eqref{eq:solu_cor}. By definition \eqref{eq:Blin_L} and following derivation of \eqref{eq:ODE_err_dec}, we get 
\[
  D_i^2 \cB_{op, i}( \uu(\bar \e ), \hat G)(0) = u_x(\bar \e)(0) V_i, \quad
  V =(  \hat G_{1,xy}(0), \hat G_{2, xy}(0), \hat G_{3,xx}(0) ). 
\]
To estimate each term, we follow Section \ref{sec:err_idea} and Appendix \ref{sec:resid}. We perform the decomposition \eqref{eq:uerr_dec1}  $\uu(\hat \e ) = \uu_A(\hat \e_1) + \hat{\uu}(\hat \e_1) + \uu(\hat \e_2)$ and similar decomposition for $\na \uu(\hat \e)$, with $(\bar \e, \chi_{\bar \e})$ in \eqref{eq:uerr_dec1}  replaced by $(\hat \e, \chi_{\hat \e})$, where $\chi_{\hat e} $ is defined in \eqref{eq:cutoff_near0_all}. Using linearity of $B_{op, i}$, we get 
\[
\bal
& \cB_{op, i}( \uu(\hat \e), \bar W ) 
- \cB_{op,i}( \uu_A(\hat \e_1), (\na \uu)_A(\hat \e_1), \bar W)
= II_i(\hat \e_1) + II_i(\hat \e_2), \\
&  II_i(\hat \e_1) = \cB_{op, i}( \hat \uu(\hat \e_1), \wh{ \na \uu}(\hat \e_1), \bar W), \   II_i(\hat \e_2) =  \cB_{op, i}( \uu(\hat \e_2),  \na \uu (\hat \e_2), \bar W)  .
\eal
\]
We have $\uu_A= O(|x|^3), (\na \uu)_A = O(|x|^2)$ near $0$, which implies 
$\cB_{op,i}( \uu_A(\hat \e_1), (\na \uu)_A(\hat \e_1), \bar W) = O(|x|^3)$ \eqref{eq:Blin_gen} and 
\[
M = II_i(\hat \e_1) + II_i(\hat \e_2) -  D_i^2 \cB_{op, i}( \uu(\hat \e), \bar W )(0) \chi_{i 2} =  II_i(\hat \e_1) + II_i(\hat \e_2) - D_i^2 (  II_i(\hat \e_1) + II_i(\hat \e_2)  )(0)  \chi_{i 2} .
\]
The term $I_{i,N}$  in $\cR_{loc, 0, i}$ \eqref{eq:lin_evo_err3},\eqref{eq:lin_evo_err2} is similar to $II_i^N$, and $M$ has a similar form as  $II_{i}(\bar \e_1) + II_i( \bar \e_2)$ in Appendix \ref{sec:resid}. We have done the above decomposition for $\uu(\bar \e)$ in \eqref{eq:uerr_dec1}, \eqref{eq:bous_errM}, and refer therein for more details.
Then the estimate of $ \cR_{loc,  j}$ is similar to that in Appendix \ref{sec:resid}.
See Section {\seccombvelerr} in \cite{ChenHou2023a} for more discussion of the above forms.

\vs{0.1in}
\paragraph{\bf{
Error for the initial data and at stopping time}}
The error $\wh W^{(1)}(T )- D^2 \wh W^{(1)}(t) \circ \chi_{\cdot 2}$ at the stopping time has compact support and its estimate follows the methods in Section \ref{sec:err_idea}. 
To bound the initial interpolation error $ err_{in} \teq \wh W^{(1)}_0 + a_0 \circ \chi_{\cdot 2} - \bar W_0$ \eqref{eq:lin_evo_err2} in a large domain, we follow similar methods. The error involves $ \bar \om, \bar \th$ which are supported globally. 
To bound $err_{in}$ in the middle and far-field, since $\hat W^{(1)}_0 + a_0 \circ \chi_{i, 2} = 0$, combining all the initial data from the finite rank perturbation (see Appendix C.2.1 of Part I \cite{ChenHou2023a}), we need to estimate 
\begin{eqnarray*}
 &&I_1 =  c_{\om}(\om_1) \bar \om - \hat \uu(\om_1) \cdot \na \bar \om , \quad  I_2 =  2 c_{\om}(\om_1)  \bar \th_x 
   - \hat \uu \cdot  \na \bar \th_x - \hat \uu_{x } \cdot \na \bar \th, \\ 
&&I_3 = 2 c_{\om}(\om_1)  \bar \th_y
   - \hat \uu \cdot  \na \bar \th_y - \hat \uu_{y  } \cdot \na \bar \th ,
\end{eqnarray*}
for large $|x|$. The approximation terms near $0$ defined in Section 4.2.1 of Part I \cite{ChenHou2023a} are supported near $0$ and decay to zero as $|x| \rightarrow \infty$. In the far-field, $\hat \uu(\om_1)$ is only a rank-one term. We estimate the above terms using \eqref{eq:err_mid_far}, \eqref{eq:err_mid_far2}  with $a = c_{\om}(\om_1)$ and the estimates in Section \ref{sec:resid}.

\subsection{Posteriori error estimates of the velocity}\label{sec:vel_err}
In this section, we show that the nonlocal error in \eqref{eq:lin_evo_err2} has the desired forms in \eqref{eq:lin_evo_main2}. Then we combine the estimate of such terms with the nonlinear energy estimate in Section {\seccombvelerr} in \cite{ChenHou2023a}. Using \eqref{eq:Blin} and the definition of $\cL^{\bar e}, \cL^e$ \eqref{eq:u_Ne}, \eqref{eq:decomp_L}, we have 
\beq\label{eq:Blin_L}
\cL_j^{\bar e}( G) =\cB_{op, j}( \uu(\bar \e), G), \quad 
\cL_j^{ e}( G) =\cB_{op, j}( \uu( G + (-\D) \phi_G^N) , \bar W ),
\eeq
where $\phi_G^N$ is the numerical stream function associated with $G$.

Given $c_i(t)$ Lipschitz in $t$ and $\bar W_i(0), i=1,2.., n$, we construct $\hat W_i(t)$ following previous sections and $\hat G$ using \eqref{eq:lin_evo_main1}.
Using the derivations in \eqref{eq:lin_evo_err2}, \eqref{eq:lin_evo_err3}, \eqref{eq:lin_evo_err1_Ie} and the above relation, the  contribution from the error type $I_{j, \bar e}$ term to the error \eqref{eq:lin_evo_err_def} in the $jth$ equation is the following
\[
\bal
 \cR_{j}^{\bar e } 
\teq \int c(t- s) I_{j ,\bar e}(s) d s 
=  - ( \cR_{j0}^{\bar e } - D_j^2 \cR_{j0}^{\bar e}(0) \chi_{j 2} ),  
\quad \cR_{j0}^{\bar e } \teq  \sum_{i \leq n} \int_0^t c_i(s) \cB_{op, j}( \uu(\bar \e), \hat W_i(t-s)) dt .
\eal
\]
Since $\cB_{op, j} $  is bilinear and $c_i(t)$ is spatial-independent and Lipschitz in $t$, we get 
\beq\label{eq:Blin_L2}
\cR_{j0}^{\bar e} = \cB_{op, j}( \uu(\bar \e),  \B(
\sum_{i \leq n} \int_0^t  c_i(t-s)   \hat W_i(s) ds \B)
= \cB_{op, j} ( \uu(\bar \e), \wh  G(t)).
\eeq

Denote by $ \wh G^{(l)}, \hat W_j^{(l)}$ the approximate solution with extension in $t$ in Section \ref{sec:cubic_time}, and the first correction $l=1$ in Section \ref{sec:lin_evo_1stcor} or two corrections $l=2$ in Section \ref{sec:lin_evo_1stcor}, \ref{sec:lin_evo_2ndcor}. Let $\hat \phi_i^{(l)}$ be the stream function associated with  $\hat W_i^{(1)}$ constructed numerically with the first correction for $l=1$ and both corrections for $l=2$. 
In particular, the full solution is given by $ \hat G = \hat G^{(2)} , \hat W_i = \hat W_i^{(2)},  \hat \phi_i^N = \hat \phi_i^{N, (2)}$ \eqref{eq:solu_full}. 
We construct the stream function $\hat \phi^{N, (l)}$ associated with $\hat G^{(l)}$ and error $\hat \e$ as follows 
\[
\bal
\hat \phi^{N, (l)} \teq   \sum_{i \leq n } \int_0^t c_i(s)   \hat \phi^{(l)}_i ( t-s) ds, 
\quad 
\hat \e = \hat W^{(1)} + \D  \hat \phi^{N, (1) }
= \sum_{i \leq n } \int_0^t c_i( s) ( \hat W_i^{(1)}  + \D  \hat \phi_i^{N, (1)} ) ( t-s) ds .
 \eal
\]
Since we can obtain $\uu(a(t) \chi_{1 2})$ exactly for the second correction (see Section \ref{sec:lin_evo_2ndcor}), we have
\beq\label{eq:Blin_L3}
\hat \e(t) =  \hat W^{(1)} - (-\D) \hat \phi^{N, (1)} = \hat W^{(2)} - (-\D) \hat \phi^{N, (2)}
= \hat W - (-\D) \hat \phi^{N}.
\eeq

In practice, we estimate $\hat \e$ using the first identity since it does not involve $a_i(t)$ and the integrand $ \hat W_i^{(1)} + \D \hat \phi_i^{(1)}$ is piecewise cubic in time. 
 We decompose $\hat \e$ as follows
\beq\label{eq:uerr_hat1}
\hat \e_2 =  \hat  \e_{xy}(0) \D( \f{x^3 y}{2}  \chi_{  \hat \e}), \quad 
\hat \e = (\hat \e -  \hat \e_2 ) +  \hat \e_2  \teq \hat \e_1 + \hat \e_2 ,
\eeq
where $ \chi_{\hat \e}$ is defined in \eqref{eq:cutoff_near0_all}.
Since $\hat \e$ only vanishes $O(|x|^2)$ near $0$, we perform the above decomposition so that $\hat \e_1 =O(|x|^3)$ near $0$. See Appendix \ref{sec:resid} and Section {\seccombvelerr} in \cite{ChenHou2023a} for motivations of \eqref{eq:uerr_hat1}.
We estimate $\hat \e_1, \hat \e_{xy}(0), \hat \phi^N$ following \eqref{eq:W2_est1}.
We establish \eqref{eq:lin_evo_main1}. 

Similarly, using linearity, \eqref{eq:Blin_L}, and \eqref{eq:Blin_L3}, we can rewrite the residual error in \eqref{eq:lin_evo_err2} from $I_{j,  e}$ term in \eqref{eq:lin_evo_err3} related to $\cL_j^{ e}( \cdot) $
as follows 
\[ 
\bal
  \cR_{j}^{  e } 
 & \teq \int c(t -s) I_{j, e}(s) d s 
 = - (\cR_{j0}^{  e } - D_j^2 \cR_{j0}^{  e}(0) \chi_{j 2} ),  \\
 \cR_{j0}^{  e }  &= \sum_{i\leq n} \int_0^t c_i(t-s) \cL_j^e( \hat W^{(1)}_i)(s) d  s 
=  \sum_{i\leq n} \int_0^t c_i(t-s) \cB_{op, j}( \uu( \hat W_i^{(1)} + \D \hat \phi_i^{N, (1)} )(s), \bar W ) d  s \\
&  = \cB_{op, j}( \hat \e(t), \bar W ), 
\eal
\]
which along with \eqref{eq:Blin_L}-\eqref{eq:Blin_L2} for $\cR_j^{\bar e}$ establish the formula for $\cR_{nloc}$ in \eqref{eq:lin_evo_main2}.

\section{Estimate the norm of the regular part of the velocity}\label{sec:vel_comp}

In this section, we derive the constants in the upper bound in Lemma \ref{lem:main_vel}. We have constructed the finite rank approximation $\hat f$ for $f$ in Lemma \ref{lem:main_vel} in Section {\secapprvel} in Part I \cite{ChenHou2023a}. The estimate of the most singular part, e.g. $u_{x, a, b}( \om \psi)$, in the $C^{1/2}$ estimate in Lemma \ref{lem:main_vel} can be obtained using the sharp H\"older estimates in Section {\secsharp} of Part I \cite{ChenHou2023a}, where $u_{x, a, b}$ is defined via a  localized kernel. 
In this section, we estimate other terms in Lemma \ref{lem:main_vel}, e.g. $ I = \psi u_x(\om) - u_{x, a, b}( \om \psi) - \psi \hat u_x(\om) $, involving the velocity with desingularized kernels, which are more regular. 

In Section \ref{sec:int_method}, we outline the strategies in the estimate and decompose the 
integrals from the nonlocal terms into several parts based on their regularities.
In Section \ref{sec:vel_linf}, we perform the $L^{\inf}$ estimates in Lemma \ref{lem:main_vel} and derive the constants. In Section \ref{sec:vel_hol_comp}, we perform the H\"older estimate of different parts. In Section \ref{sec:hol_comb}, we combine the H\"older estimate of different parts, which provide the constants in Lemma \ref{lem:main_vel}. In particular, we reduce the $L^{\inf}$ estimates and the $C^{1/2}$ estimates in Lemma \ref{lem:main_vel} to bounding some explicit $L^1$ integrals depending on the weights, which can be estimated by a numerical quadrature with rigorously error control. 
We estimate these integrals with computer assistance. See discussions in Section \ref{proof-sketch}.

We will apply the second estimates in Lemma \ref{lem:main_vel} for the nonlocal error, e.g. $\uu(\bar \e)$ and $\bar \e$ is the error of solving the Poisson equations. Since we can estimate piecewise bounds of $\bar \e$ following Section \ref{sec:err_idea}, instead of using global norm, we improve the estimate using the localized norms, which are much smaller than the global norm. See Section \ref{sec:vel_loc_est}.


The kernels associated with $\uu , \na \uu$ are given by
\beq\label{eq:kernel_du}
\bal
& K_1 \teq \f{y_1 y_2}{|y|^4}, \quad  K_2 \teq  \f{1}{2}\f{ y_1^2 - y_2^2}{ |y|^4}, \quad 
K_u  \teq  \f{  y_2}{2 |y|^2} , \quad  K_v \teq -\f{ y_1}{2 |y|^2}, \\
& K_{u_x} = - K_1, \quad K_{u_y} = K_{v_x} = K_2. 
\eal
\eeq
Here, we have dropped the constant $\f{1}{\pi}$, e.g. $ u_x(\om)= -\pa_{xy}(-\D)^{-1} = \f{1}{\pi} K_{u_x} \ast \om$. One needs to multiply $\f{1}{\pi}$ back to obtain the final estimate.

\vspace{0.1in}
\paragraph{\bf{Difficulties in the computations}}

In addition to the difficulties discussed in Section {\secvelPI} of Part I \cite{ChenHou2023a}, e.g. singularities caused by the weights and kernels, the singular integral introduces several technical difficulties in our estimates. To address these difficulties, we need to consider different scenarios and decompose the domain of the integrals carefully in our computer assisted estimates. 
Given $\om \vp  \in L^{\inf}$, the velocity $\uu$ and the commutator $\psi \cdot (\na \uu)(\om) - (\na \uu)(\om \psi)$  are only log-Lipschitz. The logarithm singularity introduces several difficulties. For example, if $ \uu$ is Lipschitz, a natural approach to estimate its H\"older norm in terms of $|| \om \vp||_{\inf}$ is to estimate the piecewise bound of $\uu$ and $\pa \uu$, which are local in $\uu$, and then use the method in Section \ref{app:hol_func1}. However, since $\uu$ is only log-Lipschitz, we need to perform a decomposition of $\uu$ into the regular part and the singular part carefully. For different parts, we will apply different estimates. See Section \ref{sec:int_loglip} for the ideas. For $\na \uu$, the estimates are more involved since it is more singular.

\subsection{Several strategies}\label{sec:int_method}

We outline several strategies to estimate the nonlocal terms.

\subsubsection{Integral with approximation}

In our computation of $\uu_A = \uu - \hat \uu, ( \na \uu)_A = \na \uu - \wh  {\na \uu}$, where the approximation terms $\hat \uu, \wh  {\na \uu}$ are defined in Section {\secapprvel} of Part I \cite{ChenHou2023a}, the rescaling argument still applies. Note that we do not have 
$\pa \uu_A = (\pa \uu)_A$ since we design approximations for $\uu, \na \uu$ separately. We consider one approximation term $  c(x) \int \one_{y \notin S}  K(x_a, y) \om(y)  dy$ for $ \int K(x, y) \om(y) d y$ to illustrate the ideas, where $S$ is the singular region associated with $x_a$. 
Suppose that $K$ is $-d$-homogeneous. We want to estimate 
\[
I = \rho(x) \int_{\R^2}  ( K(x, y) - c(x) K( x_a, y) \one_{y \notin S }  ) W(y) dy,
\]
where $W$ is the odd extension of $\om$ from $\R^2_+$ to $\R^2$ (see \eqref{eq:ext_w_odd}). Denote 
\beq\label{eq:flam}
f_{\lam}(x) \teq f(\lam x).
\eeq
We choose $\lam \asymp |x|$ and denote $x = \lam \hat x, y = \lam \hat y, x_a = \lam \hat x_a$. 
Since $K(\lam z) =\lam^{-d} K(z) $,  we have 
\beq\label{eq:scal_appr}
\bal
I  &= \rho_{\lam}(\hat x) \int_{\R^2} 
\B( K( \lam \hat x,  \lam \hat y ) - c(\lam \hat x) \one_{ \lam \hat y \notin S} K( \lam  \hat x_a,  \lam \hat y) \B) W( \lam \hat y) \lam^2 d \hat y \\
&= \lam^{2- d} \rho_{\lam}(\hat x) \int_{\R^2} 
\B( K(   \hat x,   \hat y ) - c(\lam \hat x) \one_{  \hat y \notin S / \lam} K(   \hat x_a ,   \hat y) \B) W_{\lam}(   \hat y)  d \hat y. 
\eal
\eeq
The singular region becomes $S / \lam$ and close to $x_a / \lam = \hat x_a$. For example, if $S = \{ y: \max_i |y_i - x_{a,i}| \leq a \}$, we have $S / \lam = \{  y: \max_i |y_i - \hat x_{a,i}| \leq a / \lam \}$. For the above integral,  we will symmetrize the kernel and then estimate it using the norms $ || W \vp ||_{\infty}$ and $[\om \psi]_{C_{x_i}^{1/2}}, i=1,2$ \eqref{int:scal_w}.



\vspace{0.1in}
\paragraph{\bf{The bulk and approximation }}
To take advantage of the scaling symmetry and overcome the singularity, in our computation for $x$ away from the origin and not too large, we choose several dyadic rescaling parameters $\lam = 2^i, i \in I$, e.g. $I= \{ -4, -3, .., 10 \}$. Then for any $x$ with $ \max(x_1, x_2) \in [2^i x_c, 2^{i+1 } x_c]$, we can choose  $\lam = 2^i$ so that the rescaled $\hat{x} = \f{x}{\lam}$ satisfies 
\beq\label{int:sing_loc}
\hat{x} \in
\begin{cases}
 [ x_c, 2 x_c] \times [0, 2 x_c] \teq \Om_1,  \quad \mathrm{ if } \  x_2  \leq x_1 , \\
  [0,   x_c] \times [x_c, 2 x_c] \teq \Om_2,  \quad \mathrm{ if } \ x_2  >  x_1 . \\
 \end{cases}
\eeq
We also choose $x_i$ ($(x_i, 0)$ is the singularity) and the size of the singular region $t_i$ for the approximation term defined in Section 4.3.2 of Part I \cite{ChenHou2023a} such that $x_i / \lam$ is on the grid point of the mesh and the boundary of the singular region $\{ y: |x_i - y_1| \vee | y_2|  \geq t_i / \lam\}$, which aligns with one of the edges of a mesh cell. For example, this can be done by choosing the following $y$ mesh in the near-field to discretize the $y$-integral, $x_i$, and $t_i$
\[
y_{1, i} = ih, \quad y_{2, i}= ih, \quad   x_i = 2^{n_i} h,  \quad t_i = 2^{m_i} h.
\]
Then when we discretize the rescaled integral in $y$, e.g. \eqref{eq:scal_appr}, the singular region is the union of several mesh cells.  For large $y$, it is away from the singularity $\hat x$. Then we can use an adaptive mesh in $y_1, y_2$ to discretize the integral. 

We remark that in \eqref{eq:scal_appr}, if $x_a \neq 0$ and $ x_a / \lam \asymp x_a / |x|$ is too large or too small, since $c(x)$ is supported near $x_a$,
$c( \lam \hat x) = c(x)$ will be $0$. This means that when we compute $\uu_A(x), ( \na \uu)_A$, if the coefficient of an approximation term with center $x_i$ and parameter $t_i$ is nonzero, e.g., $c(x) \neq 0$, then $\lam$ is comparable to $x_i$ when we rescale the integral by $\lam$.  Thus $ \hat x_i = x_i / \lam$ is on the grid. We also choose $t_i$ such that $t_i / \lam$ is a multiple of mesh size $h$ for $\lam$ comparable to $x_i$.

\begin{remark}
Using the scaling symmetry and rescaling the integral by dyadic scales, we can compute the integral for $x \in [0, D]^2 \bsh [0, d]^2$ with roughly $O( \log(D/ d))$ computational cost.
\end{remark}

\paragraph{\bf{The near-field and the far-field}}

Recall the notations from Section {\secapprvel} in Part I \cite{ChenHou2023a}
\beq\label{eq:u_appr_near0_coe}
\bal
& C_{u0} = x , \quad C_{v0} = -y, \quad C_{u_x 0} = 1, \quad C_{u_y 0} = C_{v_x 0} = 0,  \\
& C_{u_x}  = -(x^2 - y^2), \quad C_{v_x} = 2x y,  \quad C_{ u_y} =  2 xy,  \quad  C_u  = - ( \f{1}{3} x^3 - x  y^2 ),  \ C_v  = x^2 y - \f{1}{3} y^3, \\
 & K_{ux0} = - \f{4y_1 y_2}{|y|^4}, \quad   K_{00} = \f{24 y_1 y_2(y_1^2 - y_2^2)}{|y|^8}, \quad \cK_{00}(\om) \teq \f{1}{\pi} \int_{\R^2_{++}} K_{00}(y) \om(y) dy.
\eal
\eeq

If $x$ is sufficiently small, i.e. $\max ( x_1,x_2) < \min_{ i\in I} 2^i x_c$, we choose $\lam = \max(x_1, x_2) / x_c$ so that the rescaled $\hat{x} = \f{x}{\lam}$ is on the line $x_1 = x_c$ or $x_2 = x_c$. Assuming $\vp(x) \geq |x|^{-\b_1} |x|_1^{-\b_2}$, $\rho \sim |x|^{-\al}$ near $x=0$, and $K$ is $-d$-homogeneous, then we get
\beq\label{int:scal_asymp}
\bal
| \rho(x) \int_{\R_2^{++}} K(x, y) \om(y) dy | 
&\leq || \om_{\lam} \vp_{\lam}||_{L^{\infty}} \rho_{\lam}(\hat{x})
\int_{\R_2^{++}}  | K(\hat{x}, \hat{y}) | \vp_{\lam}(\hat{y})^{-1} \lam^{2-d} d\hat{y}  \\
& \leq || \om_{\lam} \vp_{\lam}||_{L^{\infty}}  \lam^{\b_1 + \b_2 + 2 - d} \rho_{\lam}(\hat{x})
\int_{\R_2^{++}}  | K(\hat{x}, \hat{y}) | |\hat{y}|^{\b_1} \hat{y}_1^{\b_2} d\hat{y}  .\\
\eal
\eeq

As $x \to 0$, $\lam \to 0$. The factor $\lam^{\b_1 + \b_2 + 2-d}$ absorbs the large factor $\lam^{-\al}$ in $\rho_{\lam}(\hat{x})$. In our estimate of $\uu_A, \na \uu_A$, we have $\b_1 + \b_2 =2.9$ for $\vp_1, \vp_{g, 1}$, 2.5 for $\vp_{elli}$ \eqref{wg:linf},  $(\al, d) = (2, 2)$ for $(\psi_1, \na \uu_A)$ \eqref{wg:hol}, $(\al,  d) = (3, 1)$ for $( \rho_{10}, \uu_A)$ \eqref{wg:linf}. We have $\b_1 + \b_2 + 2 - d  - 2  > 0$.

In general, the above integral may not be integrable due to the growing weight $|y|^{\b_1} y_1^{\b_2}$. For $\uu_A, \na \uu_A$ with small $x$, it takes the form (see Section 4.3 of Part I \cite{ChenHou2023a})
\beq\label{eq:int_near0}
 f(x) - C_{f0}(x) u_x(0) - C_f(x) \cK_{00} 
 = \int_{\R^2_{++}} ( K^{sym}_f  +  C_{f0}(x) \f{4}{\pi} \f{y_1 y_2}{|y|^4} - C_f(x) K_{00}(y) ) \om(y) dy,
\eeq
where $C_{f0}, C_f$ and $K_{00}$ are defined in \eqref{eq:u_appr_near0_coe}, 
and $f = u, v, u_x, v_x, u_y, v_y$. In particular, the associated kernel has a much faster decay rate $|y|^{-6}$, which will be shown in Appendix \ref{app:decay}.
Thus, the integral is integrable. 

Since $\lam = \max( x_1, x_2) / x_c$ is very small, $\rho_{\lam}(\hat{x})$ can be well approximated by the most singular power $ c \lam^{-\al}|x|^{-\al}$ for some $c>0$, 
which can be estimated effectively after factorizing out $\lam^{-\al}$.

Similarly, if $x$ is sufficiently large, i.e. $ \max(x_1, x_2) > \max_{i \in I} 2^{i+1} x_c$, we choose $\lam = \f{ \max(x_1, x_2)}{x_c }$ so that the rescaled $\hat{x} = x / \lam$ is on the line $ x_1 = x_c$ or $x_2 = x_c$. Since $\lam$ is sufficiently large, we can estimate the weight $\rho_{\lam}, \vp_{\lam}$ based on their asymptotic behavior.

\vs{0.1in}
\paragraph{\bf{Integral near $0$}}
We have an approximation  $ I = - C_{f0}(x) K_{ux0}(y) - C_2(x) K_{00}(y)$
\eqref{eq:u_appr_near0_coe} for $K_{f0}^{sym}(x, y)$ with some smooth coefficients $ C_2$ ($C_2$ may not be $C_f$). 
The term $C_{f0}(x) K_{ux0}(y)$ and $K_f$ are both $-d$ homogeneous, $d=1$ or $2$. 
Since $K_{ux0}, K_{00}$ are singular near $0$, after rescale the integral following \eqref{eq:scal_appr}, we decompose the symmetrized integral for $y$ near $0$ as follows 
\beq\label{eq:u_tof}
\bal
 & II  = \int_{\R_2^{++}} \B( K_f^{sym}( \hat x, \hat y  ) \lam^{2-d} - \lam^{2-d}  C_f(  \hat x)  K_{ux0}(\hat y) 
 - C_2(\lam \hat x ) K_{00}(\hat y) \lam^{-2} \B) \om(\lam \hat y)  d \hat y  \\
 & =\lam^{2-d} \B( \int_{\R_2^{++}} \B( K_f^{sym}( \hat x, \hat y  )  -   C_f(  \hat x) 
\one_{|\hat y|_{\inf} \geq k_{01} h} K_{ux0}(\hat y) 
 -  \lam^{-4 + d} C_2(\lam \hat x )  \one_{ |\hat y|_{\inf} \geq  k_{02} h } K_{00}(\hat y) 
\B) \om( \lam \hat y ) d \hat y  \\
 & \quad - 
  \int_{ \R_2^{++}}  \B(   C_f(  \hat x) 
\one_{|\hat y|_{\inf} \leq k_{01} h} K_{ux0}(\hat y) 
 -  \lam^{-4 + d} C_2(\lam \hat x )  \one_{ |\hat y|_{\inf} \leq  k_{02} h } K_{00}(\hat y) 
\B)  \om(\lam \hat y) d \hat y \B)
\eal
\eeq
for some small integers $k_{0i}$ with $k_{0i} h < |\hat x|_{\inf}/2$, e.g. $k_{01} = 4, k_{02} = 20$, where $|a|_{\inf} = \max(a_1, a_2)$ and $h$ is chosen in \eqref{int:para1}. We will estimate the first integral with regular integrand  near $\hat y = 0$ using the method in Section \ref{sec:T_rule}, and the last two integrals for $|\hat y|_{\inf} \leq k_{01} h, |\hat y|_{\inf} \leq k_{02} h$ analytically in Section \ref{sec:int_near}. We perform the above decomposition since  $  K_{00}(\hat y), K_{ux0}(\hat y) $ are too singular to estimate them numerically.

We apply the above decompositions to the integrals in both $L^{\inf}$ and $C^{1/2}$ estimates. We also apply the above decompositions to the approximation terms and estimate integral of $K_{ux0}$ separately near $y=0$.

\subsubsection{The scaling relations}\label{sec:scal_prop}

We discuss several scaling relations, which will be useful in later computation. For a $-d$-homogeneous kernel $K$, i.e., $K(\lam x) = \lam^{-d} K(x)$, we have 
\[
I(x) = \rho(x) \int K(x , y) \om( y) dy 
=  \rho_{\lam}(  \hat x ) \int K( \hat x, \hat y) \om_{\lam}( \hat y) \lam^{2-d}
d \hat y \teq  \lam^{2-d} I_{\lam}(\hat x),
\]
where $ x = \lam \hat x, y = \lam \hat y$. To compute the derivative of $I(x)$, using the chain rule, we have 
\[
\pa_{x_i} I( x) = \lam^{2-d} \f{ d \hat x_i}{ d x_i} \pa_{ \hat x_i} I_{\lam}( \hat x) 
= \lam^{1-d} \pa_{ \hat x_i} I_{\lam}( \hat x).
\]

For the $L^{\inf}$ part, clearly, we get $| I(x) | = | I_{\lam}(\hat x)|$. To compute the H\"older norm, we use the following relation $|x-z| = \lam | \hat x - \hat z|$ and 
\[
\bal
\f{ |I(x) - I(z)| }{ |x-z|^{1/2}}
&= \lam^{-1/2} \f{ |I_{\lam}(\hat x) - I_{\lam}( \hat z) | }{  |\hat x- \hat z|^{1/2} } . \\
\eal
\]

In particular, for $i=1,2$, we have 
\beq\label{int:scal_w}
|| \om_{\lam} \vp_{\lam} ||_{\inf} = || \om \vp||_{\inf}, \quad [ \om_{\lam} \psi_{\lam}]_{C_{x_i}^{1/2}} = \lam^{1/2} [ \om \psi]_{C_{x_i}^{1/2}} , 
\quad [f]_{C_{x_i}^{1/2}} := \sup_{y, z: y_i = z_i} \f{ | f(y)-f(z) |}{ |y-z|^{1/2}} .
\eeq
Using these scaling relations, we can perform the estimate in a rescaled domain with any $\lam>0$. 
\subsubsection{Mesh and the Trapezoidal rule}\label{sec:T_rule}

After rescaling the integral with suitable scaling factor $\lam$, we can restrict the rescaled singularity $\hat{x} \in [0, 2 x_c]^2 \bsh [0, x_c]^2$  (see \eqref{eq:scal_appr}, \eqref{int:sing_loc} ).

If a domain $Q$ is away from the singularity $\hat x$ of the kernel, applying \eqref{int:scal_w}, we get
\beq\label{int:L1_1}
\int_Q |K(\hat x, y)| |\om_{\lam}(y)| dy  
\leq || \om_{\lam} \vp_{\lam}||_{\inf} \int_Q | K(\hat x, y)| \vp_{\lam}^{-1}(y) dy 
= || \om \vp||_{\inf} \int_Q | K( \hat x, y)| \vp_{\lam}^{-1}(y) dy .
\eeq

Then, it suffices to estimate the integral of an explicit function $| K( \hat x, y)| \vp_{\lam}^{-1}(y)$. If in addition, the region $Q$ is small, e.g. $Q$ is the grid $[y_i, y_{i+1}] \times [y_j, y_{j+1}]$ introduced below, we further apply
\[
\int_Q |K(\hat x, y)| |\om_{\lam}(y)| dy  
\leq || \om \vp||_{\inf}  || \vp_{\lam}^{-1}||_{L^{\inf}(Q)}  \int_Q |K( \hat x, y)| dy.
\]
Since the domain $Q$ is small, the estimate is sharp.
We use the following method to estimate  $\int | K( \hat x_i, y)  | dy $ for a suitable kernel $K$ and $\hat x_i$ on the grid points.

We consider the estimate of the $L^1$ norm of some function $f$ in $\R_2^{++}$, e.g. $f = K(\hat x_i, y) $ mentioned above. To discretize the integral, we design uniform mesh in the domain $[0, b]^2$ covering $\Om_1$ and $\Om_2$ with mesh size $h $ and adaptive mesh in the larger domain $[0, D]^2$
\beq\label{eq:int_mesh_y}
0 = y_0 < y_1 < ..< y_n = D, \quad  y_i = ih, \  i \leq b / h.
\eeq
The finer mesh in the near field $[0,b]^2$ allows us to estimate the integral with higher accuracy. We choose sparser mesh in the far-field since $y$ is away from the singularity $\hat x$
and the kernel decays in $y$. We partition the integral as follows 
\beq\label{int:parti1}
\int_{\R_2^{++}} |f(y)| dy = \sum_{ 0\leq i, j \leq n-1 } \int_{ [y_i, y_{i+1}] \times [y_j, y_{j+1}]} |f(y)| dy + \int_{ y \notin D} |f(y)| dy. 
\eeq

We focus on how to estimate the first part for nonsingular $f$. In Section \ref{sec:int_beyond}, we estimate the integral beyond $[0, D]^2$ using the decay of the integral. We will discuss how to estimate the integral near the singularity of the kernel in a later subsection.

Denote $Q = [a, b] \times [d,c], h_1 = b-a, h_2 = d-c$. We use the Trapezoidal rule
\[
\int_{ [a, b] \times [c, d]} |f(y)| dy \leq T ( |f|, Q) + Err( f),
\]
where 
\[
T(f , Q) \teq  \f{(b-a)(d-c) }{4} ( f(a, c) + f(a,d) + f(b, c) + f(b,d)).
\]

The error estimate of the above Trapezoidal rule is not obvious due to the absolute sign. In fact, even if $f$ is smooth, $|f|$ is only Lipschitz near the zeros of $f$.  Since the set of zeros is hard to characterize and that $|f|$ can have low regularity, we do not pursue higher order quadrature rule. We have the following error estimate. 

\begin{lem}[Trapezoidal rule for the $L^1$ integral]\label{lem:T_rule}
 For $f \in C^2(Q)$, we have 
\[
\int_Q |f(y)| dy \leq T( |f| , Q) +  \f{|Q| }{12} ( h_1^2 || f_{xx}||_{L^{\infty}(Q)}  
+ h_2^2 | f_{yy}||_{L^{\infty}(Q)} ) .
\]

\end{lem}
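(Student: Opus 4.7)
The plan is to reduce the two-dimensional bound to a one-dimensional trapezoidal estimate via Fubini, while handling the absolute value by comparing $f$ itself (not $|f|$) to its linear interpolant. The subtlety is that $|f|$ is only Lipschitz on the zero set of $f$, so the classical trapezoidal quadrature error formula, which requires a $C^2$ integrand, does not apply to $|f|$ directly.

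\textbf{One-dimensional step.} First I would prove: for $g \in C^2([a,b])$,
\[
\int_a^b |g(y)|\,dy \leq \frac{b-a}{2}\bigl(|g(a)|+|g(b)|\bigr) + \frac{(b-a)^3}{12}\|g''\|_{L^\infty([a,b])}.
\]
Let $\tilde g$ denote the linear interpolant of $g$ at the endpoints. The Lagrange remainder formula gives, for each $y \in [a,b]$, some $\xi_y \in (a,b)$ with $g(y) - \tilde g(y) = \tfrac{1}{2} g''(\xi_y)(y-a)(y-b)$. Since $(y-a)(b-y) \geq 0$ on $[a,b]$, the triangle inequality yields $|g(y)| \leq |\tilde g(y)| + \tfrac{1}{2}\|g''\|_\infty (y-a)(b-y)$. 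Because $\tilde g$ is affine, $|\tilde g|$ is convex, so $|\tilde g(y)| \leq \tfrac{b-y}{b-a}|g(a)| + \tfrac{y-a}{b-a}|g(b)|$. Integrating over $[a,b]$ gives $\int_a^b |\tilde g|\,dy \leq \tfrac{b-a}{2}(|g(a)|+|g(b)|)$, and since $\int_a^b (y-a)(b-y)\,dy = \tfrac{(b-a)^3}{6}$, the 1D estimate follows.

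\textbf{Two-dimensional step.} Fixing $y_2$ and applying the 1D inequality in $y_1$,
\[
\int_a^b |f(y_1,y_2)|\,dy_1 \leq \frac{h_1}{2}\bigl(|f(a,y_2)|+|f(b,y_2)|\bigr) + \frac{h_1^3}{12}\|\partial_{y_1}^2 f(\cdot,y_2)\|_{L^\infty}.
\]
Integrating in $y_2$ over $[c,d]$, the error term contributes at most $\tfrac{h_1^3 h_2}{12}\|f_{xx}\|_{L^\infty(Q)} = \tfrac{|Q|\,h_1^2}{12}\|f_{xx}\|_{L^\infty(Q)}$. Applying the 1D inequality once more in $y_2$ to each of $\int_c^d |f(a,y_2)|\,dy_2$ and $\int_c^d |f(b,y_2)|\,dy_2$ produces the four-corner sum $T(|f|,Q)$, together with an additional error $\tfrac{h_1 h_2^3}{12}\|f_{yy}\|_{L^\infty(Q)} = \tfrac{|Q|\,h_2^2}{12}\|f_{yy}\|_{L^\infty(Q)}$. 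Combining the two error contributions yields the claimed bound.

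The main obstacle is the first step, namely bypassing the need to differentiate $|g|$. Once the comparison to the linear interpolant is set up and the convexity of $|\tilde g|$ is exploited, the remainder is a straightforward Fubini/iteration argument, and the constant $\tfrac{1}{12}$ arises naturally from $\int_a^b (y-a)(b-y)\,dy = \tfrac{(b-a)^3}{6}$ combined with the factor $\tfrac{1}{2}$ in the Lagrange remainder.
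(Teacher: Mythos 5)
Your proof is correct and rests on the same key insight as the paper's: compare $f$ to its piecewise-linear interpolant, use the triangle inequality together with the nonnegativity (convexity) of the interpolation weights to bound the interpolant's $L^1$-integral by $T(|f|,Q)$, and control the remainder by the Lagrange error formula. The only cosmetic difference is that you iterate the one-dimensional argument via Fubini, whereas the paper works directly with the bilinear interpolant $L(f) = \sum_i \lambda_i f_i$ and cites the same tensor-product error bound; the constants and structure are identical.
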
  

\begin{remark}
The above estimate shows that the Trapezoidal rule remains second order accurate from the above. 
In particular, this error estimate is comparable to the case without taking the absolute value. 
\end{remark}

\begin{proof}
Define the linear interpolation of $f$ in $Q$
\[
L(f) = \sum_{i=1}^4 \lam_i(x) f_i, \quad E(f) = f - L(f) ,
\]
where $\lam_i(x)$ is linear and satisfies $\sum \lam_i (x)= 1$ and $\lam_i(x) \geq 0$ for $x \in Q$. Using the triangle inequality, we obtain
\[
\int_Q |f| dy \leq \int_Q |E(f)| dy + \int_Q \lam_i(x) |f_i| dy 
= T( |f|, Q) +\int_Q |E(f)| dy.
\]
We have the standard error bound for linear interpolation $E(f)$
\beq\label{eq:lag_err_lin}
|E(f)| \leq \f{ || f_{xx}||_{L^{\infty}(Q) }} {2} |(x - a)(x -b )| +\f{ || f_{yy}||_{L^{\infty}(Q) }} {2}|(y-c )(y-d)|,
\eeq
which can be obtained by first applying interpolation in $x$ and then in $y$. It can also be established using the error estimate for the 2D Lagrangian interpolation with $k=2$ in Section {\applinfestsupp} in the supplementary material II \cite{ChenHou2023bSupp}.
Integrating the above estimate in $x,y$ and using $ \f{1}{2} \int_0^1 t(1-t) dt = \f{1}{12}$ conclude the proof.
\end{proof}

To estimate the integral $\int |K(x, y)|$ for all $\hat x \in \Om_1 , \Om_2$ \eqref{int:sing_loc}, we discretize $[0, 2a]^2$ using uniform mesh with mesh size $h_x = h / 2$. We use the above method to estimate $\int | K( \hat x_i, y) | dy $ for $x_i$ on the grid points. After we estimate the derivatives of the kernel, we use the following Lemma to estimate the integral for any $x$ in a domain.

\begin{lem}\label{lem:int_err_x}
Suppose that $K(x, y) \in C^2( P \times Q )$, $P = [a_1, b_1] \times [a_2, b_2], h_i = b_i - a_i, i=1,2$, and $Q = [a, b] \times [c, d]$.
Let $L(K)(x, y)= \sum_{i, j=1,2 } \lam_{ij}(x) K( (a_i, b_j), y)$ be the linear interpolation of $K(x, y)$ in $x$ using $K( (a_i, b_j), y  ), i, j =1,2$. Then for any $x \in P$, we have 
\[
\bal
\int_Q |K(x, y)| dy
\leq \sum_{i, j = 1, 2} \lam_{ij}(x) \int_{Q} | K( (a_i, b_j), y)| dy 
+ \B( \f{ h_1^2}{8}  || K_{xx} ||_{L^{\infty}(P \times  Q)} 
 + \f{ h_2^2}{8}   || K_{yy}||_{L^{\infty}(P \times Q)}  \B) |Q| .\\
\eal
\]

\end{lem}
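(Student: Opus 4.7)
The plan is to bound $|K(x,y)|$ pointwise in $x$ by the bilinear interpolant $L(K)(x,y)$ in the $x$-variable plus the standard interpolation error, then integrate in $y$. The key observation is that the nodal weights $\lambda_{ij}(x)$ of bilinear interpolation on the rectangle $P$ satisfy $\lambda_{ij}(x)\ge 0$ and $\sum_{i,j=1,2}\lambda_{ij}(x)=1$ for every $x\in P$, so by the triangle inequality
\[
|L(K)(x,y)| \;\le\; \sum_{i,j=1,2}\lambda_{ij}(x)\,|K((a_i,b_j),y)|.
\]
Integrating this over $y\in Q$ produces the main term of the claimed bound.

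For the error, I would apply the one-dimensional Lagrange remainder separately in each component of $x$, exactly as in \eqref{eq:lag_err_lin} (first interpolate in $x_1$, then in $x_2$, with the remaining variable frozen), to obtain
\[
|K(x,y)-L(K)(x,y)| \;\le\; \tfrac{1}{2}\|K_{xx}\|_{L^\infty(P\times Q)}|(x_1-a_1)(x_1-b_1)| + \tfrac{1}{2}\|K_{yy}\|_{L^\infty(P\times Q)}|(x_2-a_2)(x_2-b_2)|.
\]
Here $K_{xx}$ and $K_{yy}$ denote the second partials in the two components of $x$, in the notation used by the statement (these are bounded because $K\in C^2(P\times Q)$). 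Since $|(x_i-a_i)(x_i-b_i)|\le h_i^2/4$ for $x_i\in[a_i,b_i]$, the pointwise error is at most $\tfrac{h_1^2}{8}\|K_{xx}\|_\infty+\tfrac{h_2^2}{8}\|K_{yy}\|_\infty$, and integrating over $Q$ multiplies this by $|Q|$.

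Combining the two bounds via $|K(x,y)|\le |L(K)(x,y)|+|K(x,y)-L(K)(x,y)|$ and integrating over $y\in Q$ yields exactly the inequality in the lemma. The argument does not require any nontrivial estimate on $K$ beyond the $C^2$ regularity and the standard Lagrange remainder; the only points that need minor care are (i) writing the remainder as a sum of two one-dimensional remainders (so the same constant $1/8$ appears in each term instead of a mixed second derivative contribution), and (ii) using nonnegativity and partition-of-unity of $\lambda_{ij}$ to move the absolute value inside the sum. There is no substantive obstacle; this is the nonaveraged, pointwise-in-$x$ companion of Lemma~\ref{lem:T_rule}, and the constant $h_i^2/8$ (rather than $h_i^2/12$) reflects that we take the supremum of $|(x_i-a_i)(x_i-b_i)|$ over $P$ instead of its average.
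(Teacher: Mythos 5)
Your proposal is correct and is exactly the argument the paper intends: the paper's one-sentence proof says the lemma ``follows from \eqref{eq:lag_err_lin}, the triangle inequality and $\frac{1}{2}|t(1-t)|\le\frac{1}{8}$ for $t\in[0,1]$,'' and your writeup carries out precisely those three steps, including the observation that the nodal weights $\lambda_{ij}$ are nonnegative and sum to one so that the absolute value can be pushed inside the sum.
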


The proof follows from \eqref{eq:lag_err_lin}, the triangle inequality and $\f{1}{2}|t(1-t)| \leq \f{1}{8}$ for $ t\in[0, 1]$. We will apply the above Lemma and sum $Q$ over all the near-field domains
$Q_{ij} = [y_i, y_{i+1}] \times [y_j, y_{j+1}]$ \eqref{eq:int_mesh_y}. Since $\sum_{ij} \lam_{ij}(x) =1$, we can simplify the first term as follows 
\[
\sum_{i, j = 1, 2} \lam_{ij}(x) \sum_{ k, l \leq n } \int_{Q_{k l}} | K( (a_i, b_j), y)| dy 
\leq \max_{1\leq i, j \leq 2} \sum_{ k, l \leq n } \int_{ Q_{k l} } | K( (a_i, b_j), y)| dy  .
\]

Therefore, it suffices to estimate the integral for $x$ on the grid points and the piecewise derivative bounds of the kernel.

We apply Lemmas \ref{lem:T_rule}, \ref{lem:int_err_x} to estimate the weighted integral related to the velocity. The integrands take the form \eqref{int:sym_integ1},\eqref{int:sym_integ2}, \eqref{int:decom_ux}. To estimate the error in the above integrals, we need to obtain piecewise $L^{\inf}$ estimate of the derivatives of the integrands in $P, Q$. We estimate the derivatives of the weights in Appendix \ref{app:wg} and the kernel in Appendix \ref{app:ker}.

\vspace{0.1in}

\paragraph{\bf{Parameters for the integrals}}
In our computation, we choose 
\beq\label{int:para1}
h_x = 13 \cdot 2^{-12}, \quad h = 13 \cdot 2^{-11}, \quad x_c = 13 \cdot 2^{-5},
\eeq
which can be represented exactly in a binary system, to reduce the round off error. The approximate values of the above parameters are $h_x \approx 0.0032, h \approx 0.0064, x_c \approx 0.4$.
For $x \in [0, 2 x_c]^2 \bsh [0, x_c ]^2$ \eqref{int:sing_loc}, we have 
\beq\label{int:sing_loc_x}
 \max(x_1, x_2) \geq x_c = 64 h = 128 h_x .
\eeq
In our decomposition of the integral, e.g. \eqref{int:decom_ux}, \eqref{int:decom_linf_ux},
\eqref{int:decom_linf_u}, we impose a constraint on the size of the singular region to satisfy $(k+1) h <  x_c$ such that the region does not cover the origin.

\subsubsection{Decomposition, commutators and the Lipschitz norm}\label{sec:comp_ux}

The most difficult part of the computation is to estimate the H\"older norm of $\na \uu$, and we discuss several strategies. 
In this computation, we cannot first estimate the local Lipschitz norm of $\na u$ and then obtain the local H\"older norm due to the difficulties discussed at the beginning of Section \ref{sec:vel_comp}. We need to decompose the integral related to $\na u$ into several parts according to the distance between $y$ and the singularity and use different estimates for different parts.

We focus on the integral related to $u_x$ without subtracting any approximation term and assume that $x \in [0, 2 x_c]^2  \bsh [0, x_c]^2$. The approximation term $\wh{ \na \uu}$ is nonsingular and can be estimated using the method in Section \ref{sec:T_rule}. Let $h$ be the mesh size in the discretization of the integral in $y$. Suppose that \beq\label{eq:int_loc_x}
x \in \R_2^{++}, \quad x_2 \leq x_1, \quad  x \in  B_{i_1, j_1}(h_x) 
\subset B_{ij}(h) ,  \quad j \leq i,
\eeq
where $h_x = h / 2$ and $B_{lm}(r)$ is defined as 
\beq\label{int:box_B}
B_{lm}(r) =   [l r, (l+1) r]  \times [ mr, (m+1) r] .
\eeq
Denote by $R(x, k)$ the rectangle covering $x$
\beq\label{eq:rect_Rk}
R(x, k) \teq [ (i-k) h, (i+1 + k) h] \times [ (j-k) h, (j+1 + k) h] 
\eeq
for any $k > 0$. If $k\in Z^+$, the boundary of $R(x, k)$ is along with the mesh grid and is at least $kh$ away from $x$. Denote by $R_s, R_{s,1}, R_{s,2}$ different symmetric rectangles with respect to $x$ 
\beq\label{eq:rect_Rsk}
\bal
R_s(x, k)  & \teq [x_1 - kh, x_1 + kh] \times [x_2 - kh, x_2 + kh ]  ,  \\
R_{s,1}(x, k) & \teq [x_1 - kh, x_1 + kh]  \times [ (j-k) h, (j+1 + k) h] , \\
R_{s, 2}(x, k) &\teq [ (i-k) h, (i+1 + k) h] \times  [x_2 - kh, x_2 + kh] .
\eal
\eeq
We have $R_s(x, k) \subset R_{s,i} (x,k)\subset R(x, k),i=1,2$. We introduce the upper, lower parts of $R(x,k)$
\beq\label{eq:rect_Rk+}
R^+(x, k) \teq R(x, k) \cap \{ y : y_2 \geq x_2\}, \quad R^-(x, k) \teq R(x, k) \cap \{ y : y_2 \leq x_2\}.
\eeq
We use similar notations for $R_s(x, k), R_{s,1}(x, k), R_{s, 2}(x, k)$. We further introduce the intersection of the rectangle and four half planes with reflection 
\beq\label{eq:rect_NE}
\bal
R(x,k, N)  &= R(x, k) \cap \{y : y_2 \geq 0 \}, \quad R(x, k, S) = \cR_2 ( R(x, k) \cap \{y : y_2 \leq 0 \} ),  \\
R(x,k, E)  &= R(x, k) \cap \{y : y_1 \geq 0 \}, \quad R(x, k, W) = \cR_1 ( R(x, k) \cap \{y : y_1 \leq 0 \} ),  
\eal
\eeq
where $N, E, S, W$ are short for \textit{north, east, south, west}, respectively and the reflection operators $\cR_1, \cR_2$ are given by 
\[
\cR_1( y_1, y_2) = (- y_1, y_2), \quad \cR_2(y_1, y_2) = (y_1, -y_2).
\]
It is clear that $R(x, k , S) \subset \R_2^{+}, R(x, k, W) \subset \{ y: y_1 \geq 0\}$. An illustration of these domains is given in Figure \ref{fig:sing_Rk}.
If $x, y \in \R_2^{++}$, we have the equivalence
\beq\label{eq:rect_equi1}
\bal
& (y_1, -y_2) \notin R(x, k)  \iff (y_1, -y_2) \notin R(x, k) \cap \{ y : y_2 \leq 0 \}
\iff  y \notin R(x, k, S),  \\
& (y_1, -y_2) \in R(x, k)  \iff y \in R(x, k, S) .
\eal
\eeq
 The above notations will be very useful 
 in our later decomposition of the symmetrized kernel.  
\begin{figure}[t]
\centering
\begin{subfigure}{.42\textwidth}
  \centering
  \includegraphics[width=0.64\linewidth]{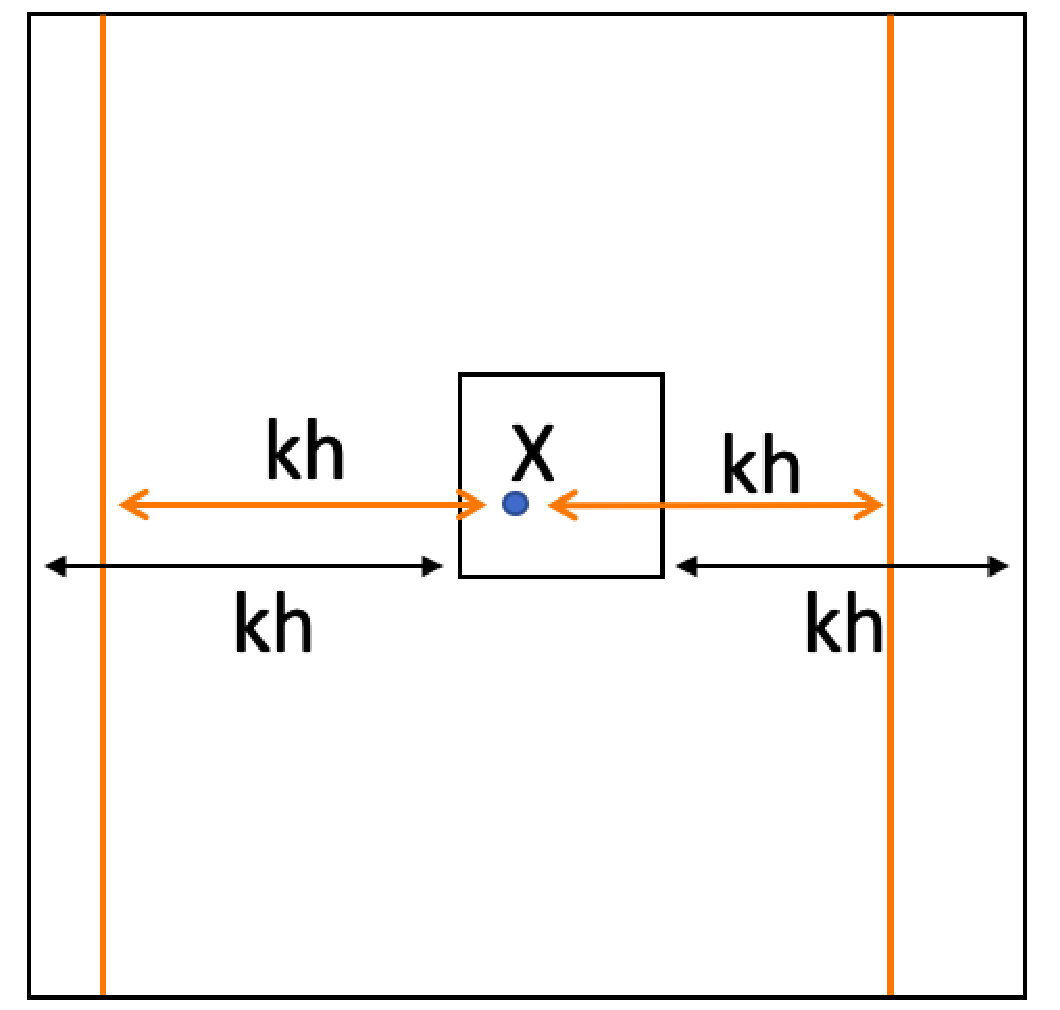}
    \end{subfigure}\begin{subfigure}{.58\textwidth}
  \centering
  \includegraphics[width=0.8\linewidth]{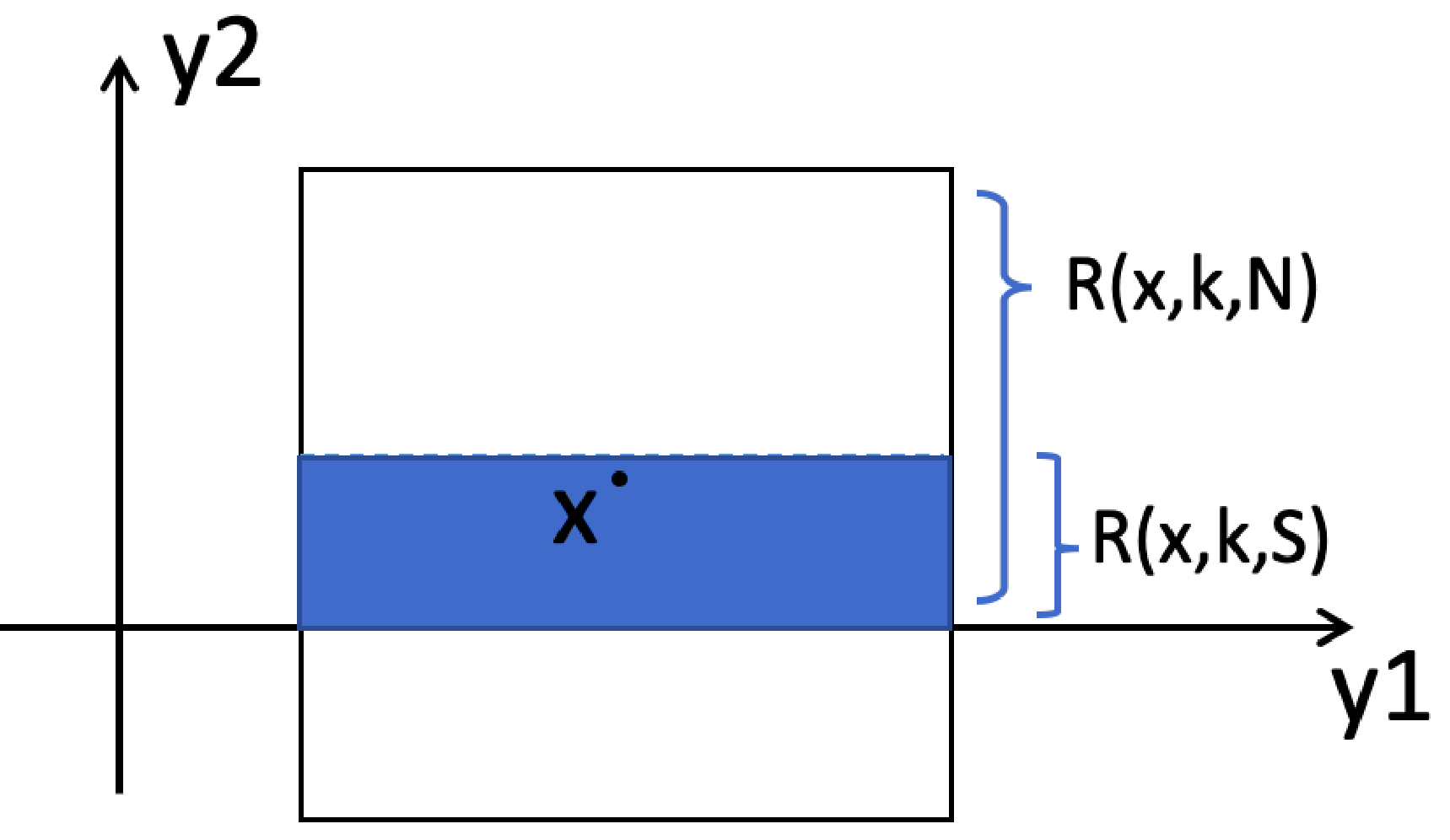}
    \end{subfigure}
\caption{Left: The large box is $R(x, k)$ and the red box is $R_{s,1}(x, k)$. The small box containing $x$ has size $h \times h$. Right: The upper box is $R(x, k, N)$, and the shaded box is $R(x, k, S)$, the reflection of the region below the $y$-axis. }
\label{fig:sing_Rk}
\end{figure}

Define the odd extension of $\om$ in $y$ from $\R_2^+$ to $\R_2$
\beq\label{eq:ext_w_odd}
W(y) = \om(y)  \ \mathrm{ for } \ y_2 \geq 0, \quad W(y) = - \om(y_1, -y_2) \ \mathrm{ for } \ y_2 < 0. 
\eeq
$W$ is odd in both $y_1$ and $y_2$ variables. Since we fix $x$ \eqref{eq:int_loc_x} below, 
for simplicity, we drop $x$ 
 in the $R$ notation. For $k > k_2, k, k_2 \in Z^+$, we decompose the weighted $u_x(x)$ integral as follows \footnote{Since we have no flow boundary condition for the velocity and stream function $- \D \phi = \om, \phi(0, y) = 0$, the Poisson integral formula for $\uu = \na^{\perp} \phi$ is equivalent to $\na^{\perp} (-\D_{2D})^{-1} W$ for $\D_{2D}$ defined in the whole space case.}
\beq\label{int:decom_ux}
\bal
&\psi(x) \int K_1(x - y) W(y) dy = \psi(x) \int_{R(k)^c} K_1(x-y) W(y) dy  \\
 & + \int_{ R_{s, 1}(k)}  K_1( x-y)  \psi(y) W(y) dy  + \int_{ R(k) \bsh R_{s, 1}(k)}  K_1(x- y) \psi(y) W(y) dy  \\
 & + \int_{ R(k) \bsh R(k_2)}  K_1(x-y) (\psi(x) -\psi(y) ) W(y) dy    + \int_{ R(k_2)}  K_1(x-y)(\psi(x) -\psi(y) ) W(y) dy \\
 &\teq I_1(x, k) + I_2(x,k) + I_3(x, k) + I_4(x,k,k_2) + I_5(x, k_2),
\eal
\eeq
where 
\[
K_1(s) \equiv \frac{s_1 s_2}{|s|^4}.
\]
We drop $-\f{1}{\pi}$ in the integrand $- \f{1}{\pi} K_1(s)$ for $u_x(x)$ \eqref{eq:kernel_du} at this moment to simplify the notation. We will estimate different parts in Section \ref{sec:vel_hol_comp}.


\vspace{0.1in}
\paragraph{\bf{Principle and Log-Lipschitz integrand}}
Our major motivation for the decomposition \eqref{int:decom_ux} and the integrands 
\eqref{int:sym_integ1} with symmetrization is to obtain integrands $J$ which is at least locally log-Lipschitz satisfying $J(x, y) \les |x-y|^{-1}$ for $y$ close to $x$, and the more singular one $J_S$. We will estimate the integral of $J$ using the Trapezoidal rule in Section \ref{sec:T_rule} and $J_S$ analytically.

\subsubsection{Symmetrization}\label{sec:int_sym}

After we obtain the decomposition, we use the odd symmetry of $W$ in $y_1, y_2$ to symmetrize the integral and reduce the integral over $\R_2$ to the first quadrant $\R_2^{++}$. 
This enables us to exploit the cancellation in the integral and obtain a sharper estimate. In our computation,  we symmetrize the integrals in $I_1(x,k)$ and $I_4(x, k, k_2)$, which are more regular. For a given kernel $K(x, y)$, we denote by $K^{sym}$ the symmetrization of $K$
\beq\label{int:ker_sym}
K^{sym}(x, y) \teq K(x, y) - K(x, -y_1, y_2) - K(x, y_1, -y_2) + K(x, -y).
\eeq

We show how to symmetrize $I_1(x,k)$ as an example. Recall the notations in \eqref{eq:rect_NE}, \eqref{eq:int_loc_x}. We assume $x_1 \geq x_2$. We choose $ k < i$ so that  $R(x, k) \subset \{ y: y_1 > 0\}$ and $R(x, k, W) = \emptyset$. By definition \eqref{eq:rect_Rk}, the domains $R(x, k), R(x, k, N), R^+(x,k)$ etc are the same for all $x \in B_{i_1, j_1}(h_x)$. Yet, $R(x, k)$ may cross the boundary $y_2 = 0$, i.e. $R(x, k, S) \neq \emptyset $. See the right figure in Figure \ref{fig:sing_Rk} for a possible configuration. Using the equivalence \eqref{eq:rect_equi1} and the property that $W$ is odd in $y_1$ and $y_2$, for general $x\in \R_2^{++}$ (without $x_1 \geq x_2$), we can symmetrize $I_1(x,k)$ as follows 
\beq\label{int:sym_I1}
\bal
I_1(x, k) = \psi(x) \int_{ \R_2^{++}}  & \B( K_1(x-y) \one_{ y \in R(k)^c} 
- K_1( x_1 - y_1, x_2 + y_2) \one_{ y  \notin R( k , S)}   \\
&   - K_1( x_1 + y_1, x_2 - y_2)  \one_{y \notin R(k, W)  } + K_1(x+y) \B) \om(y) dy. 
\eal
\eeq

For $I_4(x)$ \eqref{int:decom_ux}, we choose the weight $\psi(y)$ \eqref{wg:hol}, \eqref{wg:linf} even in $y_1, y_2$. 
Then the symmetrization of $I_4$ is 
\beq\label{int:sym_I4}
\bal
I_4(x, k, k_2) = & \int_{\R_2^{++}}  
\B(   K_1(x-y)   \one_{ y \in R(k) \bsh R(k_2)} 
- K_1( x_1 - y_1, x_2 + y_2) \one_{ y \in R( k , S) \bsh R(k_2, S)}   \\
&  \quad   - K_1( x_1 + y_1, x_2 - y_2)  \one_{y \in R(k, W) \bsh R(k_2, W)  } \B)
( \psi(x) - \psi(y) ) W(y) dy.
\eal
\eeq
In \eqref{int:sym_I4}, we do not have the term $K_1(x+y)$ since for $y \in \R_2^{++}$, $x+ y \geq  x_c > (k+1) h$ and $- y \notin R(k)$. See the discussion below \eqref{int:sing_loc_x}. Thus after symmetrizing the kernel in $I_4$, we do not have such a term.

Though the symmetrized kernel is complicated, since these regions $R(l), R(l, \al), \al = N, E, l = k , k_2$ \eqref{eq:rect_Rk}, \eqref{eq:rect_NE} can be decomposed into the union of the mesh girds $[y_i, y_{i+1}] \times [y_j, y_{j+1}]$, in each grid, the indicator functions are constants. See also Remark \ref{rem:int_indic}. 
In each grid $y \in [y_i, y_{i+1}] \times [y_j, y_{j+1}]$, we can write the integrand in $I_1 + I_4$ as 
\beq\label{int:sym_integ1}
\bal
 J & =  K^{NC}(x, y)  \cdot \psi(x) + K^C(x, y) \cdot (  \psi(x) - \psi(y) ),  \\
  \pa_{x_i} J & = (K^{NC} + K^C) \pa_{x_i} \psi(x) + 
  \pa_{x_i} K^{NC} \cdot \psi(x) + \pa_{x_i} K^C(x, y) \cdot (  \psi(x) - \psi(y) ),  \\
 \eal
\eeq
where $NC, C$ are short for \textit{non-commutator, commutator}, respectively. 

For $y$ close to $x$, $J$ is at least locally log-Lipschitz. See the Principle before Section \ref{sec:int_sym} for motivation.
For $y$ away from $x$, e.g. $|y_1| \vee |y_2| \geq 4 x_c$ in our computation, we have 
\beq\label{int:sym_integ2}
J =  K^{sym}(x, y)  \psi(x).
\eeq

In practice, we assemble the symmetrized integrand in $I_1 + I_4$ in $\R_2^{++}$ together. Using \eqref{int:sym_integ1}, we only need to assemble $K^{NC}, K^C$. 
We first initialize the integrand with $ (K^{NC},  K^C) = ( K^{sym} ,  0)$. 
To assemble the integrand in the singular regions, we perform two replacements. 
In the first replacement, we pretend that $R(k_2) = \emptyset$ and replace the integrand in $R(k) \cap \R_2^{++} $. Based on  $x \in B_{ij}(h)$ \eqref{eq:int_loc_x}, we determine the regions $R(x, k), R(x, k, S)$ \eqref{eq:rect_Rk}, \eqref{eq:rect_NE}. Since $x_1 \geq x_2$, we get $R(x, k, W) = \emptyset$. See Figure \ref{fig:sing_Rk}. We partition $R(k ) \cap \R_2^{++} $ as follows 
 \beq\label{eq:rect_Rk_dec1}
  R(k) \cap \R_2^{++} = R(k, N) = ( R(k, N ) \bsh R(k, S)) \cup R(k, S) \teq D_1 \cup D_2.
 \eeq
According to \eqref{int:sym_I1}, \eqref{int:sym_I4} ($R(k_2)=\emptyset$),  for $i = 1,2$, we first replace $(K^{NC}, K^C)$ in $D_i$ by 
\beq\label{eq:sym_int_rep1}
 ( K^{NC}, K^C) = (K^{sym} - K^C_i, K^C_i) , \  K^C_1 = K_1(x- y),
 \  K^C_2 = K_1(x-y) - K_1(x_1- y_1, x_2 + y_2), 
 \eeq
 respectively, where $K^C$ is from the integrand in \eqref{int:sym_I4}. We have $i$ singular terms in $D_i$ in \eqref{int:sym_I4}.

 In the second replacement, we replace the integrand in the smaller singular region $R(k_2) \cap \R_2^{++} \subset R(k) \bsh \R_2^{++}$. Outside this region, we have obtained the symmetrized integrand using \eqref{eq:sym_int_rep1}. Since we assume $x_1 \geq x_2$, we get $R(k, W) = \emptyset$ (see discussion below \eqref{int:ker_sym}) and $\one_{y \notin R(k, W)} \equiv 1, 
\one_{y \in R(k, W)}  = 0$. Similar to $R(k) \cap \R_2^{++}$ (see Figure \ref{fig:sing_Rk}), we can decompose 
 \[
R(k_2) \cap \R_2^{++} = ( R(k_2, N) \bsh R(k_2, S) ) \cap R( k_2, S) \teq D_3 \cup D_4.
 \]

In $D_4= R(k_2, S) \subset R(k_2), R(k, S)$, from \eqref{int:sym_I1}, \eqref{int:sym_I4}, we completely remove the $K_1(x - y), K_1(x_1 - y_1, x_2 + y_2)$ terms in the integrand and have
\[
(K^{NC}, K^C) = ( K_1(x+y) - K_1(x_1 + y_1, x_2 - y_2)   ,  \ 0 ). 
\]

In $D_3$, since $D_3 \subset R(k, N) = D_1 \cup D_2$ \eqref{eq:rect_Rk_dec1}, there are two cases. In $D_3 \cap D_1, D_1 = R(k, N) \bsh R(k, S)$, we have three non-singular terms from \eqref{int:sym_I1} and $0$ term from \eqref{int:sym_I4} and get
\[
(K^{NC}, K^C) = ( K_1(x+y) - K_1(x_1 + y_1, x_2 - y_2)- K_1(x_1 - y_1, x_2 + y_2), \ 0).
\]

 In $D_3 \cap D_2, D_2 = R(k, S)$, we have two terms from \eqref{int:sym_I1} and one term from \eqref{int:sym_I4}. We get 
 \[
(K^{NC}, K^C) = ( K_1(x+y) - K_1(x_1 + y_1, x_2 - y_2), - K_1(x_1 - y_1, x_2 + y_2)).
 \]

For $x_1 < x_2$, we assemble the integrand similarly. Using \eqref{int:sym_integ1}, we obtain the integrand $\pa_{x_i} J$ for the H\"older estimate. 

\vs{0.1in}
\paragraph{\bf{$C_y^{1/2}$ estimate of $u_y, v_x$ }}
In the $C_y^{1/2}$ estimate of $u_y, v_x$ with kernel $K_2$ \eqref{eq:kernel_du}, we symmetrize the integrand $K(x-y) (\psi(x) - \psi(y)$, see \eqref{int:decom_uy} in Section \ref{sec:cy_spec}. 
In this case, the symmetrized integrand $W(y) T$ is similar to \eqref{int:sym_I1} with $\psi(x)$ replaced by $\psi(x) - \psi(y)$ and 
\[
T = (\psi(x) - \psi(y) ) \B(K_2(x-y) \one_{ y \in R(k)^c} 
- K_2( x_1 - y_1, x_2 + y_2) \one_{ y  \notin R( k , S)}  
   - K_2( x_1 + y_1, x_2 - y_2)  \one_{y \notin R(k, W)  } + K_1(x+y) \B).
\]

Due to the weight $(\psi(x) - \psi(y) ) $, we always have $K^{NC} = 0$. We initialize the $T$ using \eqref{int:sym_integ1} with $K^C = K_2^{sym}$  \eqref{int:ker_sym}.
In the singular region $R(x,k) \cap \R_2^{++}$, we only need to perform one replacement.
Similar to \eqref{eq:sym_int_rep1}, we use  \eqref{eq:rect_Rk_dec1} and replace the integrand as follows 
\[
K^C = K_2^{sym} - K_2(x-y),   y \in R(k, N) \bsh R(k, S), 
\  K^C = K_2^{sym} - ( K_2(x-y) -  K_2(x_1-y_1, x_2+y_2) ) , y \in R(k, S).
\]
We remove the most singular integrand in $ R(k, N) \bsh R(k, S)$ and the most two singular integrands in $D_2 = R(k, S)$ to make $T$ locally log-Lipschitz. See the Principle before Section \ref{sec:int_sym}.

\vs{0.1in}
\paragraph{\bf{$L^{\inf}$ estimate}} For $L^{\inf}$ estimate, we do not multiply the integrand by the weight $\psi(x)$ or the commutator. We decompose the integral as \eqref{int:decom_linf_ux}, and symmetrize the nonsingular part in $I_1$ using \eqref{int:sym_I1} without the weight $\psi(x)$. Symmetrizing $I_4$ \eqref{int:decom_linf_ux} is similar. 
We initialize the symmetrized integrand as $K^{sym}$ \eqref{int:ker_sym}, and then 
replace it in $R(k) \cap \R_2^{++}$. 
Without loss of generality, we assume $x_1 \geq x_2$ and  have the decomposition \eqref{eq:rect_Rk_dec1}. Similar to \eqref{eq:sym_int_rep1}, we replace the integrand as follows 
\[
 K^{sym} - K_1(x-y), y \in R(k, N) \bsh R(k, S) , \quad K^{sym} - ( K_1(x-y) - K_1(x_1-y_1, x_2+y_2) ) , y \in R(k, S).
\]
That is, we remove one and two singular terms in $ R(k, N) \bsh R(k, S), R(k, S)$, resepctively, making the integrand at least locally log-Lipschitz. See the Principle before Section \ref{sec:int_sym}.

\subsubsection{Integral in domains depending on $x$}\label{sec:int_nsym1}

In the computation, we need to estimate several integrals in the domains $D(x)$ depending on $x$, e.g. $I_3$ in \eqref{int:decom_ux}. Our fundamental idea is to cover $D(x)$ by some piecewise constant domains, which will be essentially treated as fixed domains. 
By refining the location of $x$, we can obtain tight covering.

We use the $L^{\inf}$ estimate of $I_3$ to illustrate the ideas. A direct estimate yields 
\[
|I_{3}(x)| 
\leq || W \vp||_{\inf} \int_{ R(k)\bsh R_{s,1}(k)} | K_1(x- y)| \psi(y) \vp^{-1}(y)  dy.
\]
We cannot apply the method in Section \ref{sec:T_rule} to first estimate $I_3(x)$ for $x$ on the grid points and then estimate $\pa^2 I_3(x)$ for the error since the kernel is singular and the error part associated with $\pa^2 I_3(x)$ is more singular (see Lemma \ref{lem:int_err_x}). 

Denote $f = \psi \vp^{-1}$. We consider a change of variable $y = x + s$ to center our analysis around the singularity $x$. The domain for $s$ is 
\beq\label{eq:int_sing_dom1}
\{ y \in R(k) \bsh R_{s, 1}(k) \} = \{ s \in R(k ) - x \} \cap \{ |s_1| \geq k  h \} \teq D(x, k).
\eeq
It suffices to estimate 
\beq\label{eq:int_sing_nsym1}
J = \int_{s \in D(x, k)} |K_1(-s)| f(x+s) dy, \quad f \geq 0,
\eeq
for all $x \in B_{i_1, j_1}( h_x)$ \eqref{eq:int_loc_x}. We want to further simplify the above domain so that it does not depend on $x$.
Recall the location of $x$ \eqref{eq:int_loc_x}. To obtain a sharp estimate, we further partition the location of $x \in B_{i_1, j_1}(h_x)$ as follows
\beq\label{eq:int_sing_nsym1_AB}
A_{ a} = [i_1 h_x + a h_x / m, i_1 h_x + (a + 1) h_x / m], \  B_{ b} \teq 
[j_1 h_x + b h_x / m, j_1 h_x + (b + 1) h_x / m] , \  
\eeq
for some $m \in Z^+$ and  $0 \leq a, b \leq m-1$. Clearly, $A_a \times B_b$ is a partition of $B_{i_1 j_1}(h_x)$. Recall \eqref{eq:int_loc_x} and \eqref{eq:rect_Rk}. We have 
\[
R(x, k) = [ (i- k) h, (i+1 + k) h] \times [ (j-k) h , (j+1 + k) h]. 
\]
Now, for $x \in A_a \times B_b$, since $|s_1| \geq kh$, we have 
\beq\label{eq:int_sing_nsym1_X}
\bal
s_1  = y_1 - x_1 & \in [ (i-k) h- i_1 h_x - (a+1) h_x / m, - kh] \cup  [ kh, (i + 1 + k) h - i_1 h_x - a h_x / m ]   \\
& \teq X_{l,a} \cup X_{r, a},
\eal
\eeq
where the subscripts \textrm{l, r} are short for \textrm{left, right}, respectively.  Similarly, for $s_2$, we have 
\beq\label{eq:int_sing_nsym1_Y}
\bal
s_2  &= y_2 - x_2 \in [ (j-k) h - j_1 h_x - (b+1) h_x / m ,  (j + k + 1) h - j_1 h_x - b h_x / m ]  \\
& \teq [ (j-k) h - j_1 h_x - (b+1) h_x / m, - kh] \cup [-kh, kh] \cup [ kh, (j + 1+ k) h - j_1 h_x - b h_x / m ] \\
&\teq  Y_{d, b} \cup Y_{m, b} \cup Y_{ u, b}
\eal
\eeq
where the subscripts \textrm{d, m, u} are short for \textrm{down, middle, upper}, respectively. 
Note that the intervals $X, Y$ do not depend on $x$. We have 
\beq\label{eq:int_sing_nsym1_dom2}
D(x, k) \subset (X_{l,a} \cup X_{r, a} ) \times (Y_{d, b} \cup Y_{m, b} \cup Y_{ u, b}).
\eeq
Now, we can decompose $J$ \eqref{eq:int_sing_nsym1} as follows 
\[
J  \leq \sum_{\al =l, r, \b = d, m, u } J_{\al, \b} , \quad J_{\al, \b}\teq \int_{ X_{\al, a} \times Y_{\b, b}}  |K_1(-s)| f(s + x) dy, \ \al = l, r, \ \b = d, m, u .
\]
See the left figure in Figure \ref{fig:sing_nsym1} for different domains in the above decomposition. From the definitions of $X, Y$, the total width of the left and the right domains $X_{\al, a} \times (Y_{d, b} \cup Y_{m, b} \cup Y_{ u, b}), \al = l, u$ is 
\[
   |X_{l, a}| + |X_{r, a}| = h + h_x / m.
\]
For a fixed $x$, from the definition \eqref{eq:rect_Rk}, the width of $R(k) \bsh R_{s,1}(k) $ is $h$. We choose a large $m$ and further partition the location of $x$ so that we do not overestimate the region too much.

\begin{figure}[t]
\centering

  \includegraphics[width=0.32\linewidth]{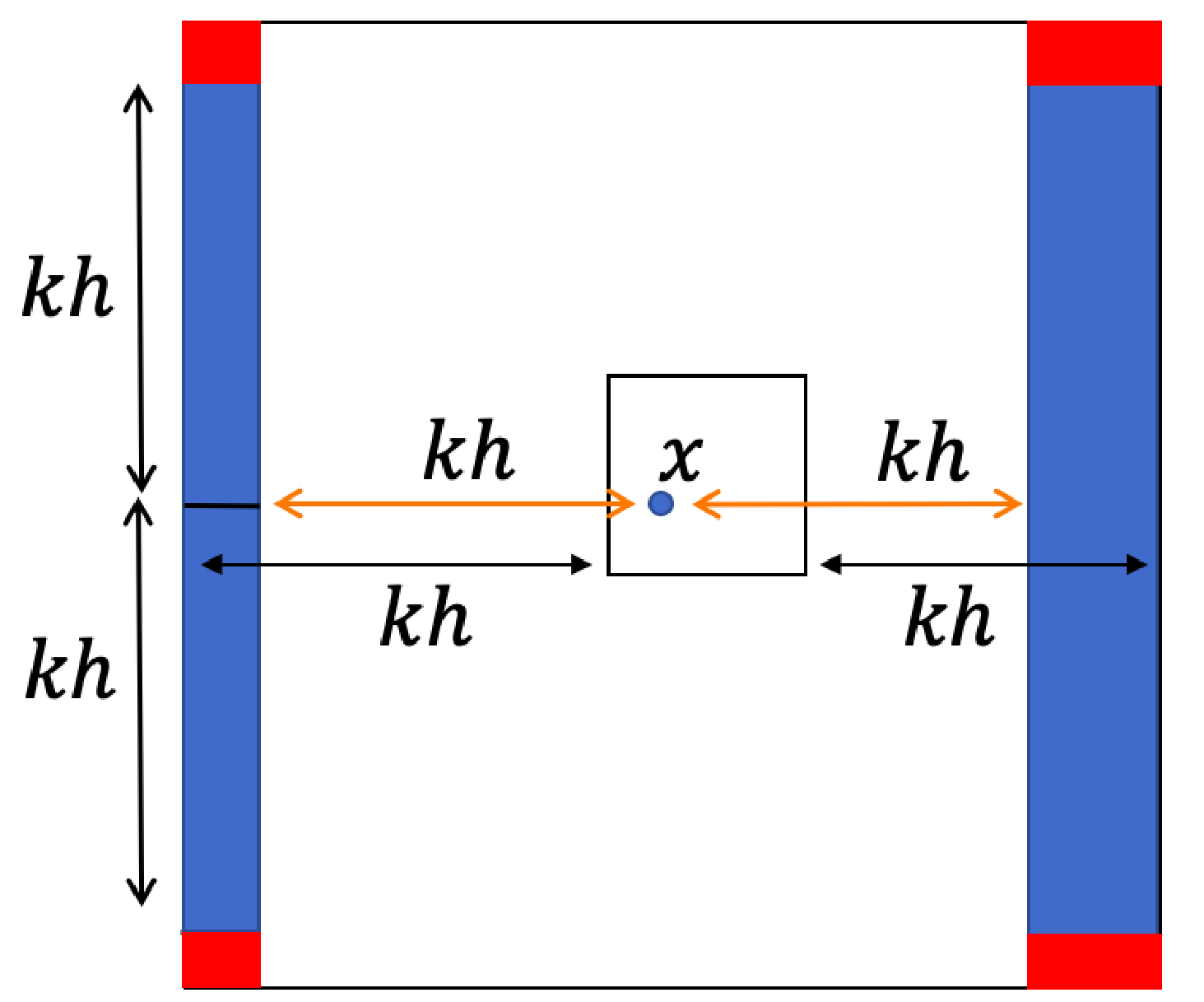}
  \includegraphics[width=0.32\linewidth]{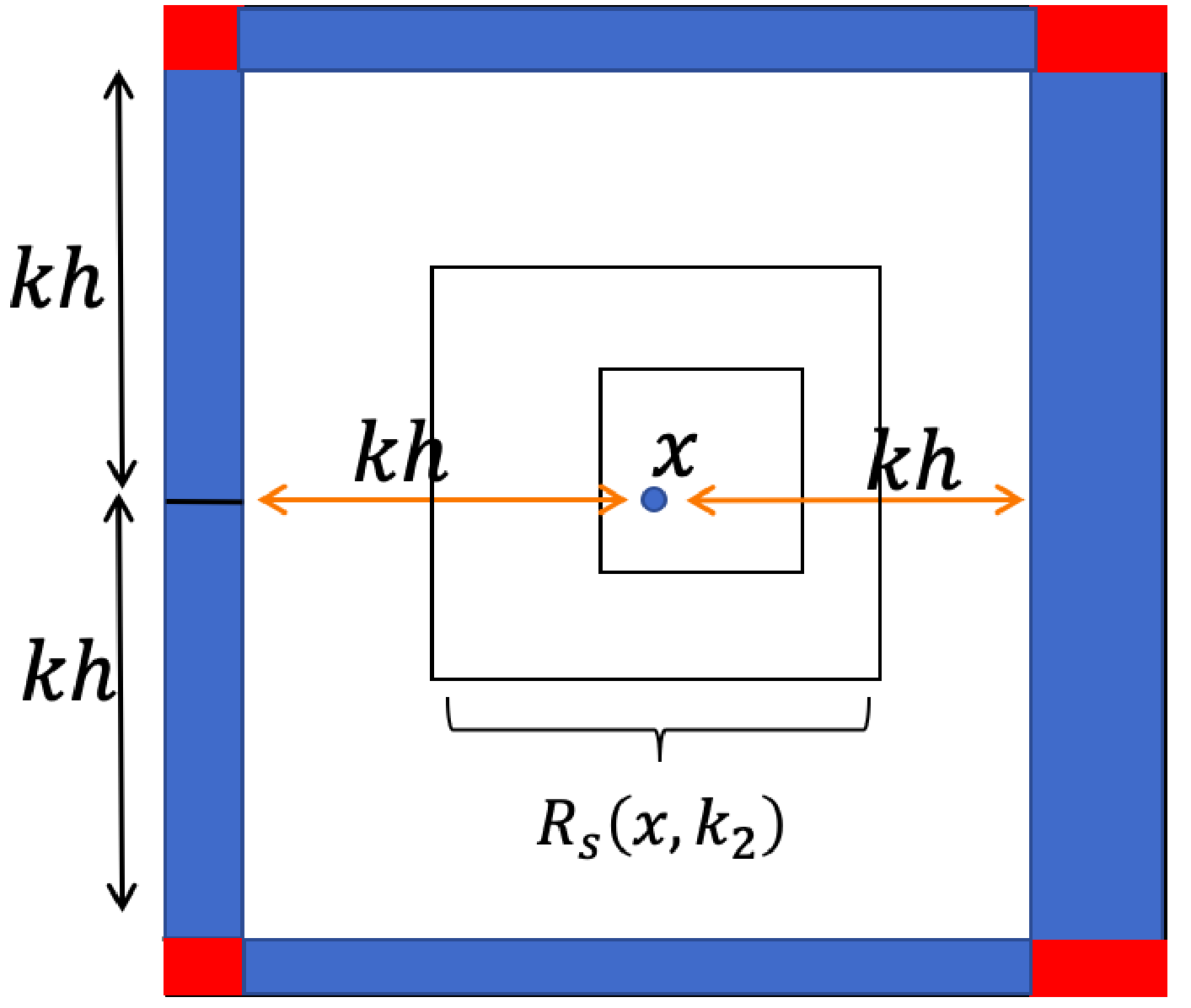}
      \includegraphics[width = 0.25\textwidth  ]{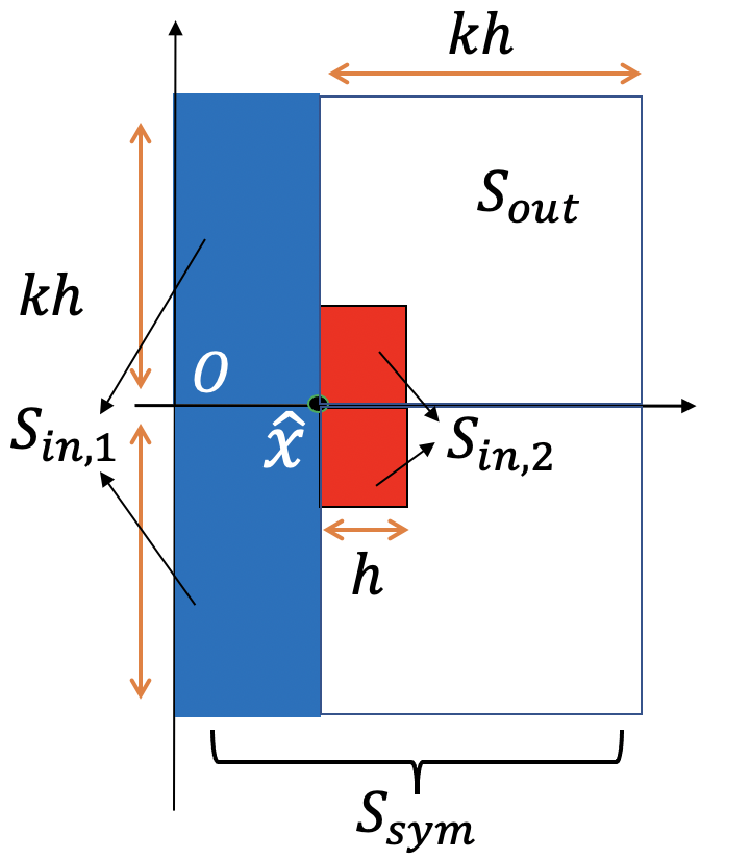}

\caption{
The largest box in the left and middle figure is $R(x, k)$. Left: The left and right blue regions are $X_{l, a} \times Y_{m, b}, X_{r, a} \times Y_{m, b}$.
The four red regions correspond to $X_{\al, a  } \times Y_{\b, b}, \al = l, u, \b = d, u$. 
Middle: Illustration of $R(x, k) \bsh R_s(x, k)$ and $R_s(x, k_2)$. $R(x, k) \bsh R_s(x, k)$ consists of the blue and the red regions. 
Right: different regions near the singularity for $u/x_1$. Blue, red, and white regions represent 
 $S_{in, 1}, S_{in, 2}, S_{out}$, respectively.
 }
\label{fig:sing_nsym1}
\end{figure}

For a small domain $Q = [a,b]\times [c, d]$ , we can estimate the integral as follows 
\beq\label{eq:int_sing_nsym1_Q}
\int_Q  |K_1(-s) | f(x + s) ds \leq \int_Q |K_1(-s)| ds 
|| f||_{ L^{\inf} (B_{i_1 j_1}(h_x) + Q )}.
\eeq
Since $Q$ is given, $K_1(s)$ is explicit and has scaling symmetries, we can estimate the integral of $|K_1(s)|$ easily. For example, if $Q = [ah, bh]^2$, we can use the scaling symmetries of $K_1(s)$ to obtain $\int_Q |K_1(-s)| = h^{\b} \int_{[a,b]^2} |K_1(-s)|$ for some $\b$. Moreover, for many kernels in our computations, e.g. $K(s) = \f{s_1 s_2}{|s|^4}$, we have explicit formulas for the integral. See Section {\secintker} in the supplementary material II \cite{ChenHou2023bSupp}.

We apply the above method to estimate the integral in $X_{\al, a} \times Y_{\b, b }, \al = l, r, \b = d, u$ (red region in Figure \ref{fig:sing_nsym1}). 
Since $Y_{m, b} = [-kh, kh]$, for the integral in $X_{\al, a} \times Y_{m, b}$ (blue region), we further decompose it 
\beq\label{eq:int_sing_nsym1_gd}
J_{\al, m} = \sum_{ -k \leq t \leq k-1} \int_{ X_{\al, a} \times [th, (t+1) h ]} |K_1(-s)| f(s+x) dy ,
\eeq
and then apply the above method to estimate it.

Next, we further simplify $|| f||_{ L^{\inf} (B_{i_1 j_1}(h_x) + Q )}$ in the above estimate. From \eqref{eq:int_loc_x}, we get 
\[
 ih \leq i_1 h_x < (i_1 + 1) h_x \leq (i+1) h, \quad  
 j h \leq j_1 h_x <(j_1 + 1) h_x \leq (j+1) h. 
\]
For $X_{l, a}$ \eqref{eq:int_sing_nsym1_X} with $0 \leq a \leq m-1$, we have the lower bound for the endpoint
\[
 (i-k) h- i_1 h_x - (a+1) h_x / m 
 \geq  (i-k) h- i_1 h_x - h_x \geq  (i-k) h -( (i+1) h - h_x ) - h_x = - kh -h .
\]
See the left figure in Figure \ref{fig:sing_nsym1}. The width of blue region is less than $h$.
Similarly, we can cover the intervals of $X, Y$ \eqref{eq:int_sing_nsym1_X}, \eqref{eq:int_sing_nsym1_Y} uniformly for $0\leq a , b \leq m-1$ and obtain 
\[
\bal
X_{l, a} &\subset [ (i-k )h - i_1 h_x - h_x, -kh ] \subset [ -(k+1) h, - kh] ,  \\
X_{r, a} &\subset [ kh, (i+1 + k) h - i_1 h_x ] \subset [kh, (k+1) h],  \\
Y_{d, b} &\subset [ - (k+1) h, - kh],  \quad  Y_{u, b} \subset [ kh, (k+1) h].
\eal
\]

Thus, we only need to estimate the $L^{\inf}$ norm of $f$ in 
\[
 Q_{i_1 j_1}(h_x) + [ \al h, (\al + 1) h] \times [ \b h, (\b + 1) h], \quad \al = - k-1, k, \quad \b = -(k+1), -k , .., k.
\]
These estimates are independent of the choice of $m, a, b$. Since the size of each domain is at most $2h \times 2h$, the above estimates based on \eqref{eq:int_sing_nsym1_Q} are sharp. We estimate the piecewise bound of the weights $\psi, \vp$ in Appendixes \ref{app:wg},\ref{app:wg_radial},\ref{app:wg_mix}.

Using the above decomposition and estimates, we obtain the estimate of $J$ \eqref{eq:int_sing_nsym1}  for $x \in  A_a \times B_b$ \eqref{eq:int_sing_nsym1_AB}. Similarly, we can estimate $J$ for any $0 \leq a, b \leq m-1$. Taking the maximum of these $m^2$ estimates, we obtain the estimate of $J$ and $I_3(x)$ for all $x \in B_{i_1 j_1}(h_x)$.

\subsubsection{First generalization: integral in a ring}\label{sec:int_nsym_bd0}

We generalize the above ideas to estimate the integrals in domain $D = R(x, k) \bsh R(x, k_2) =  R(k) \bsh R(k_2)$ 
\[
J = \int_{R(k) \bsh R(k_2)} |K(y-x)|  |f(y)| dy = \int_{s \in D(x, k)} |K(s)| |f( x + s) d y|, 
\quad D(x, k) \teq R(k) \bsh R(k_2) - x  
\]
with $2\leq k_2 = k - \f{i}{2} < k$ for some integer $i\geq 1$ and some kernel $K(z)$. 
Note that the inner region $R(k_2)$ is different from \eqref{eq:int_sing_dom1}. See $I_4$ in \eqref{int:decom_ux} for an example of this integral region. Suppose $x \in B_{ij}(h)$ \eqref{eq:int_loc_x}. We partition location of $x$ similar to \eqref{eq:int_sing_nsym1_AB} and introduce $p_l, q_l$
\beq\label{eq:singu_mbulk1}
\bal
& A_a = [ i h + ah / m, ih + (a+1)h / m] , B_b = [ j h + b h /m , j h + (b+1) h / m  ],  \ 0 \leq a ,b \leq m-1, \\ 
& p_1 = -k_2 - a / m, \  p_2  = k_2 + (m-a-1) / m, \ 
p_3 = -k_2 - b /m, \ p_4 = k_2 + (m-b-1) / m , \\
& q_1  = - k - (a+1) / m,\ q_2 = k + (m-a) / m, \ q_3 = - k -(b+1) / m, \ q_4 = k + (m-b) / m.
\eal
\eeq
For a fixed $x \in A_a \times B_b$, by comparing the boundaries of the following four rectangles, 
we get 
\[ 
\bal
& D_{in} \teq 
[p_1 h , p_2 h] \times [p_3 h , p_4 h]
\subset R(k_2)  - x \subset R(k) - x   \subset [ q_1 h, q_2h ] \times [q_3 h, q_4 h]  \teq D_{out} .
\eal
\]
To obtain the above inclusions, for example, for $s = y - x, y \in  R(k_2)$, we use
\[
\min_{y \in R(k_2)}  y_1 - x_1 =  ih - k_2 h - x_1 \leq
ih - k_2 h - ( ih + ah/m) = - k_2 h - ah / m = p_1 h,
\]
uniformly for $x \in A_a \times B_b \subset B_{ij}(h)$. 
For $R(k) - x \subset D_{out}$, we have $ q_1 h \leq \min_{y \in R(k)}  y_1 - x_1$. Other bounds for the inclusions are obtained similarly. We yield $D(x, k) \subset D_{ring}$, where
\begin{equation}
\label{D-ring}
 D_{ring} \teq  D_{out} \bsh D_{in}
 \end{equation}
 is fixed for $x \in A_a \times B_b$. 

 It suffices to estimate the integral $J$ in $D_{ring}$. We partition $s \in D_{ring}$ using mesh 
\beq\label{eq:singu_mbulk2}
Z_1 = \{ -k\leq i \leq k, i \in \BZ \} \cup \{p_1, p_2, q_1, q_2 \}
,\quad Z_2 =  \{ -k\leq i \leq k, i \in \BZ \} \cup \{p_3, p_4, q_3, q_4 \},
\eeq
and then order them in an increasing order $z_{l,1} < z_{l,2} <.. < z_{l,2k+5} \in Z_l, l = 1,2$. Note that we do not multiply $z_{l, c}$ by $h$ here. We estimate the integral $J_{cd}$ in each grid $Q_{c, d} = [z_{1, c} h, z_{1, c+1} h ] \times [z_{2, d} h , z_{2, d+1} h ]$ following \eqref{eq:int_sing_nsym1_Q} and using the norm $|| f||_{L^{\inf}(x + Q_{c, d})}$. We turn off the integral in region $Q_{c,d}$ if $Q_{c,d} \subset D_{in}$ since it is not in $D_{ring}$ \eqref{D-ring}.

\vspace{0.1in}
\paragraph{\bf{Uniform covering}}
For fixed $c, d$, we want to cover $ x + Q_{c,d }$ uniformly for $x \in A_a \times B_b$ and all $0\leq a, b\leq m-1$ (the sub-partition of $x$) to bound $|| f||_{L^{\inf}(x + Q_{c,d})}$. Since we add $4$ extra points $p_l, q_l$ in $Z_1$ and $Z_2$, and order them in an increasing order, the region $Q_{c, d}$ can change for fixed $c, d$ but with different $a, b$. We show that the $2k+4$ intervals $[z_{1, c}, z_{1, c+1}] , 1 \leq c \leq 2 k + 4 $ can be covered by $[\al_l, \b_l]$ 
uniformly for $a, b$
\beq\label{eq:singu_mbulk32}
\bal
&  [ \al_l, \b_l ] , \quad \al_l \in Z_1^l, \b_l \in Z_1^u, \
Z_1^l\teq  \{ -(k+1) \leq i \leq k , i \in \BZ \} \cup \{ - s_0 -2, s_0  \}, \\
& Z_1^u \teq \{ - k \leq i \leq k + 1 , i \in \BZ \} \cup \{ - s_0 , s_0  + 2  \},
 \quad s_0 = \lfloor k_2 \rfloor,
 \eal
\eeq
with $\al_l, \b_l$ increasing. From \eqref{eq:singu_mbulk1} and the definition of $s_0$, we get 
\beq\label{eq:singu_mbulk4}
p_1 \in [-s_0 - 2, - s_0], \  p_2 \in [s_0, s_0 + 2], \ q_1 \in [-k-1, -k], \  q_2 \in [k, k+1].
 \eeq

The uniform covering is based on the following observations. Suppose that $ u_i \leq v_i, i=1,2,.., n $ ($u_i, v_i$ may not be increasing). Let us denote by $\{U_i\}$ the re-ordering of $\{u_i\}$ in an increasing order and denote by $\{V_i\}$ the re-ordering of $\{v_i\}$ in an increasing order. Then we have $U_i \leq V_i$. In fact, for any $k \leq n$, from $u_i \leq v_i$, $V_k$ is larger than $ u_{j}$ with at least $k$ different indexes $j$. Since $U_k$ is the $k$-smallnest value in $\{ u_i \}_i$, we get $V_k \geq U_k$.  


From \eqref{eq:singu_mbulk2}, \eqref{eq:singu_mbulk4}, since $q_2 = \max_c z_{1, c}, q_1 = \min_c z_{1, c}$, we get
\[
\bal
& \{ z_{1, c}, c \leq 2 k + 4 \} = \{ -k \leq i \leq k,  i \in \BZ \} \cup \{p_1, p_2, q_1\}, 
- k - 1 \leq q_1,  -s_0 - 2 \leq p_1,   s_0 \leq p_2 ,\\
& \{ z_{1, c+1}, c \leq 2 k + 4 \} = \{ -k \leq i \leq k,  i \in \BZ \} \cup \{p_1, p_2, q_2 \}, \
p_1 \leq - s_0, \ p_2 \leq s_0 + 2, q_2 \leq k+ 1.
\eal
 \]
We can bound each component in $Z_1^l$ \eqref{eq:singu_mbulk32} by a component in the above list. 
Using the above observations, after reordering two sequences in an increasing order, which gives $\{\al_c\}, \{ z_{1, c} \}_{ c \leq 2 k + 4} $, we get $\al_c \leq z_{1, c}, c \leq 2 k + 4$ 
\eqref{eq:singu_mbulk32}. Similarly, we obtain $ z_{1, c+1} \leq \b_c$, and yield $[z_{1, c}, z_{1, c+1}] \in [\al_c, \b_c],  c \leq 2k +4$. 

Similarly, we obtain $[z_{2, d}, z_{2, d+1}] \subset [\al_d, \b_d]$. Thus, we get $ [z_{1, c}, z_{1, c+1}] \times [z_{2, d}, z_{2, d+1}] \in [\al_c, \al_{c+1}] \times [\b_d, \b_{d+1}]$ uniformly for the sub-partition of $x\in A_a \times B_b$ with  $ 0 \leq a, b \leq m-1$, and can cover $x + Q_{cd} $ by $B_{i_1 j_1}(h_x) + [\al_c h, \al_{c+1} h] \times [\b_d h, \b_{d+1} h]$ \eqref{eq:int_loc_x} for $x \in B_{i_1 j_1}(h_x)  \subset B_{ij}(h)$.

\subsubsection{Second generalization: the boundary terms}\label{sec:int_nsym_bd}

We generalize the method to estimate some boundary terms. We estimate the $x_1-$derivative of $I_3(x)$ \eqref{int:decom_ux} to illustrate the ideas. 
In $\pa_1 I_3$, we have an extra boundary term $I_{32}$
 \[
\pa_1 I_3(x) =  \int_{ R(k) \bsh R_{s, 1}(k)} \pa_{x_1} K_1(x- y)  (W\psi)(y)  dy 
- \int_{ (j-k)h }^{ (j+1 + k) h} K_1(x-y) (W \psi)(y) \B|_{y_1 = x_1 -kh }^{x_1 + kh} d y_2
\teq I_{31} + I_{32},
 \]`'
 where we have used the domain for $R(x, k)$ \eqref{eq:rect_Rk}.

For $I_{31}$, we apply the method in Section \ref{sec:int_nsym1} to estimate it.  Denote $\G_k \teq   [ j-k)h, (j+1 + k) h]$. Using a change of variable $y = x + s$, we can rewrite $I_{32}$ as follows 
 \[
I_{32} = - \int_{ s_2 \in \G_k - x_2} \B( K_1(- kh, -s_2)  (W \psi)( x_1 +  kh, x_2 + s_2 )
 - K_1( kh, -s_2)  (W \psi)( x_1 -  kh, x_2 + s_2 ) \B) d s_2.
 \]

We partition the location of $x$ and assume $x \in A_a \times B_b \subset B_{i_1, j_1}(h_x)$ \eqref{eq:int_sing_nsym1_AB}. From \eqref{eq:int_sing_nsym1_Y}, we have 
\[
s_2 \in \G_k -x_2 \subset Y_{d, b} \cup Y_{m, b} \cup Y_{u, b}.
\]

Using the above decomposition and $ |W \psi(x)| \leq || W \vp||_{\inf}  f(x)$, $f = \psi \vp^{-1}$,  we obtain 
\[
|I_{32}| \leq || W \vp||_{\inf} \sum_{ \al = \pm, \b = d, m, u} M_{\al, \b},
\quad M_{\al, \b} \teq  \int_{ Y_{\b, b}} | K_1( -\al kh, -s_2)| \cdot  |f(x_1 + \al kh,
x_2 + s_2 ) |d s_2,
\]
for $\al = \pm, \b = u, m, d$. For $ \b = u, d$, the domain $Y_{\b, b}$ is small $|Y_{\b, b}| \leq h$. We apply the method in \eqref{eq:int_sing_nsym1_Q} to estimate $M_{\al, \b}$. The only difference is that we need consider a 1D integral here
\[
\int_Q | K_1(-\al kh, -s_2) | d s_2
\]
for some interval $Q$, rather than a 2D integral in \eqref{eq:int_sing_nsym1_Q}. For $M_{\al, m}$, we decompose the domain $Y_{m, b}$ into small intervals with length $h$ similar to \eqref{eq:int_sing_nsym1_gd} and then apply the method in \eqref{eq:int_sing_nsym1_Q}. 

We combine these estimates to bound $I_{32}$ for $x \in A_a \times B_b$. Then, we maximize the estimates over $0\leq a, b \leq m-1$ to bound $I_{32}$ for $x \in B_{i_1, j_1}(h_x)$.

\subsubsection{Third generalization}\label{sec:int_nsym2}

In some of the computations, we need to estimate 
\[
 J =  \int_{ R(k) \bsh R_s(k_2) }  | K(x - y)|  f(y) dy
\]
for some $k_2 < k$ with $ 2 k_2 , k\in Z^+$, where $R_s(k)$ is defined in \eqref{eq:rect_Rsk}.  Similarly, we use
\[
R_s(k_2) \subset R_s(k) \subset R(k), \quad R(k) \bsh R_s(k_2) = 
R(k) \bsh R_s(k) \cup R_s(k) \bsh R_s(k_2),
\]
and a change of variable  $y = x + s$ to obtain 
\[
J= ( \int_{ s \in R(k) - x , |s_1| \vee |s_2| \geq k h } 
+ \int_{  k_2 h \leq  |s_1| \vee |s_2| \leq k h}  ) K(-s) f( x + s ) dy \teq J_1 + J_2.
\]

Compared to $R(k) \bsh R_{s,1}(k)$, the domain $R(k) \bsh R_s(k)$ contains two more parts 
\[
X_{m, a} \teq [-kh, kh], \quad  X_{m, a} \times Y_{u, b}, \quad X_{m, a} \times Y_{d, b},
\] 
i.e., the upper and lower blue regions in the right figure in Figure \ref{fig:sing_nsym1}. 
The integral in these regions is estimated similar to that in $X_{\al, a} \times Y_{m, b}$ \eqref{eq:int_sing_nsym1_dom2}, and the estimate of $J_1$ is similar to $J$ in \eqref{eq:int_sing_nsym1}. 

For $J_2$, the domain is simpler. Since $2k_2 \in Z^+$, we partition the domain into $h_x \times h_x$ grids
\[
J_2 = \sum_{ (c, d) \in S_{k} \bsh S_{k_2}  }  \int_{ [ c h_x , (c+1) h_x  ] \times 
[d h_x, (d+1) h_x ] } |K(-s)| f( s + x) ds, \quad  S_l \teq \{ -k \leq c < k, -k \leq d < k  \}.
\]
For each integral, we estimate it using the method in \eqref{eq:int_sing_nsym1_Q}. The remaining steps are the same as those of $J$ in \eqref{eq:int_sing_nsym1} studied previously.

\begin{remark}
In the estimates in Section \ref{sec:int_nsym1}-\ref{sec:int_nsym2}, we use the important property that the weights are locally smooth to move them outside the integral. Moreover, we use the fact that the singular region depend on $x$ 
monotonously 
to cover it effectively.
Since the integral $\int_Q |K_1(s)| dy$ for different $Q,a, b$ in the above estimates does not depend on $x$, we first compute these integrals once and store them, and then use them in later estimate of different $x$.  
 \end{remark}

\subsubsection{Taylor expansion near the singularity}\label{sec:tay}

We need to estimate the integral 
\[
 J(x) \teq \int_{D } \pa_{x_i} \B(  K(x-y) (\psi(x) - \psi(y) ) W(y) \B) dy ,
\]
for $k_2 < k$ in some region $D$ close to the singularity $x$. For example,  $D =R(x, k_2) \bsh R(x, k_3)$, $R(x, k_3) \bsh R_{s 1}(x, k_3)$ in $\pa_{x_i} I_{5,0}, \pa_{x_i} I_{5, 1}$  \eqref{eq:int_holx_I5},
To obtain a sharp estimate, we perform Taylor expansion on $\psi(x)$. We focus on $\pa_{x_1}$. Denote $z = x - y,  x_m  = \f{x+ y}{2} $. A direct computation yields
\[
\bal
I =   \pa_{x_1} ( K(x-y) \psi(x) - \psi(y))
 = (\pa_1 K)(x- y)   (\psi(x) - \psi(y))
+ K(x-y) \pa_1 \psi(x). \\
\eal
\]
Using Taylor expansion of $\psi$ at $x_m$ and following \eqref{eq:integ_Ta1}, we get
\[
\bal
&\psi(x) - \psi(y) = (x-y) \cdot \na \psi(x_m) + \e_1, 
\quad \psi_x(x) = \psi_x(x_m) + \e_2 \\
& |\e_1 | \leq \sum_{i+j = 2} c_{ij} || \pa_x^i \pa_y^j \psi ||_{L^{\inf}(Q(y))}  |z_1|^i |z_2|^j, \quad | \e_2| \leq \f{1}{2}  (||\pa_{xx} \psi||_{L^{\inf}(Q(y))} |z_1|
+ ||\pa_{xx} \psi||_{L^{\inf}(Q(y))} | z_2|) ,
\eal
\]
where $c_{20} = \f{1}{4}, c_{11} = \f{1}{2}, c_{02} = \f{1}{4}$, and we have written $z_i = x_i-y_i$ and $Q(y)$ is one of the four quadrants $ D \cap \{ y: sgn ( y_i - x_i) = \pm 1 \} $ covering both $x, y$.  Combining the term with the same derivative of $\psi$, we need to estimate 
the following integrals 
\[
\bal
 & |\int_D  \psi_x(x_m) (\pa_1 K(z) z_1 + K(z) ) W( y) dy| , \quad 
 |\int_D \psi_y(x_m) \pa_1 K(z) z_2  W( y) dy| \\
 &  \int_D |\pa_x^i \pa_y^j \psi|_{L^{\inf}(Q(y))}  |\pa_1 K(z) z_1^i z_2^j W(y)| dy,  i+j =2, \
\int_D   | \pa_x^{i+1} \pa_y^j  \psi |_{L^{\inf}(Q(y))}  | K(z) z_1^i z_2^j W(y)| dy, i+j = 1 ,
\eal
\]
We partition the region of  $z = x- y \in x - D$, e.g. $D = R(k_2) \bsh R(k_3)$ \eqref{eq:int_holx_I5}
into small mesh, and estimate the piecewise bounds of weights and  each integral following Sections \ref{sec:int_nsym1}-\ref{sec:int_nsym2}. 

We estimate the integral of $ | \pa_1^i \pa_2^j K(z) z_1^k z_2^l|$ in Section {\secintker} in the supplementary material II \cite{ChenHou2023bSupp}.

\subsubsection{H\"older estimate of log-Lipschitz function}\label{sec:int_loglip}

In some computation, we need to perform $C^{1/2}$ estimate of some log-Lipschitz function. We consider an example to illustrate the ideas 
\[
F(x) = \int_{ \max_i |x_i - y_i| \leq b } K( x, y) f(y) dy,   \quad  |K(x, y) | \leq C_1 |x-y|^{-1},
\quad |\pa K(x, y)| \leq C_2 |x-y|^{-2} ,
\]
for some constant $C_1, C_2$. Given $f \in L^{\inf}$, $F$ is log-Lipschitz. To estimate $[f]_{C_x^{1/2}}$, we cannot first estimate the piecewise values  of $f$  and $\pa_x f$ and then combine them to obtain the $C_x^{1/2}$ estimate. Instead, given $x, z$,  for $a$ to be determined, we decompose $F$ into the smooth part and the singular part
\[
\bal
F_1(x) \teq \int_{ a \leq  \max_i |x_i - y_i| \leq b } K( x, y) f(y) dy,   \quad 
F_2(x) \teq \int_{  \max_i |x_i - y_i| \leq a } K( x, y) f(y) dy .  
\eal
\]

Using the assumptions of the kernel, we have 
\[
|\pa_{x_1} F_1(x)| \leq C_3 \log \f{b}{a} || f||_{\inf}, \quad |F_2(x)| \leq  C_4 |a| \cdot || f||_{\inf},
\]
where the constants $C_3, C_4$ depend on $b, C_1, C_2$. Applying the above estimates, we obtain 
\[
\f{|F(x) - F(z)|}{ |x_1 - z_1|^{1/2}}
\leq 
\f{|F_1(x) - F_1(z)| + |F_2(x) - F_2(z)|}{ |x_1 - z_1|^{1/2}}
\leq  \B( C_3 \log \f{b}{a} \cdot |x_1 - z_1|^{1/2} + 2 C_4 |a| |x_1 - z_1|^{-1/2} \B) || f||_{\inf}.  	
\]

We optimize the estimates by choosing $a = C_5 |x_1  -z_1|$ for some constant $C_5$ depending on $C_3, C_4$. Then we establish the estimate. The above simple estimates show that the choice of $a$ depends on $|x-z|$. Thus, in our later H\"older estimates, we perform decomposition guided by the above estimates and optimize the choice of size of the singular region $[-a, a]^2$. 
On the other hand, since for different $|x-z|$ we need to choose different $a$, it increases the technicality of the computer-assisted estimates.




\subsection{$L^{\inf}$ estimate}\label{sec:vel_linf}

Let $\wh {u_{x}}$ be the approximation term of $u_x$ (see Section 4.3 of Part I \cite{ChenHou2023a}). We focus on the estimate of the piecewise $L^{\inf}$ norm of $u_{x, A} = u_x - \wh { u_{x}}$, 
which is a representative case. For simplicity, we assume the rescaling factor $\lam = 1$. We assume that $x$ satisfies \eqref{eq:int_loc_x} without loss of generality. We want to estimate $u_{x, A}$ for \textit{all} $x \in B_{i_1 j_1}(h_x)$.

We can write $ u_{x, A} = u_x - \hat u_x$ as follows 
\[
u_{x, A} = \int ( K(x-y)  - \hat K( x, y)  ) W(y) dy, \quad K_A \teq K(x- y ) - \hat K(x, y),
\]
where $\hat K(x, y)$ is the kernel for the approximation term and $W$ is the odd extension of $\om$ (see \eqref{eq:ext_w_odd}). From Sections 4.3.2 and 4.3.3 of Part I \cite{ChenHou2023a}, we remove the singular part in $\hat K$, and then $\hat K$ is nonsingular. Given $x$ with \eqref{eq:int_loc_x}, similar to \eqref{int:decom_ux}, for $k \geq k_2$, we perform the following decomposition 
\beq\label{int:decom_linf_ux}
\bal
u_{x, A}  &= ( \int_{R(k)^c} + \int_{ R(k) \bsh R_s(k_2) } + 
\int_{ R_s(k_2)} )   K(x-y) W(y) dy - \int \hat K(x, y) W(y) dy \\
&\teq I_1 + I_2 + I_3 + I_4.
\eal
\eeq
where $R_s(k)$ is the symmetric singular region \eqref{eq:rect_Rsk}. See Section \ref{sec:int_linf_para} for the choice of $k$.

Since $I_1 + I_4$ is nonsingular, we use the ideas in Section \ref{sec:int_sym} to symmetrize the kernels in $I_1 + I_4$. Then we use the method in Section \ref{sec:T_rule} to estimate it.

\begin{remark}\label{rem:int_indic}
In our computation, the domain $[0, D]^2 \cap R(k)^c$ can be decomposed into the union of small grids $[y_i, y_{i+1}] \times [y_j, y_{j+1}]$ \eqref{eq:int_mesh_y} since the boundary of $R(x, k)$ aligns with the mesh \eqref{eq:rect_Rk}. In particular, in each grid, the indicator function is constant, and the integrand is smooth in $y$. 

\end{remark}

Next we consider $I_2$. The domain of the integral is close to the singularity. If we use the method in Section \ref{sec:T_rule} to estimate it, the error will be quite large since $\pa^2 K(x-y)$ is very singular. 
We want to estimate $I_2$ using $|| W \vp||_{\inf}$ and the singular part $I_3$ using $[ W \psi_1 ]_{C^{1/2}}$. Since $K(z) $ is singular of order $-2$, we expect an estimate 
\[
|I_2| + |I_3| \les \log \f{k}{k_2} \vp^{-1}(x)  || W \vp||_{L^{\inf}[R(k)]} + \psi^{-1}(x) k_2^{1/2} [ W \psi]_{C_x^{1/2}}.
\]
Note that the weights $\vp, \psi$ have a different order of singularity for small $x$ and a different rate of decay. Moreover,
we need to control the right hand side using the energy,
which assigns different weights to two norms (seminorms). Thus, to obtain a sharp estimate, we need to optimize the choice of $k_2$.

Firstly, we consider $k_2 = 2, 2 + \f{1}{2}, .., k$, we use the method in Section \ref{sec:int_nsym2} to estimate $I_2$. We also consider very small $k_2 < 2$. In this case, we further decompose $I_2$ as follows 
\[
I_2 = ( \int_{R(k) \bsh R_s(2)} + \int_{ R_s(2) \bsh R_s(k_2)} ) K(x- y) W(y) dy \teq I_{21} + I_{22}.
\]

For $I_{21}$, we apply the method in Section \ref{sec:int_nsym2}. For $I_{22}$, we use a change of variables $y = x+ s h$ 
\[
|I_{22}|  = \B| \int_{ k_2  \leq |s_1| \vee |s_2| \leq 2 } K(-sh) W( x + s h ) h^2 ds \B| .
\]
Since the region is very small, $x+ s h \in B_{i_1 j_1}(h_x) + [-2h, 2h]$, and $K_1( h s) = h^{-2} K_1(s)$, we get 
\[
|I_{22}| \leq || W \vp||_{\inf}  || \vp^{-1}||_{L^{\inf}(B_{i_1 j_1}(h_x) + [-2h, 2h]) } 
\int_{ k_2  \leq |s_1| \vee |s_2| \leq 2 } |K(s)| ds.
\]
The integral can be computed explicitly and has the order $\log \f{2}{k_2}$.  

It remains to estimate the most singular part $I_3$ for different $k_2$. Using a change of variables $y = x + s h$, the scaling symmetries, and the above derivations, we get 
\[
I_3 = \int_{ [-k_2, k_2 ]^2} K(-s) W(x+ s h) ds.
 \]
To use the H\"older norm of $ W \psi$, we decompose it as follows
\beq\label{eq:ux_sing_bd}
I_3 =  \int_{ [-k_2, k_2 ]^2}  K(-s)   (W\psi)(x+ s h) ( \f{ 1}{\psi(x + sh)} -  \f{1}{\psi(x)}  )
+  K(-s)  \f{ (W \psi)(x+sh) }{ \psi(x)}  ds \teq I_{31} + I_{32}.
\eeq

For $I_{32}$, using the H\"older seminorm, the odd symmetry of $K(s) = c \f{s_1 s_2}{|s|^4}$ in $s_1$, and $|(W\psi)(x + sh) - (W\psi)(x - sh)| \leq \sqrt{2s_1 h}$, we get 
\[
|I_{32} | \leq \f{ h^{1/2}}{\psi(x)} [W \psi]_{C_x^{1/2}} \int_{ [0, k_2] \times [-k_2, k_2]} | K(s) | \sqrt{ 2 s_1}  ds
=  \f{ 2 k_2^{1/2}h^{1/2}}{\psi(x)} [ W \psi]_{C_x^{1/2}} \int_{ [0, 1]^2} | K(s) | \sqrt{ 2 s_1}  ds,
\]
where we used the scaling symmetry of $K$ and a change of variables $s \to k_2s$ in the last equality.

\subsubsection{The commutator}\label{sec:int_linf_ux_com}
For $I_{31}$, we apply the simple Taylor expansion to $f = \psi^{-1}$
\beq\label{int:linf_ux_com1}
 |f(x+ s h) - f(x)| \leq | f_x(x) h s_1 + f_y(x) h s_2| + h^2( \f{m_{20} s_1^2 }{2} + m_{11} s_1 s_2 + \f{ m_{02} s_2^2}{2}  ) , 
  \eeq
where $m_{ij}$ is the bound for the second derivatives of $\psi^{-1}$ 
\[
m_{ij}( s) = \max_{  B_{i_1 j_1 }( h) + I( \sgn(s_1))  \times I( \sgn(s_2))   } || \pa_x^i  \pa_y^j ( \psi^{-1})||_{L^{\inf}  }, 
\quad I_+ = [0, k_2 h], \quad I_- = [-k_2 h,0].
 \]

Note that $m_{ij}$ is constant in each quadrant of $[-k_2  , k_2 ]$. We plug in the expansion \eqref{int:linf_ux_com1} to estimate $I_{31}$. We only discuss a typical term $m_{20} s_1^2 h^2 $
\[
I_{31,20} \teq h^2 \int_{ [-k_2 ,k_2 ]^2} |K(-s) (W\psi)(x+s h) | m_{20}(s) \f{s_1^2}{2} ds .
\]

If $k_2 \geq 2$, we can further partition $[-k_2, k_2]^2$ into $B_{2p, 2q}(1/2)=[p, p+1/2] \times [q, q + 1/2], -k_2 \leq p, q \leq k_2 -1/2$, where we use the notation \eqref{int:box_B}.  For each grid $B_{2p, 2q}(1/2)$, the sign of $s$ and $m_{20}(s)$ are fixed, and we have 
\[
\int_{B_{2p, 2q}( \f{1}{2} )} |K(-s)| (W\psi)(x+s h) m_{20}(s) \f{s_1^2}{2} ds 
\leq  m_{20} || W \vp||_{\inf} \int_{B_{2p, 2q}( \f{1}{2}) } \f{ |K(s) |s_1^2}{2}  (\f{ \psi}{\vp}) (x+sh) ds.
\]
The last integral can be estimated using the method in \eqref{eq:int_sing_nsym1_Q}. Combining the estimate of integral in different regions $B_{2p, 2q}(1/2)$, we obtain the estimate of $I_{31, 20 }$. Similarly, we can estimate the contributions of other terms in \eqref{int:linf_ux_com1} to $I_{31}$. 

For small $k_2 \leq 2$, we do not partition the domain. We denote $D(k_2) = B_{i_1, j_1}(h_x) + [-k_2 h, k_2h]^2$. For $s \in [-k_2, k_2]$, we use $x+ sh \subset D(k_2) \subset D(2)$ to get
\beq\label{int:linf_ux_com2}
 |f(x+ s h) - f(x)| \leq || f_x||_{L^{\inf}(D(k_2))} s_1 h
 + || f_y||_{L^{\inf}(D(k_2))} s_2 h. 
 \quad  | W \psi(x + s h)| \leq || W \vp||_{\inf} ||\f{\psi}{\vp}||_{L^{\inf}( D(2) )}.
\eeq
Plugging the above estimate into $I_{31}$, we get 
\[
I_{31} \leq \sum_{ (i,j) = (1,0), (0,1) }
h ||\pa_x^i \pa_y^j ( \psi^{-1})||_{L^{\inf}(D(k_2))} || W \vp||_{\inf}  || \f{\psi}{\vp} ||_{L^{\inf}( D(2))}
 \int_{ [-k_2, k_2]^2} |K(s) s_1^i s_2^j | ds .
\]
Using the scaling symmetry, we can reduce the last integral to $ k_2^{i+j} \int_{ [-1,1]^2}
|K(s) s_1^i s_2^j | ds $.

We apply the above estimates to a list of $k_2$, and bound different norms using $\max(  || \om \vp||_{\inf} $ , $\max_i \g_i  [ \om \psi_1]_{C_{x_i}^{1/2}(\R_2^{+}) } )$. 
Then by optimizing the $k_2$, we obtain the sharp estimate of $u_{x, A}$.

In \eqref{int:linf_ux_com1}, we do not bound $f(x + sh) - f(x)$ directly using the estimate \eqref{int:linf_ux_com2} since $s$ is large. Instead, we perform a higher order expansion.


\vs{0.1in}
\paragraph{\bf{Estimate of $u_y, v_x$}}
The estimates of $u_y, v_x$ follow similar strategies and estimates. The only difference is the 
estimate of the most singular term similar to $I_{32}$ \eqref{eq:ux_sing_bd} for $u_y, v_x$ due to different symmetry property of the kernel. We estimate it using a combination of norms $|| \om \vp||_{\inf}$, and semi-norms $[ \om \psi]_{C_{x_i}^{1/2}}$, and refer it to Section {\seclinfsing} in the supplementary material II \cite{ChenHou2023bSupp}.
 


\subsubsection{Estimate of $\uu_A$}
The estimate of $\uu_A$ is much simpler since it is more regular. Let $K$ and $\hat K$ be the kernel of $u, v$ and its approximation term, respectively. For $f = u$ or $v$, we perform a decomposition similar to \eqref{int:decom_linf_ux}
\beq\label{int:decom_linf_u}
f_{A} = ( \int_{R(k)^c} + \int_{ R(k) \bsh R_s(k) } + 
\int_{ R_s(k)} )   K(x-y) W(y) dy - \int \hat K(x, y) W(y) dy
\teq I_1 + I_2 + I_3 + I_4.
\eeq
The estimates of $I_1 + I_4$ follow the method for $u_{x, A}$. For $I_2$, we use the method in Section \ref{sec:int_nsym1}. For $I_3$, since $K$ has a singularity of order $|x|^{-1}$, which is locally integrable, we use a change of variable $y = x + sh$ to obtain
\[
I_3 =  h \int_{[-k, k]^2} K(-s) W(x + s h) ds .
\]
Then we partition  $[-k, k]^2$ into small grids, and use the method in \eqref{eq:int_sing_nsym1_Q} to estimate the integral in each grid. Here, we get a factor $h$ in the change of variables since $K(\lam s) = \lam^{-1} K(s)$.

\subsubsection{Choice of parameters}\label{sec:int_linf_para}

Recall the choice of several parameters $a, h, h_x$ from \eqref{int:para1}. We choose $ 3 \leq k \leq 10 $. We choose $k$ for the size of the singular region $kh$ \eqref{int:decom_linf_ux}, \eqref{int:decom_linf_u}  not so small such that the error $h^2 \pa^2 K$ in Lemma \ref{lem:T_rule}, which has the order $h^2 |x-y|^{-\al-2}$ near the singularity, is smaller than the main term $K$, which has the order $|x-y|^{-\al}, \al = 1, 2$. Since we will estimate $I_1 + I_4$, $I_2$, $I_3$ in the decomposition separately using the triangle inequality, we do not choose $k$ to be too large so that we can exploit the cancellation in $I_1 + I_4$.

\subsection{H\"older estimates}\label{sec:vel_hol_comp}

We want to estimate $\f{ | f(x) - f(z)| }{ |x - z|^{1/2}}$ for any $x, z \in \R_2^{++}$ with $x_1 = z_1$ or $x_2 = z_2$ and some function $f$, e.g. $f = u_{x, A}$. Without loss of generality, we assume $|z| > |x|$. Then in the $C_x^{1/2}$ estimate, we have $x_1 < z_1, x_2 = z_2$; in the $C_y^{1/2}$ estimate, we have $x_1 = z_1, x_2 < z_2 $. 
Applying the rescaling argument in Section \ref{sec:int_method}, we can restrict $\hat x = \f{x}{\lam}$ to $  \hat x \in [0, 2 x_c ]^2 \bsh [0, x_c]^2 $. For this reason, we assume $\lam = 1$ for simplicity.  We will only estimate the H\"older difference for comparable $x, z$:  $|x| \asymp |z|$. If $|z| \gg |x|$, we simply apply the $L^{\inf}$ estimate to $f(x), f(z)$ and use the triangle inequality.

We focus on the H\"older estimate  of $u_{x, A}$, which is a representative and the most important nonlocal term to estimate in our energy estimate. 

\subsubsection{$C_x^{1/2}$ estimate}

Recall $I_i$ from the decomposition \eqref{int:decom_ux} and $K_1(s) = \f{s_1 s_2}{|s|^4}$. We apply the same decomposition to $u_{x, A}(z)$. We assume that the approximation term $\hat u_{x}$ (see Section {\secapprvel} of Part I \cite{ChenHou2023a}) takes the following form 
\beq\label{int:decom_ux_appr}
\hat u_x(x) = \int \hat K_1(x, y) W(y) dy , \quad   I_6(x) \teq \psi(x) \hat u_x(x) = \psi(x) \int \hat K_1(x, y) W(y) dy ,
\eeq
with a nonsingular kernel $\hat K_1$. We first discuss how to estimate the regular part $I_1, I_3, I_4$ in \eqref{int:decom_ux} and $I_6$, which are Lipschitz. We will apply the sharp H\"older estimate in Lemmas 3.1-3.5 in Section {\secsharp} of Part I \cite{ChenHou2023a} to estimate the most singular part $I_2$. The most technical part is to estimate $I_5$, which is log-Lipschitz since the kernel $K_1(x-y)(\psi(x) - \psi(y))$ has a singularity  of order $-1$. We assemble the estimates of different parts to estimate $[u_{x, A} \psi ]_{C_x^{1/2}}$ in Section \ref{sec:hol_comb}.

\subsubsection{Estimates of the regular terms $I_1, I_3, I_4, I_6$}\label{sec:int_hol_reg}

Recall $I_1, I_3, I_4$ from \eqref{int:decom_ux} and $I_6$ from \eqref{int:decom_ux_appr}. Since the integrands in $I_1, I_3, I_4$ are supported at least $k_2 h$ away from the singularity $x$, if $W$ is in some suitable weighted $L^{\infty}$ space,  $I_1, I_3, I_4$ 
are piecewise smooth and their derivatives can be bounded by $|| W \vp||_{\inf(\R_2^{++})}  = || \om \vp||_{\inf}$.
Their derivatives jump when $R(x, k), R(x, k_2)$ change, or equivalently, $x$ moves from one grid to another. For $x \in B_{i_1, j_1}(h_x)$ \eqref{eq:int_loc_x}, these rectangle domains are the same, and these functions are smooth. The approximation term $I_6$ \eqref{int:decom_ux_appr} is locally smooth in $x$. To exploit the cancellation, we combine the estimates of $I_1, I_4, I_6$ together. We symmetrize the kernel in  $I_1(x) + I_4(x) - I_6(x)$ following Section \ref{sec:int_sym} and use the method in Section \ref{sec:T_rule} to estimate the derivatives of $I_1(x) + I_4(x) - I_6(x)$. See also \eqref{int:sym_integ1}, \eqref{int:sym_integ2} for the form of the symmetrized integrands in these integrals. 




We estimate the piecewise Lipschitz norm of $I_3$ using the method in Sections \ref{sec:int_nsym1}, \ref{sec:int_nsym_bd}. We choose integer $k, k_2$ in the decomposition \eqref{int:decom_ux} . Then in each grid $[y_i, y_{i+1}] \times [y_j, y_{j+1}]$, the indicator functions in $I_1 + I_4 - I_6$, e.g. $\one_{R(k)^c}, \one_{ R(k) \bsh R(k_2)}$, are constant. See Remark \ref{rem:int_indic}. We will combine the estimates of different terms in Section \ref{sec:hol_comb}, e.g. $I_1 + I_4 -I_6, I_3$ and part of $I_{5\cdot}$ defined later in \eqref{eq:int_holx_I5}, and obtain some H\"older continuous functions when $x$ moves from one grid to another.   
We assemble the H\"older estimates in Section \ref{sec:hol_comb}.

\subsubsection{$C_x^{1/2}$ estimate of $I_2$}\label{sec:int_hol_I2}

We first estimate the second term $I_2$ in \eqref{int:decom_ux}. Recall $R(x, k), R_{s1}(x,k), R_s(x, k)$ from \eqref{eq:rect_Rk}, \eqref{eq:rect_Rsk} and the location of $x$ \eqref{eq:int_loc_x}. We have 
\[
x_2 - (j-k)h \leq (j+1 ) h - (j -k) h = (k+1) h, \quad (j+1 + kh) - x_2 \leq  (j+1 + kh) - j h = (k+1) h.
\]
Since $x_2 = z_2$, using Lemma 3.1 from Section {\secsharp} of Part I \cite{ChenHou2023a} with $(a, b_1, b_2) = (kh, x_2 - (j-k)h, (j+1 + k) h - x_2) $ and $|b_1|, |b_2| \leq (k+1) h$, we obtain 
\[
\f{1}{|x-z|^{1/2}} | I_2(x, k) - I_2(z, k)| 
\leq C_1( \f{(k+1) h}{ |x-z|}) [ W \psi ]_{C_x^{1/2}} 
=C_1( \f{(k+1) h}{ |x-z|}) [ \om \psi ]_{C_x^{1/2}}.
\]

We only apply the H\"older estimate to $|x-z| \leq \f{ kh}{2}$ (rescaled $x,z$) and the assumption $a \geq \f{1}{2}|x_1-z_1|$ in Lemma 3.1 in Part I \cite{ChenHou2023a} is satisfied. For $I_2(x, k)$ associated with other terms $u, v, u_y, v_x$, we can estimate it using similar ideas and Lemmas 3.1-3.5 in Part I \cite{ChenHou2023a}. The $C_y^{1/2}$ estimate of $I_2(x, k)$
is completely similar. See Section \ref{sec:hol_other} for more details.

\subsubsection{$C_x^{1/2}$ estimate of $I_5$}\label{sec:int_holx_I5}
For $I_5$ \eqref{int:decom_ux}, $K_1(x-y) (\psi(x)-\psi(y))$ is singular of order $-1$ near $y = x$. Given $W \in L^{\infty}(\vp)$, $I_5$ is log-Lipschitz. 
There are several approaches to estimate its H\"older norm, see e.g., Section \ref{sec:int_loglip}. 
We use part of the $C_x^{1/2}$ seminorm of $\om$ to get a better estimate. We choose $k_3 = k_2 - \f{i}{2} \geq 2, i=0,1,2,.., 2k_2-4$ and further decompose $I_5$ as follows 
\beq\label{eq:int_holx_I5}
\bal
I_5(x, k_2) 
&=\B( \int_{R(k_2) \bsh R(k_3)} 
+  \int_{R(k_3)  \bsh R_{s,1}(k_3)}  + \int_{  R_{s,1}(k_3)} \B)  K_1(x-y) ( \psi(x)- \psi(y)) W(y) dy  \\
& \teq I_{5,0}(x, k_2, k_3) +  I_{5,1}(x, k_3) + I_{5,2}(x, k_3).
\eal
\eeq
The domain in $I_{5, 0}$ depends on $x$. For $x$ in a grid cell, it does not change with $x$.
We estimate $\pa_{x_1} I_{5,0}$ using Taylor expansion in Section \ref{sec:tay} and following the method in Section \ref{sec:int_nsym_bd0}.
We estimate the $x$-derivative of $I_{5, 1}$ using the method in Sections \ref{sec:int_nsym1}, \ref{sec:int_nsym_bd}. We have
\beq\label{eq:int_holx_I51}
\bal
\pa_{x_1} I_{5, 1}
& = \int_{ R(k_3) \bsh R_{s, 1}(k_3)}
 \pa_{x_1} \B( K_1(x-y) ( \psi(x)- \psi(y)) \B)  W(y) dy  \\
& \quad - \int_{ (j-k_3 )h }^{ (j+1 + k_3) h} K_1(x-y)  (\psi(x) -\psi(y)) W(y) \B|_{y_1 = x_1 -k_3 h }^{x_1 + k_3 h} d y_2  .
\eal
\eeq
We estimate the first part following Section \ref{sec:tay}, and the second part following Section \ref{sec:int_nsym_bd}. 

For $I_{5,2}$, we will estimate it using a method similar to that of $I_2$. See the left figure in Figure \ref{fig:hol_ux} for the domains of the integrals in $I_{5,2}(x), I_{5,2}(z)$. The integrand satisfies
\[
\bal
K_1(x-y)  ( \psi(x)- \psi(y)) W(y) 
&=  \psi(x) K_1(x- y) ( \psi^{-1}(y) - \psi^{-1}(x) ( W \psi)(y) \\
&\approx \psi(x) \pa_i (\psi^{-1}(x)) \cdot K_1(x- y) (y_i - x_i) ( W \psi)(y) .
\eal
 \] 
 Thus, $I_{5,2}(x)$ can be seen as a weighted version of $I_2$ \eqref{int:decom_ux} with a weight $\psi(x) \pa_i (\psi^{-1}(x))$, a more regular kernel $K_1(x-y)(y_i - x_i)$, and a smaller domain $R_{s, 1}(k_3)$. Since the kernel is more regular and the domain is smaller, our estimate for $I_{5,2}$ is much smaller than that of $I_2$. 

Now, we justify this approach. Using a change of variables $y = x + s, s \in R_{s,1}(k_3) - x$ 
and the above identity, we yield
\[
I_{5,2}(x, k_3) =    \psi(x) 
 \int_{ R_{s,1}(k_3) - x} K_1(-s)  ( \psi^{-1}(x+s) - \psi^{-1}(x)) (W\psi) (x+s) ds.
\]

Using Newton's formula $f(1) = f(0) + f^{\pr}(0) + \int_0^1 (1-t)f^{\pr \pr}(t) dt$ for $f(t) = \psi^{-1}(x + t s)$, we get 
\[
\bal
\psi^{-1}(x+s) - \psi^{-1}(x)
&=s \cdot \na \psi^{-1}(x) + \int_0^1 (1 - t) \B( s \cdot (\na^2\psi^{-1}) (x + ts) \cdot s \B) dt  \\
&= \sum_{i=1,2} s_i \pa_i (\psi^{-1}) (x)
+ \sum_{ 0\leq i \leq 2}  \binom{2}{i} s_1^i s_2^{2-i} \int_0^1 (1- t)  \pa_1^i \pa_2^{2-i} (\psi^{-1})( x + ts )d t.
\eal
\]

Denote 
\[
\bal
& Q_{ij}(x) = \psi(x) \int_0^1 (1- t) \pa_1^i \pa_2^j (\psi^{-1}) ( x + ts )d t , \ i+ j = 2,
\quad  D(x) = R_{s,1}(x, k_3) - x, \\
& Q_{ij}(x) = \psi(x) \cdot \pa_1^i \pa_2^j( \psi^{-1})(x) = -\f{ \pa_1^i \pa_2^j \psi(x)}{ \psi(x)}, \ i+j = 1,  \quad P_{ij}(x) = \int_{D(x)} K_1(-s) s_1^i s_2^j ( W \psi) (x + s) ds.\\
\eal
\]

Using the above expansion and notations, we get 
\[
I_{5,2}(x, k_3) = \sum_{i+j = 1} P_{ij} Q_{ij}
+ \sum_{ i + j = 2} \binom{2}{i} P_{ij} Q_{ij}.
\]

Next, we use the above decomposition to estimate $I_{5,2}(x, k_3) - I_{5,2}(z, k_3)$. The leading order terms are $P_{ij} Q_{ij}$ with $i+j =1$. By definition of $R_{s, 1}$ \eqref{eq:rect_Rsk}, we observe that if  $x_2 = z_2$, we have 
\[
D(x) = R_{s,1}(x, k_3) - x = R_{s,1}(z, k_3) - z = D(z) .
\]
Suppose that $x_1 < z_1$. We perform a decomposition 
\beq\label{int:holx_ux_J}
\bal
&|P_{ij}(x) Q_{ij}(x) - P_{ij}(z) Q_{ij}(z)|
\leq J_1 + J_2,  \\
& J_1 \teq  | Q_{ij}(z)  ( P_{ij}(x) - P_{ij}(z)  ) |, \quad
J_2 \teq |P_{ij}(x)  ( Q_{ij}(x) - Q_{ij}(z) ) | .
\eal
\eeq
Using $D(x) = D(z)$, we bound $J_1$ as follows 
\[
\bal
|J_1| & \leq |Q_{ij}(z)| \B| \int_{D(x)} K_1(-s) s_1^i s_2^j ( ( W \psi )(x+s) - (W\psi)(z+s)  ) ds \B|  \\
& \leq | Q_{ij}(z)| \cdot |x-z|^{1/2}  || \om \psi||_{C_x^{1/2}}
\int_{s \in D(x)} | K_1(s) s_1^i s_2^j | ds .   \\
\eal
\]

The term $Q_{ij}$ only depends on the weight and is smoother than $P_{ij}$. 
We can estimate $Q_{ij}(x) - Q_{ij}(z)$ by bounding $\pa_1 Q_{ij}$ since $Q_{ij}$ is locally smooth. For $P_{ij}$ in $J_2$, we use the method in \eqref{eq:int_sing_nsym1_Q} to bound it by $C || \om \vp||_{\inf}$ with some constant $C$. Then we obtain the estimate
\[
|J_2| \leq  C_2  |x-z| \cdot || \om \vp||_{L^{\infty}}
\]
for some constant $C_2$. Note that the second order term $P_{ij} Q_{ij}, i+j = 2$ is much smaller than the leading order terms. For $|x-z|$ not too small, we can estimate its contribution trivially 
\beq\label{eq:PQ_linf}
\f{1}{|x-z|^{1/2}} | P_{ij}(x) Q_{ij}(x) - P_{ij}(z) Q_{ij}(z) |
\leq \f{1}{|x-z|^{1/2}} ( | P_{ij}(x) Q_{ij}(x)| + |P_{ij}(z) Q_{ij}(z) | ).
\eeq
We optimize the above two estimates. 

In summary, to obtain the above estimates, we estimate piecewise bounds for $|Q_{ij}(x)|$, $P_{ij}(x), | \pa_k Q_{ij}(x)| $, and the integrals $\int_{D(x)} | K_1(s)  s_1^i s_2^j | ds, \  i+ j =1,2 .$

\begin{figure}[t]
\centering
\begin{subfigure}{0.47\textwidth}
  \centering
  \includegraphics[width=0.9\linewidth]{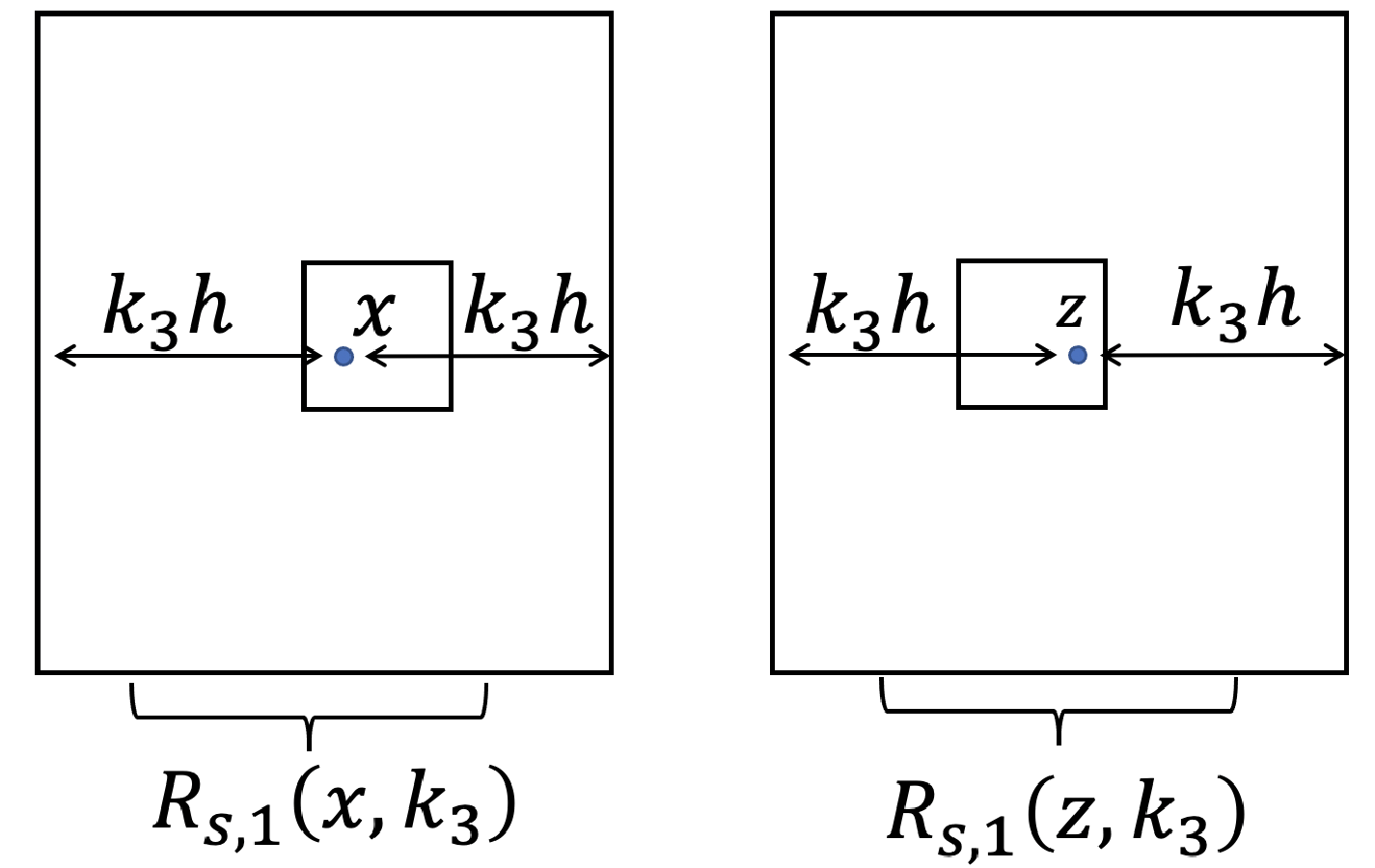}
\end{subfigure}\begin{subfigure}{0.53\textwidth}
  \centering
  \includegraphics[width=0.85\linewidth]{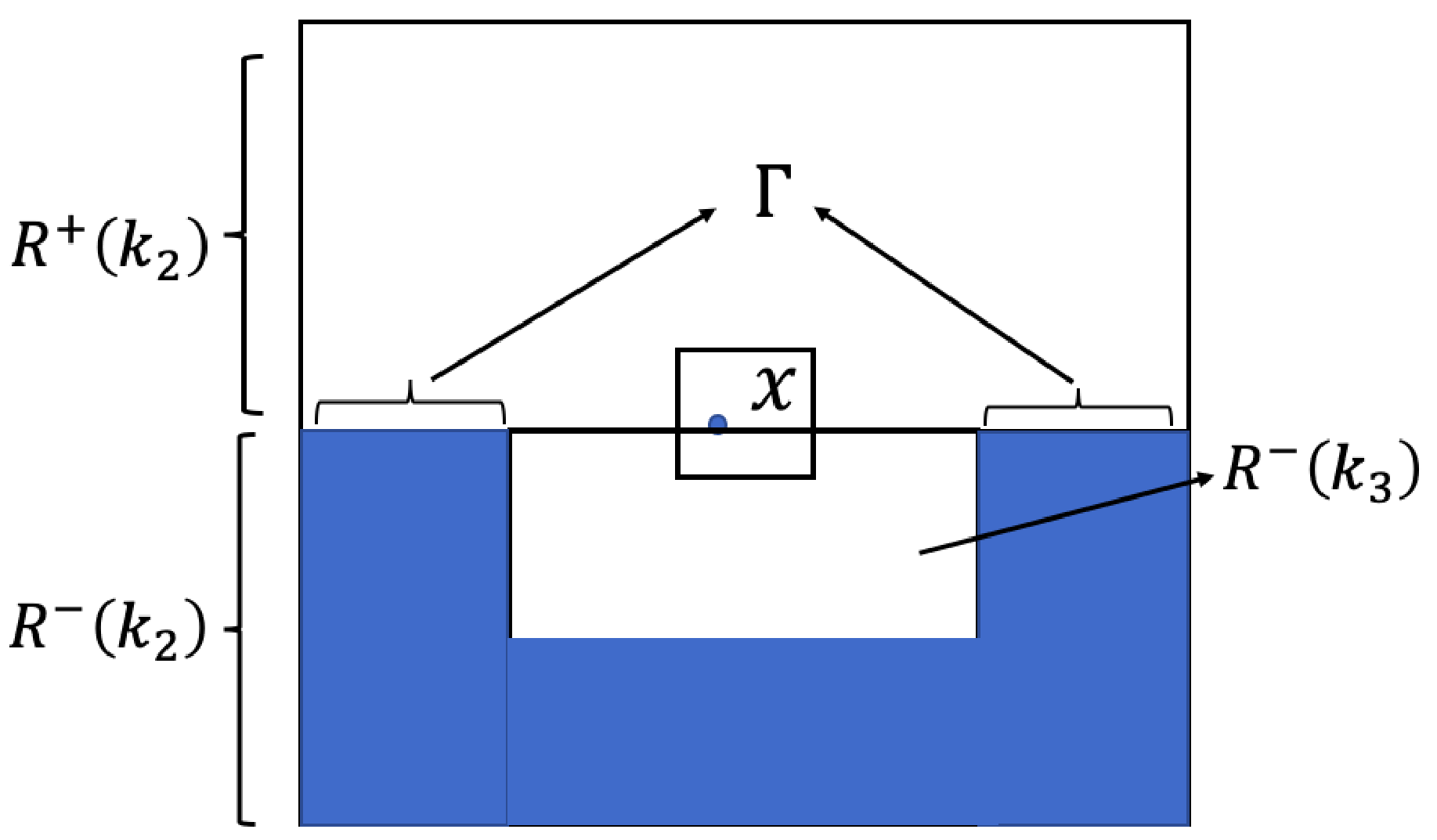}
\end{subfigure}
\caption{Left: $R_{s,1}(x, k_3)$ and $R_{s,1}(z, k_3)$ with $x_2 = z_2$. The small square is a mesh grid  containing $x$ or $z$. $x, z$ can have different locations relative to the grids.
Right: The large rectangle is $R(k_2)$, the upper part is $R^+(k_2)$, and the lower part is $R^-(k_2)$. The blue region is $R^-(k_2) \bsh R^-(k_3)$. $\Gamma$ is part of its boundary.} 
\label{fig:hol_ux}
\end{figure}

The above estimate of $I_{5}(x, k_2)$ can be generalized to the $C_x^{1/2}$ estimate of $u, v, v_x, u_y$. Yet, it does not apply to the $C_y^{1/2}$ estimate of $\uu, \na \uu$ since it requires the estimate of $ (W \psi)(x + s) - (W\psi) (z+s)$ for $s$ in some rectangle $R= D(x) = D(z)$. However, since $W$ is discontinuous across the boundary $y = 0$, $W\psi \notin C_y^{1/2}(R)$ if $x+s, z+s$ are not in the same half plane. If $x_1 < x_2$, then the rectangles $R(x, k_2), R(z,k_2)$ will not intersect the boundary and the previous estimate holds true. If $x_1 > x_2$, we consider two modifications for different kernels in the following subsections.

\subsubsection{Ideas of the $C_y^{1/2}$ estimates of $I_5$}\label{sec:int_holy_idea}

The main idea in the following $C_y^{1/2}$ estimates is to use a combination of the estimates for the log-Lipschitz function in Section \ref{sec:int_loglip} and the estimate in Section \ref{sec:int_holx_I5}. The latter provides better estimates, and we try to use this method \textit{as much as possible}. Following the ideas in Section \ref{sec:int_loglip}, we decompose $I_5(x)$ into the singular part and nonsingular part with different size $k_3$ of the singular region 
\[
I_5(x) = I_{5, S}(x, k_3) + I_{5, NS}(x, k_3). 
\]

Although we cannot apply the second method to the whole $I_5(x)$, 
we can apply it to the integrals in the upper part of the regions, e.g. $R^+(k_2), R^+(k_3)$ \eqref{eq:rect_Rk+}, since these integrals only involve $W \psi$ in $\R_2^+$ and we have $W \psi \in C^{1/2}$. Thus, we will further decompose some of the regions into the upper part and the lower part, and then apply the first method to the lower part, and the second method to the upper part.

\subsubsection{$C_y^{1/2}$ estimate of the velocity with a kernel of the first type}\label{sec:int_holy_K1}

The kernels 
\beq\label{int:ker_type1}
K = \f{y_1 y_2}{ |y|^4},\quad  \f{y_2}{|y|^2} 
\eeq
associated with $u_x = -\pa_{xy} (-\D)^{-1}\om, u =-\pa_y (-\D)^{-1} \om$ vanish when $y_2 = 0$. We call them the first type kernel. Let $K$ be a kernel of the first type. We use the following decomposition 
\beq\label{int:holy_sing0}
\bal
I_5(x, k_2) & = ( \int_{R^+( k_2)} + \int_{R^-(k_2)} ) K(x-y) (\psi(x) - \psi(y) ) W(y) dy 
\teq I_5^+(x, k_2) + I_5^-(x, k_2) 
\eal
\eeq
See the right figure in Figure \ref{fig:hol_ux} for $R^{\pm}(k_2)$. Since $ R^+(x, k_2), R^{+}(z, k_2) \subset \R_2^+$, we can decompose 
\[
I_5^+ =  I_{5, 1}^+ + I_{5,2}^+
\]
into the integral in the regions $I_{5, 1}^+: R^+(k_2)\bsh R_{s, 2}^+(k_2)$ and 
$I_{5, 2}^+:R_{s, 2}^+(k_2) $, and apply the same argument as that for $I_{5,1}(x, k_3), I_{5,2}(x, k_3)$ in Section \ref{sec:int_holx_I5} to obtain the desired estimates by restricting all the derivations in $R^+(x, k_2), R^+(z, k_2)$. Note that here, we do not further choose smaller window $R^+(x, k_3)$ to decompose $I_5^+(x, k_2)$, i.e. $k_3 = k_2$ and $I_{5,0} = 0$ in \eqref{eq:int_holx_I5}. For $I_{5, 1}^+$, similar to \eqref{eq:int_holx_I51}, we get a boundary term from $\pa_2 ( R^+(k_2) \bsh R_{s2}^+(k_2)) = [ (i-k_2) h, (i+1 + k_2 h) ] \times \{  x_2 + k_2 h \} $. See \eqref{eq:rect_Rsk}, \eqref{eq:rect_Rk} for $R^+(k), R^+_{s2}(k)$.

For the lower part $I_5^-(x, k_2)$, it is log-Lipschitz if $W \in L^{\infty}(\vp)$. We cannot bound its derivative using $|| W \vp||_{\inf}$. We face the difficulty discussed at the beginning of Section \ref{sec:vel_comp}.


Alternatively, we follow the ideas in Section \ref{sec:int_loglip}. We decompose it into the smooth part and rough part. We introduce $  0< k_3 < k_2$ and consider the following decomposition 
\beq\label{int:holy_sing1}
\bal
I_5^-(x, k_2)  &= ( \int_{ R^-(k_2) \bsh R^-(k_3)} 
+ \int_{ R^-(k_3) \bsh R_{s,2}^-(k_3) }
+ \int_{ R_{s,2}^-(k_3) } ) K(x-y) (\psi(x) - \psi(y) ) W(y) dy  \\
& \teq I^-_{5, 0}(x, k_2) + I^-_{5,1}(x, k_3) + I^-_{5,2}(x, k_3).
\eal
\eeq

See the right figure in Figure \ref{fig:hol_ux} for an illustration of different domains. Recall that $k_2 \in Z_+$. We choose $k_3 =  k_2 - \f{i}{2} \geq 2, i=0, 1,2.., 2 k_2 - 4$. Since the integrand in $I^-_{5,0}$ supports at least $k_3 h$ away from the singularity, $I^-_{5,0}(x, k_2)$ is piecewisely smooth. We can estimate $ \pa_{x_2} I^-_{5, 0}(x, k)$ following Sections \ref{sec:int_nsym_bd0}, \ref{sec:tay}. The domain $R^-(k_2) \bsh R^-(k_3)$ is not piecewise constant since the upper part of its  boundary, i.e. 
\[
  \G = \{ (y_1, x_2) :  y_1 \in [ (i- k_2) h , (i+1 + k_2)  h]   \bsh [ (i- k_3) h , (i+1 + k_3)  h] \}, 
\]
depends on $x_2$. See Figure \ref{fig:hol_ux} for an illustration of $\G$.  Taking $x_2$ derivative on $I_{5,1}^-$, we get 
\beq\label{eq:int_K1_I51n}
\bal
|\pa_{x_2} I^-_{5, 0}(x, k_2) |
&\leq \B| \int_{ R^-(k_2) \bsh R^-(k_3)}  \pa_{x_2} J(x, y)  W(y) dy \B| +  \B| \int_{y \in  \G}  J(x, y) W(y)  d y_1 \B|, \\
 J(x,y) & =  K(x-y) (\psi(x) - \psi(y) ) 
\eal
\eeq

Since $y \in \G \subset \{ y : y_2 = x_2\}$ and that $K(y_1, 0 ) \equiv 0$, the second term vanishes. The first term can be estimated using a change of variables $y = x + s$ and the method in Section \ref{sec:tay}, Section \ref{sec:int_nsym_bd0}, 
since its support is at least $k_3 h$ away from the singularity.

For $I^-_{5,1}$, it is also piecewise Lipschitz, we estimate the $x_2$ derivative similar to $I_{5,1}$ in \eqref{eq:int_holx_I51}
\beq\label{eq:int_K1_I51n_nsym}
\bal
 |\pa_{x_2} I^-_{5, 1} |
 & \leq 
  \B| \int_{ R^-(k_3) \bsh R_{s, 2}^-(k_3)} \pa_{x_2} J(x,y) W(y)  dy \B| 
 +  \B| \int_{ (i-k_3) h }^{ (i+1 + k_3)h }   J(x, y) W(y) \B|_{y_2 = x_2 - k_3 h} d y_1 \B| . \\
 \eal
\eeq
Different from $I_{5,1}$ in \eqref{eq:int_holx_I51}, the boundary term in the above estimate only involves the lower part $y_2 = x_2 - k_3 h$ since the domain in $I_{5,1}^-$ is 
$ R^-(k_3) \bsh R_{s, 2}^-(k_3)$. 

For $I^-_{5,2}$, the kernel satisfies $K(x-y) (\psi(x) - \psi(y)) \sim |x-y|^{-1}$ for small $|x-y|$ and is locally integrable. 
We estimate its piecewise $L^{\inf}$ bound using the method in Section \ref{sec:int_linf_ux_com} for the commutator.

The above decomposition can be applied to estimate 
\[
\f{ | I_5^-(x, k_2) - I_5^-( z,k_2) | }{ |x-z|^{1/2}}  
\leq \min_{ k_3  = k_2 - \f{i}{2}}  \f{ | (I_{5,0}^- + I_{5,1}^-) (x, k_3) - (I_{5,0}^-
+ I_{5, 1}^-)(z,k_3) | }{ |x-z|^{1/2}}    +    \f{ | I_{5,2}^-(x, k_3) | + | I_{5,2}^-(z,k_3) | }{ |x-z|^{1/2}}
\]
for $|x-z|$ not too small, e.g. $|x-z| \geq d_s = \f{h}{10} $. When $|x-z|$ is sufficiently small,  the second term in the above estimate can be very large. 

According to the analysis in Section \ref{sec:int_loglip}, for $|x-z|$ very small, we need to choose $k_3 h \sim |x-z|$ to get the sharp estimate. Thus, we consider one more decomposition for $a \leq 1$
\beq\label{int:holy_sing2}
\bal
I_5^-(x, k_2) &= 
\int_{ R^-(k_2) \bsh R_s^-(a)}  K(x-y) (\psi(x) - \psi(y) ) W(y) dy  \\
&+ \int_{  R_s^-(a) }  K(x-y) (\psi(x) - \psi(y) ) W(y) dy  
\teq I^-_{5,3}(x, a) + I^-_{5,4}(x, a).
\eal
\eeq

The above decomposition is slightly different from \eqref{int:holy_sing1}. We choose $R_s^-(a)$ rather than $R^-(a)$, since we need to choose the singular region with size going to $0$ as  $|x-z| \to 0$.
Yet,  $R^-(a)$ \eqref{eq:rect_Rk} does not satisfy this requirement for $a \to 0$.
We can estimate the derivative of $I^-_{5,3}(x, a)$ following Sections \ref{sec:int_nsym1}-\ref{sec:int_nsym_bd}, and the $L^{\infty}$ norm of 
$I^-_{5,4}(x,a)$ following Section \ref{sec:int_linf_ux_com}. Again, in the computation of $\pa_{x_2} I^-_{5,3}(x, a)$, the boundary term vanishes due to $K(y_1, 0 ) \equiv 0$. In summary, we can obtain the following estimate 
\beq\label{int:log_lip_ux}
|\pa_{x_2} I^-_{5,3}(x,a)| \leq A(x) + B(x) \log(1/a), \quad  |I^-_{5,4}(x,a)| \leq C(x) a h ,
\eeq
for any $a \leq 1$, where $A(x), B(x)$ can be estimated following the method in Appendix \ref{app:loglip}, and the estimate of $C(x)$ follows the method in Section \ref{sec:int_linf_ux_com}.
Using the above estimates and the ideas in Section \ref{sec:int_loglip}, we can estimate $d_y( I_5^-(\cdot, k_2)  , x, z )$ for small $|x-z|$ by optimizing $a$, where $d_y$ is defined below 
\beq\label{eq:hol_dy}
d_y(f, x, z) = |  f(x) - f(z) | |x-z|^{-1/2}.
\eeq
We will assemble these estimates in Section \ref{sec:hol_comb}.

\subsubsection{$C_y^{1/2}$ estimate of the velocity with a kernel of the second type}\label{sec:int_holy_K2}

For the kernels $K_2 = \f{y_1^2 - y_2^2}{ |y|^4}$ and $\f{y_1}{|y|^2}$, they do not vanish on $y_2 = 0$ in general. We call them the second type kernel. 

If we use the strategies in the previous subsection, the boundary term in the computation of $
\pa_{x_2} I^-_{5, 0}(x,k_3), \pa_{x_2} I^-_{5,1}(x, k_3)$ or $\pa_{x_2} I^-_{5,3}(x, k_3)$ does not vanish on $\G$ and can be large. To avoid picking up a boundary term on $\G$ and apply the ideas in Section \ref{sec:int_holy_idea}, we consider another estimate on $I_5(x, k_2)$. For $k_3 = k_2 - \f{i}{2}, i = 0, 1, .., 2k_2-4$, we perform the following decomposition 
\beq\label{int:holy_K2_I5}
\bal
I_5(x, k_2) &= ( \int_{ R(k_2) \bsh R(k_3)} 
+ \int_{R^-(k_3)  \bsh R_{s,2}^-(k_3) } 
+ \int_{R^+(k_3)} + \int_{ R_{s, 2}^-(k_3)}
)
 K(x-y) (\psi(x) - \psi(y) W(y) dy  \\
 & \teq I_{5,0} + I_{5,1} + I_{5,2} + I_{5,3} .
\eal
 \eeq
Following the ideas in Section \ref{sec:int_loglip}, we estimate the derivative of the regular part and then the $L^{\inf}$ norm of the singular part. Indeed, we can estimate the $y$-derivative of $I_{5,0}$ following Sections \ref{sec:tay}, \ref{sec:int_nsym_bd0},
$I_{5,1}$ following the estimates of $I_{5, 1}, I_{5,1}^-$ in 
\eqref{eq:int_holx_I51}, \eqref{eq:int_K1_I51n_nsym}, 
and the $L^{\infty}$ norm of $I_{5,3}$ following Section \ref{sec:int_linf_ux_com}.
 The estimate of $I_{5,1}$ is similar to that of $I_4$ in Section \ref{sec:int_hol_reg}.
For $I_{5,2}$, since $R^+(k_3)$ is in $\R_2^+$, we decompose 
\[
 I_{5, 2} = I_{5, 2, 1} + I_{5,2 , 2}
\]
into the integral in the regions $I_{5, 2, 1}: R^+(k_3)\bsh R_{s, 2}^+(k_3)$ and 
$I_{5, 2, 2}:R_{s, 2}^+(k_3)$, and then estimate them following the method in the estimate of $I_{5, 1}, I_{5,2}$ in Section \ref{sec:int_holx_I5}.

After we estimate these quantities, we can estimate $d_y(I_5, x, z)$ \eqref{eq:hol_dy}
  for $|x-z|$ not too small by optimizing $k_3$.  To estimate $d_y( I_5,x,z)$ \eqref{eq:hol_dy} for sufficiently small $|x-z|$, following \eqref{int:holy_sing2}, we use the following decomposition 
\beq\label{int:holy_loglip1}
\bal
I_5(x, k_2) &= \int_{ R(k_2) \bsh R_s(a)} K(x-y) (\psi(x) - \psi(y) W(y) dy 
+  \int_{ R_s^+(a) } K(x-y) (  \psi(x) - \psi(y) ) W(y) dy \\
&+  \int_{R_s^-(a)}  K(x-y) (  \psi(x) - \psi(y) ) W(y) dy 
\teq I_{5,4} + I_{5,5} + I_{5,6}. 
\eal
 \eeq

Then we estimate the derivative of $I_{5,4}$ and the $L^{\infty}$ norm of $I_{5,6}$ as follows
\beq\label{int:holy_loglip2}
| \pa_{x_2} I_{5,4}| \leq A(x)  + B(x) \log(1/a) ,\quad |I_{5,6}| \leq C(x) a h,
\eeq
where the estimates of $A, B$ are given in Appendix \ref{app:loglip}, and the estimate of $C$ follows the method in Section \ref{sec:int_linf_ux_com}. The H\"older estimate of $I_{5,5}$ follows the method in the estimate of $I_{5,2}$ in Section \ref{sec:int_holx_I5}. With these estimates, we can further bound $d_y( I_5,x,z)$ 
\[
 d_x(f,x, z) \teq \f{ |f(x) - f(z)|}{ |x_1 - z_1|^{1/2}}, 
 \quad d_y(f,x, z) \teq \f{ |f(x) - f(z)|}{ |x_2 - z_2|^{1/2}}
\]
for sufficiently small $|x-z|$ by optimizing $a$. See Section \ref{sec:hol_comb}.

\begin{remark}

We do not apply the above computation with smaller window $[-ah, ah]^2$ in the $C_x^{1/2}$ estimate, since it leads to a worse estimate. See also the discussions in Section \ref{sec:int_holy_idea}.
\end{remark}

\subsubsection{H\"older estimate of $u, v, u_y, v_x$ }\label{sec:hol_other}
The ideas of the H\"older estimate for other terms are similar. For a kernel $K$ associated with $\uu, \na \uu$, we perform another decomposition similar to \eqref{int:decom_ux}
\beq\label{int:decom_ux2}
\bal
\psi(x) \int &K(x - y)  W(y) dy =  \int  
\B(   \psi(x) \one_{R(k)^c} + \one_{R_s(k)}  \psi(y) 
+\one_{ R(k) \bsh R_s(k)} \psi(y)  \\
&\quad + \one_{ R(k) \bsh R(k_2)} ( \psi(x) - \psi(y))
+ \one_{ R(k_2)} ( \psi(x) - \psi(y)) \B) K(x- y) W(y) dy \\
& \teq  I_1(x, k) + I_2(x,k) + I_3(x, k) + I_4(x,k,k_2) + I_5(x, k_2),
\eal
\eeq
Here, we use $R_s(x, k)$ \eqref{eq:rect_Rsk}, which is symmetric with respect to both $x_1$ and $x_2$, rather than $R_{s, 1}(x, k)$, since the singular region in the sharp H\"older estimate 
of $[u_y]_{C_{x_i}}^{1/2}, [v_x]_{C_{x_i}^{1/2} }, [u_x]_{C_y^{1/2}}$ in Lemma 3.3-3.5 in Part I \cite{ChenHou2023a} needs to be symmetric in both $x_1, x_2$. Denote by $I_{f6}(x, k_2)$ the approximation term for $f = u_x, u_y, v_x, u, v$. It takes the form similar to \eqref{int:decom_ux_appr}. 

We consider two cases of $\hat x \in [0, 2 x_c]^2 \bsh [0, x_c]^2 $ \eqref{int:sing_loc}. In the first case, we consider $\hat x \in [x_c, 2 x_c] \times [0, 2 x_c] \teq D_{X1}$, where we have $\hat x_1 \geq c \hat x_2$ for some constant $c>0$. In the second case, we consider  $\hat x \in [0, x_c] \times [  x_c, 2 x_c] \teq D_{X2}$, where we have $\hat x_1 \leq c \hat x_2$. We distinguish these two cases since in the second case, the singular region does not touch the boundary, we can apply the method in Section \ref{sec:int_holx_I5}.

\vs{0.1in}
\paragraph{\bf{$C_x^{1/2}$ estimate of $u_y, v_x$ }}
In the $C_x^{1/2}$ estimate of $u_y, v_x$, we follow Section \ref{sec:int_hol_reg} to estimate the regular part $I_1 + I_4 - I_6$ and $I_3$. We follow Section \ref{sec:int_hol_I2} and use Lemma 3.4 in Section {\secsharp} of Part I \cite{ChenHou2023a} to estimate $I_2$. For $I_5$, we follow Section \ref{sec:int_holx_I5}.

\vs{0.1in}
\paragraph{\bf{$C_y^{1/2}$ estimate of $u_x$ }}
We perform the decomposition \eqref{int:decom_ux2} rather than \eqref{int:decom_ux}. The estimates of $I_1 + I_4 - I_6$, $I_3$ follow Section \ref{sec:int_hol_reg}. For $I_2$, we use Lemma 3.3 in Section {\secsharp} of Part I \cite{ChenHou2023a}. 
We follow Section \ref{sec:int_holy_K1} to estimate $I_5$ if $\hat x \in D_{X1}$, and 
Section \ref{sec:int_holx_I5} if  $\hat x \in D_{X2}$.

We remark that we use the decomposition \eqref{int:decom_ux2} rather than \eqref{int:decom_ux} since in Lemma 3.3 in Section {\secsharp} of Part I \cite{ChenHou2023a}, we need to assume that the singular region around $x$ is symmetric in both $x_1$ and $x_2$. The same reasoning applies to $
C_{x_i}^{1/2}$ estimate of $u_y, v_x$.

\vs{0.1in}
\paragraph{\bf{$C_x^{1/2}$ and $C_y^{1/2}$ estimate of $u, v$}}
The H\"older estimates of $u, v$ are substantially easier since $u, v$ are more regular. We perform $C_x^{1/2}, C_y^{1/2}$ of $\rho \uu_A$ for another weight $\rho = \psi_u$ \eqref{wg:hol}. 
Below, we only use the weighted $L^{\inf}$ norm $|| \om \vp ||_{\inf}$. We decompose the integral as follows 
\beq\label{int:decom_u}
\bal
\rho(x) \int K(x - y)  W(y) dy &=  
 \int \B(    \one_{R(k)^c} \rho(x)  + \one_{ R(k)} \rho(x) \B)  K( x- y) W(y) dy \\
 &  \teq  I_1(x, k) + I_2(x,k) .
\eal
\eeq
We choose  $k$ smaller than that in \eqref{int:decom_ux} for $\na \uu$ since the kernel for $\uu$ is more regular. We follow Section \ref{sec:int_hol_reg} to estimate $I_1 - I_6$. For $I_2$, we follow the ideas in Sections \ref{sec:int_loglip}, \ref{sec:int_holy_K1}, \ref{sec:int_holy_K2} to estimate the log-Lipschitz function. We choose a list of $k_2$ and associated region $S(k_2)$ and decompose $I_2$ as follows 
\[
I_2(x, k) \teq  \B( \int_{ R(k) \bsh R(k_2) } + \int_{R(k_2) \bsh S(k_2)}
+  \int_{  S(k_2) } \B)  \rho(x) K(x- y) W(y) d y \teq I_{20}(x,k_2)+  I_{21}(x, k_2) + I_{22}(x, k_2).
\]
For large $k_2 = k, k-1/2, .., 2 $, we choose $S(k_2) = R_{s,i}(k_2)$ in the $C_{x_i}^{1/2}$ estimate, $i=1,2$. For $k_2 < 2$, we choose $S(k_2) = R_s(k_2)$.  For $I_{20}(x, k_2), I_{21}(x, k_2)$, we estimate its derivatives following the estimate of $I_{50}, I_{5, 1}$ 
\eqref{eq:int_holx_I5}, \eqref{eq:int_holx_I51}, respectively, or Section \ref{sec:int_nsym_bd0} when $k_2 \geq 2$, and the estimate of $I_{54}$ when $k_2 < 2$ in Section \ref{sec:int_holy_K2}. For $I_{22}(x, k_2)$, we estimate its $L^{\inf}$ norm following the estimate of $I_{53}$ when $k_2 \geq 2$, and the estimate of $I_{56}$ when $k_2 < 2$ in Section \ref{sec:int_holy_K2}. The estimate is simpler since the above kernel is much simpler than  $K(x -y) (\psi(x) - \psi(y) )$ in Section \ref{sec:int_holy_K2}.


\subsubsection{Special case: $C_y^{1/2}$ estimate of $u_y, v_x$ }\label{sec:cy_spec}

In this case, we apply Lemma 3.5 from Section {\secsharp} of Part I \cite{ChenHou2023a} to estimate the most singular part. Since in Lemma 3.5 from Section {\secsharp} of Part I, we do not localize the integral, we perform the following decomposition 
\beq\label{int:decom_uy}
\bal
\psi(x) \int K(x - y)  W(y) dy &=  
\int \B( \psi(y) + \one_{R(k_2)^c}( \psi(x) - \psi(y)) + \one_{ R(k_2)} ( \psi(x) - \psi(y)) \B) 
K( x- y) W(y) dy  \\
& \teq  I_1(x, k) + I_2(x,k) + I_3(x, k) .
\eal
\eeq

For $I_1$, we apply Lemma 3.5 from Part I \cite{ChenHou2023a}. We follow Section \ref{sec:int_holy_K2} to estimate $I_3$ if $\hat x \in D_{X1}$, and Section \ref{sec:int_holx_I5} if  $\hat x \in D_{X2}$. We follow Section \ref{sec:int_hol_reg} to estimate $I_2 - I_6$, where $I_6$ is the approximation terms for $u_y, v_x$ similar to \eqref{int:decom_ux_appr}. 
The symmetrized integrand is discussed in the paragraph ``$C^{1/2}$ estimate of $u_y, v_x$" in Section \ref{sec:int_sym}. There are additional difficulties since the weight  $\psi(y)$ and the symmetrized integrand $I(x, y) = K^C(x, y) (\psi(x) - \psi(y))$ for some kernel $K^C$ (see similar derivations in \eqref{int:sym_integ1},\eqref{int:sym_integ2}) are singular near $0$. 
Note that we do not have the $K^{NC}$ term. See the paragraph \textit{$C^{1/2}$ estimate of $u_y, v_x$} before Section \ref{sec:int_nsym1}.

The integral of $I(x, y)$ near $0$ or in the far-field require some additional estimates, which we discuss below. Since $y$ is away from the singularity $x$ in these cases, the symmetrized integral is given by $I = K^{sym}(x, y) (\psi(x) - \psi(y))$. See \eqref{int:sym_integ2} and 
Section \ref{sec:int_sym} for related discussions.

\vs{0.1in}
\paragraph{\bf{Estimate the integral near $0$}}

To estimate the $D_1 = \pa_{x_2}$ derivative, we use 
\[
|D_1 I| =| D_1 K^{sym} ( \psi(x) - \psi(y)) + K^{sym} \cdot D_1 \psi(x)|
\leq  | D_1 K^{sym}  \cdot \psi(x) + K^{sym} \cdot D_1 \psi(x) | +  |  D_1 K^{sym} \cdot \psi(y) |.
\]
For $y$ close to $0$, since $\psi$ is singular, $\psi(y)$ is much larger than $\psi(x)$, and $K^{sym}(x, y)$ is not singular. The main term in $D_1 I$ is given by $D_1 K^{sym} \psi(y)$. 
It follows 
\[
\int_Q |D_1 I \cdot W(y)|  dy 
\leq || W \vp ||_{\inf} \B( || \vp^{-1} ||_{L^{\inf}(Q)} \int_Q | D_1 K^{sym} \psi(x) + K^{sym} \cdot D_1 \psi(x)| dy 
+  ||  \f{\psi}{\vp} ||_{L^{\inf}(Q)}  \int_Q | D_1 K^{sym} | dy \B),
\]
where $Q$ is some grid near the origin. The integrands in both integrals do not involve the singular weight, and we can estimate them for each grid point $x$ using the previous methods.

To estimate the $X-$ discretization error, we need to estimate the integral of $\pa_{xi}^2 \pa_{x_2} J$. Since $\psi(y)$ is independent of $x$, we get 
\[
I  = K^{sym}(x, y) (\f{\psi(x)}{\psi(y)} - 1)   \psi(y), 
\int_Q |\pa_{x_i}^2 \pa_{x_2} I\cdot  W(y) | dy 
\leq || W \vp||_{\inf}|| \f{\psi}{\vp} ||_{L^{\inf}(Q)} \int_Q \B| \pa_{x_i}^2 \pa_{x_2} K^{sym}(x, y) (\f{\psi(x)}{\psi(y)} - 1)  \B| dy.
\]
The last integrand is not singular in $y$ near $y=0$, and we estimate it using the previous method, e.g. Section \ref{sec:T_rule}.

For $u_y, v_x$, we have a rank-one approximation $K_{app}(x, y)$ from $C_{u_y} \chi_0 K_{00}$ \eqref{eq:u_appr_near0_coe} 
(see Section 4.3.2 from Part I \cite{ChenHou2023a}). The full integrand with approximation term and weight is given by 
\[
\bal
 I_{app} & = K^{sym}(x, y) (\psi(x) - \psi(y)) - K_{app}(x, y) \psi(x) \\
 & = ( K^{sym}(x, y) - K_{app}(x, y) ) \psi(x) - K^{sym}(x, y) \psi(y)
 = I_{app,1} + I_{app, 2}.
 \eal
\]
For $y$ away from the singularity $x$ and $0$, $I_{app, 1}$ has the same form as the previous case, e.g. the $C_x^{1/2}$ estimate. 
We improve the error estimate $\pa_i^2 \pa_{x_2} I_{app}$ using the cancellation between the full symmetrized kernel $K(x, y)$ and $K_{app}$ from Lemma \ref{lem:ker_sym} and the estimate in \eqref{eq:ker_decay3} in Appendix \ref{app:decay}
and the property that $\psi(y)$ is much smaller than $\psi(x)$ for $|y|$ much larger than $|x|$.

\vs{0.1in}
\paragraph{\bf{Estimate in the far-field}}

For the tail part in this case, we have an improvement for small $|x|$ where $\chi_0(x) = 1$ due to the approximation term near $0$ 
\[
\hat f =  C_{f0}(x, y) u_x(0) + C_f(x, y) \cK_{00} 
=  C_f(x, y) \cK_{00} ,
\]
where $f = u_y, v_x$ and $\cK_{00}$ is defined in \eqref{eq:u_appr_near0_coe}, and we have used $C_{f0}(x, y) = 0$. 
Its associated integrand is given by 
\[
K_{app} \teq  \pi^{-1} C_f(x, y) K_{00}(y),
\]
where $K_{00}$ is defined in \eqref{eq:u_appr_near0_coe}. To estimate it, we use the following decomposition 
\[
D_1( I - \psi(x) K_{app})
= D_1( (K^{sym} - K_{app} ) \cdot \psi(x) ) 
- D_1 K^{sym} \cdot \psi(y)  \teq P_1 + P_2.
\]
We estimate $P_1$ using the method in Section \ref{sec:int_beyond}. Due to the approximation, $K^{sym} - K_{app}$ has a much faster decay for large $y$ beyond $[0, D]^2$. See \eqref{eq:ker_decay3} and Appendix \ref{app:decay}.
For $P_2$, we have 
\[
\int_{ \Om^c } |P_2| | W(y)| dy 
\leq || W \vp ||_{\inf}  \int_{\Om^c} | D_1 K^{sym} | \f{\psi}{\vp}(y)  dy,
\]
where $\Om = [0, D]^2$ with large $D$. The last integral is computed using the method in Section \ref{sec:int_beyond}.

\subsection{Estimate the integrals near $0$ and in the far field}\label{sec:int_beyond}

We use a combination of uniform mesh and adaptive mesh to compute the integral in a finite domain $[0, D]^2$, e.g. $D = 1000$. See Section \ref{sec:T_rule}. Since the kernel decays and the singularity is in the near-field, the integral beyond this domain is small, and we estimate it directly. In addition, for $y$ near $0$, we estimate the integrals 
(the last two integrals in \eqref{eq:u_tof}) from the approximations $u_x(0), K_{00}$ \eqref{eq:int_near0}, which is singular of order $|y|^{-2}$ or $|y|^{-4}$. For simplicity, we consider $\lam = 1$. The estimates can be generalized to other scaling parameter $\lam$. 
To estimate $\int_D k(y)\om(y) dy$ for $D$ near $0$ or $D$ in the far-field, following \eqref{int:L1_1}, we only need to estimate $\int_{D} |k(y)| \vp^{-1}(y) dy $. Since $|y|$ is either very small or very large, we can use the asymptotics of $\vp$ in these estimates.

\subsubsection{Near-field estimate}\label{sec:int_near}

Firstly, we estimate $\int_{ [0, R_1]^2} |k(y)| \vp^{-1}(y)  dy $ for  $k(y) = \f{y_1 y_2}{ |y|^4},  \f{y_1 y_2 (y_1^2 - y_2^2)}{|y|^8}$ related to $u_x(0) , K_{00}$ \eqref{eq:int_near0}. 
 We partition $[0, R_1] $ into 
\[
 0 = z_0  < z_1 < ... < z_n = R_1
\]
with $z_1$ much smaller than $R_1$. Denote $Q_{ij} =[ z_{i-1}, z_i] \times [z_{j-1} , z_j]  $. Clearly, we have 
\[
\int_{ [0, R_1]^2} | k(y) | \vp^{-1}(y) dy 
\leq \sum_{ 1 \leq i, j \leq n } I_{ij},
 \quad I_{ij} \teq
\int_{Q_{ij}}  | k(y) | \vp^{-1}(y) dy.
 \]
For $I_{ij}, (i,j) \neq (1, 1)$, we apply a trivial bound 
\beq\label{eq:integ_end_trivial}
I_{ij} \leq || \vp^{-1}||_{L^{\inf}(Q_{ij})} \int_{Q_{ij}} |k(y)| dy
\leq  |Q_{ij}| \cdot  ||k||_{L^{\inf}(Q_{ij})}
|| \vp^{-1}||_{L^{\inf}(Q_{ij})}.
\eeq

For $k(y) = \f{y_1 y_2}{ |y|^4},  \f{y_1 y_2 (y_1^2 - y_2^2)}{|y|^8}$, the estimate of $||k||_{L^{\inf}(Q_{ij})}$ is established in Appendix \ref{app:ker}. It remains to estimate the first term $I_{11}$. Denote $r = y_1$. Suppose that 
\[
\vp( x) \geq q  |x|^{a } ( \cos \b)^{ b }, \quad b \leq 0.
\]
See \eqref{wg:linf}. If $k(y) = \f{y_1 y_2}{ |y|^4}$ and $a<0$, we yield 
\[
\bal
 I_{11} & \leq q^{-1} \int_0^{ \sqrt 2 r  } \int_0^{\pi/2} \f{ \sin \b \cos \b}{r^2} r^{-a} 
 ( \cos \b)^{-b}  r dr d \b
 = q^{-1} \int_0^{\sqrt 2 r} r^{-a- 1} dr \int_0^{\pi/2} \sin \b (\cos \b)^{-b +1} d \b \\
 & = q^{-1}  \f{ (\sqrt 2 r)^{-a}}{-a}  \int_0^1 t^{-b + 1} d t 
 = q^{-1} \f{ (\sqrt 2 r)^{-a}}{-a} \f{1}{2 - b}.
 \eal
\]

If $k(y) = \f{y_1 y_2 (y_1^2 - y_2^2)}{|y|^8}$, we yield $|k(y)| \leq \f{1}{4} \f{\sin 4 \b}{ r^4}$. Since $b \leq 0$, if $a<-2$, we get $ \vp \geq q r^{a}$ and 
\[
\bal
I_{11} & \leq q^{-1} \int_0^{\sqrt 2 r } \int_0^{\pi/2} \f{1}{4} \f{ |\sin 4 \b|}{ s^4} 
s^{-a} s ds d \b
= \f{1}{4q } \int_0^{\sqrt 2 r} s^{-a-3 } d s
\f{1}{4}\int_0^{2\pi} |\sin \b| d \b \\
&= \f{1}{4q} \f{ (\sqrt 2 r)^{-a-2}}{-2-a } \int_0^{\pi/2} \sin \b d \b
= \f{1}{4q} \f{ (\sqrt 2 r)^{-a-2}}{-2-a }.
\eal
\]

\subsubsection{Far-field estimate}\label{sec:int_far}
Denote $a \vee b = \max(a, b)$.
To estimate  the far field integral $I \teq \int_{ y_1 \vee y_2 \geq R_0 } | k(y) | \vp^{-1}(y) dy $, we first pick sufficient large $R$, and then partition the domain 
\[
0 = z_0 < z_1 < .. < z_m = R_0 < z_{m+1} < ... < z_n = R_1 < + \inf.
\]

Denote $Q_{ij} = [z_{i-1}, z_i] \times [ z_{j-1}, z_j]$. Clearly, we have 
\[
I = \sum_{ m+1 \leq \max(i, j) \leq n } I_{ ij } 
+ J , \quad I_{ij} \teq \int_{Q_{ij}} |k(y)| \vp^{-1}(y) dy, \quad J = \int_{ y_1 \vee y_2 \geq R_1 } | k(y) | \vp^{-1}(y) dy.
\]

For $I_{ij}$, we apply the trivial estimate \eqref{eq:integ_end_trivial}. Suppose that 
\[
\vp \geq q r^a (\cos \b)^b , \quad |k(y)| \leq |y|^{-p} , \quad b \in [-1, 0], \quad p+ a > 2.
\]
We get 
\[
J \leq \f{1}{q} \int_{R_1}^{\inf} \int_0^{\pi/2} r^{-p- a} (\cos \b)^{-b}   r dr d \b
= \f{1}{q} \f{ R_1^{-p-a+2}}{ |p+a - 2|} \int_0^{\pi/2} (\cos \b)^{-b} d \b.
\]

Using H\"older's inequality and $b \in [-1, 0]$, we get 
\[
\int_0^{\pi/2} (\cos \b)^{-b} d \b  
\leq ( \int_0^{\pi/2} \cos \b d \b)^{-b} (\int_0^{\pi/2} 1 )^{1 + b}
= (\pi/2 )^{1 + b}.
\]

It follows 
\[
J \leq \f{1}{q} \f{ R_1^{-p-a+2}}{ |p+a - 2|}(\pi/2 )^{1 + b}.
\]

\vs{0.1in}
\paragraph{\bf{Application }}

We apply the above calculations to estimate the integral and its derivatives beyond the mesh $[0,D]^2$ \eqref{int:parti1}. Since the domain is far away from the singularity, the integrand is the symmetrized kernel, e.g., \eqref{int:sym_integ2}. 
From Appendix \ref{app:decay} and Lemma \ref{lem:ker_sym} in Appendix \ref{app:ker}, for $\uu_A, \na \uu_A, \pa_i ( \rho \uu_A), \pa_i (  \psi \na \uu_A)$, the integrand in the far-field ($y$ is large) satisfies 
\[
|K(x, y) | \leq C(x) \Den^{-k},
\] 
with some $k \geq 2$ and coefficients $C(x)$, where $\Den$ is defined in \eqref{int:Den}. 

In our computation, we rescale $x$ to $\hat x$ and restrict it to the near-field $[0, b]^2$ with $b < 2$. 
Note that $y \notin [0, D]^2$ and $|y|\geq D \gg b$. From \eqref{int:Den}, we get 
\[
\Den \geq \min_{ |z_1| \leq x_1, |z_2| \leq x_2} |y-z|^2 
\geq \min_{ |z_1| \leq x_1, |z_2| \leq x_2} (|y| - |z|)^2
\geq (|y| - |x|)^2 = |y|^2 (1 - \f{|x|}{|y|} )^2 .
\]
Since $\f{|x|}{|y|} \leq \sqrt{2 } b / D$, we yield 
\[
\Den \geq (1- C_s)^2 |y|^2, \quad C_s = \sqrt{2} b / D.
\]

It follows 
\[
\int_{ y \notin [0,D]^2} |K(x, y)| \vp^{-1}(y)dy 
\leq (1-C_s)^{-2k} C(x) \int_{ y \notin [0,D]^2} |y|^{-2k}\vp^{-1}(y)dy .
\]
Using the method in Section \ref{sec:int_far}, we can estimate the above integral.

\subsection{Estimate for very small or large $x$}\label{sec:hol_1D}

The rescaling argument and the methods in the previous subsections apply to the estimate of $\uu_A(x), \na \uu_A(x)$ for $x \in [0, x_M]^2 \bsh [0, x_m]^2, 0 < x_m < x_M$. For very small or large $x$, we cannot use a finite number of dyadic scales $\lam = 2^i$ to rescale $x$ such that $x / \lam \in [0, 2x_c]^2 \bsh [0, x_x]^2$.  Instead,  we choose $\lam = \f{ \max(x_1, x_2) }{x_c}$. We want to estimate the rescaled integral  with a $-d$-homogeneous kernel $K$
\[
 p(x) \int K(x - y) W(y) dy 
 = p_{\lam}(x) \int K(\hat x - \hat y) \lam^{2- d} W_{\lam}( \hat y) dy,
\]
uniformly for all small $\lam \ll 1$ or large $\lam \gg 1$, where $p$ is some weight and $p_{\lam}$ is defined in \eqref{eq:flam}. The rescaled singularity $\hat x = x / \lam$ satisfies $\max_i \hat x_i = x_c$.
We simplify $\hat x, \hat y$ as $x, y$.

We can use the asymptotic of the weights to estimate the integral, see e.g. \eqref{int:scal_asymp}. The new difficulty is that the estimate involves the rescaled weight $p_{\lam}(y)$. Since $\lam$ is not fixed and depends on $x$ that tends to $0$ or $\infty$, we cannot evaluate $p_{\lam}(y)$ and the integrand directly. In the following derivation, $\lam$ is comparable to $|x|$, which is either very small or very large. 

For $y$ away from the singular region, the integrand of the regular part is given by $J = K(x, y) \cdot p_{\lam}(x) $ \eqref{int:sym_integ2}. We choose a radial weight $p$ defined in Appendix \ref{app:wg} $p(x) = \sum_{1\leq i \leq n} q_i |x|^{a_i}$. See $\psi_1, \psi_u, \psi_{du}$ \eqref{wg:hol}. 
We introduce the asymptotics of these weights 
\[
R_{\lim} \teq \lim_{x \to A} \f{ D_1 p_{\lam}(x)}{ p_{\lam}(x)},
\quad p_{lim} = q_i |x|^{a_i}, 
\]
with $(A, i) = (0, 1)$ or $(A, i) = (\infty, n)$, where $(q_n, a_n)$ denotes the last power in the weight. We use the following decomposition to compute $D_1 J$ with $D_1 = \pa_{x_i}$
\[
\bal
|D_1 J| &= | D_1(  K(x, y) \cdot p_{\lam}(x)  )|
= |D_1 K(x, y) \cdot p_{\lam}(x)
 + K(x, y) \cdot D_1 p_{\lam}(x) | \\
 &= \B| p_{\lam}(x) \B\{  D_1 K(x, y) + R_{lim} K(x, y) +( \f{ D_1 p_{\lam}(x) }{ p_{\lam}(x)} - R_{lim}  ) K(x, y)   \B\} \B| . \\
 \eal
\]
Since we consider very small $\lam$ or very large $\lam$, the error term 
$ \f{ D_1 p_{\lam}(x) }{ p_{\lam}(x)} - R_{lim}  $ is small. Hence, we use a triangle inequality to bound $D_1 J$
\[
| D_1 J| 
\leq 
 p_{\lam}(x) \B|  D_1 K(x, y) + R_{lim} K(x, y)  \B|
 + p_{\lam}(x) \B| ( \f{ D_1 p_{\lam}(x) }{ p_{\lam}(x)} - R_{lim}  ) K(x, y)    \B|.
\]
The advantage of the above decomposition is that the main term $D_1 K(x, y) + R_{lim} K(x, y)$ does not depend on $\lam$ so that we can estimate it using previous methods.

Since the estimate of derivative of $u, v$ does not involve the commutator, see, e.g. \eqref{int:decom_u}, we can apply the above method to compute the integral of $D_1 u$ for small $x$ or large $x$.

For $y$ near the singular region, from \eqref{int:sym_integ1}, the symmetrized integrand is given by 
\[
J = K^{C} ( p_{\lam}(x) - p_{\lam}(y) ) + K^{NC} p_{\lam}(x),
\]
where we use $p$ for the weight. Firstly, we have
\[
\bal
|D_1 J | & = | D_1 K^{C} ( p_{\lam}(x) - p_{\lam}(y)  ) + D_1 K^{NC} p_{\lam}(x)
+ (K^C + K^{NC})  D_1 p_{\lam}(x)  | \\
\eal
\]
Denote  $K = K^C + K^{NC}$. We use the following method to bound $D_1 J$
\[
\bal
|D_1 J |
&\leq p_{\lam}(x) \B| D_1 K^C  \cdot ( 1 - \f{ p_{\lam}(y)}{ p_{\lam}(x)} )  +
D_1 K^{NC} +K \cdot \f{ D_1 p_{\lam}}{ p_{\lam}} \B| \\
& \leq p_{\lam}(x) 
\B\{ 
\B| D_1 K^C  \cdot ( 1 - \f{ p_{lim}(y)}{ p_{lim}(x)} )  +
D_1 K^{NC} +K \cdot \f{ D_1 p_{lim}}{ p_{lim}} \B|  \\
& \qquad + \B| D_1 K^C (\f{ p_{\lam}(y)}{ p_{\lam}(x)} - \f{ p_{lim}(y)}{ p_{lim}(x)}  )  \B|
+ K \B| \f{ D_1 p_{lim}}{ p_{lim}} -  \f{ D_1 p_{\lam}}{ p_{\lam}}  \B|
\B\} .
\eal
\]
The second and the third term on the right hand side can be seen as an error term. The main term $\B| D_1 K^C  \cdot ( 1 - \f{ p_{lim}(y)}{ p_{lim}(x)} )  +
D_1 K^{NC} +K \cdot \f{ D_1 p_{lim}}{ p_{lim}} \B|$ does not depend on $\lam$, and the singularity $x$ is in the near-field and away from $0$. We can apply all the delicate decompositions developed in previous sections to estimate $D_1 J$. 

In the  H\"older estimates, we need various bounds for the weights $p_{\lam}$. 
Using the asymptotics of $p(x)$, we can estimate the derivatives of $p_{\lam}$ for very small $\lam$ or very large $\lam$ uniformly. See Appendix \ref{app:wg}, \ref{app:wg_radial}. Once we obtain the estimates of $\psi_{\lam}$, and the weight $\vp_{\lam}$ in the $L^{\inf}$ norm $|| \om_{\lam} \vp_{\lam}||_{\inf}$, we can use the methods in the previous subsections and the scaling relations in Section \ref{sec:scal_prop} to perform the H\"older estimates. 

The $L^{\inf}$ estimate follows similar ideas and is much easier. We refer more details to Section {\secuoneD} in the supplementary material II \cite{ChenHou2023bSupp}.

We remark that since we have much larger damping coefficients in the energy estimates (see Section {\secEE} in Part I  \cite{ChenHou2023a}) near $x=0$ and in the far-field, the estimates of the nonlocal terms in these regions, though technical, only have minor effects on the nonlinear stability estimates. 

\subsection{Assemble the H\"older estimates}\label{sec:hol_comb}

In Section \ref{sec:vel_hol_comp}, we decompose the velocity in several parts and estimate them separately using the norms $|| \om \vp||_{\inf}, [\om \psi]_{C_{x_i}^{1/2}}$. In this section, we assemble these estimates and estimate 
\[
\d(f, x, z) \teq \f{ |f(x) - f(z)| }{ |x-z|^{1/2}},
\]
for $f = \psi_u \uu_A , \psi \na \uu_A$ with weights in \eqref{wg:hol}. To obtain better estimates, we combine some of the estimates.

In the proof of the first inequality in Lemma \ref{lem:main_vel}, we combine and bound different norms using $\max(  || \om \vp||_{\inf} ,  \max_{j=1,2} \g_j  [ \om \psi_1]_{C_{x_j}^{1/2}(\R_2^{+}) } )$. We apply the second inequality to the error $\e = \om - (-\D) \phi^N $ \eqref{eq:elli_err0} and can evaluate the localized norm using piecewise bounds of the error. See Section \ref{sec:vel_loc_est}.

To illustrate the ideas, we focus on the $C_x^{1/2}$ estimate, $x \in [x_c, 2 x_c] \times [0, 2 x_c]$, i.e. $x_1$ is large relative to $x_2$, $z_1 \geq x_1$, and $x_2 = z_2$. For general pairs $(x, z)$, we can rescale $(x, z)$ to $(\lam x, \lam z)$ such that $ \lam x \in [0, 2x_c]^2 \bsh [0, x_c]^2$. Using the scaling relations in \eqref{sec:scal_prop}, we can estimate the rescaled version of $\d(f, x, z)$. See also the discussion at the beginning of Section \ref{sec:vel_hol_comp}. 

We assume that $z_1 \in [x_c, 2(1 + \nu) x_c]$ with $\nu < 1$. For $z_1 \geq 2 (1 + \nu) x_c$, we have $z_1 > (1 + \nu) x_1$.  
Since $z_1, x_1$ are large relative to $z_2, x_2$, respectively, we have 
\[  |x-z|= |z_1 - x_1| \asymp |z_1| \gtr |x|, |z| .\]
Then, we can use the $L^{\inf}$ estimate and triangle inequality to estimate $\d(f, x, z)$. Note that we can estimate the piecewise $L^{\inf}$ norm of $ |x|^{-1/2} \rho(x) \uu_A(x)$ and $|x|^{-1/2} \psi \na \uu_A$ following Section \ref{sec:vel_linf}, where $\rho, \psi$ are the weights in the H\"older estimate of $\rho \uu_A, \psi \na \uu_A$. See Section {\secholRtwo} in the supplementary material II \cite{ChenHou2023bSupp} for more details.

We focus on $f = \psi u_{x, A}$. We partition the domain $D_{\nu} = [x_c , 2(1+ \nu) x_c] \times [0, 2 x_c]$ into $h_x \times h_x$ grids $D_{ij} , 1 \leq i \leq 2 (1 + \nu ) x_c / h_x, 1 \leq j \leq 2 x_c / h_x$. We apply the decomposition \eqref{int:decom_u} with the same parameters $k, k_2$ to $x$ in different grids $D_{ij}$. For $x \in D_{ij}$, using the method in Section \ref{sec:vel_hol_comp}, we obtain the estimate 
\beq\label{eq:hol_comb1}
\bal
 &f(x) = I_1(x) + I_2(x) + I_3(x) + I_4(x) + I_5(x) - I_6(x), \quad 
 I_{5} = I_{5, 0} + I_{5, 1} + I_{5, 2} , \\
 &|\pa_x (I_1 + I_4 + I_{5,0}- I_6)| \leq a_{ij, 1}  || \om \vp||_{\inf} ,  \ 
 |\pa_x I_3| \leq a_{ij, 2} || \om \vp||_{\inf}  , \ 
 |\pa_x I_{5, 1} | \leq a_{ij, 3} || \om \vp||_{\inf}  , 
 \eal
\eeq
for some constants $a_{ij, l}, b_{ij} \geq 0$, where $I_{5, 1}, I_{5,2}$ are defined and estimated in Section \ref{sec:int_holx_I5}. 

For $x, z \in D_{\nu}$ with $x_2 = z_2, z_1 \leq z_1$, we have $x \in D_{i_1, j}, z \in D_{i_2, j}$ for some $i_1 \leq i_2$. We apply the method in Section \ref{sec:int_hol_I2} to estimate $ \d( I_2, x, z) $ and the method in Section \ref{sec:int_holx_I5} to estimate $J_1$ related to $\d(I_{52}, x, z)$ \eqref{int:holx_ux_J}. These estimates contribute to the bound  $ C_{hol} [ \om \psi]_{C_x^{1/2}} $ for some $C_{hol}>0$, which can be computed.

\vspace{0.1in}
\paragraph{\bf{Regularity of the combination}}

While $I_1 + I_4 + I_{5,0} - I_6, I_3, I_{5,1}$ are only piecewise smooth and can be discontinuous when $x$ crosses the grids $D_{ij}$, the sum $I_{lip} =I_1 + I_4 + I_{5,0} - I_6 + I_3 +I_{5,1}$ is continuous and Lipschitz in $x_1$ for fixed $x_2$. In fact, by definition 
\eqref{int:decom_ux}, \eqref{eq:int_holx_I5}, we get
\[
\bal
I_{lip} & = \int_{R(k) \bsh R_{s, 1}(k_3)}  
K_1(x - y) (\psi(x) - \psi(y)) W(y) d y \\
& \quad + \psi(x) \int_{ R(k)^c} K_1( x-y) W(y) d y
+ \int_{R(k)\bsh R_{s, 1}(k)} K_1(x - y) \psi(y) W(y) d y - I_6  \\
& = \psi(x) \int_{ R^c_{s, 1}(k_3) } K_1(x - y)  W(y) d y
 - \int_{R_{s,1}(k) \bsh R_{s, 1}(k_3)} K_1(x - y)  W(y) \psi(y) d y - I_6.
\eal
\]
For fixed $x_2$, since $I_6$ \eqref{int:decom_ux_appr} for the approximation term is smooth in $x$ and the domain $R_{s, 1}(l)$ \eqref{eq:rect_Rsk} depends on $x_1$ continuously, we obtain that $I_{lip}$ is continuous in $x_1$ when $x$ crosses the grids $D_{ij}$. Since $I_{lip}(x)$ is smooth for $x \in D_{ij}$, we get that $I_{lip}$ is continuous and Lipschitz in $x_1$ with piecewise Lipschitz norm bounded by $a_{ij,1}+ a_{ij, 2} + a_{ij, 3}$.

Similarly, for the case in Section \ref{sec:int_holx_I5}, we have $I_{lip} = I_1 + I_3 + I_4 + I_{5,0} + I_{5, 1} - I_6$ \eqref{int:decom_ux}, \eqref{eq:int_holx_I5} is Lipschitz in $x_i$ in the $C_{i}^{1/2}$ estimate for fixed $x_{3-i},i=1,2$. 

For the case in Section \ref{sec:int_holy_K1},  
$I_{lip} = I_1 + I_3 + I_4 + I_{5,0}^- + I_{5, 1}^- +  I_{5, 1}^+ - I_6$  \eqref{int:decom_ux}, 
\eqref{int:holy_sing0}, \eqref{int:holy_sing1}
and $I_{lip} = I_1 + I_3 + I_4 +   I_{5, 3}^-  + I_{5, 1}^+   - I_6$ 
 \eqref{int:decom_ux}, \eqref{int:holy_sing0}, \eqref{int:holy_sing2} are Lipschitz, where $ I_{5, 1}^+$ associated with $I_5^+$  \eqref{int:holy_sing0} is defined similar to 
$ I_{5, 1}$ in \eqref{eq:int_holx_I5}.

For the case in Section \ref{sec:int_holy_K2}, 
$I_{lip} = I_1 + I_3 + I_4 + I_{5,0} + I_{5, 1} 
+  I_{5,2, 1} - I_6$ 
\eqref{int:decom_ux},
\eqref{int:holy_K2_I5}, 
and $I_{lip} =  I_1 + I_3 + I_4 + I_{5,4}- I_6$ are Lipschitz,  where $ I_{5, 2, 1}$ associated with $I_{5, 2}$  \eqref{int:holy_K2_I5} is defined similar to $ I_{5, 1}$ in \eqref{eq:int_holx_I5}.

In summary, the sum of the terms in $f(x)$ \eqref{eq:hol_comb1} 
with piecewise derivative estimates is Lipschitz. Using the triangle inequality, we obtain the piecewise Lipschitz bound for $I_{lip}$. 
The remaining parts in $f(x)$ \eqref{eq:hol_comb1} are continuous and are estimated by the piecewise $L^{\infty}$ bounds, e.g. $I_{5, 2}^-$ \eqref{int:holy_sing1}, $I_{5, 4}^-(a)$ \eqref{int:holy_sing2}, $I_{5, 3}$ \eqref{int:holy_K2_I5}, and the improved H\"older estimates, e.g. $I_{5, 2}$ \eqref{eq:int_holx_I5}

By averaging the piecewise derivative bounds and using the estimates in Appendix \ref{app:piece_deri}, for $x \in D_{i_1, j}, z \in D_{i_2, j}$, we can obtain 
\[
| I_{lip}(x) - I_{lip}(z)| \leq C_{lip} |x_1 - z_1| \cdot || \om \vp||_{\inf}
\]
for constant $C_{lip}$  depending only on $\{ a_{kl, j} \}_{k, l \geq 1, j\leq 3 } $ and the mesh $h_x$ explicitly. 
Hence, for the remaining terms in $f$ not estimated using the seminorm $[ \om \psi]_{C_x^{1/2}} $, e.g. $I_1 + I_4  - I_6 +  I_3 + I_{5,0} + I_{5, 1}$ and $J_2$ related to $I_{5,2}$ \eqref{int:holx_ux_J}, each term is continuous and they satisfy \footnote{In the previous version of this paper \cite{ChenHou2023b}, some term $f_l(x)$ is not continuous when $x$ crosses the grids. We have corrected this minor issue by reorganizing different terms so that each $f_l(x)$ is continuous. See the above paragraph \textit{Regularity of the combination}. Related computer-assisted estimates have been updated and the full nonlinear stability estimates remain valid.
}
\[
  f_R(x) = \sum_{ 1 \leq l \leq N} f_l(x), \quad |f_l(x) - f_l(z)| \leq \min( p_l |x_1 - z_1|, q_l)  \cdot  || \om \vp||_{\inf}
\]
for some $N$, where we can choose $q_l = \infty$ if we do not have $L^{\inf}$ estimate for $f_l(x)$. Similar consideration applies to $p_l$. In our problem, there are only a few terms and $N  < 10$. In the $C_{x_i}^{1/2}$ H\"older estimate of $P_{ij}Q_{ij}(x)$ (continuous in $x_i$) in $I_{52}$ \eqref{int:holx_ux_J}, we optimize two estimates (see the estimates between \eqref{int:holx_ux_J} and \eqref{eq:PQ_linf}), which is a nontrivial example of the above summand.

Now, for $x \in D_{i_1,j} , z \in D_{i_2, j}$, we have 
\beq\label{eq:hol_comb2}
\bal
 \f{ |f_R(x) - f_R(z)}{ |z_1 - x_1|^{1/2}}
&\leq \sum_{1 \leq l \leq N} \min( p_l \d^{1/2}, q_l \d^{-1/2} ) || \om \vp||_{\inf}, \\
 \d & = z_1 - x_1 \in [ \max( i_2 - i_1 -1, 0) h_x,  (i_2 - i_1 + 1) h_x ], 
 \eal
\eeq

The upper bound can be obtained explicitly by partitioning the range of $z_1 - x_1$ into finite many subintervals $M_l$ according to the threshold $\d_l = q_l / p_l$. In each $M_l$, the bound reduces to
\[
 P \d^{1/2} + Q \d^{-1/2}
\]
for some constants $P, Q$. It is convex in $\d^{1/2}$ and can be optimized easily and explicitly in any interval $[\d_l, \d_u], \d_l > 0$ .

\begin{remark}
We combine the estimates of different parts in \eqref{eq:hol_comb1} using \eqref{eq:hol_comb2} to obtain a sharp estimate. If one estimate different parts separately, the distance $\d = z_1 -x_1$ for the optimizer may not be achieved for the same value, which leads to an overestimate. We remark that for small distance $|z_1-x_1|$, such an overestimate can be significant since the 
ratio between the endpoints $|i_2 - i_1 + 1| / \max( i_2 - i_1 -1, 0) $ varies a lot. 
\end{remark}

In some estimates, e.g. the $C_y^{1/2}$ estimate of $u_x$ in Section \ref{sec:int_holy_K1}, we need to  decompose $I_5$ using different size of small singular region $k_3$. In such a case, we have a list of estimates associated to different $k_3$ for the part $f_R$ not estimated by $[\om \psi]_{C_x^{1/2}}$ or $[ \om \psi]_{C_y^{1/2}}$:
\[
 \f{ |f_R(x) - f_R(z) | }{ |z_1 - x_1|^{1/2}}
\leq \sum_{1 \leq l \leq N} \min( p_{l, k_3} \d^{1/2}, q_{l, k_3} \d^{-1/2} ) || \om \vp ||_{\inf}.
\]
For $|x_1-z_1|$ bounded away from $0$, e.g. $|x_1 - z_1| \geq \f{1}{10} h_x$, we can still partition the range of $|x_1 - z_1|$ and optimizing the above estimates first over $\d$ and then $k_3$.

\subsubsection{H\"older estimate for small distance}\label{sec:hol_small}

In some H\"older estimates, e.g. the $C_y^{1/2}$ estimate in Sections \ref{sec:int_holy_K1}, \ref{sec:int_holy_K2}, when $|x-z|$ is very small, e.g. $|x - z| \leq  c h_x $ with $c < 1$, we need to choose a singular region with size $a$ to be arbitrary small. See also Section \ref{sec:int_loglip} for the estimates of a log-Lipschitz function. In these estimates, we can decompose 
$f_R(x)$ that is not estimated using the H\"older norm of $\om \psi$ as follows 
\[
f_R(x) = f_1(x, a, b) + f_2(x, a),
\]
for  $a < b$ and $b$ is fixed.
We can estimate the derivative of $f_1$, and the $L^{\inf}$ norm for $f_2$ \[
| \pa_x f_1(x, a, b) | \leq ( A_i + B_i \log \f{ b}{a} ) || \om \vp||_{\inf}, \quad |f_2| \leq \f{C_i a}{2} || \om \vp||_{\inf}
\]
in each grid $D_{ij}$ for any $a \leq b$, see e.g., \eqref{int:log_lip_ux} and \eqref{eq:hol_comb1}. We drop $j$ since we consider $x, z$ with $x_2 = z_2$. For $t= |x-z| \leq h_x$, we get 
\beq\label{eq:hol_comb3}
\f{ | f(x) - f(z)|}{|x-z|^{1/2}}
\leq  (A + B \log \f{b}{a}) \sqrt{t} + \f{ Ca}{ \sqrt{t}} \teq F(a, t)
\eeq
where $A =\max(A_i, A_{i+1}), B = \max( B_i, B_{i+1}), C = \max( C_i, C_{i+1})$. For each $t \leq c h_x $,  we can optimize the above estimate  over $a \leq b$ explicitly. Then we maximize the estimate over $t \leq c h_x$ to obtain uniform estimate for small $|x-z| \leq c h_x$. We refer the derivations to Appendix \ref{app:hol_opt}.

\subsection{Improved estimate for the nonlocal error}\label{sec:vel_loc_est}

In Section \ref{sec:vel_err}, we discuss the estimates of the nonlocal error $\uu(\bar \e)$ based on the functional inequalities established in this section. 
Since the weight is singular $\vp \sim |x|^{-2} |x_1|^{-1/2}, \vp = \vp_{elli}$ \eqref{wg:linf}  near the origin, $ \bar \e_1 \vp $ is much larger near $x=0$. Due to the anisotropic mesh for large $x$ and small $y$, or small $x$ and large $y$, and the round off error, $\bar \e_1$ is not very small in these far-field regions. 
On the other hand, these regions are small since either $|(x, y)|$ is very small or the ratio $x/y, y/x$ is very small, and the error is very small in the bulk, e.g. $x = O(1)$. See Figure \ref{fig:elli_err} for the rigorous weighted bound of the error in the adaptive mesh. The weighted error of $\bar \e_1$ is larger near $0$, while the error for $\hat \e_1$ is larger in the far-field. If we simply use the global norm $||\om \vp||_{\inf}, \om = \bar \e, \hat \e$, and then apply the previous estimates to bound $\uu(\bar \e)$, we overestimate the nonlocal error significantly. For $x = O(1)$, where we have the smallest damping for the energy estimate, due to the decay of kernel and the smallness of these regions, the integral $\int K(x, y) \bar \e(y) dy$ near $y=0$ or in the far-field is very small. 


\begin{figure}[h]
   \centering
      \includegraphics[width =0.9 \textwidth  ]{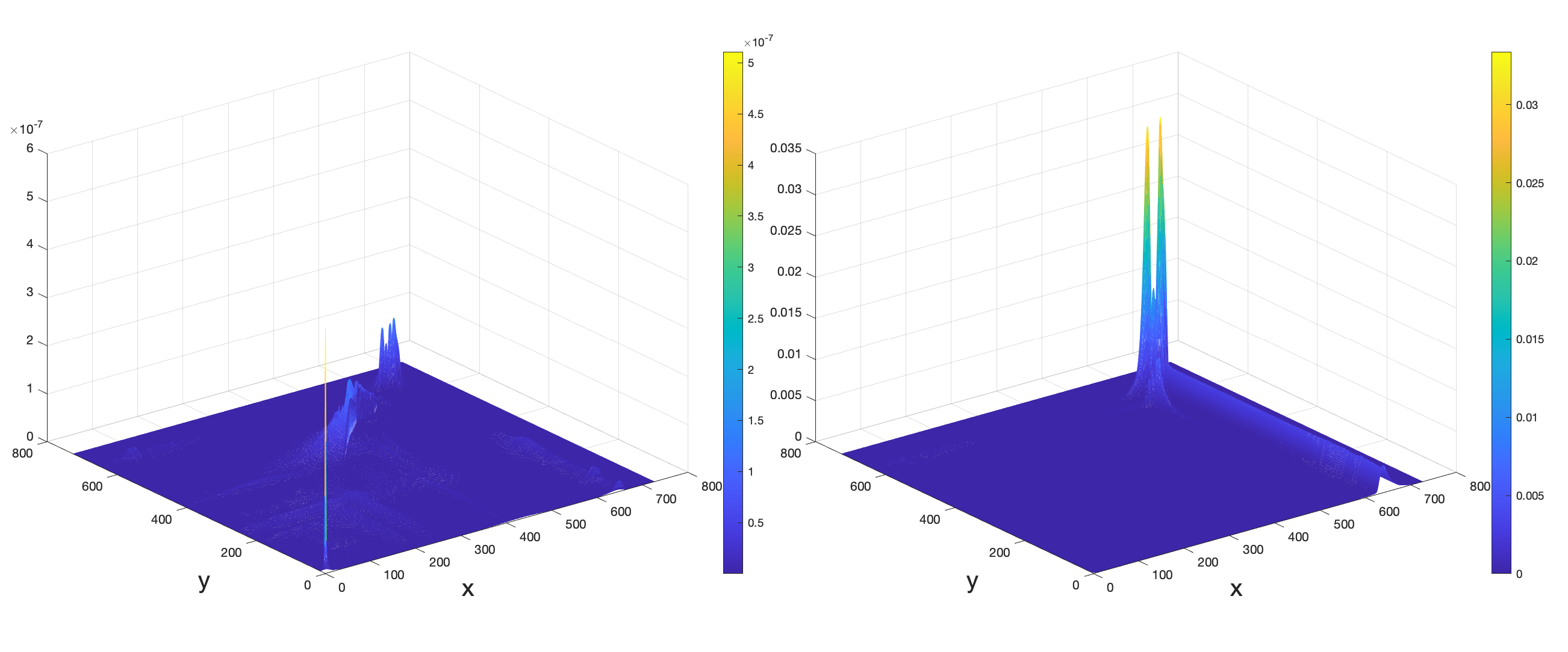}
      \caption{Piecewise $L^{\inf}(\vp_{elli})$ bound of the error $\bar \e_1, \hat \e_1$ in solving the Poisson equations. Left: error for the approximate steady steate. Right: error for the approximate space-time solution $\hat W_2$}
            \label{fig:elli_err}
 \end{figure}

Note that we can obtain the piecewise derivative bounds for the error $\bar \e_1, \hat \e_1$ and we partition the domain of the integral into different regions \eqref{int:decom_linf_ux}. Instead of using the global norm to bound the integral, we use the localized 
norms $|| W \vp_{elli}||_{l^{\inf}(D)}, [ W \psi_1]_{C_{x_i}^{1/2}(D)}$ \eqref{wg:linf}, \eqref{wg:hol} to exploit the smallness of the error in most part of the domains and 
improve the error estimate. 


Recall the regions of rescaled $\hat x$ \eqref{int:sing_loc} and the mesh $y_i$ partitioning the domain \eqref{eq:int_mesh_y}. We fix a scale $\lam$ and assume $\hat \in [x_c, 2x_c] \times [0, 2 x_c]$. By definition, the singular region $R(\hat x, k)$ \eqref{eq:rect_Rk} satisfies 
\[
- R(\hat x, k) \cap \R_2^+,  \ R(\hat x, k) \cap \R_2^+ \subset [x_c - k h , 2 x_c + kh] \times [0, 2 x_c +  kh ] \teq S_{kh}.
\]
Thus, in the estimates of $I_2, I_3, I_4$ in \eqref{int:decom_linf_ux}, instead of using the global norm $|| W \vp ||_{L^{\inf}}$, we use $|| \om_{\lam} \vp_{\lam} ||_{L^{\inf}(S_{kh})}
= || \om \vp||_{L^{\inf}( \lam S_{kh})}$. For the error $\om = \bar \e, \hat \e$, 
we can bound $|| \om  \vp||_{L^{\inf}( \lam S_{kh})}$ by using the piecewise estimates of $\bar \e, \hat \e$ and covering the region $ \lam S_{kh}$. Similarly, we use the localized bound $[ \om_{\lam} \psi_{\lam}]_{C_{x_i}^{1/2}( S_{kh}) } = \lam^{1/2} 
[ \om  \psi ]_{C_{x_i}^{1/2}( \lam S_{kh}) } $ for the H\"older seminorm in the estimate of $I_2, I_3, I_4$, and similar localized norms for $I_5$.

For the regular part $I_1$, we partition $[0, D]^2, \R_2^{++}$ into disjoint domains: near-field $D_{n,i}$ the bulk $D_B$ and the far-field $D_{f,i}$, e.g. 
\[
D_{n, 1} =[8h, 16 h],  \  D_B =[0, 2]^2 \bsh D_{n, 1}, \  D_{f, 1} = [0, D]^2 \bsh [0, 2]^2,  \  D_{f, 2} = \R_2^{++} \bsh [0, D]^2,
\]
where $h$ is the mesh size in \eqref{eq:int_mesh_y}. Then we use the norm $|| \om_{\lam} \vp_{\lam} ||_{L^{\inf}(D) } = || \om \vp||_{L^{\inf}(\lam D)} $ for the estimate of the integral in region $D$.

In  \eqref{eq:u_tof}, we estimate the integral of $K_{00}(y)$ \eqref{eq:u_appr_near0_coe} for
$|\hat y|_{\inf} \leq k_{02}h$ and $|\hat y|_{\inf} \geq k_{02}h$ separately. Since the kernel is very singular near $0$, the $L^1$ estimate of the integral in $|\hat y|_{\inf} \leq k_{02}h$ in Section \ref{sec:int_near} is not very small. Since we can evaluate $ \om = \bar \e, \hat \e$, we change the rescaling from $\hat{y}$ back to $y$ by using $y = \lam \hat y$ in \eqref{eq:u_tof} 
\[
J = 
 \int_{|\hat y|_{\inf} \leq k_{02} h} K_{00}( \hat y) \om(\lam \hat y) d \hat y 
= 
\lam^2 \int_{ |y|_{\inf} \leq \lam k_{02} h} K_{00}( y) \om( y) dy,
\]
where we get $\lam^2$ since $K_{00}$ is $-4$ homogeneous. For a list of dyadic scales $\lam = 2^k$, we estimate the integral using Simpson's rule with very small mesh. 
This allows us to exploit the cancellation in the integral. For $|y|$ very close to $0$, we use Taylor expansion. See Section 
{\secKNNerr} in supplementary material II \cite{ChenHou2023bSupp} (attached to this paper) for more details.




In the estimate of the integral for very small $x$ or large $x$ in Section \ref{sec:hol_1D} (see more details in Section {\secuoneD} in the Supplementary Material II \cite{ChenHou2023bSupp}), we estimate the rescaled integral for $\lam \leq \lam_1$ and $\lam \geq \lam_n$ with small $\lam_1$ and large $\lam_n$ uniformly. In the case of $\lam \leq \lam_1$, we bound $|| \om_{\lam} \vp_{\lam} ||_{L^{\inf}([a, b]\times [c, d])} 
\leq || \om \vp ||_{L^{\inf}( \lam_1 [0, b]\times [0, d])} $. Other norms in different cases are estimated similarly.

We do not track the bound $|| \om_{\lam} \vp_{\lam}||_{L^{\inf}(Q_{ij})}$ in each small grid $Q_{ij}$ for computational efficiency. 

\appendix

\section{Weights and parameters}\label{app:wg_tot}

\subsection{Estimate of the weights}\label{app:wg}

Recall the 
weights for the H\"older estimate of $\om, \eta, \xi$ and $\uu$
\beq\label{wg:hol}
\bal
& \psi_1   = |x|^{-2} + 0.5 |x|^{-1} + 0.2 |x|^{-1/6}, \quad  \psi_{du} = \psi_1, \quad \psi_{u} = |x|^{5/2} + 0.2 |x|^{-7/6},  \\
&\psi_2  = p_{2, 1} |x|^{-5/2} + p_{2,2} |x|^{-1} + p_{2,3} |x|^{-1/2} + p_{2,4} |x|^{1/6}, \\
& \psi_3 = \psi_2,  \quad \vec{p}_{2, \cdot} = (0.46, 0.245, 0.3, 0.112), \\
\eal
\eeq
and the following weights for $\om$, $\rho_i$ for $\uu$ and the error 
\beq\label{wg:linf}
\bal
\vp_1 & = x^{-1/2} ( |x|^{-2.4} + 0.6 |x|^{-1/2} ) + 0.3 |x|^{-1/6}, \quad \vp_{g1} = \vp_1 +   |x|^{1/16}, \\
\vp_{elli} & = |x_1|^{-1/2} ( |x|^{-2} + 0.6 |x|^{-1/2}) + 0.3 |x|^{-1/6} ,  \quad \rho_{10} = |x|^{-3} + |x|^{-7/6} , \quad 
\rho_{20} = \psi_1.  \\
& \rho_3 = |x|^{-1} + |x|^{-1/6}, \quad \rho_4 =  x^{-1/2} ( |x|^{-2.5} + 0.6 |x|^{-1/2} ) + 0.3 |x|^{-1/6} .
\eal
\eeq

To estimate the weighted $L^{\inf}$ norm of the residual error in Section \ref{sec:lin_evo}, we use $\psi_i, \vp_{evo, i}$ 
\beq\label{wg:lin_evo}
\bal
& \vp_{evo, 1} =\vp_1, \quad \vp_{evo, 2}  = x^{-1/2} ( \td p_{5, 1} |x|^{-5/2} + \td p_{5, 2} |x|^{-3/2} + \td p_{5,3} |x|^{-1/6} ) + \td p_{5,4} |x|^{-1/4} +  \td  p_{5, 5} |x|^{1/7}, \\
& \vp_{evo, 3}  = x^{-1/2} ( \td p_{6, 1} |x|^{-5/2} + \td p_{6, 2} |x|^{-3/2} + \td p_{6,3} |x|^{-1/6} ) + \td p_{6,4} |x|^{-1/4} + \td p_{6, 5} |x|^{1/7}, \\
& \td p_{5, \cdot} = (0.42,  \ 0.135, \  0.216, \  0.182, \ 0.0349) \cdot \mu_0, \quad \mu_0 = 0.917, \\ 
& \td p_{6, \cdot}  = ( 2.5 \cdot \td p_{5,1} , 2.9 \cdot \td p_{5, 2},  \ 3.115 \cdot \td p_{5,3} ,  \ 1.82 \cdot \td p_{5,4},  \ 2.72 \cdot \td p_{5,5}  ), \\
\eal
\eeq
where $\vp_1$ is defined in \eqref{wg:linf}.




In our energy estimates and the estimates of the nonlocal terms, we need various estimates of the weights and their derivatives. From Appendix {\appwgPI} of Part I \cite{ChenHou2023a} and \eqref{wg:linf}, \eqref{wg:hol}, we have two types of weights. The first one is the radial weight 
\[
\rho(x, y) = \sum_i  p_i r^{a_i}, \quad  r = (x^2 + y^2)^{1/2},
\]
where $a_i$ is increasing and $p_i \geq 0$. We use these weights for the H\"older estimates. See e.g. \eqref{wg:hol}.

The second type of weights is the following 
\[
\rho(x, y) = \rho_1(r) |x|^{-\al} + \rho_2(r),
\]
where $\rho_1, \rho_2$ are the radial weights. 

We use $f_l, f_u$ to denote the lower and upper bound of $f$. We have the following simple inequalities 
\beq\label{eq:func_intval}
\bal
& (f-g)_{l} = f_l - g_u, \quad (f-g)_u = f_u - g_l , \quad (f+g)_{\g} = f_{\g} + g_{\g},  \\
& (f g)_l = \min( f_l g_l , f_u g_l, f_l g_u, f_u g_u),
\quad (f g)_u = \max( f_l g_l , f_u g_l, f_l g_u, f_u g_u).
\eal
\eeq
where $\g = l, u$. If $ g \geq 0$, we can simplify the formula for the product 
\beq\label{eq:func_intval2}
 (f g)_{l} =  \min( f_l g_l, f_l g_u), \quad ( fg)_u = \max( f_u g_l, f_u g_u).
\eeq

Given the piecewise bounds of $ \pa^j f, \pa^j g, j\leq k$, we can estimate $ \pa^k (fg)$ using the Leibniz rule
\beq\label{eq:lei}
 |\pa_x^i \pa_y^j (f g) | \leq  \sum_{k \leq i, l\leq j} \binom{i}{k} \binom{j}{l}
 |\pa_x^k \pa_y^l f| \cdot 
  |\pa_x^{i-k} \pa_y^{j-l} g | .
\eeq

\subsection{Radial weights}\label{app:wg_radial}

The advantage of radial weights $\rho$ is that we can estimate them easily. Since 
$\rho(x, y)$ is even in $x, y$, we restrict the estimate of piecewise bounds to the case of $x \geq 0, y \geq 0$. The bound in general domain
$D = [a, b] \times [c, d]$ can be obtained by decomposing $D$ into four quadrants and then using the symmetry and combining the bounds from different quadrants.

\subsubsection{Bounds for the derivatives}\label{sec:wg_radial_deri}
We can easily derive the derivatives and their upper and lower bound as follows. Firstly, we have 
\beq\label{eq:WG_radial_1}
 (\pa_x^i \pa_y^j \rho(x, y) )_{\g} = \sum_{1\leq k \leq n} p_k ( \pa_x^i \pa_y^j r^{a_k} )_{\g},
\eeq
where $\g = l, u$. Using induction, for any $\al, i, j$, we can obtain 
\[
\pa_x^i \pa_y^j r^{\al} = \sum_{  k \leq i + j, l \leq \min(j, 1)} C_{i,j,k,l}(\al) x^k y^l r^{\al -i - j- k -l}
= \sum_{  k \leq i + j, l \leq \min(j, 1)} ( C^+_{i,j,k,l}(\al) 
- C^-_{i,j,k,l}(\al)) x^k y^l r^{\al -i - j- k -l} ,
\]
with $C^{\pm}_{i,j,k,l}(\al) \teq \max( 0, C_{i,j,k,l}(\al))$. The bounds for $C^{\pm}_{i,j,k,l}(\al) x^k y^l r^{\al -i - j- k -l}$ are simple:
\beq\label{eq:WG_radial_3}
( C^{\pm}_{i,j,k,l}(\al) x^k y^l r^{\al -i - j- k -l} )_{\g}
= C^{\pm}_{i,j,k,l}(\al) x_{\g}^k y_{\g}^l r_{\g}^{\al -i - j- k -l}.
\eeq

In particular, we use the derivatives bound for $i+j \leq 4$ and we have
\[
\bal
& \pa_x r^a = a x r^{a-2},  \quad 
\pa_x^2 r^a = a r^{a-2} + a(a-2) x^2 r^{a-4}, \quad \pa_{xy} r^a = a(a-2) x y r^{a-4} , \\
& \pa_x^3 r^a =a(a-2) (a-4) x^3 r^{a-6} + 3 a(a-2) x r^{a-4} , 
\ 
\pa_x^2 \pa_y r^a = a (a-2) y r^{a-4} + a(a-2)(a-4) x^2 y r^{a-6} , \\
&\pa_x^4 r^a 
 = 3 a (a-2) r^{a-4} + 6 a (a-2)(a-4) x^2 r^{a-6}
 + a(a-2) (a-4)(a-6) x^4 r^{a-8} , \\
 & \pa_x^3 \pa_y r^a 
 = a(a-2)(a-4) xy r^{a-6} + 2 a (a-2)(a-4) xy r^{a-6}
 + a(a-2)(a-4)(a-6) x^3 y r^{a-8} , \\
 & \pa_x^2 \pa_y^2 r^a = a(a-2)(a-3) r^{a-4} 
 + a(a-2)(a-4)(a-6) x^2 r^{a-6}
 - x^4 a(a-2)(a-4)(a-6) r^{a-8}.
\eal
\]

Using \eqref{eq:func_intval}, the above identities, and linearity, we can obtain the upper and lower bounds for $\pa_x^i \pa_y^j \rho$. 
Since $\rho(x, y)$ is symmetric in $x, y$, we have $\pa_1^i \pa_2^j \rho(x, y)
= (\pa_1^j \pa_2^i \rho) (y,x)$ and can obtain piecewise bounds of $\pa_1^i \pa_2^j \rho$ from that of $\pa_1^j \pa_2^i \rho$.


For the estimate in Section \ref{sec:hol_1D}, we need to use the  estimates of $ \pa_x^i \pa_y^j \rho(\lam x)$ for very small $\lam \leq \lam_*$ or very large $\lam \geq \lam_*$ uniformly. Obviously, the bounds are mainly determined by the leading order power of $p(\lam x)$, i.e. $p_1 | \lam r|^{a_1}$ for small $\lam$ and $p_n |\lam r|^{a_n}$ for large $\lam$. We would like to estimate $(\pa_x^i \pa_y^j \rho(\lam x) )_{\g } \lam^{ - \b}$ for $\lam \leq \lam_*, \b = a_1$ and $\lam \geq \lam_*, \b = a_n$, $\g = l, u$.  Using the above derivations \eqref{eq:WG_radial_1}, 
we have 
\[
\lam^{-\b}(\pa_x^i \pa_y^j \rho(x, y) )_{\g} = \sum_{1\leq k \leq n} p_k ( \pa_x^i \pa_y^j \lam^{a_k -\b} r^{a_k} )_{\g}, \quad \g = l, u,
\]
and we only need to derive the upper and the lower bounds for $C^{\pm}_{i,j,k,l}(a_m) x^k y^l r^{\al -i - j- k -l} \lam^{a_m - \b}$ uniformly for $\lam \leq \lam_*, \b = a_1$ or $\lam \geq \lam_*, \b = a_n$. Since $a_i$ is increasing, in the first case, we have
\[
\lam^{a_1 - a_1} = 1, \quad  a_m - a_1 > 0,   \quad   (\lam^{ a_m - a_1})_l = 0,  \
(\lam^{ a_m - a_1})_u =\lam_*^{ a_m - a_1} , m > 1.
\] 
In the second case, we get 
\[
\lam^{a_n - a_n} = 1, \quad  a_m - a_n < 0,   \quad   (\lam^{ a_m - a_n})_l = 0,  \
(\lam^{ a_m - a_n})_u =\lam_*^{ a_m - a_n} , m > 1.
\]
In both cases, if $a_m = \b$, we get a trivial bound $1$ for $\lam^{a_m - \b}$; if $a_m \neq \b$, we get $ 0 \leq \lam^{a_m -\b} \leq \lam_*^{a_m -\b}$. Using these bounds for $\lam^{a_m -\b}$, \eqref{eq:WG_radial_3}, \eqref{eq:func_intval}, \eqref{eq:func_intval2}, we obtain the bounds for $ \lam^{-\b}\pa_x^i \pa_y^j \psi(\lam x)$ uniformly for small $\lam, \b = a_1$ and large $\lam , \b =a_n$.

We also need to bound $ M = \lam^{-\b} \rho_{\lam}(x) \B| \f{ \rho_{\lam}(y) }{ \rho_{\lam}(x)} - \f{ \rho_{lim}(y) }{ \rho_{lim}(x)} \B| $ used in Section \ref{sec:hol_1D}, uniformly for $\lam \leq \lam_*, \b = a_1, \rho_{lim}(y) = p_1 |y|^{a_1}$ or $\lam \geq \lam_*,\b = a_n, \rho_{lim}(y) = p_n |y|^{a_n}$. Using the formula of $\rho$ and a direct computation yield
\[
\f{\rho_{lim}(y) }{ \rho_{lim}(x)} = \f{|y|^{\b}}{|x|^{\b}}, \quad M \leq \sum_{i \leq n} p_i \lam^{a_i - \b } \B| |y|^{a_i} - |x|^{a_i} \f{ |y|^{\b}}{|x|^{\b}} \B|
\leq 
\sum_{i \leq n} p_i \lam_*^{a_i - \b } |y|^{\b} \B| |y|^{a_i-\b} - |x|^{a_i-\b} \B|.
\]
We remark that the leading power $ \lam_*^{a_i - \b}$ for $a_i = \b$ is cancelled due to $|y|^0 = |x|^0 = 1$ in the above estimate and we gain the small factor $\lam_*^{a_i - \b}$ for $a_i \neq \b$.




\subsubsection{Leading order behavior of $\pa \rho / \rho$}\label{sec:wg_radial_regu}
In our verification, we need to bound $ \pa \rho( \lam x)  / \rho(\lam x)$ as $\lam \to 0$ or $\lam \to \infty$ uniformly. A direct calculation yields 
\[
\f{ \pa_{x_i} \rho}{\rho}
= \f{x_i}{|x|^2} \f{ \sum_i p_i a_i r^{a_i} }{ \sum_i p_i r^{a_i}}
\teq \f{x_i}{|x|^2} S(x) ,  \quad S(x) \teq \f{ \sum_i p_i a_i r^{a_i} }{ \sum_i p_i r^{a_i}}.
\] 

For $x$ close to $0$, we introduce $b = a - a_1$. Clearly, we get $b_i \geq 0$ and 
\[
S(x) =  a_1 + \f{ \sum_i p_i b_i r^{a_i} }{ \sum_i p_i r^{a_i}} 
= a_1 + \f{ \sum_i p_i b_i r^{b_i} }{ \sum_i p_i r^{b_i}}  \teq a_1 + \f{A(r)}{B(r)}.
\]
Using $b_i\geq 0$ and the Cauchy-Schwarz inequalities, we yield 
\[
A^{\pr} B - A B^{\pr} = r^{-1} \B(  (\sum p_i b_i^2 r^{b_i} ) (\sum p_i r^{b_i}) 
- (\sum p_i b_i r^{b_i})^2 \B) 
= r^{-1} \f{1}{2} \sum_{ij} p_i p_j (b_i- b_j)^2 r^{b_i + b_j }
\geq 0,
\]
and thus $A / B$ is increasing. For $\lam \leq \lam_*, r \in [r_l, r_u]$, we get the uniform bound for $S(\lam x)$
\[
 a_1 \leq S( \lam x ) \leq a_1 + \f{ A(  \lam_* r_u )}{ B(\lam_* r_u)}.
\]

For $\lam = 1$, we simply obtain 
\[
     a_1 +  \f{ A( r_l )}{ B( r_l)} \leq S(x) \leq a_1 + \f{ A( r_u )}{ B( r_u)}.
\] 

Similarly, for $ \lam \geq  \lam_*$, $r \in [r_l, r_u]$, we get 
\[
a_n + \f{A( \lam_* r_l)}{ B(\lam_* r_l)} \leq S(\lam x) \leq a_n, 
\quad  \f{A(r)}{B(r)} = \f{ \sum_i p_i b_i r^{b_i}}{ \sum_i p_i r^{b_i}},
\]
where $b = a - a_n \leq 0$. Here, we have used that $A(r) / B(r)$ is increasing. Thought $b_i$ is negative, we still have $(A/B)^{\prime} = \f{A^{\pr} B - A B^{\pr}}{B^2} > 0$. From the above estimates, we yield 
\[
\bal
\lim_{\lam \to 0} \lam \f{\pa_{x_i} \rho }{ \ \rho }(\lam x)  & = \f{x_i}{|x|^2} a_1 \teq R_0(x), \quad 
|\f{\pa_{x_i} \rho }{\rho}(\lam x) - R_0(\lam x) |  \leq \lam^{-1} \f{x_i}{|x|^2} \f{ |A( \lam_* x) | }{|B(\lam_* x) | }, \  \lam \leq \lam_* , \\
\lim_{\lam \to \infty} \lam  \f{\pa_{x_i} \rho }{\ \rho }(\lam x) & = \f{x_i}{|x|^2} a_n \teq R_{\inf}(x), \quad 
|\f{\pa_{x_i} \rho }{\rho}(\lam x) - R_{\inf}(\lam x) | \leq \lam^{-1} \f{x_i}{|x|^2} \f{ |A( \lam_* x) |}{ | B( \lam_* x) |}, \ \lam \geq \lam_*. 
\eal
\]



\subsubsection{Bounds for the derivatives of $1/ \rho$}\label{sec:wg_radial_inv}
The bounds for $ d_x^i d_y^j \rho^{-1}$ is more complicated since $\rho^{-1}$ is not linear in the summand $p_i r^{a_i}$. We need such estimates in the estimate of the velocity. Firstly, using the bounds in Section \ref{sec:wg_radial_deri} and \eqref{eq:func_intval2}, we can obtain the upper and the lower bounds for $R_{ij}$
\[
R_{ij} = \f{ \pa_x^i \pa_y^j \rho}{\rho}.
\]
For $i + j = 1$ and $k=2,3$, we use the estimate in Section \ref{sec:wg_radial_deri} to obtain the bounds for
\[
R_{10} = \f{x}{|x|^2} S(x), \quad R_{0, 1} = \f{y}{|x|^2} S(x), \quad (R_{ij})^k.
\]

In our estimate, we need $\pa_x^i \pa_y^j \rho^{-1}$ for $i+j \leq 3$. A direct calculation yields 
\[
\bal
& \pa_x \rho^{-1} = - \f{\rho_x}{\rho^2} = - \f{R_{10} }{\rho}, \quad
\pa_{x x} \rho^{-1}= - \f{\rho_{xx}}{\rho^2}  + 2 \f{\rho_x^2}{\rho^3}
= \rho^{-1} (  - R_{20} +2  R_{10}^2 ),  \\
& \pa_{xy} \rho^{-1} = - \f{ \rho_{xy}}{\rho} + \f{2 \rho_x \rho_y}{\rho^3}
 = \rho^{-1}( - R_{11}  + 2 R_{10} R_{01} )  ,\\
 & \pa_{xxx} \rho^{-1} = -\f{\rho_{xxx}}{\rho^2} + \f{6 \rho_{xx} \rho_x}{\rho^3}
  - \f{6 \rho_x^3}{\rho^4} 
  = \rho^{-1}( -R_{30} + 6 R_{20} R_{10} - 6 R_{10}^3 ) , \\
  & \pa_{xxy} \rho^{-1} = - \f{\rho_{xxy}}{\rho^2} + \f{2 \rho_{xx} \rho_y}{\rho^3}
   + \f{4 \rho_x \rho_{xy}}{ \rho^3}  -6 \f{\rho_x^2 \rho_y }{\rho^4}
   = \rho^{-1}( - R_{21} + 2 R_{20} R_{01} + 4 R_{10} R_{11} - 6 R_{10}^2 R_{01} ).
\eal
\]

Next, we estimate  $ \pa_x^i \pa_y^j ( \pa_{x_l} \rho / \rho)$ for $i \leq 2, j =0 $ or $i=0, j\leq 2$. Denote $f =\pa_{x_l} \rho$. Using a direct computation, for $D_2 = \pa_x^{i_2} \pa_y^{j_2}$ with $ i_2 + j_2 = 1$, we yield 
\[
D_2 \f{ f }{\rho} = \f{  D_2 f }{ \rho} - \f{ f D_2 \rho}{\rho^2}
= \rho^{-1}( D_2 f - f R_{i_2, j_2} ).
\]

For $(i_2, j_2) = (2,0) , (0, 2)$, denote $i_3 = i_2 / 2, \;j_3 = j_2 / 2$, $D_3 = \pa_x^{i_3} \pa_y^{j_3}$. We yield 
\[
\bal
D_3^2 \f{ f}{\rho}
& =  \f{D_3^2 f }{\rho} 
- \f{2 D_3 f \cdot D_3 \rho}{\rho^2} + f D_3^2( \f{1}{\rho})  = \f{D_3^2 f }{\rho} 
- \f{2 D_3 f \cdot D_3 \rho}{\rho^2} + f ( - \f{D_3^2 \rho}{ \rho^2} + \f{2 ( D_3 \rho)^2}{\rho^3} ) \\
& = \rho^{-1}( D_3^2 f - 2 D_3 f R_{i_3, j_3} - f R_{i_2, j_2}
+ 2 f R_{i_3, j_3}^2 ),
\eal
\]
where we have used  $D_3^2 \f{1}{\rho} = D_3 ( -\f{D_3 \rho}{\rho^2}) = - \f{D_3^2 \rho}{ \rho^2} + \f{2 ( D_3 \rho)^2}{\rho^3} $.

Since we have estimated $\pa_x^i \pa_y^j \rho$ and $R_{ij}$, we can bound these derivatives of $D_1 \rho / \rho$ using \eqref{eq:func_intval}.

We also need to obtain the uniform estimates of $ \lam^{\b} \pa_x^i \pa_y^j ( \rho^{-1}( \lam x))$
for $\lam \leq \lam_*, \b = a_1$ and $\lam \geq \lam_*, \b = a_n$. 
Denote $\rho_{\lam}(x) = \rho(\lam x)$. For example, for $D_1 = \pa_{x_i}$, we have 
\[
 \lam^{\b} D_1 ( \rho_{\lam}^{-1}( x)) = -\lam^{1 + \b}  \f{ (D_1 \rho)( \lam x)}{ \rho_{\lam}^2(x) }
 = - \lam^{1+\b} \rho_{\lam}^{-1}(x) \lam^{-1} \f{x_i}{|x|^2} S(\lam x)
 = - \lam^{\b} \rho_{\lam}^{-1}(x) \f{x_i}{|x|^2} S(\lam x),
\]
which can be estimated using the estimates in Sections \ref{sec:wg_radial_deri}, \ref{sec:wg_radial_regu}. The power $\lam^{\b}$ and the leading power $\lam^{-\b}$ in $\rho_{\lam}^{-1}(x)$ cancel each other. The estimates of $ \lam^{\b} \pa_x^i \pa_y^j ( \rho^{-1}(\lam x) )$ with $i+j \geq 2$ and $\pa_x^i \pa_y^j \f{\pa_{x_l} (\rho_{\lam})}{ \rho_{\lam} }$ are similar, and follow from the above derivations for $\pa_x^i \pa_y^j \rho^{-1}, \pa_x^i \pa_y^j( \pa \rho / \rho)$, the piecewise estimates for $ \pa_x^i \pa_y^j \rho(\lam x)$ in Section \ref{sec:wg_radial_deri} and $\f{\pa \rho}{\rho}(\lam x)$ in Section \ref{sec:wg_radial_regu}, which are uniform in small $\lam \leq \lam_* $ or large $\lam \geq \lam_*$. We remark that in all of these estimates for $\rho_{\lam}(x)$, taking derivatives in $x$ does not change the asymptotic power in $\lam$.

\subsubsection{Improved estimates for $\rho^{-1}$ near $x=0$}
For the special case $a_1 = -2$, we can write 
\[
\rho(x) = r^{-2} \sum_i p_i r^{a_i + 2} = r^{-2} \td \rho(x), \quad \rho^{-1} = (x^2 + y^2) \td \rho(x)^{-1}
\]
To obtain a better estimate of $\rho^{-1}$, we use the fact that $x^2 +y^2$ is a polynomial. Firstly, we can obtain the bounds for $\pa_x^i \pa_y^j \td \rho^{-1}$. The bound for $S_0 = x^2 + y^2$ is trivial, e.g., 
\[
(\pa_x S_0)_{\g} = 2 x_{\g},  (\pa_y S_0)_{\g} = 2 y_{\g}, \ \g = u , l, \ \quad \pa_{x y} S_0 = 0,  \quad \pa_{xx} S_0 = \pa_{yy} S_0 = 2.
\]
Then using \eqref{eq:func_intval}-\eqref{eq:func_intval2}, we can bound $\rho^{-1}$.

\subsection{The mixed weight}\label{app:wg_mix}

For the second type of weights $W = \rho_1(r) |x|^{-1/2} + \rho_2(r)$, we can compute its derivatives and its upper and lower bounds using linearity and the Leibniz rule \eqref{eq:lei}. We consider $x, y \geq 0$. For example, we have 
\[
W_l =  \rho_{1, l} x_u^{-1/2} + \rho_{2, l}, \quad 
(W^{-1})_u = ( W_l)^{-1}, \quad
W_x = \pa_x \rho_1 x^{-1/2} - \f{1}{2} \rho_1 x^{-3/2} + \pa_x \rho_2.
\]

To obtain the upper bound for $\pa_x^i \pa_y^j W$, we use the Leibniz rule \eqref{eq:lei}:
\[
|\pa_x^i \pa_y^j W| \leq \sum_{ k \leq i} \binom{i}{k} |\pa_x^{i-k} \pa_y^j \rho_1| \f{ (2k-1)!!}{2^k}
x^{-1/2 - k} + | \pa_x^i \pa_y^j \rho_2|.
\]

We need to bound $\rho(r) / W(x, y)$ in the estimate of the integrals. Suppose that the leading and the last powers of $\rho$ is $a_1, a_n$. The leading and the last terms of $W$ are given by $p_i r^{b_i} \cos(\b)^{-\al_i}, \al_i \geq 0$.
\[
W \geq p_1 r^{b_1}, \quad  W \geq p_n r^{b_n}. 
\]

We estimate 
\[
\f{\rho}{W} \leq C_1 r^{a_1 - b_1}, \quad \f{\rho}{W} \leq C_2 r^{a_n - b_n},
\]
for all $x, y \in \R_2^+$. We apply the above estimates for $x$ near $0$ or $x$ sufficiently large.

Using $W(\lam x) \geq \rho_1(\lam x) \lam^{-1/2} |x_1|^{-1/2}$, 
 $W(\lam x) \geq \rho_2(\lam x)$,  the uniform estimates of $\rho_i(\lam x)$ in $\lam$ in Section \ref{sec:wg_radial_deri}, we can obtain the lower bound of $W(\lam x)$  and the upper bound of $ \f{\rho(\lam x)}{ W(\lam x)}$ uniformly in $\lam$.

\section{Estimate the derivatives of the velocity kernel and integrands }\label{app:ker}

In this appendix, we estimate the derivatives of the kernel $ - \f{1}{2\pi} \log |x|$ associated to the velocity $\uu = \na^{\perp}(-\D)^{-1} \om$ and its symmetrization \eqref{int:ker_sym}. These estimates are used to estimate the error terms in Lemmas \ref{lem:T_rule}, \ref{lem:int_err_x}. We will perform an additional estimate for $u$ with weight $\vp(x)$ singular along $x_1 = 0$ in Section \ref{sec:u_dx1}. Some additional derivations related to the estimate of the velocity are given in Appendix \ref{app:u_add}.

\subsection{Estimate the symmetrized kernel}\label{app:ker_sym}

In this section, we estimate the symmetrized kernel. We develop several symmetrized estimates for harmonic functions. Before we introduce the estimates, we have a simple 1D estimate, which is useful for later estimates.

\begin{lem}\label{lem:ta_1D}
We have 
\[
|f(x)  + f(-x) - 2 f(0) | \leq x^2 || f_{xx}||_{ L^{\inf}[-x, x] } ,
\quad |f(x)  + f(-x) - 2 f(0) - x^2 f_{xx}(0) | \leq \f{x^4}{12} || \pa_x^4 f||_{ L^{\inf}[-x, x] } .
\]
\end{lem}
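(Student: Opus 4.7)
}
The plan is to apply Taylor's theorem with the integral form of the remainder at $0$ to both $f(x)$ and $f(-x)$, and then take advantage of the cancellation of odd-order terms produced by the symmetric combination $f(x)+f(-x)$. For the first inequality, I would write
\begin{equation*}
f(x)=f(0)+xf'(0)+\int_0^x (x-t)f''(t)\,dt,\qquad f(-x)=f(0)-xf'(0)+\int_0^x (x-s)f''(-s)\,ds,
\end{equation*}
where the second identity follows from the substitution $t=-s$ in the standard remainder for $f(-x)$. Adding these expressions cancels $f(0)$ and the $xf'(0)$ terms, yielding
\begin{equation*}
f(x)+f(-x)-2f(0)=\int_0^x (x-t)\bigl(f''(t)+f''(-t)\bigr)\,dt.
\end{equation*}
Bounding the integrand by $2\|f''\|_{L^\infty[-x,x]}$ and using $\int_0^x(x-t)\,dt=x^2/2$ gives the claimed bound $x^2\|f_{xx}\|_{L^\infty[-x,x]}$.

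For the second inequality, I would carry the Taylor expansion two orders further, using
\begin{equation*}
f(\pm x)=f(0)\pm xf'(0)+\tfrac{x^2}{2}f''(0)\pm\tfrac{x^3}{6}f'''(0)+R_\pm,
\end{equation*}
with integral remainders $R_\pm=\int_0^{\pm x}\tfrac{(\pm x-t)^3}{6}f^{(4)}(t)\,dt$. The same substitution $t=-s$ recasts $R_-$ as $\int_0^x\tfrac{(x-s)^3}{6}f^{(4)}(-s)\,ds$ (the sign flips of $(-x-t)^3=-(x-s)^3$ and of $dt=-ds$ combine to a plus sign). Summing, the $f'(0)$ and $f'''(0)$ contributions vanish by symmetry and one is left with
\begin{equation*}
f(x)+f(-x)-2f(0)-x^2 f''(0)=\int_0^x \frac{(x-t)^3}{6}\bigl(f^{(4)}(t)+f^{(4)}(-t)\bigr)\,dt.
\end{equation*}
Taking absolute values, bounding $|f^{(4)}(\pm t)|$ by $\|\partial_x^4 f\|_{L^\infty[-x,x]}$ and evaluating $\int_0^x (x-t)^3\,dt=x^4/4$ produces the factor $\tfrac{2}{6}\cdot\tfrac{x^4}{4}=\tfrac{x^4}{12}$, as claimed.

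Both estimates are routine Taylor-with-remainder computations, so there is no genuine obstacle; the only point requiring a little care is the change of variables in the remainder for $f(-x)$ and in particular tracking the sign of $(-x-t)^3$ so that the two remainder integrals combine with a plus sign into $f^{(4)}(t)+f^{(4)}(-t)$ rather than their difference. Assuming $f\in C^2([-x,x])$ (respectively $f\in C^4([-x,x])$) is implicit from the appearance of $\|f_{xx}\|_{L^\infty}$ (respectively $\|\partial_x^4 f\|_{L^\infty}$) on the right-hand side, so no further regularity hypotheses need to be stated.
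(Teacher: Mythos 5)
Your proof is correct and uses essentially the same idea as the paper's: the symmetric combination $f(x)+f(-x)$ annihilates odd-order Taylor terms, so one Taylor-expands to order two (respectively four) and bounds the remainder. The paper packages this by setting $G(x)=f(x)+f(-x)$, observing $G$ is even so $G'(0)=G'''(0)=0$, and applying Taylor with a Lagrange remainder at a single point $\xi$, while you expand $f(\pm x)$ separately with integral remainders and add; this is a cosmetic difference (integral vs.\ Lagrange form, and whether the symmetry is invoked before or after expanding), not a genuinely different route.
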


\begin{proof}
Denote $G(x) = f(x) + f(-x)$. Clearly, $G$ is even and 
\beq\label{eq:ta_1D_pf1}
G(0) = 2 f(0) , \quad G^{\pr}(0) =0, \quad  \pa_x^2 G(0) = 2 f_{xx}(0), \quad  \pa_x^3 G(0) = 0.
\eeq
Using the Taylor expansion, we obtain 
\[
G(x) = G(0) + G^{\pr}(0) x + \f{ \pa_x^2 G(0) x^2 }{2}  + \f{\pa_x^3 G(0) x^3 }{6} + 
 \f{\pa_x^4 G(\xi ) x^4}{24},
\]
for some $\xi \in [0, x]$. Using \eqref{eq:ta_1D_pf1}, we get 
\[
|G(x) - G(0) - G^{\pr \pr}(x) \f{x^2}{2} |  \leq || \pa_x^4 G ||_{L^{\inf}[0, x]} \f{x^4 }{24}\leq || \pa_x^2 f ||_{L^{\inf}[-x, x]}  \f{x^4}{12} . 
\]
Plugging the identity \eqref{eq:ta_1D_pf1} into the above estimate proves the second estimate in Lemma \ref{lem:ta_1D}. The first estimate is simpler. 
\end{proof}

The following lemma is useful for estimating the symmetrized  kernel \eqref{int:ker_sym} and its derivatives.

\begin{lem}\label{lem:ker_sym}
Suppose that $ Q_x = [-x_1, x_1 ] \times [-x_2, x_2]$ and $f \in C^4(Q_x)$ is harmonic. Denote 
\beq\label{eq:ker_sym_G}
\bal
G_1(1, x)  &\teq f( x_1, x_2) + f(- x_1,  x_2) + f(  x_1, -  x_2) + f(-  x_1, - x_2) - 4 f(0, 0)  ,\\
G_2(1, x) &\teq f( x_1, x_2) - f(- x_1,  x_2) - f(  x_1, -  x_2) + f(-  x_1, -x_2)  ,\\
\hat G_1(x)  & \teq  2 x_1^2  f_{xx}(0, 0) + 2 x_2^2 f_{yy}(0, 0) , \quad \hat{G}_2(x) \teq 4 x_1 x_2 f_{xy}(0, 0) .
\eal
\eeq

We have
\begin{align}
& |G_1(1, x)|  \leq 2 |x|^2 || f_{xx}||_{L^{\infty}(Q_x)} , \quad 
| \pa_{x_i} G_1(1, x)| \leq 4 |x_i| \cdot || f_{xx}||_{L^{\infty}(Q_x)}, \label{eq:ta_G1}  \\
&| G_1(1, x) - \hat G_1( x)| \leq   \f{(x_1^4 + 6 x_1^2 x_2^2 + x_2^4 ) }{6} || \pa^4 f ||_{L^{\infty}(Q_x)}
\leq \f{ |x|^4 }{3} || \pa^4 f ||_{L^{\infty}(Q_x)} ,
\label{eq:ta_G1_x_appr}  \\
&| G_1(1, x_1, 0) -\hat G_1( x_1 , 0)  | \leq \f{1 }{6} x_1^4 || \pa^4 f ||_{L^{\infty}(Q_x)},  
\label{eq:ta_G1_x1_appr}  
\\
& | \pa_{x_i} ( G_1(1, x) - \hat{G}_1(x) ) |  \leq 
 \f{2}{3}(  3 x_{3-i}^2 x_{i} +  x_{i}^3   )  || \pa^4 f ||_{L^{\infty}(Q_x)}  
\leq \f{ 2 \sqrt{2}}{3} |x|^3 || \pa^4 f ||_{L^{\infty}(Q_x)},
\label{eq:ta_G1x_appr}  
\end{align}
where $|| \pa^4 f||_{L^{\infty}} = \max_{0\leq i \leq 4}  ||\pa_x^i \pa_y^j f ||_{L^{\infty}(Q_x)}$. For $G_2$, we have the following estimate 
\begin{align}
& |G_2(1, x)|  \leq 4 x_1 x_2   || f_{xy}||_{L^{\infty}(Q_x)} , \quad 
| \pa_{x_i} G_2(1, x)| \leq 4 |x_{3-i}| \cdot || f_{xy}||_{L^{\infty}(Q_x)}, 
\label{eq:ta_G2} \\
&  | G_2(1, x) - \hat {G}_2(x)| \leq \f{2 x_1 x_2 |x|^2}{3} || \pa^4 f ||_{L^{\infty}(Q_x)} ,
\label{eq:ta_G2_x_appr}
 \\
&  | \pa_{x_i} ( G_2(1, x) - \hat{G}_2(x) )| \leq 
 \f{2}{3}( 3 x_i^2 x_{3-i} +  x_{3-i}^3   )  || \pa^4 f ||_{L^{\infty}(Q_x)}  
\leq \f{2 \sqrt{2}}{3} |x|^3 || \pa^4 f ||_{L^{\infty}(Q_x)} .
\label{eq:ta_G2x_appr}
\end{align}

\end{lem}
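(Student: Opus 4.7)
My plan is to exploit the explicit parity symmetries of the two averaged quantities: $G_1(1,x)$ is even in each of $x_1$ and $x_2$ separately, while $G_2(1,x)$ is odd in each variable separately. Consequently, if I Taylor-expand $f$ at the origin, only monomials $y_1^a y_2^b$ with matching parity survive the symmetrization, namely $(a,b)$ both even for $G_1$ and both odd for $G_2$. Among the surviving low-order terms, $(a,b)\in\{(2,0),(0,2)\}$ reproduces exactly $\hat G_1$, while $(a,b)=(1,1)$ reproduces exactly $\hat G_2$. This identifies $\hat G_i$ as the second-order Taylor piece of $G_i$ and makes every inequality a statement about the Taylor remainder.

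For the coarse bounds \eqref{eq:ta_G1} and \eqref{eq:ta_G2}, I will write each $G_i$ as an iterated second or mixed difference. For $G_1$ the decomposition
\[
G_1(1,x) = \bigl[f(x_1,x_2) + f(-x_1,x_2) - 2 f(0,x_2)\bigr] + \bigl[f(x_1,-x_2) + f(-x_1,-x_2) - 2 f(0,-x_2)\bigr] + 2\bigl[f(0,x_2) + f(0,-x_2) - 2 f(0,0)\bigr]
\]
combined with the one-dimensional first half of Lemma \ref{lem:ta_1D} applied bracket-by-bracket yields $2 x_1^2 \|f_{xx}\|_{L^\infty(Q_x)} + 2 x_2^2 \|f_{yy}\|_{L^\infty(Q_x)}$, and harmonicity $f_{yy}=-f_{xx}$ collapses this into $2|x|^2\|f_{xx}\|_{L^\infty(Q_x)}$. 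For $G_2$ I would rewrite the mixed difference as $\int_{-x_1}^{x_1}\!\int_{-x_2}^{x_2} f_{xy}(s,t)\,ds\,dt$, giving $4x_1 x_2\|f_{xy}\|$ immediately. The derivative versions in \eqref{eq:ta_G1} and \eqref{eq:ta_G2} follow by differentiating the symmetrization and applying the same one-dimensional mean-value reasoning to first derivatives of $f$.

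For the refined bounds \eqref{eq:ta_G1_x_appr}--\eqref{eq:ta_G2x_appr}, I will expand $f$ at the origin by Taylor's formula through order three with integral-form fourth-order remainder, substitute into $G_1$ (resp.\ $G_2$), and let parity cancel all mismatched monomials. For $G_1$ the surviving order-four monomials are $(4,0),(2,2),(0,4)$ with combined prefactor $\tfrac{x_1^4}{6}+x_1^2 x_2^2+\tfrac{x_2^4}{6}=\tfrac{1}{6}(x_1^4+6x_1^2x_2^2+x_2^4)$, matching \eqref{eq:ta_G1_x_appr} exactly; the AM--GM inequality $2x_1^2 x_2^2\le x_1^4+x_2^4$ then produces the clean bound $|x|^4/3$. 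For $G_2$ the surviving monomials are $(3,1)$ and $(1,3)$ with combined prefactor $\tfrac{2}{3}x_1 x_2 (x_1^2+x_2^2)=\tfrac{2}{3}x_1 x_2 |x|^2$, matching \eqref{eq:ta_G2_x_appr}. The derivative statements \eqref{eq:ta_G1x_appr} and \eqref{eq:ta_G2x_appr} follow by differentiating the same Taylor expansion once in $x_i$ before symmetrizing, and a routine bookkeeping check reproduces the factor $\tfrac{2}{3}(3 x_{3-i}^2 x_i + x_i^3)$, which is majorized by $\tfrac{2\sqrt 2}{3}|x|^3$ by Cauchy--Schwarz. The edge case \eqref{eq:ta_G1_x1_appr} at $x_2=0$ is immediate from the second half of Lemma \ref{lem:ta_1D} applied to the 1D function $x_1\mapsto f(x_1,0)$.

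The main obstacle will be that the Lagrange form of the remainder evaluates fourth derivatives at different intermediate points for each of the four corners $(\pm x_1,\pm x_2)$, so the remainders do not cancel algebraically and a naive pointwise estimate overcounts. I will circumvent this by using the integral-form remainder $R_4(y)=\int_0^1 \tfrac{(1-t)^3}{6}\sum_{a+b=4}\binom{4}{a}y_1^a y_2^b\,\pa_x^a\pa_y^b f(ty)\,dt$, which places every fourth derivative at points in the segment from $0$ to $y$ and thus inside $Q_x$; the triangle inequality then bounds each of the four contributions uniformly by $\|\pa^4 f\|_{L^\infty(Q_x)}$ and delivers exactly the stated constants. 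Harmonicity plays no role in \eqref{eq:ta_G1_x_appr}--\eqref{eq:ta_G2x_appr}, but is essential in the coarse bound \eqref{eq:ta_G1} to consolidate $\|f_{yy}\|$ into $\|f_{xx}\|$.
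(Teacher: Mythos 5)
Your treatment of the coarse bounds \eqref{eq:ta_G1}--\eqref{eq:ta_G2} is sound and essentially equivalent to the paper's (a slightly different choice of which variable to difference first, plus the same $\int\!\int f_{xy}$ representation for $G_2$). The edge case \eqref{eq:ta_G1_x1_appr} is also fine. The gap lies in the refined bounds \eqref{eq:ta_G1_x_appr}, \eqref{eq:ta_G1x_appr}, \eqref{eq:ta_G2_x_appr}, \eqref{eq:ta_G2x_appr}.

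The parity cancellation you invoke for the ``mismatched'' fourth-order monomials does \emph{not} hold at the level of the remainder for a function that is only $C^4$. Writing $G_1 - \hat G_1 = \sum_{s_1,s_2\in\{\pm1\}} R_4(s_1 x_1, s_2 x_2)$ with your integral-form $R_4$, each fourth derivative $\pa_x^a\pa_y^b f$ is evaluated on the distinct segment $t\mapsto t(s_1 x_1, s_2 x_2)$; the sign factor $s_1^a s_2^b$ therefore multiplies a \emph{different} integral for each corner, and the four contributions from, say, $(a,b)=(3,1)$ do not sum to zero. A triangle inequality on the four-corner sum gives
\begin{equation*}
|G_1(1,x)-\hat G_1(x)| \leq 4\cdot\frac{(|x_1|+|x_2|)^4}{24}\,\|\pa^4 f\| = \frac{(x_1+x_2)^4}{6}\,\|\pa^4 f\|,
\end{equation*}
which carries the unwanted cross terms $\tfrac{1}{6}(4x_1^3 x_2 + 4 x_1 x_2^3)$ and is strictly weaker than $\tfrac{1}{6}(x_1^4+6x_1^2x_2^2+x_2^4)\|\pa^4 f\|$ (by a factor of 2 when $x_1=x_2$). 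To make those cross terms vanish you would need to apply a $G_2$-type bound to $\pa^4 f$ itself, which costs two additional derivatives and is unavailable under the $C^4$ hypothesis. The situation for \eqref{eq:ta_G2_x_appr} is worse still: the asserted bound contains \emph{no} $(4,0), (2,2), (0,4)$ contributions at all, yet those monomials survive a naive estimate of $\sum_s s_1 s_2 R_4(sx)$ and there is no $C^4$-level mechanism to eliminate them. The same obstruction recurs in the differentiated estimates \eqref{eq:ta_G1x_appr}, \eqref{eq:ta_G2x_appr}.

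The paper sidesteps this by iterating the one-dimensional Lemma \ref{lem:ta_1D} slice-by-slice rather than Taylor-expanding jointly at the origin. For \eqref{eq:ta_G1_x_appr}, one first applies the $1/12$-bound of Lemma \ref{lem:ta_1D} in the $y$-variable at each fixed $\pm x_1$ slice (error $\tfrac{x_2^4}{12}A_{04}$ per slice), then applies the first half of the lemma to the one-variable function $x\mapsto \pa_2^2 f(x,0)$ (error $x_1^2 A_{22}$), then the second half of the lemma to $x\mapsto f(x,0)$ (error $\tfrac{x_1^4}{12}A_{40}$). Summing gives exactly $\tfrac{1}{6}(x_1^4+6x_1^2 x_2^2+x_2^4)$, with no cross terms, because each direction is handled by a genuine $1$D parity argument where the leading odd coefficient \emph{does} vanish. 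For $G_2$, the paper writes $G_2(1,x)-c\hat G_2(x) = \int_0^{x_1}\!\!\int_0^{x_2}[\pa_{12}f(z)+\pa_{12}f(-z)+\pa_{12}f(z_1,-z_2)+\pa_{12}f(-z_1,z_2)-4c\,\pa_{12}f(0)]\,dz$, a double integral of a $G_1$-type expression in $\pa_{12}f$, and then integrates the already-established $G_1$ bound; the derivative estimates \eqref{eq:ta_G1x_appr}, \eqref{eq:ta_G2x_appr} are handled by the same integral-representation reduction. You would need to replace your joint Taylor expansion with this iterated one-dimensional scheme to recover the stated constants.
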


Note that $G_1( \cdot, x)$ is even in $x_i$, and $G_2(\cdot, x)$ is odd in $x_i$. The polynomials of $x_i$ in the upper bounds (without absolute value) have the same symmetries. Similar properties hold for $\pa G_1, \pa G_2$. Moreover the above bound satisfies the differential relation. These properties are useful for tracking different bounds for $G_1, G_2$.

\begin{proof}
Recall $Q_x = [-x_1, x_1] \times [-x_2, x_2]$. Denote 
\[
A_{ij}(x) = || \pa_x^i \pa_y^j f ||_{L^{\inf}( Q_x) }. 
\]
Using Lemma \ref{lem:ta_1D}, for any $t \in [0, 1]$, we obtain
\[
| f( t x_1, x_2 ) + f(t x_1, - x_2) - 2 f(t x_1, 0) | \leq A_{02} x_2^2, \quad 
|f(x_1, 0) + f(-x_1, 0 ) - 2 f(0, 0)| \leq A_{20} x_1^2. 
\]
Since $f$ is harmonic function, we have $ \pa_x^{i+2} \pa_y^j f = - \pa_x^i \pa_y^{j+2} f$ and obtain $A_{i+2, j} = A_{i, j+2}$. Taking $ t = \pm 1$ in the above estimate and using the triangle inequality, we prove 
\[
|G(1, x) | \leq 2 A_{20} x_1^2 + 2 A_{02} x_2^2 = 2 A_{20}(x_1^2 + x_2^2) 
= 2 A_{20} |x|^2,
\]
which is the first estimate in \eqref{eq:ta_G1}. 

The second estimate in \eqref{eq:ta_G1} is simple. We consider $i=1$ without loss of generality. We get
\[
|\pa_{x_1} G_1(1, x) |
= | (  \pa_1 f) (x_1, x_2) - (\pa_1 f)(-x_1, x_2) +  (\pa_1 f)(x_1, -x_2)  -  (\pa_1 f)(-x_1, -x_2)  )|
\leq 4 x_1  A_{20}(x).
\]

For \eqref{eq:ta_G1_x_appr}, using Lemma \ref{lem:ta_1D}, we yield 
\beq
\bal
 |f( t x_1, x_2) + f(t x_1,  - x_2) - 2 f(tx_1, 0) - x_2^2 (\pa_2^2 f)(tx_1, 0)| 
 & \leq A_{04}(x) \f{x_2^4}{12}, \\
|\pa_2^2 f( x_1, 0) + \pa_2^2 f( - x_1, 0) - 2 \pa_2^2 f(0, 0) | & \leq x_1^2 A_{2, 2}(x), \\
|f(x_1, 0) + f( -x_1, 0) -2 f(0) - x_1^2 \pa_1^2 f(0)|  & \leq A_{40} \f{x_1^4}{12},
\eal
\eeq
for $t = \pm 1$. Combining the above  estimates and using the triangle inequality and $A_{40} = A_{22} = A_{04}$, we prove the first estimate in \eqref{eq:ta_G1_x_appr}. The second estimate follows from $2|x|^4 - x_1^4 - 6x_1^2 x_2^2 - x_2^4 = (x_1^2 - x_2^2)^2 \geq 0$.

Estimate \eqref{eq:ta_G1_x1_appr} follows from \eqref{eq:ta_G1_x_appr} by taking $x_2 = 0$.

For \eqref{eq:ta_G1x_appr}, we consider the estimate of $\pa_{x_1}$. The other case is similar. Using 
\[
\pa_1 f (x_1, s) - (\pa_1 f)(-x_1, s)  = \int_0^{x_1}  (\pa_1^2 f ) ( t, s) + (\pa_1^2 f )(-t, s) dt ,\]
we obtain
\[
\bal
\pa_1( G(1, x) &-  \hat G_1(x))
 =   (\pa_1 f)( x_1, x_2) - (\pa_1 f)( -x_1, x_2) 
+ (\pa_1 f)(x_1, -x_2) - (\pa_1 f) (-  x_1, -x_2 ) - 4 x_1 \pa_1^2 f(0) \\
& = \int_{0}^{x_1} \B( ( \pa_1^2 f )( z, x_2) + (\pa_1^2 f )(-z, x_2)
+ (\pa_1^2 f)(z, -x_2 ) + (\pa_1^2 f)(-z, x_2) - 4 \pa_1^2 f(0) \B) dz.
\eal
\]

Applying \eqref{eq:ta_G1}, we yield 
\[
|\pa_1( G(1, x) -  \hat G_1(x))| 
\leq  \int_0^{x_1} 2 (z^2 + x_2^2) dz    A_{4,0}(x)
= (\f{2}{3} x_1^3 + 2 x_1 x_2^2 ) A_{4,0}(x),
\]
and complete the proof of the first estimate in \eqref{eq:ta_G1x_appr}. For the second estimate, we use the AM-GM inequality to yield 
\beq\label{eq:ker_sym_pf2}
(3 x_2^2 x_1 + x_1^3)^2
 = (3 x_2^2 + x_1^2)^2 x_1^2 
 = \f{1}{4}(3 x_2^2 + x_1^2)^2 4 x_1^2 
 \leq \f{1}{4} (  \f{  2(3 x_2^2 + x_1^2) + 4 x_1^2   }{3} )^3 = 2 |x|^6.
\eeq
Taking a square root completes the estimate.

To estimate $G_2$ in \eqref{eq:ker_sym_G}, we rewrite it as follows 
\beq\label{eq:lem_ker_sym_G21}
\bal
 G_2(1, x) - c \hat G_2 (x)   = & \int_{ -x_1}^{x_1} \int_{-x_2}^{x_2} \pa_{12} f (z_1, z_2) - c  \pa_{12} f( 0) d z \\
 =& \int_0^{x_1} \int_0^{x_2}
  (\pa_{12}f) (z_1, z_2) + 
(\pa_{12} f)(-z_1, z_2) + ( \pa_{12} f) (z_1, -z_2)  \\ 
& + (\pa_{12} f)(-z_1, z_2 ) - 
4 c (\pa_{12} f)(0) d z,
\eal
\eeq
for $c = 0, 1$. The integrand has the same form as $G_1$ in \eqref{eq:ker_sym_G}. For $c =0$, using the above decomposition, we prove 
\[
|G_2(1, x)| \leq 4 x_1 x_2 A_{11}.
\]
When $c = 1$, using \eqref{eq:ta_G1x_appr}, we yield 
\[
|G_2(1, x) - \hat G_2 (x) | 
\leq A_{40} 2 \int_0^{x_1} \int_0^{x_2} |y|^2 dy  
= A_{40} \f{ 2}{3}  (x_1^3 x_2 + x_1 x_2^3 ) 
= A_{40} \f{ 2}{3}  x_1 x_2 |x|^2.
\]

To estimate the derivatives, we focus on $\pa_{x_1}$. Using the above representation, we obtain 
\begin{eqnarray*}
\bal
\pa_{x_1} ( G_2(1, x) - c \hat G_2 (x) ) 
= &\int_0^{x_2}  ((\pa_{12}f) (x_1, y_2) +  (\pa_{12} f)(-x_1, y_2))  \\
 & +  ( ( \pa_{12} f) (x_1, -y_2) + (\pa_{12} f)(-x_1,  - y_2 ) - 
4 c (\pa_{12} f)(0)) dy.
\eal
\end{eqnarray*}

We apply the same estimates to the integrands with $c =0, 1$ and yield 
\[
|\pa_{x_1} G_2(1, x)| \leq 4 x_2 A_{11}, \quad 
|\pa_{x_1} ( G_2(1, x) - \hat G_2 (x) ) | \leq A_{31} 2  \int_0^{x_2} (x_1^2 + y_2^2) dy_2
 = A_{31}  (2 x_1^2 x_2 + \f{2}{3} x_2^3  ).
\]
The second inequality in \eqref{eq:ta_G2x_appr} follows from \eqref{eq:ker_sym_pf2}. The above estimates imply \eqref{eq:ta_G2}-\eqref{eq:ta_G2x_appr}.
\end{proof}

Recall the kernels associated with $\na \uu, \uu$ in \eqref{eq:kernel_du}. These kernels are the derivatives of the Green function $-\f{1}{2\pi} \log |x|$ and are harmonic away from $0$. 
We have the following estimates for their derivatives.

\begin{lem}\label{lem:K_mix}
Denote $r = (x^2 + y^2)^{ \f{1}{2}}$ and $f(x, y) = \log r $. For any $i, j \geq 0$ with $i+ j \geq 1$, we have 
\[
 | \pa_x^i \pa_y^j f(x, y)| \leq (i+j - 1)! \cdot r^{- i - j}.
\]

As a result, for $K_1(y) = - \f{1}{2} \pa_{12} f(y), K_2(y)= - \f{1}{2} \pa_1^2 f(y)$, we have 
\[
\bal
 &|  K_i| \leq  \f{1 }{2 |y|^{2}},
  \quad | \pa^j_{y_1} \pa^{2-j}_{y_2}  K_i| \leq \f{3}{|y|^4}, \quad 
 | \pa^{j}_{y_1} \pa^{4-j}_{y_2}  K_i| \leq \f{60}{ |y|^6}, 
\quad | \pa^j_{y_1} \pa^{6-j}_{y_2} K_i| \leq \f{2520}{ |y|^8} , \quad i = 1, 2.
  \eal
\]
\end{lem}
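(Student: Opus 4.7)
\textbf{Proof proposal for Lemma \ref{lem:K_mix}.} The plan is to exploit the fact that $f(x,y) = \log r$ is the real part of the holomorphic function $\log z$, where $z = x + iy$, and deduce the bound on mixed partials from the one-variable complex bound $|(\log z)^{(n)}| = (n-1)!\,|z|^{-n}$.

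First I would record the identity $f(x,y) = \operatorname{Re}(\log z)$ on $\R^2 \setminus \{0\}$ (using any local branch, which disappears after taking the real part) and observe that $g(z) := \log z$ is holomorphic away from the origin with $g^{(n)}(z) = (-1)^{n-1}(n-1)!\, z^{-n}$ for all $n \geq 1$. The Cauchy--Riemann equations give $\pa_y g = i\, \pa_x g$, and inductively $\pa_x^i \pa_y^j g = i^{\,j}\, g^{(i+j)}(z)$, so
\[
\pa_x^i \pa_y^j (\log z) = i^{\,j}\,(-1)^{i+j-1}(i+j-1)!\, z^{-(i+j)}.
\]
Taking real parts commutes with the real partial derivatives, so
\[
|\pa_x^i \pa_y^j f(x,y)| = \bigl|\operatorname{Re}\bigl(i^{\,j}(-1)^{i+j-1}(i+j-1)!\, z^{-(i+j)}\bigr)\bigr| \leq (i+j-1)!\,|z|^{-(i+j)} = (i+j-1)!\, r^{-(i+j)},
\]
which is the first claim.

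For the consequences, note that the definitions in \eqref{eq:kernel_du} give $K_1(y) = -\tfrac{1}{2}\pa_{y_1}\pa_{y_2}\log r$ and $K_2(y) = -\tfrac{1}{2}\pa_{y_1}^2 \log r$ (a short check: $\pa_{y_1}\log r = y_1/r^2$, $\pa_{y_1}^2 \log r = (y_2^2-y_1^2)/r^4$, and $\pa_{y_1}\pa_{y_2}\log r = -2y_1 y_2/r^4$). Hence for any non-negative integers with $j \leq m$,
\[
|\pa_{y_1}^{j}\pa_{y_2}^{m-j} K_i(y)| \leq \tfrac{1}{2}\,|\pa_{y_1}^{a}\pa_{y_2}^{b} \log r|, \qquad a+b = m+2,
\]
so the first bound applied with $n = m+2$ yields $\tfrac{1}{2}(m+1)!\, r^{-(m+2)}$. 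Specializing to $m = 0, 2, 4, 6$ gives the four estimates $\tfrac12 r^{-2},\ 3 r^{-4},\ 60\, r^{-6},\ 2520\, r^{-8}$ in the statement.

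There is no substantive obstacle: the only thing to be careful about is the passage from the holomorphic derivative to real partial derivatives, which is immediate from Cauchy--Riemann once one writes $\pa_y = i \pa_x$ on holomorphic functions. Both pieces of the lemma are then one line each. If the complex viewpoint is to be avoided, one could alternatively prove the first bound by induction on $i+j$ using the identity $\pa_x (P(x,y)/r^{2n}) = (\pa_x P)/r^{2n} - 2n x P/r^{2n+2}$ with an explicit homogeneous polynomial $P$ of degree $n-1$, but this requires tracking polynomial coefficients and is considerably messier than the complex-analytic route outlined above.
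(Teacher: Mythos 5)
Your proof is correct, and the numerical consequences all check out ($\tfrac12(m+1)!\,r^{-(m+2)}$ at $m=0,2,4,6$ gives $\tfrac12,\,3,\,60,\,2520$). The route you take, however, is genuinely different from the paper's. The paper works in polar coordinates and proves by induction on $n=i+j$ the explicit form $\pa_x^i\pa_y^j f = (n-1)!\cos(n\beta-\beta_{ij})\,r^{-n}$, running the induction through the operators $\pa_x = \cos\beta\,\pa_r - \tfrac{\sin\beta}{r}\pa_\beta$ and $\pa_y = \sin\beta\,\pa_r + \tfrac{\cos\beta}{r}\pa_\beta$ and a trigonometric addition identity. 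You instead observe $f = \operatorname{Re}(\log z)$ and use the Cauchy--Riemann identity $\pa_y g = i\,\pa_x g$ on holomorphic $g$ to get $\pa_x^i\pa_y^j\log z = i^j(-1)^{i+j-1}(i+j-1)!\,z^{-(i+j)}$ in one line, then take real parts. The two arguments are morally identical (your $z^{-n}=r^{-n}e^{-in\beta}$ is exactly the paper's $\cos(n\beta-\beta_{ij})r^{-n}$ before discarding the imaginary part), but the complex-analytic route eliminates the induction and the bookkeeping of the phase constant $\beta_{ij}$, so it is shorter and more transparent. The paper's approach has the minor advantage of staying entirely in real variables, which matches the style of the surrounding polar-coordinate computations in that section (e.g.\ the recursions for $C_{i,j,k,l}(\beta)$ in Appendix C.3). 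One small point worth making explicit if you write this up: the branch ambiguity of $\log z$ is harmless both because $\operatorname{Re}\log z = \log|z|$ is single-valued and because all derivatives of order $\geq 1$ of $\log z$ are themselves single-valued.
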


\begin{proof}

Consider the polar coordinate $\b = \arctan( y/x),  r= (x^2 +y^2)^{1/2} $. We use induction on $n = i+j$ to prove 
\beq\label{eq:K_mix_pf1}
\pa_x^i \pa_y^j f = (n-1)!  \cos(n \b - \b_{ij} ) r^{-n},
\eeq
for some constant $\b_{ij}$. We have the formula 
\beq\label{eq:Dxy}
\pa_x g = (\cos \b \pa_r - \f{ \sin \b}{r} \pa_{\b} ) g, \quad  \pa_y  g= (\sin \b \pa_r + \f{ \cos \b}{r} \pa_{\b} ) g.
\eeq

Firstly, for $n = 1$, a direct calculation yields 
\[
\pa_x f  = \f{x}{r^2} = \f{\cos \b}{r}, \quad \pa_y f = \f{y}{r^2} = \f{\sin \b}{r} = \f{ \cos( \b- \pi/2)}{r}.
\]
Suppose that \eqref{eq:K_mix_pf1} holds for any $i,j$ with $i+j = n$ and $n \geq 1$. Now, since 
\[
\bal
\pa_x \pa_x^i \pa_y^j f  &=(n-1)! \pa_x ( \cos( n \b - \b_{ij}) r^{-n} )  \\
&= (n-1)! (-n \cos \b \cos( n \b - \b_{ij}) r^{-n-1} +  n \sin \b \sin( n\b - \b_{ij})  r^{-n-1} ) \\
&=  n! (  - \cos( n\b -\b_{ij} + \b)   r^{-n-1}) 
= n! \cos( (n+1) \b - \b_{ij} - \pi)  r^{-n-1},
\eal
\]
using a similar computation and $\sin( x) = \cos(x - \pi/2)$, we can obtain that $\pa_y \pa_x^i \pa_y^j f$ has the form \eqref{eq:K_mix_pf1}. Using induction, we prove \eqref{eq:K_mix_pf1}. The desired estimate  follows from \eqref{eq:K_mix_pf1}.
\end{proof}

Using the above two Lemmas, we can estimate the error in the discretization of the kernels $K(x, y)$ in both $x$ and $y$ directions.

\subsubsection{Estimate the kernels in the far field}\label{app:decay}
We apply Lemma \ref{lem:ker_sym} to estimate the decay of $F_1, F_2$
\beq\label{eq:ker_decay3}
\bal
&  F_0 \teq G(y-x) - G(y_1 - x_1, y_2 + x_2 )
- G(y_1 + x_1, y_2 - x_2) + G(y+x), \ G(y) = -\log |y|/ 2 , \\
& F_1 \teq F_0 - 4 x_1 x_2 \pa_{12} G(y), \ F_2 \teq F_1 - \f{ 2 (x_1^2 - x_2^2) x_1 x_2 }{3} \pa_1^3 \pa_2 G(y), 
\  I_{ijkl}( P ) \teq \pa_{x_1}^i \pa_{x_2}^j \pa_{y_1}^k \pa_{y_2}^l P(x, y).
\eal
\eeq

Note that for stream function $\phi = (-\D)^{-1} \om(y) = C \cdot G \ast W $, where $W$ is the odd extension of $\om$ from $\R_2^+$ to $\R_2^{++}$, since $G(z)$ is even in $z_i$, after symmetrization, we have 
\[
\td \phi(x) = \phi(x) - x_1 x_2 \phi_{12}(0)= C \int_{\R^2} G(y-x) W(y) dy
= C \int F_1(x, y) W(y) dy, 
\]
where $\phi_{12}(0)$ is related to $C_{f0} K_{ux0}$ in \eqref{eq:u_appr_near0_coe}. In the estimate of $\uu, \na \uu$ related to $\pa_{x_1}^i \pa_{x_2}^j \td \phi$, e.g. $(1, 1)$ for $u_x= -\pa_{x_1 x_2} \phi$, for $y \in Q$ away from the singularity, we get the symmetrized integrand
\[
\pa_{x_1}^i \pa_{x_2}^j \int_Q  F_1(x, y) W(y) dy 
= \int_Q \pa_{x_1}^i \pa_{x_2}^j F_1(x, y) W(y) dy.
\]

In the error estimate of the Trapezoidal rule Lemma \ref{lem:T_rule}, we estimate $ 
\pa_{x_1}^i \pa_{x_2}^j \pa_{y_i}^2 F_1(x, y)$, which is $I_{ij 20}(F_1)$ or $I_{ij02}(F_1)$ in \eqref{eq:ker_decay3}. 
We apply the estimate of $F_2$ to $K_f - C_{f0} K_{ux0} - C_f K_{00}$ \eqref{eq:u_appr_near0_coe}. 
Below, we show that $I_{ijkl}(F_i), i=1,2$ has faster decay in $|y|$ than $ \pa_{x_1}^i \pa_{x_2}^j \pa_{y_1}^k \pa_{y_2}^l G(y + x)$. 

By definition, we get $i_1, j_1 \leq 1$. Next, we fix $y$ and introduce 
\beq\label{eq:ker_sym_g}
g_{pq}(z) \teq  \pa_{y_1}^{p} \pa_{y_2}^{q} G( y + z) ,
\quad M_{G, k} \teq \max_{ a + b = k}  || (\pa_{y_1}^a \pa_{y_2}^b G)(y + \cdot ) ||_{L^{\inf}(Q_x)}, \ Q_x = [-x_1, x_1] \times [-x_2, x_2].
\eeq

Since $G$ is harmonic, we have
\beq\label{eq:ker_sym_sgn}
\bal
&\pa_{x_i}^k G( y_1 + s_1 x_1, y_2 + s_2 x_2) = s_i^k \pa^k_{y_i}  G( x_1 + s_1 y_1, x_2 + s_2 y_2) , \ s_l \in \{ \pm 1 \} , \\
& \pa_1^2 G(y) = - \pa_2^2 G(y), \quad \pa_{x_1 x_2} g_{ p q}(x) |_{x=0}
 = \pa_{y_1}^{p+1} \pa_{y_2}^{q+1} G(y), \quad
  \pa_{22} g_{rs}(0) = -\pa_{11} g_{rs}(0).
\eal
\eeq


\vs{0.1in}
\paragraph{\bf{Second approximation $F_2$}}
Note that taking $\pa_{y_i}$ in $F_i$ does not change the sign of coefficient of $G$ term in \eqref{eq:ker_decay3}. Applying \eqref{eq:lem_ker_sym_G21} with $c=1$ and $f(z) = g_{r s}(z)$ in $G_2$, we yield 
\[
\bal
& I_{pqrs}(F_2)
 =   \pa_{x_1}^p \pa_{x_2}^q \int_0^{x_1} \int_0^{x_2} g_{ {r+1}, {s+1}, all}(z) dz ,   \\
& g_{\al \b, all}(z) =  g_{\al \b}(z) + g_{\al \b}(-z)  + g_{\al \b}(z_1, -z_2) + g_{\al \b}(-z_1, z_2)
 - 4 g_{\al \b}(0) -  2 (z_1^2 - z_2^2) \pa_{11} g_{\al \b}(0) .
\eal
\]

If $\max(i, j) \leq 1$, using the above notation to $I_{ijkl}(F_2)$ and the estimate of $G_1 - \hat G_1$ in Lemma \ref{lem:ker_sym} with $f = g_{kl}$, and then integrating the bounds in $z_2$, we get 
\[
|I_{1 0 kl}(F_2) | =| \int_0^{x_2} g_{k+1,l+1,all}(x_1, z_2) d z_2| 
\leq M_{G, d_2} \int_0^{x_2} \f{x_1^4 + 6 x_1^2 z_2^2 + z_2^4}{6} d z_2
=   ( \f{x_1^4 x_2}{6} +\f{x_1^2 x_2^3}{3} + \f{x_2^5}{30} ) M_{G, d_2 }, 
\]
where $ d_2 = k+ l + 6$. Similarly, we get 
\[
|I_{01 kl}(F_2)| \leq ( \f{x_1^5 }{30} +\f{x_1^3 x_2^2}{3} + \f{x_1^4 x_2 }{6} ) A_{G, d_2 }, 
\quad I_{11kl}(F_2) \leq \f{x_1^4 + 6 x_1^2 x_2^2 + x_2^4}{6} A_{G, d_2}.
\]

If $\max(i,j) \geq 2, i+ j \leq 3$, without loss of generality, we consider $i \geq 2$. 
We choose $(i_1, j_1, k_1, l_1)  = ( i-2,  j, k+2, l) $. 
From \eqref{eq:ker_sym_sgn}, we get
\begin{eqnarray*}
 &&\pa_{x_1}^2 ( x_1 x_2 \pa_{12} G(y) ) =0,\\
  &&\pa_{x_1}^2 \pa_{y_1}^{k } \pa_{y_2}^{ l} ( \f{2 (x_1^2 - x_2^2) x_1 x_2}{3}  \pa_1^3 \pa_2 G(y) )
= 4 x_1 x_2   \pa_{y_1}^{k_1+1 } \pa_{y_2}^{l_1 + 1} G)(y)
= 4 x_1 x_2 \pa_{12} g_{k_1 l_1}(0).
\end{eqnarray*}

Using \eqref{eq:ker_sym_sgn} again, 
we rewrite $\pa_{x_1}^i \pa_{y_1}^k G(x+y) = \pa_{x_1}^{i_1} \pa_{y_1}^{k_1} G(x+y)$ and get
\beq\label{eq:ker_sym_G23}
I_{ijkl}(F_2)
= \pa_{x_1}^{i_1} \pa_{x_2}^{j_1} 
( g_{k_1 l_1}(x) 
- g_{k_1 l_1}(x_1, -x_2) - g_{k_1 l_1}(-x_1, x_2) 
+ g_{ k_1 l_1}(-x) -4 x_1 x_2 \pa_{12} g_{k_1 l_1}(0)
  ) . 
\eeq
The same derivation applies to the case of $ j \geq 2$, where we choose $(i_1, j_1,k_1, l_1) = (i, j-2, k, l+2)$. Since $i_1, j_1 \leq 1$, using the estimate of $G_2 - \hat G_2$ in Lemma \ref{lem:ker_sym} with $f = g_{k_1 l_1}$, we get 
\[
\bal
 |I_{20 kl}(F_2) | , |I_{02 kl}(F_2)| & \leq \f{2 x_1 x_2 |x|^2}{3} M_{G, d_2}, \    (i_1, j_1) = (0, 0),  \\
|I_{30 kl}(F_2) |, |I_{12 kl}(F_2)| & \leq 
\f{2}{3} ( 3 x_1^2 x_2 + x_2^3 ) M_{G, d_2} , (i_1, j_1) = (1, 0), \\
 |I_{21 kl}(F_2) |, |I_{03 kl}(F_2)| & \leq 
\f{2}{3} (  x_1^3  + 3 x_1 x_2^2 ) M_{G, d_2},  (i_1, j_1) = (0, 1), \ d_2 = k_1 + l_1 + 4 = k+ l + 6.
\eal
\]

Note that the form \eqref{eq:ker_sym_G23} can be seen as the $\pa_{x_1}^{i_1} \pa_{x_2}^{i_2} F_1$. If $4 \leq i+j \leq 5$, we still first perform \eqref{eq:ker_sym_G23} by choosing $(i_1, j_1, k_1, l_1) =(i-2, j, k+2, l)$ or $(i, j-2, k, l+2)$ and get 
\[
 I_{ijkl}(F_2) = I_{i_1 j_1 k_1 l_1}(\td F_1), 
\]
where $\td F_1$ is similar to $F_1$ in \eqref{eq:ker_decay3} with $G$ replaced by $
g_{i-i_1, j-j_1} =  \pa_{y_1}^{i-i_1} \pa_{y_2}^{j-j_1} G(y)$. Then we apply the estimate for the first approximation below with $i_1 + j_1 \leq 3$.



\vs{0.1in}
\paragraph{\bf{First approximation}}

The estimate of $I_{ijkl}(F_1)$ is similar. Denote
\[
\bal
i_2 = i - 2 \lfloor \f{i}{2} \rfloor , \ 
j_2 = j -  2 \lfloor \f{j}{2} \rfloor, \
k_2 = k + 2 \lfloor \f{i}{2} \rfloor,
\quad l_2 = l +  2 \lfloor \f{j}{2} \rfloor. 
\eal
\]
If $\max(i, j)  \leq  1$, we get $(i,j,k,l) = (i_2,j_2, k_2, l_2)$. 
Applying the estimate $G_2 - \hat G_2$ in Lemma \eqref{lem:ker_sym} with $f= g_{k_2 l_2}$, we get 
\[
\bal
I_{10kl}(F_1)  & \leq  \f{2}{3} x_2( x_2^2 + 3 x_1^2) || \pa^d G(y + \cdot )||_{L^{\inf}(Q_x)}
 = \f{2}{3} x_2( x_2^2 + 3 x_1^2)  M_{G, d },  \\ 
I_{01kl}(F_1)  & \leq  \f{2}{3} x_1( x_1^2 + 3 x_2^2) M_{G, d}, \quad 
I_{00kl}(F_1)  \leq \f{2 x_1 x_2|x|^2 }{3} M_{G, d},  \quad  d =  k_2 + l_2 + 4 = k+l+4.
\eal
\]

If $(i, j) = (1,1)$, we apply the estimate of $G_1 $ in Lemma \eqref{lem:ker_sym} with $f =\pa_{x_1 x_2} g_{kl}(x)$ ( $k, l$ are number of derivatives on $G(y + z)$) to get 
\[
|I_{11 kl} (F_1)| \leq 2 |x|^2 M_{G, d}, \quad d = k_2 + l_2 + 4 = k + l + 4.
\]


If $\max(i, j) \geq 2, i+ j \leq 3$,  $x_1 x_2 \pa_{12} G(0)$ vanishes in $I_{ijkl}$. We apply derivation similar to \eqref{eq:ker_sym_G23} without $4 x_1 x_2 \pa_{12} g_{k_2 l_2}(0)$ and the estimate of $G_2$ in Lemma \ref{lem:ker_sym} with $f = g_{k_2 l_2}$ to get 
\[
| I_{ijkl}(F_1) 
\leq 4 x_1^{1- i_2} x_2^{1- j_2} || \pa^2 g_{k_2 l_2}||_{L^{\inf}(Q_x)}
\leq  4 x_1^{1- i_2} x_2^{1- j_2} M_{G, d}, \quad d = k_2 + l_2 + 2 = k+ l + 4.
\]

To bound $M_{G, k}$, we apply Lemma \ref{lem:K_mix} to get 
\beq\label{eq:Den}
  M_{G, k} = \max_{ a + b = k}  || (\pa_{y_1}^a \pa_{y_2}^b G)(y + \cdot ) ||_{L^{\inf}(Q_x)}
  \leq \f{ (k-1)!}{2 \cdot \mathrm{Den}(x, y)^{k/2}  } , 
  \quad   \mathrm{Den}(x, y) = \min_{ z \in Q_x} |y-z|^2.
\eeq
It is not difficult to obtain that for $x, y \in \R_2^{++}$, we have
\beq\label{int:Den}
\mathrm{Den}(x, y) = \sum_{i=1,2} \min_{ |z_i| \leq x_i} |y_i- z_i|^2 = \sum_{i=1,2} (\max( y_i - x_i, 0))^2.
\eeq

Using the above estimates, for $|y| \gg |x|$, we get $ \mathrm{Den} \sim |y|^2$ and the decay estimate for $I_{ijkl}(F_1)$ \eqref{eq:ker_decay3} with a rate $|y|^{ -k - l -4 }$ and $I_{ijkl}(F_2)$ with a rate $|y|^{-k-l-6}$.

\subsection{Piecewise $L^{\inf}$ estimate of derivatives of the Green function }\label{app:green}
 
In this section, we develop sharp $L^{\inf}$ estimates of the derivatives of the Green function $G(x) =-\f{1}{2\pi} \log |x|$ and their linear combinations in a small domain $[a, b] \times [c, d]$. They will be used in Lemmas \ref{lem:T_rule}, \ref{lem:int_err_x} to estimate the error, especially near the singularity of the kernel. We remark that the linear combinations of $\pa_1^i \pa_2^j G$ can be quite complicated. If we simply use the triangle inequality to estimate it, 
we can overestimate some terms with cancellation significantly, especially near the singularity of $G$. These sharp estimates are useful for reducing the estimate of the error term in Lemmas \ref{lem:T_rule}, \ref{lem:int_err_x} without choosing very small mesh, which can lead to large computational cost.

\subsubsection{Coefficients of the derivatives of the Green function}\label{sec:psi_coe}

To simplify the notation, we drop $\f{1}{\pi}$ from $G$ and denote $ f_p = - \f{1}{2} \log |x|$. 
Firstly, we derive the formulas of $\pa_1^i \pa_2^j f_p $. Due to homogeneity, for $k + l \geq 1$, we assume
\beq\label{eq:psi_coe_anz}
\pa_{x_1}^{k} \pa_{x_2}^l f_p = \f{ \sum_{ i+ j  = k+l}  c_{ij} x_1^i x_2^j  }{ |x|^{ 2 (k+l) }}.
\eeq

Next, we derive the recursive formula for $c_{ij}$. Using induction, we can obtain 
\[
\bal
\pa_{x_1}^{k+1} \pa_{x_2}^l f_p & = \f{ \sum_{i+j = k+ l} c_{ij} i x_1^{i-1} x_2^j }{ |x|^{2 (k+l)} } 
- \f{2 (k+l) x_1}{ |x|^{2 (k+l + 1)} } \sum_{i + j = k+ l} c_{ij} x_1^i x_2^j \\
& = \f{1}{ |x|^{ 2(k+l + 1) }}
(  \sum_{i+j = k+ l}  c_{ij} i x_1^{i+1} x_2^j + c_{ij} i x_1^{i-1} x_2^{ j+2} 
- 2 ( k+ l) c_{ij} x_1^{i+1} x_2^j ) \\
& =\f{1}{ |x|^{ 2(k+l + 1) }}
(  \sum_{i+j = k+ l} ( c_{ij} i + c_{i+2, j-2} (i+2) - 2 (k+l) c_{ij}) x_1^{ i+1} x_2^j  ). 
\eal
\]

Therefore, we obtain the recursive formula 
\[
c_{i+1, j} = i c_{ij}  + (i+2)  c_{i+2, j-2} - 2(k+l) c_{ij},
\]
for all $i + j = k+ l$, or equivalently,
\[
c_{i, j} = (i-1) c_{i-1, j}  - 2(k+l) c_{i-1, j} + (i+ 1)  c_{i + 1, j-2} ,
\]
for all $i + j = k+ l +1$. Similarly, for $\pa_{x_2}$, we yield 
\[
c_{i, j} = (j-1) c_{i, j-1} - 2 (k+l) c_{i, j - 1} + ( j+1) c_{ i-2, j+1},
\]
for all $ i + j = k + l + 1$.

\subsubsection{Estimates of rational functions}

We use the above formulas to develop sharp estimates of the derivatives of $f_p$ and their linear combinations in a small grid cell $[y_{1l}, y_{1u}] \times [y_{2l}, y_{2u}]$. For $k < k_2$ and $S \subset \{ (i,j) : i+ j = k \}$, we estimate 
\beq\label{eq:psi_coe_est1}
I_S \teq \f{ \sum_{ (i,j) \in S} c_{ij} y_1^i y_2^j  }{ |y|^{k_2} } . 
\eeq
We assume that $I_S(x)$ is either odd in $x_i$ or even in $x_i$ for $i=1,2$. Clearly, this properties hold for $\pa_{x_1}^k \pa_{x_2}^l f_p $ \eqref{eq:psi_coe_anz}. Denote $i_1 = \min_{i \in S} i, j_1 = \min_{ j \in S} j$. We yield 
\[
I_S = \f{y_1^{i_1} y_2^{j_1}}{ |y|^{ { i_1 + j_1}} }  
\f{ \sum_{ (i,j) \in S} c_{ij} y_1^{ i- i_1} y_2^{ j - j_1}  }{ |y|^{k_2 - i_1 - j_1} }  .
\]

We further introduce 
\[
P  \teq \sum_{ (i,j) \in S} c^+_{ij} y_1^{ i- i_1} y_2^{ j - j_1} , \quad 
Q \teq \sum_{ (i,j) \in S} c^-_{ij} y_1^{ i- i_1} y_2^{ j - j_1}.
\]

We claim that $i-i_1, j - j_1$ are even for all $(i, j) \in S$. Since $I_S$ is either odd or even in $x_i, i=1,2$, the numerator $\sum c_{ij} x_1^i x_2^j$ in \eqref{eq:psi_coe_est1} have the same symmetries in $x_1, x_2$. 
In particular, each monomial $c_{ij} x_1^i x_2^j$ in \eqref{eq:psi_coe_est1} also enjoys the same symmetries in $x_1, x_2$ as $I_S$. 
If $i- i_1$ is odd for some $i$, then $c_{ij} x_1^{i-i_1} x_2^{j-j_1}$ must be odd in $x_1$. It implies $i-i_1 \geq 1$ for any $(i,j) \in S$ and contradicts the minimality of $i_1$. The same argument applies to $j_1$.

As a result, $P$ and $Q$ are monotone increasing in $|y_1|, |y_2| \geq 0$. For $|y_i|_l \leq |y_i| \leq |y_i|_u, i=1,2$, we can derive the upper and lower bounds for $P, Q$ and yield
\[
\bal
|I| & \leq  \f{ \max( P_u - Q_l, Q_u - P_l)}{ | y |_l^{k_2 - i_1 - j_1}}
\max_{y \in \Om} 
\f{ |y_1|^{i_1} |y_2|^{j_1}}{ |y|^{ { i_1 + j_1}} }   \\
& \leq \f{ \max( P_u - Q_l, Q_u - P_l)}{ | y |_l^{k_2 - i_1 - j_1}}
( \f{ |y_1|_u }{ ( |y_1|_u^2 + |y_2|_l^2 )^{1/2}} )^{i_1}  
( \f{ |y_{2}|_u }{ ( |y_{1}|_l^2 + |y_{2}|_u^2 )^{1/2} } )^{j_1} ,
\eal
\]
where $|y|_l$ is the lower bound of $|y|$ and we have used the fact that $z_i / |z|$ is increasing in $z_i$ for $z_i \geq 0$ to obtain its upper bound. Now, for $y_i \in [y_{il}, y_{iu}]$, we estimate $|y_i|_l, |y_i|_u$ as follows 
\beq\label{eq:psi_coe_est2}
\bal
& |y_i| \geq \max(0, |y_{il} + y_{iu} | /2 - (y_{iu} - y_{il}) /2  )  \teq |y_i|_l,  \\
 &  |y_i| \leq \max( |y_{il}|, |y_{iu}|)  \teq |y_i|_u, 
\  |y|_l \teq ( |y_1|_l^2 + |y_2|_l^2)^{1/2}. 
\eal
\eeq
Note that for $y_i \in [y_{il}, y_{iu}]$, $y_i$ can change sign.

\subsection{Improved estimate of the higher order derivatives of the integrands}\label{app:integ_est}

In the H\"older estimate, we need to estimate the derivatives of the integrands \eqref{int:sym_integ1}, \eqref{int:sym_integ2}, \eqref{int:decom_ux}, which take the form
\[
 K^C(x, y) (p(x) - p(y)) + K^{NC} p(x),
\]
for some weight $p$ and kernels $K^C, K^{NC}$. Using the estimates of the kernels in Appendix \ref{app:ker_sym}, \ref{app:green} and the weights in Section \ref{app:wg}, the Leibniz rule \eqref{eq:lei}, and the triangle inequality, we can estimate the derivative of the integrands.  However, such an estimate can lead to significant overestimates near the singularity of the integrand. We use the estimates in Appendix \ref{app:green} to handle the cancellations among different terms and obtain improved estimates for the integrand and its derivatives near the singularity: 
\beq\label{int:integ_T00}
 T_{00}(x, y) \teq K(y-x) (p(x) - p(y)), \quad   \pa_{x_i} T_{00}(x, y).
 \eeq
We choose weight $p(x)$ that is even in $x$ and $y$. The basic idea is to perform a Taylor expansion on $p(x) - p(y)$ and obtain the factor $|x-y|$, which cancels one order of singularity from $K(x, y)$. We use the formulas in Appendix \ref{app:green} to collect the terms with the same singularity and exploit the cancellation.

\subsubsection{Y-discretization}\label{sec:errC1_Y}

In the Y-discretization of the integral, we need to estimate the $y-$derivatives of the integrand \eqref{int:integ_T00}. For $a , b = 1, 2 $, denote 
\beq\label{eq:integ_D12}
D_1 = \pa_{a}, \quad D_2 = \pa_{b}, \quad x_m = \f{x+y}{2}.
\eeq
Next, we compute $\pa_{y_b}^j \pa_{x_a}^i T_{00}$. The reader should be careful about the sign. Note that 
\[
\pa_{x_{a}} ( K( y- x)  )= -( \pa_{a} K )( y- x)
= - D_1 K(y-x).
\]

Using the Leibniz rule, we get 
\[
\bal
\pa_{y_b}^2 \pa_{x_a}  T_{00} &= \pa_{y_b}^2( - D_1 K ( p(x )  - p(y)) + K \cdot D_1 p (x))
=   \pa_{y_b}^2 ( D_1 K  \cdot ( p(y )  - p(x)) + K \cdot  D_1 p (x))  \\
&= D_2^2 D_1 K \cdot ( p(y) - p(x)) + 2 D_2 D_1 K \cdot D_2p(y) 
+ D_1 K \cdot D_2^2 p(y) + D_2^2 K \cdot D_1 p(x) .\\
\eal
\]

We use Taylor expansion at $x = x_m$ and write
\beq\label{eq:integ_Ta1}
\bal
p(y)- p(x) =  (y-x) \cdot \na p( x_m) + p_{m,2,err}, 
\; \pa_i p(z) = \pa_i p( x_m) + ( \pa_i p(z) - \pa_i p(x_m) ), \ z = x, y , \\
| f(z) - f(x_m)- (z - x_m) \cdot \na f(x_m) | \leq  \f{1}{2} \f{d_1^2}{4}  || f_{xx} ||_{L^{\inf}(Q)} + \f{d_1 d_2}{4} || f_{xy}||_{L^{\inf}(Q)}
+ \f{1}{2} \f{d_1^2}{4}  || f_{xx} ||_{L^{\inf}(Q)}  \teq I_f.
\eal
\eeq
 for $d = y- x,  z = x , y$ and any $f$, where $Q$ is the rectangle covering $x, y$. Then $p_{m,2,err}$ is bounded by $2 I_p = O(|x-y|^2)$. Combining the terms involving $\na p$, we get
\beq\label{eq:errC1_Y1_T00}
\bal
\pa_{y_b}^2  \pa_{x_a}  T_{00} &  =  \sum_{i=1,2} \B( D_2^2 D_1 K \cdot (y_i - x_i) + \one_{D_2 = \pa_{i}} 2 D_2 D_1 K 
+ \one_{D_1 = \pa_{i}}  D_2^2 K \B) \cdot \pa_{x_i} p(x_m)  + D_2^2 D_1 K \cdot p_{m,2,err} \\
&\quad + 2 D_2 D_1 K \cdot ( D_2 p(y) - D_2p(x_m))
+ D_2^2 K \cdot ( D_1p(x) - D_1p(x_m))  
+ D_1 K \cdot D_2^2 p(y)  \\
& \teq  \B( \sum_{i=1,2} I_i  \cdot \pa_{x_i} p(x_m) \B)+ II_1 + II_2 + II_3 + II_4,  \\
 I_i & \teq D_2^2 D_1 K \cdot (y_i - x_i) + \one_{D_2 = \pa_{i}} 2 D_2 D_1 K 
+ \one_{D_1 = \pa_{i}}  D_2^2 K,
\eal
\eeq
where $\pa_1^i \pa_2^j K$ is evaluated at $ y - x$, and $II_i$ denotes the last four terms in the second equation. The first term is the most singular term. We combine the most singular terms to exploit the cancellation and improve the estimates. We estimate the kernels 
\beq\label{eq:K_mix}
\bal
&K_{mix}( D_1, D_2, i, s)( z_1, z_2) \teq  D_2^2 D_1 K(z)  z_i + \one_{D_2 = \pa_{i}} 2 D_2 D_1 K (z)
+ s \one_{D_1 = \pa_{i}}  D_2^2 K (z) ,
\eal
\eeq
with $s= \pm 1$ and $D_1, D_2 \in \{ \pa_1, \pa_2 \}$.
 Then we can bound $\pa_{y_b}^2 \pa_{x_a}  T_{00}$ using the triangle inequality. When $D_1 = D_2$, we have an improved estimate for $II_2, II_3$ 
 \beq\label{eq:errC1_Y1_II23}
 II_2 + II_3 
 = D_2^2 K ( D_2p(y) - D_2p( x_m) + (D_2 p(y) + D_2 p(x) - 2 D_2 p(x_m)) ). 
 \eeq
 We estimate $ D_2 p(y) + D_2 p(x) - 2 D_2 p(x_m) $ using \eqref{eq:integ_Ta1} with $f = D_2 p$ and $z = x, y$.

\subsubsection{The second singular term}\label{sec:errC1_Y_Ks2}

For $x = (x_1, x_2)$ close to the $y-$axis or the $x$-axis, since we have symmetrized the integral (see \eqref{int:sym_integ1} and Section \ref{sec:int_sym}), we have another singular term in the integrand
\[
T_{01} \teq K( y_1 - x_1, y_2 + x_2) ( p(x) - p(y) )  , \ \mathrm{ or }  \ 
T_{10} \teq K(y_1 + x_1, y_2 - x_2) ( p(x) - p(y) ).
\]

We have the first term if $x_2 < x_1$ and $x_2$ close to $0$, and the second term if $x_1 < x_2$ and $x_1$ close to $0$. We label the former case with $side = 1$ and the latter $ side = 2$. 
See the right figure in Figure \ref{fig:sing_Rk} for an illustration of the first case. The $T_{01}$ term is supported in the blue region $R(x, k, S)$.
Denote 
\beq\label{eq:errC1_Y_side}
(s_1, s_2) = (1, -1) \mathrm{ \ if \  } side = 1,  \quad (s_1, s_2) = (-1, 1) \mathrm{\ if\ } side = 2.
\eeq

\paragraph{ \bf{ Case I} }
If $(D_1, side) = (\pa_1, 1)$ or $( \pa_2, 2)$, we obtain 
\[
\pa_{x_a} K( y_1 - s_1 x_1, y_2 - s_2 x_2 ) = -\pa_{y_a}  K( y_1 - s_1 x_1, y_2 - s_2 x_2 ),
\]
for $( a ,s_1, s_2) = (1, 1, -1)$ or $(2, -1, 1)$. 
The computations for $ \pa_{y_b}^2 \pa_{x_1} T_{01}, \pa_{y_b}^2 \pa_{x_2} T_{10}$ are the same as \eqref{eq:errC1_Y1_T00} with $K$ and its derivatives evaluating at $z = (y_1 - s_1 x_1, y_2 - s_2 x_2)$.

We estimate $II_i$ in \eqref{eq:errC1_Y1_T00}  directly using 
the triangle inequality and the bounds for $\pa_1^i \pa_2^j K $ in Section \ref{app:ker_sym}, \ref{app:green}  and $p$ in Section \ref{app:wg}. For $I_i$ in \eqref{eq:errC1_Y1_T00} in the most singular term,
if $i = side$, from definition \eqref{eq:errC1_Y_side}, we get 
\[
s_i = 1, \quad s_{3-i} = -1, \quad   z_i = y_i - s_i x_i = y_i - x_i ,\quad z_{3-i} = y_{3-i} + x_{3-i}. 
\]
Therefore, it follows
\[
I_i = D_2^2 D_1 K(z) \cdot (y_i - x_i) + \one_{D_2 = \pa_{i}} 2 D_2 D_1 K (z)
+ \one_{D_1 = \pa_{i}}  D_2^2 K (z)
= K_{mix}( D_1, D_2, i, 1)(z),
\]
where $K_{mix}$ is defined in \eqref{eq:K_mix}.  If $ i \neq side$, we have $z_i = y_i + x_i \geq |y_i - x_i|, z_{3-i}= y_{3-i}- x_{3-i}$. We simply bound the summand using the triangle inequality 
\[
|I_i| \leq  | D_2^2 D_1 K(z) | \cdot | y_i - x_i| + \one_{D_2 = \pa_{i}} 2 | D_2 D_1 K (z) |
+ \one_{D_1 = \pa_{i}}  | D_2^2 K (z) |.
\]

\paragraph{\bf{Case II} }
If $(D_1, side) = (\pa_1, 2)$ or $( \pa_2, 1)$, we obtain 
\[
\pa_{x_a} K( y_1 - s_1 x_1, y_2 - s_2 x_2 )
=  (\pa_{y_a}  K) ( y_1 - s_1 x_1, y_2 - s_2 x_2 ),
\]
for $(a, s_1, s_2) = (1, -1, 1)$ or $(2, 1, -1)$. Recall the definitions of $D_1, D_2$ \eqref{eq:integ_D12}. Using the above identity, we yield 
\[
\pa_{y_b}^2 \pa_{x_a} T =  \pa_{y_b}^2  ( D_1 K \cdot (p(x) - p(y)) + K \cdot D_1 p ) 
= - ( \pa_{y_b}^2  ( D_1 K \cdot (p(y) - p(x)) - K \cdot D_1 p ) ),
\]
for $ T = T_{01}$ or $T_{1 0}$. Using an expansion similar to that in \eqref{eq:errC1_Y1_T00}, \eqref{eq:integ_Ta1}, we get 
\beq\label{eq:errC1_Y_refl}
\bal
- \pa_{y_b}^2 \pa_{x_a}  T &  =  \sum_{i=1,2} ( D_2^2 D_1 K \cdot (y_i - x_i) + \one_{D_2 = \pa_{i}} 2 D_2 D_1 K 
- \one_{D_1 = \pa_{i}}  D_2^2 K ) \cdot \pa_{x_i} p(x_m)  + D_2^2 D_1 K \cdot p_{m,2,err} \\
&\quad + 2 D_2 D_1 K \cdot ( D_2 p(y) - D_2p(x_m))
- D_2^2 K \cdot ( D_1p(x) - D_1p(x_m))  
+ D_1 K \cdot D_2^2 p(y)  \\
 & \teq  \B( \sum_{i=1,2} I_i  \cdot \pa_{x_i} p(x_m) \B)+ II_1 + II_2 + II_3 + II_4 , \\
 I_i & \teq D_2^2 D_1 K \cdot (y_i - x_i) + \one_{D_2 = \pa_{i}} 2 D_2 D_1 K 
- \one_{D_1 = \pa_{i}}  D_2^2 K , 
\eal
\eeq
where $\pa_1^i \pa_2^j K$ is evaluated at $z = (y_1 - s_1 x_1, y_2 - s_2 x_2)$. We bound $II_i$ using triangle inequality, the estimate \eqref{eq:errC1_Y1_II23}, and the bounds for  $K$, its derivatives, and $p$ in Sections \ref{app:ker_sym}, \ref{app:green}, and \ref{app:wg}. 

For $I_i$, if $i = side$, from \eqref{eq:errC1_Y_side}, we get $s_i = 1$ and $z_i = y_i - s_i x_i = y_i - x_i$. Hence, we get 
\[
I_i = D_2^2 D_1 K \cdot (y_i - x_i) + \one_{D_2 = \pa_{i}} 2 D_2 D_1 K 
- \one_{D_1 = \pa_{i}}  D_2^2 K = K_{mix}(D_1, D_2,i, -1)(z),
\]
where $K_{mix}$ is defined in \eqref{eq:K_mix}.

If $i \neq side$ and $ D_1 = D_2 = \pa_i$, we have $z_i = y_i - s_i x_i = y_i + x_i$ and get a cancellation between $D_2 D_1 K$ and $D_2^2 K$ and yield 
\[
|I_i| = | D_2^2 D_1 K \cdot (y_i - x_i) +  D_2 D_1 K  |
\leq | D_2^2 D_1 K | \cdot | y_i - x_i |  +   | D_2 D_1 K |.
\] 
Otherwise, we simply bound each term in $I_i$ using the triangle inequality.

\subsubsection{X-discretization}

For $K(s) =  \f{ s_1 s_2 }{|s|^4},\f{1}{2} \f{ s_1^2 - s_2^2 }{ |s|^4}$, we have $K(s) = K(-s)$. Denote \[
T =K(y-x) (p(x) - p(y)) = K(x- y) (p(x) - p(y)).
\]

In this section, we compute $\pa_{x_b}^i \pa_{x_a}^j T $. Using the Taylor expansion at $x$
\[
p(x) - p(y) = (x- y) \cdot \na p(x) + p_{x, 2,err},
\]
and calculations similar to those in Section \ref{sec:errC1_Y}, we get 
\beq\label{eq:errC1_X_T00}
\bal
\pa_{x_b}^2 \pa_{x_a} T &=  \pa_{x_b}^2 ( D_1 K \cdot (p(x) - p(y)) + K D_1 p(x) ) =  D_2^2 D_1 K \cdot (p(x) - p(y)) + 2 D_1 D_2 K \cdot D_2 p(x) \\
 & \quad + D_1 K \cdot D_2^2 p(x) + D_2^2 K \cdot D_1p(x) + 2 D_2 K \cdot D_1 D_2 p(x) + K \cdot D_1 D_2^2 p(x) \\
 & = \sum_{i=1,2} ( D_2^2 D_1 K \cdot (x_i - y_i) + \one_{D_2 = \pa_{i} } 2 D_1 D_2 K
+ \one_{ D_1 = \pa_{i}}   D_2^2 K ) \pa_i p(x) +  D_2^2 D_1 K  \cdot p_{x, 2, err}  \\
& \quad + D_1 K \cdot D_2^2 p(x) + 2 D_2 K \cdot D_1 D_2 p(x) + K \cdot D_1 D_2^2 p(x) 
\teq \B(  \sum_{i=1,2} I_i \cdot \pa_i p(x) \B) + II ,\\
I_i & \teq  D_2^2 D_1 K \cdot (x_i - y_i) + \one_{D_2 = \pa_{i} } 2 D_1 D_2 K
+ \one_{ D_1 = \pa_{i}}   D_2^2 K ,
\eal
\eeq
where $II$ consists of the last four terms in the third equation, $K$ and its derivatives are evaluated at $x - y$. Since $D_1, D_2 = \pa_{x_i}$, we get
\[
 I_i = D_2^2 D_1 K \cdot (x_i - y_i) + \one_{D_2 = \pa_{i} } 2 D_1 D_2 K
+ \one_{ D_1 = \pa_{i}}   D_2^2 K  = K_{mix}( D_1, D_2, i, 1)(x- y),
\]
where $K_{mix}$ is defined in \eqref{eq:K_mix}.
We use the bound for $K_{mix}$, $\pa_1^i \pa_2^j K$ and $p$ to estimate $D_2^2 D_1 T$.

\subsubsection{The second singular term}
Similar to Section \ref{sec:errC1_Y_Ks2}, we have the second singular term for $x$ close to the $x$-axis or $y$-axis
\[
T_{01} \teq K(x_1 - y_1, x_2 + y_2) (p(x) - p(y)), \quad 
T_{10} \teq K(x_1 + y_1, x_2 - y_2) (p(x) - p(y)).
\]
We have the former if $x_2 < x_1$ and $x_2$ close to $0$, 
and the latter if $x_1 < x_2$ and $x_1$ close to $0$. Using the definition of $side, s_1, s_2$ from Section \ref{sec:errC1_Y_Ks2} and  \eqref{eq:errC1_Y_side}, we get 
\[
\pa_{x_a}  K( x_1 - y_1 s_1, x_2 - y_2 s_2) = (D_1 K)( x_1 - y_1 s_1, x_2 - y_2 s_2).
\]
Then the computations of $D_2^2 D_1 T$ are the same as those in \eqref{eq:errC1_X_T00} with $\pa_1^i \pa_2^j K$ evaluated at $z = (x_1 - s_1 y_1, x_2 - s_2 y_2) $. We bound $II$ in \eqref{eq:errC1_X_T00} directly using the triangle inequality and the bounds for $\pa_1^i \pa_2^j K$ and $p$. For $I_i$ in \eqref{eq:errC1_X_T00}, if $i = side$, 
from \eqref{eq:errC1_Y_side}, we get $s_i$ and $z_i = x_i - s_i y_i = x_i - y_i$. It follows 
\[
I_i = D_2^2 D_1 K \cdot z_i + \one_{D_2 = \pa_{i} } 2 D_1 D_2 K
+ \one_{ D_1 = \pa_{i}}   D_2^2 K 
= K_{mix}(D_1, D_2, i, 1)( z).
\]
If $i \neq side$, we have $z_i = x_i + y_i > |x_i - y_i|$. We bound each term in $I_i$ separately by following the previous argument.

\subsection{Estimate of $u(x)$ for small $x_1$}\label{sec:u_dx1}

In the energy estimate, we need to estimate $ (u(x) - \hat u(x)) \vp(x)$ with weight $\vp$ singular along the line $x_1 = 0$, e.g.$\vp_1$ \eqref{wg:linf}, where $\hat u(x)$ is a finite rank approximation of $u(x)$. We use the property that $u$ vanishes on $x_1 = 0$ to establish such an estimate. 

By definition and symmetrizing the kernel using the odd symmetry of $\om$, we have
\[
u(x, y) = \f{1}{2\pi} \int_{ y_1 \geq 0}  \B( \f{ x_2 - y_2}{ |x - y|^2} - \f{ x_2 - y_2}{ (x_1 + y_1)^2 + (x_2-y_2)^2 } \B) \om(y) dy  = \f{1}{\pi} \int_{ y_1 \geq 0}   K( x, y) W(y) dy ,
\]
where 
\beq\label{eq:u_dx_ker}
\bal
K  &= \f{1}{2}   ( \f{ x_2 - y_2}{ |x - y|^2} - \f{ x_2 - y_2}{ (x_1 + y_1)^2 + (x_2-y_2)^2 } )
= x_1 \cdot \f{ 2 (x_2 - y_2) y_1 }{ |x - y|^2 ( (x_1 + y_1)^2 + (x_2-y_2)^2 ) } \\
&  \teq x_1 K_{du}(x, y)= x_1 \td K_{du}( x_1, y_1, x_2 - y_2),
\quad \td K_{du}(x, y, z) = \f{2 y z }{ ((x-y)^2 + z^2)( (x+y)^2 + z^2)}.
\eal
\eeq
We define $K_{app}$ as the symmetrized kernel in $\R_2^{++}$ for $\hat u$ similar to that in Section \ref{sec:vel_linf}. Since $W$ is odd in $y_2$, we can symmetrize the integral in $y_2$ and obtain the full symmetrized integrand 
\[
x_1 K_{du}(x, y) - x_1 K_{du}(x_1, x_2, y_1, -y_2) 
= x_1 (\td K_{du}(x_1, y_1, x_2  -y_2) - \td K_{du}(x_1, y_1, x_2 + y_2)).
\]

Since $K$ is $-1$ homogeneous, using a rescaling argument, for $ x = \lam \hat x, y = \lam \hat y$, , we have 
\beq\label{eq:u_dx_reg0}
\bal
& u =\f{\lam}{\pi} \int_{ \hat y_1 \geq 0 } 
 \B( \one_{S^c}(\hat y) K(\hat x, \hat y )  - K_{app, \lam}( \hat x, \hat y) \B) \om_{\lam}(\hat y) 
+ \one_{S}(\hat y) K(\hat x, \hat y )  \om_{\lam}(\hat y) d \hat y 
\teq I + II, \\
 \eal
\eeq
for some rescaled kernel $K_{app, \lam}(\hat x, \hat y)$, 
where $S = R(\hat x, k)$ is the singular region \eqref{eq:rect_Rk} adapted to $\hat x$. For $I$, we further rewrite it and estimate it as follows 
\[
\bal
I  &= \f{\lam}{\pi} \hat x_1 \int_{\hat y_1 \geq 0, \hat y \notin S} 
\B( \one_{S^c}(\hat y) K_{du}(\hat x, \hat y )  - \f{1}{\hat x_1} K_{app, \lam}( \hat x, \hat y) \B)  \om_{\lam}( \hat y) d \hat y . \\
\eal
\]
Since the integrand is not singular, we further symmetrize the integrand in $y_2$ and then use the method in Section \ref{sec:T_rule} to discretize and estimate the integral to obtain its tight bound. 

\vs{0.1in}
\paragraph{\bf{Derivative bounds}}
To estimate the error in the Trapezoidal rule in Lemma \ref{lem:T_rule}, we need to bound 
$\pa_{x_i}^2 K_{du}(x, y), \pa_{y_i}^2 K_{du}(x, y)$. 
Since $ \f{1}{x} C_{u0}(x,y), \f{1}{x} C_u(x, y)$ \eqref{eq:u_appr_near0_coe} are smooth, from the construction in Section {\secapprvel}, the kernel $ \f{1}{  x_1} K_{app}( x,  y )$ and its rescaled version are regular in $ \hat x$. We estimate its derivatives following Section \ref{sec:int_method}. Since $K_{du}(x, y) = \f{1}{x_1} K(x, y)$
\eqref{eq:u_dx_ker},  $K(x, y)$ is harmonic in $y$, and $|\pa_{x_2}^2 K(x, y)| = |\pa_{y_2}^2 K(x, y) |$, we get 
\[
 \pa_{y_1}^2 K_{du}(x, y) = -  \pa_{y_2}^2 K_{du}(x, y), \quad |\pa_{y_2}^2 K_{du}(x, y)| = 
 | \pa_{x_2}^2 K_{du}(x, y) |.
\]
Thus, we only need to bound $|\pa_{x_1}^2 K_{du}|$ and $ |\pa_{y_1}^2 K_{du}|$, or $\pa_x^2 \td K_{du}$ and $ \pa_y^2 \td K_{du}$ using the relation \eqref{eq:u_dx_ker}. We derive the formulas of $ \pa_x^2 \td K_{du}$ and $ \pa_y^2 \td K_{du}$ and then estimate them using methods similar to that in Appendix \ref{app:green}. We have an improved estimate for 
$\pa_y \td K_{du}$ in $ \{x \} \times [y_l, y_u] \times [z_l, z_u]$ near the singularity. A direct computation yields 
\[
\bal
  &\pa_y^2 \td K_{du}(x, y, z) =24 y z \f{ ( z^4 - (x^2 -y^2)^2) ( x^2 + y^2 + z^2) }{ T_-^3 T_+^3} + 64 \f{ x^2 y^3 z^3}{ T_-^3 T_+^3}  \\
& = \f{y z}{T_-^2 T_+^2} 
\B( 12 ( \f{1}{T_-} + \f{1}{T_+} ) (z^4 - (x-y)^2 (x+y)^2) + 64 x^2 \f{y^2 z^2}{ T_- T_+}  \B), 
  \quad T_{\pm} = (x \pm y)^2 + z^2 .
  \eal
\]
where we have used $ \f{1}{T_-} + \f{1}{T_+} = 2 \f{ x^2 + y^2 + z^2}{ T_- T_+}$. We apply the estimate of $K_{du}$ to $x, y \geq 0$. Since $| \pa_y^2 \td K_{du} |$ is even in $z$, without loss of generality, we consider $z \geq 0$. Then for $P_2$, we have $z / T_-^{1/2} , y / T_+^{1/2}$ are increasing in $z, y$, respectively. To bound other terms, we simply use the monotonicity of 
the polynomials, \eqref{eq:psi_coe_est1}, interval operation \eqref{eq:func_intval}, \eqref{eq:func_intval2}, and follow Section \ref{sec:psi_coe}. For example, we use \eqref{eq:psi_coe_est2} to bound $(x-y)^2, (x+y)^2$ and 
\[
\bal
  0 \leq  \f{y}{ T_+^{1/2}} \leq \f{ y_u}{  ( ( x+ y_u)^2 + z_l^2 )^{1/2}}, \quad
 0 \leq \f{z}{ T_-^{1/2}} \leq \f{z_u}{ ( |x-y|_l^2 + z_u^2 )^{1/2} }. 
 \eal
\]

\vs{0.1in}
\paragraph{\bf{$\hat x_1$ not small}}
For $II$ in \eqref{eq:u_dx_reg0}, if $\hat x_1 \geq x_l = 2h> 0$ away from $0$, we have $|K_{du}(\hat x, \hat y ) |\les \f{1}{ x_l} \f{1}{ | \hat x-y|}$,
which is integrable near the singularity $ \hat x$. 
We estimate $II$ using
\[
|II| \leq \f{\lam}{\pi} \hat x_1 \int_{ \hat y_1 \geq 0, \hat y \in S} | K_{du}(\hat x, \hat y)| \vp_{\lam}^{-1}(\hat y) d \hat y || \om \vp||_{\inf}, \quad S = R(\hat x, k). 
\]
We follow Section \ref{sec:int_nsym1} by introducing $\hat y = \hat x + s, s \in S - \hat x$, decomposing $S - \hat x$ into the symmetric part $D_{sym}$ and non-symmetric part $D_{ns}$ and estimating the piecewise integral of $K_{du}(\hat x, \hat y)$ 
\[
\bal
& D_{sym} = R_s(\hat x, k) - \hat x, \ D_{ns} = ( R(\hat x, k) \bsh R_s(\hat x, k) )- \hat x,  \\
& | K_{du}( \hat x, \hat y) | \one_{\hat y_1 \geq 0}=  |F| \one_{\hat x_1 + s_1 \geq 0} , \ 
 F =  \f{  (\hat x_1 + s_1) s_2  }{ |s|^2 ( (s_1 + 2 \hat x_1)^2 + s_2^2 )},
\eal
\]
and piecewise bounds of $\vp_{\lam}^{-1}(y)$, where we have used \eqref{eq:u_dx_ker} to obtain the above formula. We observe that $|F|$ is even in $s_2$ and $F \geq 0$
for $s \in Q = [a, b] \times [c, d]$ with $ c, d \geq 0$. 
We estimate the piecewise integrals of $F$ in $Q$ in Section {\secudxsupp} in the supplementary material II \cite{ChenHou2023bSupp}. Denote $X_1^+ \teq \{ y : y_1 \geq 0 \}$. If $ \hat x_1 \geq kh$, we get $S \cap X_1^+ = R(\hat x, k) $ and the regions $D_{sym}, D_{ns}$ are the same as those in Section \ref{sec:int_nsym1}. If $ \hat x_1 \in [ih, (i+1) h), i < k$, the region $S$ touches $ \{ y : y_1 = 0 \}$ and we get
\[
S \cap X_1^+= [0, (i+k+1) h] \times [  (j-k) h, (j+1 + k)h], \ \mathrm{ \ for \ } x_2 \in [ j h, (j+1) h]
\]
In this case, the symmetric and non-symmetric region becomes smaller. We do not have the left edge in the middle figure in Figure \ref{fig:sing_nsym1}, part of the upper and the lower edge due to the restriction $\hat y_1 = s_1 + \hat x_1 \geq 0$. The estimate of the integrals for  $s \in S \cap X_1^+ - \hat x_1$ follows similar argument.







\vs{0.1in}
\paragraph{\bf{Small} $\hat x_1$}
The difficulty is to estimate $II$ for small $\hat x_1 \leq 2h$. It is not difficult to obtain that 
\beq\label{eq:u_dx_log}
| II| \les  \f{\lam}{\pi} || \om_{\lam} ||_{L^{\inf}(S)}  \hat x_1 |\log( \hat x_1 )|.
\eeq

Thus we cannot bound $II$ by $ C \hat x_1$ for some constant $C$ uniformly for small $\hat x_1$. Denote by 
\beq\label{eq:u_dx_change}
\bal
S_{sym} & = [0, \hat x_1 + k h] \times [ \hat x_2 - kh, \hat x_2 + k h],  \quad
S_{in, 1} = [0, \hat x_1] \times [ \hat x_2 - kh, \hat x_2 + k h],  \\
S_{in, 2} &= [ \hat x_1, \hat x_1 + h] \times [ \hat x_2 - h, \hat x_2 + h],  \quad 
S_{in } = S_{in, 1} \cup S_{in, 2} \\
S_{out} & = [\hat x_1, \hat x_1 + h k] \times [\hat x_2 - kh , \hat x_2 + kh] \bsh S_{in, 2},
\quad \hat y = \hat x + \hat x_1 s .
\eal
\eeq
See the right figure in Figure \ref{fig:sing_nsym1} for an illustration of different regions.
By definition, we have $S_{sym} = S_{out} \cup S_{in, 1} \cup S_{in, 2}$. Here $S_{in}$ captures the most singular region. Then $\hat y \in S_{in}$ is equivalent to 
\beq\label{eq:u_dx_change2}
\bal
& s \in \hat x_1^{-1} ( S_{in} - \hat x) 
= x_1^{-1} ( [ - \hat x_1, 0] \times [- kh, kh] \cup [0, h] \times [-h, h] ) 
\teq R_1(B_1) \cup R_2(B_2), \\
& R_1(B_1) = [- 1, 0] \times [ -\f{1}{B_1}, \f{1}{B_1}],
\  R_2(B_2) = [0, \f{1}{B_2}] \times [-\f{1}{B_2}, \f{1}{B_2} ], \  
B_1 = \f{ \hat x_1}{kh} , \  B_2 = \f{\hat x_1}{h} .
\eal
\eeq

We further decompose $II$ as follows 
\[
II = \f{\lam}{\pi} \hat x_1 \int_{y_1 \geq 0}
(\one_{ S \bsh S_{sym} } (\hat y) 
+ \one_{S_{out}}( \hat y) + \one_{S_{in,1}}(\hat y)
+ \one_{S_{in, 2}}( \hat y) ) K_{du}(\hat x, \hat y) \om_{\lam}(\hat y ) d \hat y
= \f{\lam \hat x_1}{\pi} ( II_1 + II_2 + II_{in, 1 } + II_{in, 2}).
\]

The integrals $II_1, II_2$ capture the non-symmetric part and the symmetric part away from the singularity. We apply $L^{\infty}$ estimate and the method in Sections \ref{sec:int_nsym1}, \ref{sec:int_nsym2}. For $II_{in, i}$, using a change of variables \eqref{eq:u_dx_change}, \eqref{eq:u_dx_change2}, we derive
\[
II_{in, i} =  \int_{ s \in R_i(B_i)}
K_{du}( \hat x, \hat x + \hat x_1 s) \hat x_1^2 
\om_{\lam}( \hat x + \hat x_1 s ) d s.
\] 

Note that $\hat y - \hat x = \hat x_1 s, \  \hat y_1 + \hat x_1 = \hat x_1 (2 + s_1), \ \hat y_2 - \hat x_2 = \hat x_1 s_2$.
By definition \eqref{eq:u_dx_ker}, we get 
\[
\bal
 K_{du}( \hat x, \hat x + \hat x_1 s) \hat x_1^2 
& = - \f{ 2 \hat x_1 s_2 \cdot ( \hat x_1 + \hat x_1 s_1) }{ \hat x_1^2 |s|^2 \cdot
\hat x_1^2( (s_1 + 2 )^2 + s_2^2 ) }   \hat x_1^2 
= - \f{ 2 (s_1 + 1) s_2}{ |s|^2 ( (s_1 + 2)^2 + s_2^2) } \teq - K_s(s), \\
 II_{in, i}  & = - \int_{R_i(B_i)} K_s(s) \om_{\lam}( \hat x + \hat x_1 s) ds .
\eal
\]
Since $K_s(s)$ is symmetric in $s_2$, we derive 
\[
\bal
|II_{in, 1}| \leq & || \om \vp||_{\inf}  (\max_{ z \in [-\hat x_1, 0] \times [0, k h] } \vp_{\lam}^{-1}(\hat x + z) 
+ \max_{ z \in [-\hat x_1, 0] \times [ - kh, 0] } \vp_{\lam}^{-1} )
J_1(B_1)   , \\
|II_{in, 2} | \leq  & || \om \vp||_{\inf}    (\max_{ z \in [0,  h ] \times [0, h ] }  \vp_{\lam}^{-1} 
+ \max_{ z \in [0, h ] \times [ -h, 0] } \vp_{\lam}^{-1} ) J_2(B_2)  , 
\eal
\]
where $B_i$ is given in \eqref{eq:u_dx_change2} and 
\[
J_1(B_1) = \B| \int_{[-1, 0] \times [0, 1/ B_1]} K_s(s) ds \B|
= \int_{ [-1, 0] \times [0, 1/B_1]} K_s( s) ds,
\quad 
J_2(B_2) = \int_{ [0,1 /B_2]^2} K_s(s) ds.
\]
The formula of $J_i$ can be obtained using the analytic integral formula for $K_s$, and obviously $J_i$ is decreasing in $B$. Note that $J_1(B)$ is bounded, but $J_2(B) \les 1 + \log(B) \les 1 + |\log \hat x_1|$, which relates to the estimate \eqref{eq:u_dx_log}. We refer the formulas of $J_i$ to Section {\secudxsupp} in the supplementary material II \cite{ChenHou2023bSupp}.

\subsection{Additional derivations}\label{app:u_add}

\subsubsection{Estimate of the log-Lipschitz integral}\label{app:loglip}

In this section, we derive the coefficient in the estimate of $\pa_{x_2} I_{5,4}(x)$ \eqref{int:holy_loglip1}, \eqref{int:holy_loglip2}. For $I_{5,4}$, we further decompose it as follows 
\[
I_{5,4} = \B( \int_{ R(k_2) \bsh R_s(k_2)} +
\int_{ R_s(k_2) \bsh R_s(b)}
+   \int_{ R_s(b) \bsh R_s(a)}  \B) K(x-y) (\psi(x) - \psi(y) W(y) dy \teq I_{5,4, 1} + I_{5,4, 2} + I_{5,4,3}.
\]
In practice, we choose $b = 2$. The first two terms are nonsingular and their derivatives can be estimated using the method in Sections \ref{sec:int_nsym1}-\ref{sec:int_nsym2}. In the estimate of $\pa_{x_i} I_{5,4}$, we only need to estimate the boundary term on $\pa R_s(a)$ since the boundary terms on $\pa R_s(k_2), \pa R_s(b)$ are canceled in $\pa_{x_i} I_{5,j}, j=1,2,3.$ For $I_{5,4,3}$, using the second order Taylor expansion to $\psi(x) - \psi(y)$ centered at $x$, we have
\[
\bal
& \pa_{x_2} ( K(x-y) (\psi(x) - \psi(y) ) )
 = (\pa_2 K)(x- y)  ( \psi(x) - \psi(y))
+ K(x-y) \pa_2 \psi(x) \\
= & (\pa_2 K(x- y) (x_2 - y_2) + K(x-y)) \pa_2 \psi(x)
+ \pa_2 K(x- y) (x_1 - y_1) \pa_1 \psi(x) 
+ \cR_K,
\eal
\]
where the remainder $\cR_K$ coming from the higher order term in the Taylor expansion satisfies 
\[
|\cR_K| \leq \sum_{i+j=2} || \pa_x^i \pa_y^j \psi||_{L^{\inf}(Q)} |x_1 - y_1|^i |x_2 - y_2|^j c_{ij},
\]
where $Q = B_{i_1 j_1}(h_x ) + [- b h, b h]^2$ and $c_{20} = c_{02} = \f{1}{2}, c_{11} = 1$. It follows 
\[
|\pa_{x_2} I_{5,4,3} |\leq || \om \vp||_{\inf} \sum_{ 0 \leq i \leq 1, 0 \leq j \leq i+1 } Scoe_{ij}(x) \cdot f_{ij}(a, b) ,
\]
where the coefficients $Scoe_{ij}(x)$ depend on the weight $\psi, \vp$, and $f_{ij}(a, b)$ 
bounds the integral
\beq\label{eq:fKy}
 \int_{ [-b, b ]^2 \bsh [-a, a]^2} | \pa_2 K (y)  \cdot y_1^i y_2^j +
\one_{(i,j) = (0, 1)} K(y) | dy  \leq f_{ij}(a, b).
\eeq
For example, $Scoe_{01}$ comes from the following estimate for $I_{5,4, 3}$
\[
\bal
& \int_{R_s(b) \bsh R_s(a)} |(\pa_2 K(x- y) (x_2 - y_2) + K(x-y)) \pa_2 \psi(x) |  \om(y) d y \\
\leq & || \om \vp||_{\inf} || \vp^{-1}||_{L^{\inf}(Q)}
\cdot | \pa_2 \psi(x)| 
\int_{ [-b, b]^2 \bsh [-a, a]^2} | \pa_2 K (s) s_2 + K(s) | d s .
\eal
\]

The function $ f_{ij}(a, b)$ satisfies the following estimates 
for some constants $ B_{1j}>0$
\[
f_{1j}(a, b) \leq  B_{1j} \log(b/a), \quad j = 1,2 .
\]
We refer the derivations to Section {\secsinguasym} in the supplementary material II \cite{ChenHou2023bSupp}.

\subsubsection{Optimization in the H\"older estimate}\label{app:hol_opt}
Consider 
\[
\max_{t \leq t_u} \min_{a \leq b} F(a, t) ,\quad F(a, t)=  (A + B \log \f{b}{a}) \sqrt{t} + \f{ Ca}{ \sqrt{t}} ,
\]
in the upper bound in \eqref{eq:hol_comb3}. For each $t \leq t_u$, 
we first optimize $F(a, t)$ over $a \leq b$. We assume that $A, B, C, b, c, h , h_x$ are given. Denote 
\[
t_u = c h_x, \quad t_1 =  \f{ C b}{B}. 
\] 

For a fixed $t$, since $\pa_a^2 F > 0, \pa_a F(0, t) <0$ and $ \pa_a F(a, t) = 0$ if $ a = \f{ B t}{C}$, we choose $a = \min( b, \f{B t}{C})$. For $ t \leq \f{ C b}{B} = t_1$, we get 
\[
\min_{a\leq b} F(a, t) \leq F(  \f{Bt}{C}, t)
= (A + B \log \f{ b C}{B} + B) \sqrt{t} - B \sqrt{t}\log t .
\]
The right hand side can be further estimated  by studying the concave function on $s=t^{1/2} \leq s_u$ 
\[
f(p, q,s ) = ( p - q \log s) s  \leq f(p, q, \min( s_u, s_* )), 
\quad  s_* = \exp( \f{p-q}{q} )
\]
with $p = A + B \log ( \f{b C}{B}) + B, q = 2 B, s_u = \min( t_u^{1/2}, t_1^{1/2})$. 
We get the above inequality since  $f(p, q, s)$ is increasing for $s\leq s_*$ and is decreasing for $s \geq s_*$.

If $ \f{ C b}{B} \leq t \leq t_u$, we choose $a = b$ and get 
\[
\min_{a \leq b} F(a, t) 
\leq F(b, t) = A \sqrt{t} + \f{ C b}{\sqrt t},
\]
which is convex in $t^{1/2}$. Thus its maximum is achieved at the endpoints.

\section{Representations and estimates of the solutions}\label{app:solu}

In Section {\secASS} of Part I \cite{ChenHou2023a}, we represent the approximate steady state as follows 
\beq\label{eq:ASS_solu}
\bal
& \bar \om = \bar \om_1 + \bar \om_2, \quad \bar \th = \bar \th_1 + \bar \th_2 , \quad \bar \om_1 = \chi(r) r^{ - \bar \al_1} g_1(\b), \quad 
\bar \th_1 = \chi(r)  r^{ 1 - 2 \bar \al_1}  g_2(\b),  \\
& \bar \phi^N = \bar \phi_1^N + \bar \phi_2^N + \bar \phi_3^N
 + \bar \phi_{cor}^N, 
\quad
  \bar \phi^N_3 = \bar a \chi_{\phi, 2D} , \quad \chi_{\phi, 2D} = - x y \chi_{\phi}(x) \chi_{\phi}(y) , \\ 
  & \bar \phi_{cor}^N = - c \cdot \f{x y^2}{2} \kp_*(x) \kp_*(y) = c \phi_1 ,   \quad 
  c = \pa_x( \bar \om + \D (\bar \phi_1^N + \bar \phi_2^N + \bar \phi_3^N) ),  \quad 
  | \bar \al_1 +   \f{ \bar c_{\om}}{\bar c_l}| \ll 1, \quad \bar  \al_1  \approx  \f{1}{3},
\eal
\eeq
where $\bar \om_2, \bar \th_2, \bar \phi_2^N$ have compact supports and are represented as piecewise polynomials, $ \bar a \in \R$ is some coefficient, $\kp_*$ is given in \eqref{eq:cutoff_near0}, $\phi_1$ is the same as \eqref{eq:solu_cor1}, $ \chi_{\phi}$ is given in \eqref{eq:cutoff_psi_near0}. We choose a small correction $\bar \phi_{cor}$ similar to that in Section \ref{sec:lin_evo_1stcor} so that $\bar \om + \D \bar \phi^N = O(|x|^2)$ near $0$.  We use upper script $N$ to distinguish the numerical approximation $\bar \phi^N$ for the exact stream function $\bar \phi = (-\D)^{-1} \bar \om$. 
The exponent $\bar \al_1$ and angular profiles $g_i(\b)$ are obtained by fitting the far-field asymptotics of an approximate steady state with $\bar \om_1= 0, \bar \th_1 = 0$. Then we construct $(\bar \om_1, \bar \th_1)$ using the above formulas. Afterward, we refine the construction of the near-field part $(\bar \om_2, \bar \th_2)$ and exponents $(\bar c_{\om}, \bar c_l)$ by fixing $(\bar \om_1 , \bar \th_1, \bar \al_1)$. 
See more details on how to find the semi-analytic part in Section {\secASS} of Part I  \cite{ChenHou2023a}. 
We will discuss how to estimate the semi-analytic part in Section \ref{sec:est_appr_far}. In the following sections, we discuss more details about the representations and establish rigorous estimate of the derivatives of $\bar \om, \bar \th$.


Note that we do not need an approximation term $\bar \phi_3$ for the stream function in solving the linearized equation in Section \ref{sec:lin_evo} since we can allow a larger residual error in Section \ref{sec:lin_evo}.

\subsection{Representations}\label{app:solu_rep}

In a large domain $[0, L]^2$, we use piecewise polynomials to represent the solution. Firstly, we choose a large $L$ of order $10^{15}$ and then design the adaptive mesh $y_{-5} < .. < y_0 =0 < y_1 < .. < y_{N-1} = L, N = 748$ to partition $[0, L]$.

\vs{0.1in}
\paragraph{\bf{Adaptive mesh}}
We design three parts of the mesh $y_i, i \in I_j \teq [a_j, b_j], a_0 = 0$ as follows 
\beq\label{eq:ASS_mesh}
\bal
& y_i = \f{i}{256}, i = -5, -4,..,1, .., b_1, \quad  
 y_{a_2 + i} = y_{a_2} + F( i h_3) ,  
i = 1,.., b_2 - a_2, \\
&  y_{a_3 + i} = y_{a_3} \exp( i r_1 ), i = 1, .., b_3 - a_3, \quad  r_0 = 1.025, \ r_1 = 1.15  \\
& F(z) =  \f{h_2}{ h_3} z \exp( r z^2),  \ r = \log( \f{r_0}{ 1 + h_3} ) \f{1}{ (1+h_3)^2 - 1} , \  h_2 = \f{1}{128}, h_3 = \f{1}{b_2 - a_2} . 
\eal
\eeq

Since we need to estimate the weighted $L^{\inf}$ norm of the residual error with a singular weight of order $|x|^{-\b}, \b \approx 3$ near $x=0$, we use uniformly dense mesh near $0$ so that we have a very small residual error. We choose the parameters $\f{1}{256}, h_2 = \f{1}{128}$ since they can be represented exactly as floating point numbers. Thus, we can reduce the round-off error in the computation.
In the far-field, we use a mesh that grows exponentially fast in space. Note that the error estimate $f - I(f)$ for the $k-$th order interpolation of $f$ on $[y_i, y_{i+1}]$ reads
\[
 | f - I(f) | \leq  C (y_{i+1} - y_i)^k |\pa_x^k f| .
\]
For large $x$, we expect that $ \pa_x^k f $ has a decay rate $|y|^{-k - \al}$ if $|f| \les |y|^{-\al}$ for $\al >0$. Thus, to get a uniformly small error in the far-field, we just require $\f{y_{i+1} - y_i}{y_i} \leq \e$ with $\e < 1$. This allows us to choose an exponentially growing mesh in the far-field and cover a very large domain without using too many points. We use the second part of the mesh to glue the first part of the mesh, which grows linearly, and the third part of the mesh. The functions $F(z)$ behaves linearly for $z$ close to $0$, and it grows exponentially fast with rate $r_1$ for $z$ close to $1$: 
\[
F(1+h_3) / F(1) =( 1 + h_3) \exp( r ( ( 1+h_3)^2 - 1)) = (1+h_3) \exp( \log(r_0 / (1+h_3))) = r_0.
\]
Parameters $h_2, h_3$ control the mesh size $y_{a_2+1} - y_{a_2} = F(  h_3 )= h_2 \exp( r h_3^2) \approx h_2$. 
One can design another $F(z)$ by gluing the first and the third part of the mesh. The above 
explicit and simple form of $F(z)$ serves our purpose. We further glue $y_{i}, i \in [b_j, a_{j+1}], j= 1,2$ using the Lagrangian interpolation for $j=1$. For $j=2$, we interpolate the growth rate using $\exp( \log(r_0) l(i) + (1 - l(i) ) \log(r_1) )$ with $l(i)$ linear in $i \in [b_2 , a_{3}]$. Note that we do not use the specific property of the profile to design the adaptive mesh \eqref{eq:ASS_mesh}.

In our numerical computation, we compute the derivatives of the solution using the B-spline basis, see e.g., \eqref{eq:w_spline}, and do not use the Jacobian related to the adaptive mesh. In particular, we do not use derivatives of the map $f(i) = y_i$, and have more flexibility to design the mesh. 


Let $n_1 = 720 < N$. We solve the dynamic rescaling equation \eqref{eq:bousdy1}-\eqref{eq:normal} on first $n_1 \times n_1, (y_i, y_j), i, j \leq n_1-1$ grids. We construct
\beq\label{eq:w_spline0}
\bar \om_2(x ,y) = \sum_{ 0 \leq i \leq n_1 + 11, -2  \leq j \leq n_1 +1 } a_{ij} B_{1, i}(x) B_{  j}(y), 
\eeq
where $a_{ij} \in \R$ is the coefficient, $B_i(x), B_j(y)$ are constructed from the $6-$th order B-spline  
\beq\label{eq:spline}
B_{i}(x) = C_i B_{i0}(x), \quad B_{i0}(x) = \sum_{ 0 \leq j \leq k }  k \f{ (s_{ij} - x)_+^{k-1}}{d_j} , \quad d_j = \prod_{ 0 \leq l  \leq k, l \neq j} (s_{ij} - s_{il}), 
\eeq
with $k = 6$. The constant $C_i$ will be chosen in \eqref{eq:spline_normC1}, \eqref{eq:spline_normC2} so that the stiffness matrix associated to these B-spine basis has a better condition number. We choose $s_{ij}$ as follows 
\[
s_{ij} = y_{i + j - 3}, \quad 0 \leq j \leq k = 6. 
\]
Then the B-spline $B_i$ is supported in $[y_{i-3}, y_{i+3}]$ and is centered around $y_i$. Since $\om$ is odd in $x$, to impose this symmetry in the representation, we modify the first few basis 
\beq\label{eq:spline_odd}
B_{1, i}(x) = B_{i}(x) - B_{i}(-x),  \quad i \leq 2. 
\eeq
Then $B_i$ is odd. We remark that $B_{1, 0}(x) \equiv 0$.

\vspace{0.1in}
\paragraph{\bf{B-Spline and the tensor structure}}

We also use the B-spline basis to represent the stream function \eqref{eq:ASS_stream} and solve the Poisson equation using the B-spline based finite element method. We use the B-spline basis since it is easy to design a high-order numerical scheme to solve the Poisson equation. Each basis function in \eqref{eq:w_spline0}, \eqref{eq:th_spline}, \eqref{eq:ASS_stream} has the form $f(x) g(y)$, which allows us to evaluate and estimate the 2D function very effectively using the method in Appendix \ref{app:piece_pol_2D}.

\begin{remark}
While the method described below to obtain the coefficients $a_i$ is technical, since we perform a-posteriori estimates of the profiles and residual error using the given $a_i$, the method of deriving $a_i$ is not involved in the a-posteriori estimates and the verification process.
\end{remark}

\vs{0.1in}
\paragraph{\bf{Extrapolation}}

Near the boundary $y=0$, we need 2 extra basis functions $ a_{i, - j} B_{- j}(y), j = 1, 2$ 
that are not zeros in $y_1 \geq 0$. Without these 2 functions, the representation \eqref{eq:w_spline0} does not approximate $\bar \om$ with a $6-th$ order error.
We use a 7-th order extrapolation \cite{luo2013potentially-2,luo2014potentially} to determine $a_{i, -j}$
\[
a_{i, -j} = \sum_{ 0 \leq l  \leq 6} C_{3-j, l + 1 } a_{i,  l } , \ 
 C_{1, \cdot} = (28, -112, 210, -224,140, -48, 7), 
\ C_{2, \cdot} = (7, -21, 35, -35, 21, -7, 1) .  \ 
\]
We choose $C_{j, l}$  such that
the 7-th difference of $a_{i, j}, -2 \leq j \leq 6$ is $0$. Since $a_{i, -j}$ depends on $a_{i, l}$ 
linearly, we can combine 
$a_{i, -j} B_{i, -j}, j=1,2$ with $a_{i, l} B_{i, l}$ and modify \eqref{eq:w_spline0} as follows 
\beq\label{eq:w_spline}
\bal
& \bar \om_2(x ,y) = \sum_{ 0 \leq i , j \leq n_1+1 } a_{ij} B_{1, i}(x) B_{ 2,  j}(y),  \\
& B_{2, j}(y) = B_j(y) + C_{2, j + 1} B_{-1}(y) + C_{1, j+1} B_{-2}(y), \ 0 \leq j \leq 6, \quad  B_{2, j}(y) = B_j(y), j \geq 7. 
 \eal
\eeq
The modified basis functions $B_{1, i}, B_{2, j}$ are still piecewise polynomials in $[y_l, y_{l+1}]$. 

\vs{0.1in}
\paragraph{\bf{Far-field extension}}

In \eqref{eq:w_spline0},\eqref{eq:w_spline}, we use Bspline $B_{1,i}(x), B_j(y)$ up to $i, j \leq n_1+1$ rather than $n_1-1$ since the support of $B_{1, i}, B_j$ intersects $[0, y_{n-1}]^2$ for $i,j \leq n_1-1$. To determine the extra coefficients, we first extend the grid point values of $\om_2(x, y)$ from $(y_i, y_j)$  with $i, j \leq n_1-1$ to $ i, j \leq n_1 + l_0-1$ by $\om_2(y_{n_1+l}, y_j ) = P(y_{n_1+ l}; y_j), l =0,1,..,l_0 -3$, where $P$ is the Lagrangian interpolation polynomials on $( y_{n_1-1}, \om(y_{n_1-1}, y_j)), (y_{n_1 + l_0-3}, 0), (y_{n_1+ l_0-2}), 0)$. We impose $\om_2(y_{ n_1+l}, y_j) = 0, l= l_0-3, l_0-2, l_0-1$. Similarly, we extend $\om( y_i, y_{n+l})$. 
Note that $\om_2$ is odd and $B_{1,0} = 0$. We solve the coefficients $a_{kl}, 1\leq  k \leq M, 0 \leq l \leq M $ from 
\[
 \om_2(y_p, y_q) = \sum_{ 1 \leq i \leq M, 0\leq  j\leq M  } a_{ij} B_{1,i}(x) B_{2,j}(y), \ 1 \leq p\leq M,  \ 0 \leq q \leq M, \ 
 M = n_1 + l_0-1 \; .
\]
The value $a_{0 j}$ is not used since $B_{1,0} \equiv 0$. To simplify the notation, we keep it.
We only keep $a_{ij}, i , j \leq n_1+1$ and obtain \eqref{eq:w_spline}. In practice, we choose $l_0 = 8$ and the above construction provides a solution with tail decaying smoothly to $0$ for $|y|_{\inf} \geq y_{n_1 + l_0-1}$.

To solve the dynamic rescaling equations numerically \eqref{eq:bousdy1}-\eqref{eq:normal1}
 (see Section {\secASS} Part I), we update the grid point value of $\om_{n+1}$ at time $t_{n+1}$, and then use the above method to obtain $a_{ij}$. 

For the density $\bar \th_2$, the representation is similar 
\beq\label{eq:th_spline}
\bar \th_2 = x \sum_{0\leq i, j\leq n_1 +1} a_{ij} B_{1,i}(x) B_{2, j}(y) .
\eeq
Here, we multiply $x$ since $\bar \th$ is even and vanishes $O(x^2)$ near $x = 0$.

For the stream function $\bar \phi^N_{2}$ \eqref{eq:ASS_solu}, we choose $n_2 > n_1$ and represent it as follows 
\beq\label{eq:ASS_stream}
\bar \phi^N_{2} = \sum_{ 0\leq i, j \leq n_2- 1} a_{ij} \td B_{1, i}(x) \td B_{ 2, j}(y) \rho_p(y).
\eeq

Instead of using the above extension to determine the extra coefficients, we perform an additional extrapolation for the basis in the far-field similar to \eqref{eq:w_spline}
\[
\td B_{l, j}(z) = B_{l, j}(z), \quad j \leq n_2 - 8, \quad  
\td B_{l, j}(z) = B_j(z) + C_{ 2, n_2 - j} B_{n_2}(z)  + C_{1, n_2 - j} B_{n_2 + 1}(z) .
\]

We multiply $\rho_p(y)$ given below to impose the Dirichlet boundary condition
\beq\label{eq:psi_wg}
\rho_p(y) = \arctan(1 + y) - \arctan(1).
\eeq
We can obtain the exact formulas of $\pa_x^i \rho_p$ using a symbolic computation. We use induction to obtain rigorous estimate of $\pa_x^i \rho_p$. See Section \ref{sec:stream_wg}.




We choose $C_i$ in \eqref{eq:spline} of order $ s_{i,j+1} - s_{i, j}$  as follows 
\beq\label{eq:spline_normC1}
C_i = y_1 ,  i \leq 9, \quad C_i = ( s_{i, 4} - s_{i, 2}) / 2,  \ i > 9, 
\eeq
so that the summand in \eqref{eq:spline} has order $1$ for $x$ in the support $[y_{i-3}, y_{i+3}]$. When we need to perform extrapolation for 
$a_n B_{n}, a_{n+1} B_{n+1}$ from $a_i B_i, i\leq n-1$, e.g. \eqref{eq:ASS_stream}, 
we modify the last few terms as follows 
\beq\label{eq:spline_normC2}
C_i = (y_n - y_{n-1}) / 100, \  n - 9 \leq i .
\eeq
We choose $C_i$ to be constant for $i$ close to $0$ or $i$ close to $n_1$ since we need to perform extrapolation, and the choice of the constant does not affect the extrapolation formula for $a_{ij}$.

\vs{0.1in}
\paragraph{\bf{Far-field angular profile}}

To represent the far-field angular profile of $\bar \om_1, \bar \th_1, \bar \phi_1^N$ \eqref{eq:ASS_solu}, we design adaptive mesh $0 = \b_0 < \b_1 <.. < \b_m  = \pi/2$, and use $8$-th order Bspline to represent $\bar \om, \bar \zeta = \f{\bar \th}{x_1}$
\[
  g( \pi/2 - \b) = \sum_{i \geq 0} b_i B_{1, i}^{(8)}(\b),  \quad g_{\phi}(\pi/2-\b) =   ( (\pi/2)^2- \b^2) \sum_{i} b_i \td B_i^{(8)}(\b) ,  \b  \in [0, \pi/2], 
\]
where $B_{1,i}^{(8)}$ is $8$-th order Bspline \eqref{eq:spline} $k=8$ with odd modification \eqref{eq:spline_odd}. Since $\bar \om, \bar \zeta$ are odd in $x$, in the angular direction, this symmetry becomes odd in $\b = \pi/2$. 
To impose it, we write $g$ in terms of $\pi/2- \b$ and 
 modify the first few B-spline $B_i$  \eqref{eq:spline} following \eqref{eq:spline_odd} so that $\td B_{1, i}$ is odd at $\b = 0$. Then $g$ is odd in $\b = \pi/2$. The stream function $\bar \phi^N$ satisfies the boundary condition $\bar \phi^N(x, 0) = 0$. For the angular profile, we need $g_{\phi}(0) = 0$, and use the weight $\pi/ 2- \b$ to impose this condition. We further modify a 
 few Bspline $B_{1, i}(\b)$ supported near $\b=\pi/2$ using 9-th order extrapolation similar to \eqref{eq:w_spline} near $\b= \pi/2$ and get $\td B_{1, i}(\b)$. We choose the mesh $\b_i$ to be equi-spaced near $\b= \pi/2$ and determine the coefficients for extrapolation similar to \eqref{eq:w_spline}. We remark that to evaluate the derivative $\pa_{\b}^i g$ at $\pi/2-\b$, we have the sign $(-1)^{k}$
 \[
 (\pa_{\b}^k g)( \pi/2 - \b) = (-1)^k \pa_{\b}^k g( \pi/2-\b)  =(-1)^k  \sum b_i \pa_{\b}^k B_{1, i}^{(8)}(\b) .
 \]

We discuss how to obtain these angular profiles using the curve fitting in Section {\secASS} in \cite{ChenHou2023a}.

\subsection{Estimate of the derivatives of piecewise polynomials}\label{app:piece_pol}

Our approximate steady state in a very large domain is represented as piecewise polynomials. We discuss how to estimate its derivatives. Suppose that we can evaluate a function $f$ on finite many points. 
For example, $f$ is an explicit function or a polynomial.  To obtain a piecewise sharp bound of $f$ on $I = [x_l, x_u]$, we use the following standard error estimate 
\beq\label{est_2nd}
  \max_{ x \in I }|f(x)|  \leq \max( |f(x_l) | , |f(x_u)| ) + \f{ h^2}{8} || f_{xx}||_{L^{\inf}(I)}, \quad h = x_u - x_l,
\eeq
If we can obtain a rough bound for $f_{xx}$, as long as the interval $I$ is small, i.e., $h$ is small, the error part is small. Similarly, if we can obtain a rough bound for $\pa_x^{k+2} f$, using induction and the above estimate recursively,
\[
 \max_{ x \in I }| \pa_x^i f(x)|  \leq \max( |\pa_x^i f(x_l) | , |\pa_x^i f(x_u)| ) + \f{ h^2}{8} || \pa_x^{i+2}f||_{L^{\inf}(I)},
 \]
for $i=k, k-1, ..., 0$, we can obtain the sharp bound for $\pa_x^i f$ on $I$. We call the above method the second order method since the error term is second order in $h$.

\subsubsection{Estimate a piecewise polynomial in 1D}\label{app:piece_pol_1D}

Suppose that $p(x)$ is a piecewise polynomials on $x_0 < x_1< ..< x_n$ with degree $d$, e.g. Hermite spline. Denote $I_i = [x_i, x_{i+1}]$. Then $p(x)$ is a polynomial in each $I_i$ with degree $\leq d$. Our goal is to estimate $\pa_x^k p(x)$ in $I_i$ for all $k$ by only finite many evaluations of $p(x)$ and its derivatives. Firstly, we have 
\[
\pa_x^k p(x) = 0, \quad k > d , \quad \pa_x^d p(x) = c_p, 
\]
for some constant $c_p$ in $I_i$. 
Using induction from $ k =d-1, d-2, .., 0$, we have 
\[
 \max_{ x \in I_i }| \pa_x^k p (x)|  \leq \max( |  \pa_x^k p(x_i) | ,| \pa_x^k p (x_{i+1})| ) + \f{ h_i^2}{8} || \pa_x^{k+2} p ||_{L^{\inf}(I_i)}, \quad h_i = x_{i+1} - x_i.
\]

Since we know $\pa_x^{d+1} p(x) = 0$ on $I_i$, using the above method, we can obtain the sharp piecewise bounds for all derivatives of $p(x)$ on $I_i$. Using the above approach, we can estimate the derivatives of the angular profile defined in Section 7.1 of Part I \cite{ChenHou2023a} rigorously.

\subsubsection{Estimate a piecewise polynomial in 2D}\label{app:piece_pol_2D}

Now, we generalize the above ideas to 2D so that we can estimate the approximate steady state \eqref{eq:w_spline}. We assume that $p(x, y)$ is a piecewise polynomials in the mesh $Q_{ij} = [x_i, x_{i+1}] \times [y_j , y_{j+1}]$ with degree $d$. That is, in $Q_{ij}$, $p(x, y)$ can be written as a linear combination of 
\[
x^k y^l, \quad  \max(k, l) \leq d ,
\]
e.g. \eqref{eq:w_spline}. For \eqref{eq:w_spline}, we have $d=5$. Similar to the 1D case, we have 
\[
 \pa_x^k \pa_y^l p(x, y) = 0, \quad \max(k, l) > d .
\]
Moreover, we know $\pa_x^{d-1} \pa_y^{d-1}$ is linear in $x, y$. 

We use the following direct generalization of \eqref{est_2nd} to 2d
\beq\label{est_2nd_2D}
\bal
 \max_{ (x, y) \in Q } |f(x, y| 
 &\leq \max_{\al, \b = l, u} | f( x_{\al}, y_{\b} ) |  
 +  \f{ ||f_{xx}||_{L^{\inf}(Q)} (x_u-x_l)^2 }{8}
 + \f{ ||f_{yy}||_{L^{\inf}(Q)} (y_u- y_l)^2 }{8}, \\
   Q  &= [x_l, x_{u}] \times [y_l , y_u ] .
 \eal
\eeq

Denote 
\[
 A_{kl} \teq \max_{Q_{ij}} ||\pa_x^k \pa_y^lp||_{L^{\inf}(Q_{ij})},
 B_{kl} \teq \max_{\al, \b = l, u} | \pa_x^k \pa_y^l p( x_{\al}, y_{\b} ) | ,
\quad h_1 = x_{i+1} - x_i, \quad h_2 = y_{j+1} - y_j. 
\]
Since $p$ is given, we can evaluate $B_{kl}$. Clearly, we have $A_{kl} = 0$ for $\max(k, l) > d$. For $k = d-1, d$, using \eqref{est_2nd_2D} and induction in the order $l= d, d-1, d-2, .., 0$, we can obtain 
\[
A_{kl} \leq B_{kl} + \f{1}{8} ( h_1^2 A_{k+2, l } + h_2^2 A_{k, l+2} ).
\]
This allows us to bound $A_{kl}$ for $k = d, d-1$ and all $l$. Similarly, we can bound $A_{kl}$ for $l=d, d-1$ and all $k$. 

For the remaining cases, we can use induction on $n = \max(k, l) = d-2, d-1, .., 0$ to estimate
\[
A_{kl} \leq B_{kl} + \f{1}{8} ( h_1^2 A_{k+2, l } + h_2^2 A_{k, l+2} ).
\]
This allows us to estimate all derivatives of $p(x, y)$ in $Q_{ij}$.

\subsubsection{Estimate a piecewise polynomial in 2D with weights}

We consider how to estimate the derivatives of $ f = \rho(y) p(x, y)$, where $\rho $ is a given weight in $y$ and $p(x, y)$ is the piecewise polynomials in 2D. For example, our construction of the stream function \eqref{eq:ASS_stream} has such a form. Firstly, we can estimate the derivatives of $p(x, y)$ using the method in Appendix \ref{app:piece_pol_2D}. For the weight $\rho$, we estimate its derivatives in Section \ref{sec:stream_wg}. Then, using the Leibniz rule 
\eqref{eq:lei} and the triangle inequality, we can estimate the derivatives $f$
\[
|\pa_x^i \pa_y^j f |
\leq \sum_{l \leq j} \binom{j}{l} | \pa_x^i \pa_y^l p(x, y)|  |\pa_y^{j-l} \rho(y)|
\]
for high enough derivatives.

Now,  we plug the above bounds for $\pa_x^{i+2} \pa_j^y, \pa_x^i \pa_y^{j+2} f $ in \eqref{est_2nd_2D} and evaluate $ \pa_x^i \pa_y^j f $ on the grid points to obtain the sharp estimate of $\pa_x^i \pa_y^j f$.

\subsection{Estimate of the far-field approximation}\label{sec:est_appr_far}

We estimate the derivatives of 
\[
g(x, y) = g(r, \b) = A(r) B(\b), \quad r = (x^2 + y^2)^{1/2}, \quad \b = \arctan(y/x),
\]
where $(r, \b)$ is the polar coordinate. The semi-analytic parts of $\bar \om, \bar \th$ have the above forms.

\subsubsection{Formulas of the derivatives of $g$}
Firstly, we  use induction to establish 
\beq\label{eq:ansatz_appr}
F_{i,j} \teq \pa_x^i \pa_y^j g(r, \b) = \sum_{k + l \leq i + j} 
C_{i,j ,k,l}(\b) r^{-i-j + k} \pa_r^k A \pa_{\b}^l  B, 
\eeq
with $ C_{i,j,k,l} = 0$, for $k <0$, $l<0$ , or $k+l > i+j$. Let us motivate the above ansatz. Recall from \eqref{eq:Dxy} that 
\[
\pa_x = \cos \b \pa_r - \f{ \sin \b}{r} \pa_{\b}, \quad  \pa_y = \sin \b \pa_r + \f{ \cos \b}{r} \pa_{\b}.
\]

For each derivative $\pa_x$ or $\pa_y$, we get the factor $ \f{1}{r}$ or a derivative $\pa_r$, which leads to the form $r^{-i-j + k} \pa_r^k A$. Moreover, we get a derivative $\pa_{\b}$ and some functions depending on $\b$, which leads to the form $C_{i,j,k,l}(\b) \pa_{\b}^l B$.

For $D = \pa_x$ or $\pa_y$, a direct calculation yields 
\beq\label{eq:ansatz_appr2}
D F_{i, j} = \sum_{k+l \leq i+j} D( C_{i,j,k,l} r^{-i-j+k} )\cdot \pa_r^k A \pa_{\b}^l B
+ C_{i,j,k,l} r^{-i-j+k} ( D \pa_r^k A  \cdot \pa_{\b}^l B+  \pa_r^k A \cdot D \pa_{\b}^l B ).
\eeq
Using the formula of $\pa_x, \pa_y$, we get
\[
\bal
&\pa_x  ( C_{i,j,k,l}(\b) r^{-i-j + k} )=   - \sin \b\pa_{\b} C_{i,j,k,l} r^{-i-j - 1 + k}
+ (k - i - j ) \cos \b C_{i,j,k,l} r^{-i-j - 1 +k},  \\
& \pa_x \pa_r^k A = \cos \b \pa_{r}^{k+1} A, \quad \pa_x \pa_{\b}^l B
= - \f{\sin \b}{r} \pa_{\b}^{l+1} B , \\
\eal
\]
Using $\pa_x F_{i, j} = F_{i+1, j}$ and comparing the above formulas and the ansatz \eqref{eq:ansatz_appr}, we yield 
\beq\label{eq:ansatz_appr_ind1}
\bal
C_{i+1, j, k, l} &= (k-i-j)\cos \b C_{i,j,k,l} - \sin \b \pa_{\b} C_{i,j,k,l}
+ \cos \b C_{i,j, k-1, l}  - \sin \b C_{i,j, k, l-1} ,\\
\eal
\eeq
for $k \leq i+ j$. Similarly, for $D = \pa_y$, plugging the following identities
\[
\bal
&\pa_y  ( C_{i,j,k,l}(\b) r^{-i-j + k} )=   \cos \b \pa_{\b} C_{i,j,k,l} r^{-i-j - 1 + k}
+ (k - i - j ) \sin(\b) C_{i,j,k,l} r^{-i-j - 1 +k},  \\
& \pa_y \pa_r^k A = \sin \b \pa_{r}^{k+1} A, \quad \pa_y \pa_{\b}^l B
=  \f{\cos \b}{r} \pa_{\b}^{l+1} B  \\
\eal
\]
 into \eqref{eq:ansatz_appr2} and then comparing \eqref{eq:ansatz_appr} and \eqref{eq:ansatz_appr2}, we yield 
\beq\label{eq:ansatz_appr_ind2}
C_{i, j+ 1, k, l} = (k-i-j)\sin \b C_{i,j,k,l} + \cos \b \pa_{\b} C_{i,j,k,l}
+ \sin \b C_{i,j, k-1, l}  + \cos \b C_{i,j, k, l-1} . \\
\eeq

The based case is given by
\[
F_{0,0} = A(r) g(\b), \quad C_{0,0,0,0} = 1.
\]
Using induction and the above recursive formulas, we can derive $C_{i,j,k,l}(\b)$ in \eqref{eq:ansatz_appr}. 

\subsubsection{Estimates of $F_{i, j}$}

To estimate $F_{i, j}$, using \eqref{eq:ansatz_appr} and triangle inequality, we only need to estimate $\pa_r^k A, \pa_{\b}^l B(\b)$, and $C_{i,j,k,l}(\b)$. In our case, $B(\b)$ is piecewise polynomials, whose estimates follow the method in Appendix \eqref{app:piece_pol_1D}. Function $A(r)$ is some explicit function, which will be constructed and estimated in Section \ref{sec:est_radial}. To estimate $C_{i,j,k,l}(\b)$ on $\b \in [\b_1, \b_2]$, we use the second order estimate in \eqref{est_2nd} and the induction ideas in Section \ref{app:piece_pol_1D}. We can evaluate $ C_{i,j,k,l}$ using its exact formula. It remains to bound $\pa_{\b}^2 C_{i,j,k,l}.$

An important observation from \eqref{eq:ansatz_appr_ind1}, \eqref{eq:ansatz_appr2} is that $C_{i,j,k,l}$ is a polynomial on $\sin \b$ and $\cos \b$ with degree less than $i+j$, which can be proved easily using induction. In particular, we can write $ C_{i,j,k,l}$ as follows 
\[
C_{i,j,k,l} =  \sum_{ 0 \leq k \leq n} a_{k} \sin( k\b) + b_{k} \cos(k \b), \quad 
f \teq \pa_{\b}^2 C_{i,j,k,l} = \sum_{ 1\leq k \leq n} c_k \sin( k\b) + d_k \cos(k \b), \quad n = i+j 
\]
for some $a_k, b_k ,c_k, d_k \in \R$. It is easy to see that $C_{i,j,k,l}$ is either odd or even in $\b$ depending on $j-l$, which implies $c_k \equiv 0$ or $d_k \equiv 0$. Using Cauchy-Schwarz's inequality, we get 
\[
||f||_{\inf} \leq \sum_{1 \leq k \leq n } (|c_k| + |d_k|)
\leq \B(  n \sum_{ k \leq n} ( c_k^2 + d_k^2)  \B)^{1/2} 
= \B( \f{n}{\pi} \int_0^{2\pi} f^2  \B)^{1/2},
\]
where we have used orthogonality of $\sin kx , \cos kx$ and $|| f||_{L^2}^2 = \pi \sum_{k\leq n} ( c_k^2 + d_k^2 )$
in the last equality. It is easy to see that $ f^2$ is again a polynomial in $\sin \b, \cos \b$ with degree $\leq 2 n$. We fix $M > 2n$. For any $ 0 \leq k < M$, it is easy to obtain 
\[
\f{1}{2\pi} \int_0^{2\pi} e^{i k x} = \f{1}{M} \sum_{j=1}^M \exp( i \f{2 kj}{M}\pi )
= \d_{k0}.
\]

Using the above identity, we establish 
\[
|| g ||_{L^2}^2 = \f{2\pi}{M} \sum_{j=1}^M | g ( \f{2 j\pi}{M}) |^2, 
\]
for any polynomial $g$ in $\sin \b, \cos \b$ with degree $< M / 2$. Hence, we prove 
\[
|| f||_{\inf} \leq \B( \f{2n}{M} \sum_{k=1}^M f^2( \f{2j\pi}{M})  \B)^{1/2}.
\]

The advantage of the above estimate is that to obtain the sharp bound of $ C_{i,j,k,l}$, we only need to evaluate $C_{i,j,k,l} ,f = \pa_{\b}^2C_{i,j,k,l} $ on finite many points.

\subsubsection{From polar coordinates to the Cartesian coordinate}

We want to obtain the piecewise estimate of $F_{p, q} =  \pa_x^p \pa_y^q ( A(r) g(\b))$ on $Q_{ij} = [x_i, x_{i+1}] \times [y_j, y_{j+1}], 1 \leq i, j \leq n$.  Firstly, we partition the $(r, \b)$ coordinate into $r_1 < r_2 <..< r_{n_1},  0 = \b_0 < b_1 < ... < \b_{n_2} = \f{\pi}{2}$. Then we apply the methods in Section \ref{sec:est_appr_far} to bound $F_{p, q}$ on $S_{ij} \teq[r_i, r_{i+1}] \times [\b_j, \b_{j+1}]$. 
We cover $Q_{ij}$ by $S_{k,l}$ and transfer the bound from $(r, \b)$ coordinate to $(x, y)$ coordinate 
\[
\max_{x \in Q_{ij}} | F_{p ,q}(x) | \leq \max_{ S_{k, l} \cap Q_{ij} \neq \emptyset } || F_{p, q}(r, \b)||_{L^{\inf}(S_{k,l})} 
\]

For $(r, \b) \in Q_{i, j}$, we get \[
 r \in [ (x_i^2 + y_j^2)^{1/2},  \ (x_{i+1}^2 + y_{j+1}^2)^{1/2} ] , \quad  \b \in [ \arctan \f{y_j}{x_{i+1}}, \  \arctan \f{y_{j+1}}{x_i} ] .
\]

Therefore, we yield the necessary conditions for $Q_{i, j} \cap S_{k, l} \neq \emptyset$:
\[
x_{i+1}^2 + y_{j+1}^2 \geq r_k^2, \quad x_i^2 + y_i^2 \leq r_u^2, \quad 
 \arctan \f{y_{j+1}}{x_{i}}  \geq \b_{l}, \quad 
 \arctan \f{y_j}{x_{i+1}} \leq \b_{l+1} .
\]
Given $Q_{i, j}$, we maximize $ || F_{p, q}||_{L^{\inf}(S_{k,l})}$ over $(k,l)$ satisfying the above bounds to control $|| F_{p, q}||_{L^{\inf} (Q_{i,j})}$.

\subsection{Estimates of the residual error}\label{sec:resid}


Let $\chi_{ \bar \e} = 1 +O(|x|^4)$ be the cutoff function in \eqref{eq:cutoff_near0_all}.
Firstly, we decompose the error of solving the Poisson equations $\bar \e = \bar \om - (-\D  ) \bar \phi^N$ as follows 
\beq\label{eq:uerr_dec1}
\bal
  &  \bar \e  =  \bar \e_1 +  \bar \e_2, \quad \bar \e_2 =  \bar \e_{xy}(0) \D( \f{x^3 y}{2}  \chi_{ \bar \e}) , \quad \uu(  \bar  \e_2) = \na^{\perp}(-\D)^{-1} \bar \e_2 = \f{1}{2}  \bar \e_{xy}(0)\na^{\perp}  ( x^3 y \chi_{ \bar \e}) ,  \\
  & \uu(  \bar \e)   = \uu( \bar \e_1) + \uu( \bar \e_2)  = \uu_A(  \bar \e_1) + ( \hat \uu( \bar \e_1 ) + \uu( \bar \e_2) ) \teq \uu_A(  \bar \e_1) + \uu_{loc}(\bar \e) , 
 \eal
\eeq
where $\hat \uu$ is the approximation term for $\uu$ defined in Section 4.3 in Part I \cite{ChenHou2023a}. We perform the above correction near $0$ so that $\bar \e_1 = O(|x|^3)$ near $0$. 
We perform a similar decomposition for $(\na \uu)_A$. Note that we do not have $\pa_{x_i} \uu_A = (\pa_{x_i} \uu)_A$. Using the above decomposition and the notation \eqref{eq:Blin_gen}, we can rewrite the residual error $\bar \cF_i$ \eqref{eq:bous_err} with rank-one correction as follows 
\[
\bar \cF_i - D_i^2 \bar \cF_i(0) f_{\chi, i} = \bar \cF_{loc, i} + 
\cB_{op, i}( (\uu_A(\bar \e_1), (\na \uu)_A(\bar \e_1) ), \bar W ), 
\]
where $D^2 = (\pa_{xy}, \pa_{xy}, \pa_x^2 )$ is defined in \eqref{eq:diff_op} and $\bar \cF_{loc, i}$ is defined below in \eqref{eq:bous_errM}. Since $\uu_A(\bar \e_1) = O(|x|^3), (\na \uu)_A(\bar \e_1) = O(|x|^2)$ (see Section 4.3 in Part I \cite{ChenHou2023a} for 
these properties of $\uu_A = \uu - \hat \uu$), from \eqref{eq:Blin_gen} and \eqref{eq:uerr_dec1}, we get 
\[
\cB_{op, i}( (\uu_A(\bar \e_1), (\na \uu)_A(\bar \e_1) ), \bar W )= O(|x|^3),
\quad  u_x(\bar \e_2)(0)=0, \quad u_{x, A}( \bar \e_1)(0) =0. 
\]
Using these properties of $\cB_{op, i}$, we define $\bar \cF_{loc ,i}$ as follows
\beq\label{eq:bous_errM}
\bal
  \bar \cF_{loc, i} &= {II}_i - D_i^2 II_i(0) f_{\chi,i} , 
 \quad
   II_i   = \bar \cF_i - \cB_{op, i}( (\uu_A(\bar \e_1), (\na \uu)_A(\bar \e_1) ), \bar W ), \\
  \uu(\bar \om) & = \bar \uu = \bar \uu^N + \uu_{loc}(\bar \e) + \uu_A(\bar \e_1), 
  \ \bar c_{\om} = \bar c_{\om}^N +  u_x(\bar \e_1)(0), \ \bar c_{\om}^N \teq \f{\bar c_l}{2} +  \bar u^N_x(0), \ c_{\om}(\bar \e_1) \teq u_x(\bar \e_1)(0) , \\
II_1 & = - (\bar c_l x + \bar \uu^N + \uu_{loc}(\bar \e) ) \cdot \na \bar \om + \bar \th_x 
+ (\bar c^N_{\om} + \bar c_{\om}(\bar \e_1) )\bar \om, \\
II_2 & = - (\bar c_l x +\bar \uu^N + \uu_{loc}(\bar \e)  ) \cdot \na \bar \th_x + 2 (\bar c^N_{\om} + \bar c_{\om}(\bar \e_1)) \bar \th_x - (  \bar u_x^N + u_{x, loc}(\bar \e)  )  \bar \th_x - ( \bar v_x^N + v_{x, loc} (\bar \e) ) \bar \th_y, \\
II_2 & = - (\bar c_l x +\bar \uu^N + \uu_{loc}(\bar \e)  ) \cdot \na \bar \th_y + 2 ( \bar c^N_{\om} +  c_{\om}(\bar \e_1)) \bar \th_y - (  \bar u_y^N + u_{y, loc}(\bar \e)  )  \bar \th_x - ( \bar v_y^N + v_{y, loc} (\bar \e) ) \bar \th_y , \\
 \eal
\eeq
where $f_{\chi, i}$ is defined in \eqref{eq:cutoff_near0_all}, and we have used 
$\bar c_{\th} = \bar c_l + 2 \bar c_{\om}$ \eqref{eq:bous_err}, \eqref{eq:normal} for $\bar c_{\om}$. 
The above decomposition is essentially the same as \eqref{eq:decomp_L}. We apply the functional inequalities in Section \ref{sec:vel_comp} to estimate the nonlocal terms $\uu_A(\bar \e_1), (\na \uu)_A(\bar \e_1)$, and combine the estimate of $\cB_{op, i}( (\uu_A, (\na \uu)_A ), \bar W)$ with the energy estimate. See Section {\seccombvelerr} in Part I \cite{ChenHou2023a} for more details about the decompositions and estimates. 

The terms $II_i$ depend on the profile $\bar \om ,\bar \th, \bar \e$ locally. Using the decomposition \eqref{eq:uerr_dec1}, we can further decompose the above $II_i$ as follow 
\[
II_i = II_i^N + II_i(\bar \e_1) + II_i(\bar \e_2), \ 
II_i(\bar \e_1) = \cB_{op, i}( \hat \uu(\bar \e_1), \wh{ \na \uu}(\bar \e_1), \bar W), \ 
II_i(\bar \e_2) =  \cB_{op, i}( \uu(\bar \e_2),  \na \uu (\bar \e_2), \bar W), 
\]
where $II_i^N$ contain the terms in $II_i$ except the $u_{loc}, u(\bar \e_1)$ terms.

For $\hat \uu(\bar \e_1)$, it is a finite rank operator on $\bar \e_1$, and we can write it as 
\[
\hat \uu(\bar \e_1) = \sum_{i=1}^n a_i(\bar \e_1) \bar g_i(x)
\teq C_{\uu 0}(x) u_x( \bar \e_1)(0) 
+ \td {\hat \uu}(\bar \e_1) , \quad a_i(\bar \e_1 ) =  \int_{\R_2^{++}} \bar \e_1(y) q_i(y) dy ,
\]
for some functions $\bar g_i(x)$ and $q_i(y)$, where $C_{\uu0}(x)$ is given in \eqref{eq:u_appr_near0_coe}, and $ \td{ \hat \uu}( \bar \e_1)$ denotes other modes with $O(|x|^3)$ vanishing order near $0$. See Section {\secapprvel} in \cite{ChenHou2023a} for definition. 
We can obtain more regular estimates, e.g. $C^3$ estimates, of $\hat \uu(\e_1)$ since $\bar g_1(x)$ is smooth. Similarly, we decompose $ \wh {\na \uu}(\bar \e_1)$. We obtain piecewise estimates of $\pa_x^i \pa_y^j \bar \e_1, i + j \leq 1$ following the methods in Section \ref{sec:err_idea} and Section
{\applinfestsupp} in the supplementary material II \cite{ChenHou2023bSupp} (attached to \cite{ChenHou2023b}) and then the above integrals on $\bar \e_1$. 
The main term in $\hat \uu(\bar \e_1)$ is $C_{u0} u_x(0)$ with
\beq\label{eq:ux0_ep2}
\bal
& u_x( \bar \e_1)(0) = u_x(\bar \e)(0) = -\f{4}{\pi} \int_{\R_2^{++}} \bar \e(y) \f{y_1 y_2}{|y|^4} dy,  \\
 & u_x(\bar \e_2)(0) = - \e_{xy}(0)/ 2\cdot \pa_y (x^3 y \chi_{\bar \e}) |_{(0,0)} =0.
\eal
\eeq

Since the kernel $\f{y_1 y_2}{|y|^4}$ has a slow decay for large $|y|$ (not in $L^1$), we need to estimate $u_x(\bar \e)(0)$ carefully, using Simpson's rule. See Section {\seccwerr} in supplementary material II for Part II \cite{ChenHou2023bSupp}.


Using the above decomposition, we further decompose $ \hat \uu(\bar \e_1)$ 
\[
II_i( \bar \e_1)
= u_x(\bar \e)(0) \cB_{op, i}(  C_{\uu0}(x) , C_{\na \uu 0}(x), \bar W )
+ \cB_{op,i}( \wt { \wh {\uu}}, \wt { \wh { \na \uu} }, \bar W )
\teq II_{ i, M}(\bar \e_1) + II_{i, R}(\bar \e_1).
\]
Since $D_i^2$ is linear, we estimate each term $g_i - D_i^2 g_i(0) f_{\chi, i}$ for $g_i = II_{i, M}(\bar \e_1), II_{i, R}(\bar \e_1), II_i^N, II_i(\bar \e_2)$ to bound $\cF_{loc, i}$. 
To estimate $II_{i, R}$, since $ \td {\hat \uu}( \bar \e_1 ) = O(|x|^3)$ near $0$, 
(see Section {\secapprvel} in \cite{ChenHou2023a}), we get $D_i^2 II_{i, R}(\bar \e_1) = O(|x|^3)$ and estimate
\[
\quad  \td {\hat  \uu}(\bar \e_1) \rho_{10}, 
\ \pa_i \td {\hat  \uu}(\bar \e_1) \rho_{20}, \
  \wt{ \wh {\na \uu}} (\bar \e_1) \rho_{20},  \
\pa_i \wt{ \wh {\na \uu}} (\bar \e_1) \rho_3, \ \rho_4 \td {\hat \uu}( \bar \e_1 )
\]
for $\rho_{i0} $ \eqref{wg:linf} with $\rho_{i0} \sim |x|^{-4 + i}, i\leq 3$ near 0 
using the $C^3$ bounds of $\td {\hat \uu}, \wt {\wh {\na \uu}}$. Note that $\pa_i \td {\hat  \uu}  \neq \wt {\widehat { \pa_i \uu} } $. The former is the derivative of $  \td {\hat  \uu} $, and the later is the approximation term for $ \pa_i \uu$. With the above weighted estimate, we can bound a typical terms, e.g. $ \wt{ \wh {u_x} } \bar \th_x \vp_2$ in $II_{i, R}(\bar \e_1) \vp_2$ as follows 
\[
\wt {\wh {u_x} } \bar \th_x \vp_2
 = \wt {\wh {u_x}} \rho_{20} \cdot (\bar \th_x  \f{\vp_2}{\rho_{20}} ),  \ 
 \pa_x( \wt {\wh {u_x} }\bar \th_x ) \rho
 = (\pa_x   \wt {\wh{ u_x}} \bar \th_x + \wt { \wh{u_x}} \pa_x \bar \th_x) \rho= \pa_x   \wt {\wh{ u_x} } \rho_3  \cdot \f{ \bar \th_x \rho}{\rho_3}+ \wt {\wh{u_x} } \rho_{20} \cdot  \f{ \pa_x \bar \th_x \rho }{ \rho_{20}} ,
\] 
where $\vp_2$ is given in \eqref{wg:linf}. Each term $A, B$ in the above products  $A \cdot B$
is regular and we estimate each term and then the product to bound weighted $L^{\inf}$ and $C^1$ norm of $II_{i, R}(\bar \e_1)$. 

The remaining part in $II_i^N, II_{i, M}(\bar \e_1), II_i(\bar \e_2)$ depends on $(\bar \phi^N, \bar \om, \bar \th) $ locally and are given functions.
To estimate the weighted $L^{\inf}$ and $C^{1/2}$ norms of 
$g_i - D_i^2 g_i(0) f_{\chi, i} = O(|x|^3)$ with $g =  II_{i,M}(\bar \e_1), II_{i}(\bar \e)$, we follow the methods in Sections \ref{sec:err_idea}, \ref{sec:vel_err} with $\pa_t \bar \om = \pa_t \bar \th = 0$.

\vs{0.1in}
\paragraph{\bf{Estimate in the far-field}}
Since $\bar \om , \bar \th$ are supported globally, we need to estimate the error in the far-field. Recall the formulas of $\bar \om, \bar \om_1, \bar \th, \bar \th_1$ from \eqref{eq:ASS_solu}. We consider $|x|_{\inf} \geq R_1 \geq 10^{12} > 10 a_2$ beyond the support of $\bar \om_2, \bar \th_2, \bar \phi_2^N, \bar \phi_3^N, \bar \phi^N_{cor}$ \eqref{eq:ASS_solu} so that $\chi(r) = 1$ \eqref{eq:cut_radial} and 
\[
\bar \om = \bar \om_1 = \bar g_1(\b) r^{ \bar \al_1} , \quad  \bar \th = \bar \th_1 = r^{1 + 2 \bar \al_1 } \bar g_2(\b) , \quad  \bar \phi^N = \bar \phi_1^N  = r^{ 2 + \bar \al_1 } \bar f(\b).
 \]
 We estimate the angular derivatives of $f(\b), g_i(\b)$ using the methods in Section \ref{app:piece_pol_1D}.
Using the above representation, $ x \cdot \na r^{\b} = r \pa_r r^{\b} = \b r^{\b}$, $x \cdot \na (\pa \bar \th_1) = 2 \bar \al_1 (\pa \bar \th_1),
x \cdot \na \bar \om_1 =  \bar \al_1 \bar \om_1$, $\bar c_{\om} = \bar c_{\om}^N + \bar c_{\om}^{\bar \e}$ \eqref{eq:u_Ne}, and separating $\uu^N$ and $\uu_{loc}$ in \eqref{eq:bous_errM}, 
for $|x|_{\infty} \geq 10^{12}$, we obtain 
 \[
 \bal
 \bar \cF_{loc, 1}
&
 = \B( (\bar c^N_{\om} - \bar c_l \bar \al_1) \bar \om_1  - \bar \uu^N \cdot \na \bar \om_1 + \bar \th_{1, x} \B)+ \bar c_{\om}^{\e} \bar \om_1 -  \uu_{loc} \cdot \na \bar \om_1 \teq I_{11} + I_{12} , \\
 \bar \cF_{loc, 2} 
&  = \B(   (2 \bar c_{\om}^N - 2 \bar c_l  \bar \al_1 ) \bar \th_{1, x}
 -  \pa_x (\bar \uu^N \cdot \na \bar \th_1 ) \B) 
+ 2 \bar c_{\om}^{\e} \bar \th_{1,x} - \uu_{loc} \cdot  \na \bar \th_{1, x}
-  u_{x, loc}    \bar \th_x -  v_{x, loc}  \bar \th_y \teq I_{21} + I_{22},  \\
 \bar \cF_{loc, 3} 
&  = \B(   (2 \bar c_{\om}^N - 2 \bar c_l  \bar \al_1 ) \bar \th_{1, y}
 -  \pa_x (\bar \uu^N \cdot \na \bar \th_1 ) \B) 
+ 2 \bar c_{\om}^{\e} \bar \th_{1, y} - \uu_{loc} \cdot  \na \bar \th_{1, y}
-  u_{y, loc}   \bar \th_x -  v_{y, loc}  \bar \th_y \teq I_{31} + I_{32}, 
 \eal
 \]
 where we have simplified $\uu_{loc}(\bar \e)$ as $\uu_{loc}$ and used $f_{\chi, i} = 0$ \eqref{eq:cutoff_near0_all}, $\bar \cF_{loc,i} = II_i $ \eqref{eq:bous_errM} since $f_{\chi, j}$ is supported near $0$. 
 The terms $I_{11}, I_{21}, I_{31}$ are local with the form $r^{\g} q(\b)$ for some angular function $q$ and decay rate $\g$. We estimate their piecewise 
 $L^{\infty}$ and derivative bounds using 
 \eqref{eq:Dxy}. From our choice of $\bar \al_1$ \eqref{eq:ASS_solu}, $ \bar c_{\om}^N - \bar c_l \bar \al_1$ is very small. Thus the first term in $I_{11}, I_{21}, I_{31}$ is small. The second term in $I_{11}, I_{21}, I_{31}$ has faster decay rates $r^{2 \bar \al_1}, r^{ 3\bar \al_1}$ and is also very small. 

\vs{0.1in}
\paragraph{\bf{Estimate of the velocity approximation}}
From \eqref{eq:uerr_dec1}, since $\bar \e_2$ is supported near $0$, we get $\uu_{loc} = \hat \uu(\e_1)$. For $I_{j2}$ in the above decomposition in the far-field, it remains to estimate 
\beq\label{eq:err_mid_far}
 \bar c_{\om}^{e} \bar \om - \hat \uu(\bar \e_1) \cdot \na \bar \om
 ,\    2 \bar c_{\om}^{e} \bar \th_x - \hat \uu_x(\bar \e_1)\cdot  \na \bar \th -  \hat \uu(\bar \e_1) \cdot \na \bar \th_x , \ 
 2 \bar c_{\om}^{e} \bar \th_y - \hat \uu_y(\bar \e_1)  \cdot \na \bar \th - \hat \uu(\bar \e_1) \cdot \na \bar \th_x.
\eeq
Note that $c_{\om}(\bar \e_1) = c_{\om}(\bar \e)$ \eqref{eq:ux0_ep2} and $c_{\om}(\bar \e) = \bar c_{\om}^e$ in our notation.  For any $a\in \R$, we estimate 
\[
A(f, g) = a  g - \hat \uu(f) \cdot \na g , \quad  
B_i(f, g) =  2 a  \pa_i g - \hat \uu(f) \cdot \na \pa_i g 
- \wh {\pa_i \uu}(f) \cdot \na g , \ i = 1,2.
\]
for $|x|_{\inf} \geq R_1$. From Sections 4.3.2--4.3.3 in Part I \cite{ChenHou2023a}, for $|x|_{\inf} \geq R_1$, $\hat \uu, \wh{ \na \uu }$ reduce to 
\[
\bal
& \hat u(f) = x_1 I_{far}(f), \quad  \hat v(f) = - x_2 I_{far}(f) , 
\quad  \wh {\pa_1 u(f)} = I_{far}(f), \quad \wh {\pa_2 v(f)} = - I_{far}(f), \\
&  \wh {\pa_2 u(f)} = \wh{ \pa_1 v(f)} = 0,  \quad 
 I_{far}(f) \teq  - \f{4}{\pi}\int_{ \max(y_1, y_2) \geq R_n} \f{y_1 y_2}{|y|^4} \om(y) dy, 
\eal
\]
where 
$R_n = 1024 \cdot 64 h_x$ is the largest threshold. Denote $b = I_{far}(f)$. A direct calculation yields 
\beq\label{eq:err_mid_far2}
\bal
&A(f, g)  = (a  - b) g + b (g - x_1 \pa_1 g + x_2 \pa_2 g) ,  \\
& B_1(f, g)  = 2 a \pa_1 g - b \pa_1 g - b x_1 \pa_{11} g + b x_2 \pa_{12} g 
= (2 a - 2b) \pa_1 g +  b ( \pa_1 g -  x_1 \pa_{11} g +  x_2 \pa_{12} g ) , \\
& B_2(f, g) = 2 a \pa_2 g + b \pa_2 g - b x_1 \pa_{12} g + b x_2 \pa_{22} g 
= (2 a - 2b) \pa_2 g +  b ( 3 \pa_1 g -  x_1 \pa_{11} g +  x_2 \pa_{12} g ) .\\
\eal
\eeq
Therefore, we only need to bound the functions following Section \ref{app:piece_pol}, e.g. $g - x_1 \pa_1  g + x_2 \pa_2 g $ and $g$, and the functional $b(f)$ and $a$. We apply these estimates for \eqref{eq:err_mid_far} with $a = \bar c_{\om}^{e}, f = \bar \e_1, g = \bar \om, \bar \th$.



 \section{Estimate of explicit functions}\label{app:explcit}

In this section, we estimate the derivatives of several explicit or semi-explicit functions using induction, including several cutoff functions used in the estimates and the weight in the stream function \eqref{eq:ASS_stream}.

\subsection{Estimate of the radial functions}\label{sec:est_radial}

\subsubsection{Estimate of the cutoff function}

We estimate the derivatives of the cutoff function 
\beq\label{eq:cutoff_exp}
\chi_e(x) = \B( 1 + \exp( \f{1}{x} + \f{1}{x-1} ) \B)^{-1}, 
\eeq
where $e$ is short for \textit{exponential}. In our verification, it involves high order derivatives of $\chi_e$. Although $\chi_e$ is explicit, its formula is complicated and is difficult to estimate. Instead, we use the structure of $\pa_x^i \chi_e$ and induction to estimate $\pa_x^i \chi_e$. Denote 
\[
p(x) = \f{1}{x} + \f{1}{x-1},  \quad f = \f{1}{1 + x},  \quad \chi_e = f(e^p).
\]

Firstly, we use induction to derive 
\[
d_x^k \chi_e = \sum_{i=1}^k (\pa^i f)(e^p) e^{ip} Q_{k, i}(x),
 \]
 where $Q_{k,i} = 0$ for $i >k, i <0$. A direct calculation yields 
 \[
 \bal
\pa \sum_{i=1}^k \pa^i f e^{ip} Q_{k, i}(x)
&= \sum_{i=1}^k (\pa^{i+1} f)(e^p) \cdot p^{\pr} e^{p} e^{ip} Q_{k, i}
+ (\pa^i f) \pa_x( e^{ip} Q_{k,i}) \\
& = \sum_{i=1}^k (\pa^{i+1} f)(e^p) \cdot e^{(i+1)p} p^{\pr} Q_{k, i}
+ (\pa^i f) e^{ip} ( ip^{\pr} Q_{k,i} +  Q^{\pr}_{k,i}). \\
\eal
 \]
 Comparing the above two equations, we derive
 \[
Q_{k+1, i } = p^{\pr} Q_{k, i-1} + i p^{\pr} Q_{k, i} + Q^{\pr}_{k, i} .
 \]
 The first few terms in $Q_{k,i}$ are given by 
 \[
Q_{0, 0} = 1, \quad Q_{1, 1} = p^{\pr}, \quad Q_{1, 0} = 0.
 \]

It is not difficult to see that $Q_{k,i}$ is a polynomial of $ \pa_x^j p, j \leq k$ with non-negative coefficients. We derive the expression of $Q_{k, i}$ in terms of $ \pa_x^j p, j \leq k$ symbolically. Thus, using triangle inequality, we only need to bound $\pa_x^j p$. We have 
\[
|\pa_x^n p(x)| = n! | x^{-n-1} + (x-1)^{-n-1} |
\leq n! (|z|^{-n-1} + 2^{n+1} ) , \quad z = \min(|x|, |1-x|).
\]
If $n$ is even, $x^{-n-1}$ and $(x-1)^{-n-1}$ have different sign, and we get better estimate
\[
|\pa_x^n p(x) | \leq  n! \max( |x|^{-n-1}, |x-1|^{-n-1}) = n! \cdot z^{-n-1}.
\]

Substituting the above bounds into the formula of $Q_{k, i}$, we can obtain the upper bound $Q^u_{k, i}(x)$ for $Q_{k,i}(x)$, which is a polynomial of $z^{-1}$ with positive coefficient. Since each term in $Q_{k,i}$ is given by $c_{i_1, i_2, .., i_m} \prod_{j=1}^m \pa_x^{i_j} p $ with $\sum i_j = k$, the above estimate implies 
\[
|c_{i_1, i_2, .., i_m} \prod_{j=1}^m \pa_x^{i_j} p |
\leq c_{i_1, i_2, .., i_m} \prod_{j=1}^m  i_j ! (|z|^{-i_j - 1} + 2^{i_j + 1}).
\]
Since $m \leq k$, the highest order of $z^{-1}$ in the upper bound is bounded by $2k$. Thus, we obtain that $Q^u_{k,i}$ is a polynomial in $z^{-1}$ with $\deg Q_{k,i}^u \leq 2k$. Next, we bound 
\[
|e^{i p} Q_{k,i}| \leq e^{i p} Q_{k,i}^u.
\]
For $k \leq 20, x \geq 1 - \f{1}{2k} \geq \f{1}{2}, z^{-1} = |x-1|^{-1} \geq 2 k$, a direct calculation implies that $e^{i p(x)} Q_{k,i}^u(x)$ is decreasing. In fact, for $l \leq 2 k $, we have $z = |x-1| = 1-x$ and 
\[
\bal
 &\pa_x ( \exp( i p(x) ) (1-x)^{-l} )
  = \exp( ip(x))( i p^{\pr} (1-x)^{-l} + l (1-x)^{-l-1}) \\
  = & \exp( ip(x)) \B( - \f{i}{x^2}  - \f{i}{(x-1)^2}  + l (1-x)^{-1} \B) (1-x)^{-l} \leq 0.
 \eal
\]
In the last inequality, we have used $ - \f{i}{1-x} + l \leq - 2k i + 2 k \leq 0 $. 

Note that $|(\pa_x^i f)(e^p)| = i!| (1+ e^p)^{-i-1}|\leq i!$.  Thus, for $x \in [x_l, x_u]$ with $x_l$ close to $1$, we get 
\[
|\pa_x^k \chi_e(x)| 
\leq \sum_{i=1}^k | (\pa^i f)(e^p) | e^{ i p(x)}   Q_{k,i}^u(x)
\leq \sum_{i=1}^k i! \f{e^{ip(x)} }{ ( 1+ e^p)^{i+1}}   Q^u_{k,i}(x)
\leq \sum_{i=1}^k  i!  e^{ i p(x_l)}   Q_{k,i}^u(x_l) .
\]

For $x$ away from $1$, we use monotonicities of $p, Q^u$ and the above estimate to estimate  piecewise bounds of $\pa_x^k \chi_e(x)$. Using the above derivatives bound, the symbolic formula of $\pa_x^k \chi_e$, and the refined second order estimate in Section \ref{app:piece_pol_1D}, we can obtain sharp bounds for $\pa_x^k \chi_e$. Remark that we only apply the above estimate to $k \leq 15$.

\subsubsection{Estimate of polynomial decay functions}\label{sec:cutoff_rati}

For cutoff function $\chi_e( \f{|x|-a}{b})$ based on the exponential cutoff function \eqref{eq:cutoff_exp}, it has rapid change from $|x| \leq a$ to $|x| \geq a+b$, which is not very smooth in the computational domain if there are not enough mesh for $x$ with $a \leq |x| \leq b$. We apply these cutoff functions to the far-field, e.g. $|x| \geq 10$, where the mesh is relatively sparse. Thus, we need another function similar to a cutoff function that has a slower change than the exponential cutoff function. We consider 
\beq\label{eq:cutoff_rati}
\chi(x) = \f{ x^7}{  (1 + x^2)^{7/2}}, \quad x \in \R_+.
\eeq
and will use its rescaled version, e.g., $\chi( \f{x-a}{b})$, in our verification. 

Firstly, we use induction to derive 
\[
\pa_x^{k} \chi = \f{p_k(x)}{ (1+x^2)^{ 7/ 2+ k}}, \quad p_0 = x^7.
\]
where $p_k(x)$ is a polynomial. A direct calculation yields 
\[
\pa_x^{k+1} \chi = \f{ p_k^{\pr}(x) (1 + x^2) - ( \f{7}{2} + k) \cdot 2 x p_k(x)}{ (1 + x^2)^{7/2 + k + 1}}. 
\]

Comparing the above two formulas, we yield 
\[
p_{k+1} = p_k^{\pr} (1 + x^2) - (7 + 2k) x p_k(x). 
\]

The first few terms are given by $p_0 = x^7,  p_1 = 7 x^6$. Using the recursive formula and $\deg p_1 = 6$, we yield 
\beq\label{eq:cutoff_rati_deg}
\deg p_{k+1} \leq \deg p_k + 1, \quad \deg p_k \leq k+5,  \quad k \geq 1.
\eeq

Since $p_k$ is a polynomial, the above recursive formula shows that $p_{k+1}$ is also a polynomial.

To estimate $\pa_x^k \chi$, we decompose $p_k $ into the positive and the negative parts. Suppose 
that $p_k = \sum_i a_i x^i$. We have 
\[
p_k = p_k^+ - p_k^-, \quad p_k^+ = \sum a_i^+ x^i , \quad p_k^- = \sum a_i^- x^i.
\]

For $x \geq 0$, $p_k^+,p_k^-$ are increasing. Thus, for $ x \in [x_l, x_u]$, we get 
\[
 |\pa_x^k \chi| \leq  \f{ \max( p_k^+(x_u) - p_k^-(x_l), p_k^-(x_u) - p_k^+(x_l) )  }{ (1 + x_l^2)^{7/2 + k} }.
\]

Next, we estimate $\pa_x^k \chi$ for large $x $. For $x \geq 2, k \geq 1$  and any polynomial $q(x)$ with non-negative coefficients and $\deg q \leq k + 5$, we yield 
\[
x q^{\pr} \leq (k+5) q, \quad  \f{ q^{\pr}(1 + x^2)}{ (7 + 2k) x q}
\leq \f{(1 + x^2) (k+5)}{ (7 + 2k) x^2 }
\leq \f{ 5 (k+5)}{4 (7+ 2k)} < 1.
\]

The first inequality follows by comparing the coefficients of $x q^{\pr}$ and $(k+5) q$, which are nonnegative. It follows 
\[
\pa_x \f{q}{(1+x^2)^{7/2 + k}} = \f{ q^{\pr}(1 + x^2) - (7/2 + k) 2x q}{ (1+x^2)^{7/2 + k+1}} \leq 0, \quad k \geq 1, \ x \geq 2.
\]
Thus $\f{q}{(1+x^2)^{7/2 + k}}$ is decreasing. For $k \geq 1$ and $x \geq x_l \geq 2$, using \eqref{eq:cutoff_rati_deg} and the monotonicity, we yield 
\[
|\pa_x^k(x)| \leq \f{ p_k^+(x) + p_k^-(x)}{ (1+x^2)^{7/2 + k}}
\leq \f{ p_k^+(x_l) + p_k^-(x_l)}{ (1+x_l^2)^{7/2 + k}}
\]

For $k= 0$, the estimate is trivial: $\chi(x) \leq 1$. Using these higher order derivative bounds, we can use the discrete values of $\pa_x^k \chi$ and the bound for $\pa_x^{k+2} \chi$ to obtain sharp bounds of $\pa_x^k \chi$. 

Note that $\chi_1(x-a) = \f{ (x-a)_+^7}{ (1 + (x-a)^2)^{7/2} }$ is only $C^{6, 1}$. Suppose that $a \in [x_l, x_u]$. Since $\chi_1$ is smooth on $x \leq a$ and on $ x \geq a$, we can still use first order estimate to estimate $\pa_x^k \chi_1$ as follows 
\[
|\pa_x^k \chi_1(x)| \leq \max_{\al \in \{l, u \} }  |\pa_x^k \chi_1(x_{\al})| +
\max( || \pa_x^{k+1} \chi_1||_{L^{\inf]}[x_l, a]}
|| \pa_x^{k+1} \chi_1||_{L^{\inf]}[ a, x_u]}
)  |x_u - x_l| .
\]

\subsubsection{Radial cutoff function}
Now, we construct the radial cutoff functions for the far-field approximation terms of $\om$ and $\phi$ as follows 
\beq\label{eq:cut_radial}
\bal
 \chi(r) & = \chi_1 (1 - \chi_2) + \chi_2, \quad \chi_1(r) = \chi_{rati}( \f{r-a_1}{l_1^{1/2}} ), 
\quad \chi_2(r) = \chi_{exp}(  \f{r -a_2}{9a_2} ) , \\
  a_1 & = 10, \quad l_1= 50000, \quad a_2 = 10^5,
\eal
\eeq
where $\chi_{exp}$ and $\chi_{rati}$ are defined in \eqref{eq:cutoff_exp} and \eqref{eq:cutoff_rati}, respectively. Using the estimates of $\chi_{rati}, \chi_{exp}$ established in the last two sections, the Leibniz rule \eqref{eq:lei}, and \eqref{est_2nd}, 
we can evaluate $\chi$ on the grid points and estimate its derivative bounds.

\subsection{Cutoff function near the origin}\label{app:cutoff_near0} 

For the cutoff function $\kp(x)$ used in Section \ref{sec:lin_evo}, we choose it as follows
\beq\label{eq:cutoff_near0}
\kp(x;  a , b ) = \kp_1(\f{x}{ a } ) (1 - \chi_e( \f{x}{ b} )  ) , \quad  \kp_1(x) = \f{1}{1 + x^4},  \quad
 \kp_*(x)= \kp(x ; \f{1}{3}, \f{3}{2} ) ,
\eeq
where $\chi_e$ is the cutoff function chosen in \eqref{eq:cutoff_exp}. We mostly use the cutoff $\kp_*$. Since $\chi_e(y) = 1$ for $y \geq 1$ and $\chi_e(y) = 0$ for $ y \leq 0 $. The above cutoff function is supported in $x \leq a_2$. Using Taylor expansion, we have the following properties for $\kp$
\[
\kp_1(x / a_1) = 1 + O(x^4), \quad  \kp(x) = 1 + O(x^4).
\]


For the cutoff functions $\chi_{NF}$ in Section 4.2.1 in Part I \cite{ChenHou2023a}, $\chi_{\bar \e}$ in \eqref{eq:uerr_dec1}, and $\chi_{\hat \e}$ in \eqref{eq:uerr_hat1}, we choose 
\beq\label{eq:cutoff_near0_all}
\bal
& \chi_{\bar \e}(x, y) = \kp(x; \nu_{\bar \e, 1}, \nu_{\bar \e, 2}) 
\kp(y; \nu_{\bar \e, 1}, \nu_{\bar \e, 2}) ,  \quad \nu_{\bar \e, 1 } = 1/192, \quad \nu_{\bar \e,  2} = 3/2 , \\ 
&  
 \chi_{ \hat \e}(x, y) = \kp_*(x) \kp_*(y) ,\quad  \chi_{NF}(x, y) = \kp(x; 2,10)  \kp(y ; 2, 10 ),  \\
 & f_{\chi, 1}  = \D( \f{x y^3}{6} \chi_{NF}(x, y)  ) ,
\quad f_{\chi, 2} = x y \chi_{NF}(x, y), \quad f_{\chi, 3} = \f{x^2}{2} \chi_{NF}(x, y), \\
\eal
\eeq

For the cutoff function in the stream function \eqref{eq:ASS_solu},  we choose 
\beq\label{eq:cutoff_psi_near0}
 \chi_{\phi } = \kp_2( \f{x} { \nu_{4,1}} ) (1 - \chi_e( \f{x}{ \nu_{4, 2} } )  ), \quad 
\kp_2(x) = \f{1}{1 + x^2}, \quad \nu_{4, 1} = 2,  \quad \nu_{4, 2} = 128.
\eeq

For $\kp_1(x), \kp_2(x)$, we use induction to obtain 
\[
\pa_x^k \kp_1(x) = \f{ P_k^+(x) - P_k^-(x)}{ (1 + x^4)^{k+1}}, 
\quad \pa_x^k \kp_2(x) = \f{ R_k^+(x) - R_k^-(x)}{ (1 + x^2)^{k+1}}, 
\]
for some polynomials $P_k^{\pm}, R_k^{\pm}$ with non-negative coefficients, and the same method as that in Section \ref{sec:cutoff_rati} to estimate the derivatives of $\pa_x^i \kp_1(x)$. The estimate of $\kp_1$ is simpler since $\kp_1$ has a simpler form. Using the Leibniz rule 
\eqref{eq:lei} and the triangle inequality, we can obtain estimate $\pa_x^l \kp_1(x)$ in $[a, b]$. Then we use these derivative estimates for $\pa_x^{l+2}   \kp_1(x)$, evaluate  $\kp(x; a_1, a_2)$ on the grid points, and then use \eqref{est_2nd} to obtain a sharp estimate of $\pa_x^l \kp_1(x)$ on $[a, b]$. The same method applies to estimate $\kp_2, \chi_{\phi}$.

For large $x$, e.g. $x \geq 100$, the above estimates can lead to a very large round off error. Instead, for $a \geq 2, a \in \BZ_+$, we use the Taylor expansion 
\[
F_a =   \f{1}{1 + x^a} = \sum_{k \geq 0} (-1)^k x^{- a(k+1) },
  \quad \pa_x^i  F_a = \sum_{k \geq 0} (-1)^{k+i} C_{i, k} x^{- a(k+1)  - i}, 
  \quad C_{i, k} = \prod_{0\leq j \leq i-1} ( a(k+1) + j) . 
\]
We want to bound $| \pa_x^i F_a| \leq C_{i, 0}(1 + C_{\e}) x^{- a -i} $ for $x \geq x_l = 100, i \leq 20$. For $k\leq 20$, we bound 
\[
C_{i, k} x^{ - a (k+1) -i} \leq C_{i, k} x_l^{ -(a-1) k} x^{ - a - i - k}
\leq C_{i, 0} \e_1 ^{ - a - i - k}, \quad \e_1 \teq \max_{ i \leq 20, k \leq 20} x_l^{-(a-1) k } C_{i, k} C_{i, 0}^{-1} .
\]
For the tail part $k > 20$, we consider $ G(k) = k \log  x - i \log(1 + k)$. Since $x > 21, i \leq 20$, we get
\[
\pa_k G = \log x - \f{i}{ 1 + k} \geq  \log x - 1 > \log 4 - 1 >0, \quad G(k) \geq G(21)
 = 21 \log x - i \log 21 > 0.
\]
It follows $x^k > (1+k)^i$. Using  $ \f{ a(k+1) + j}{ a + j} \leq 1 + k, C_{i, k} \leq C_{i, 0} (1 + k)^i$, and $a \geq 2$, we further get 
\[
C_{i, k} x^{- a(k+1) -i} 
\leq   x^{- k - a - i} C_{i, k} x^{- k}
\leq  x^{- k - a - i} C_{i, 0} (1 + k)^i x^{-k} \leq C_{i, 0} x^{- k - a - i}, \ k > 20.
\]
Combining the above estimates and $x \geq x_l> 10$, we obtain 
\[
\bal
 |\pa_x^i F_a | \leq C_{i, 0} x^{-a-i} C_a, 
 \quad  C_a \leq 1 + \e_1 \sum_{k=1}^{20} x^{-k} + \sum_{k \geq 21} x^{- k} 
 \leq 1 +  \f{ \e_1 x^{-1} }{1 - x^{-1}} 
 + \f{ x^{-21} }{1 - x^{-1}}
\leq 1 +  \f{\e_1}{x_l - 1} + x_l^{-20}.
\eal
\]





\subsection{Estimate of $\rho_p(y)$}\label{sec:stream_wg}

We estimate the weight $\rho_p(y)$ \eqref{eq:psi_wg} in the representation of the stream function. Using symbolic computation, e.g., Matlab or Mathematica, we yield 
\[
\bal
&\pa_x^9 \rho_p(y) = \f{  f_2(y) - f_1(y)}{ ( g(y))^9 }, \quad g(y) = 2 + 2 y + y^2, \\
& f_1 = 288 y^2 + 672 y^3 + 504 y^4, \quad f_2 = 16 + 168 y^6 + 72 y^7+ 9 y^8 .
\eal
\]
Since $f_1, f_2, g \geq 0$ are increasing in $y \geq 0$, for $y \in [y_l, y_u]$, we yield 
\[
|\pa_x^9 \rho_p(y)| \leq  \f{ \max (f_2(y_u) - f_1(y_l), f_1(y_u) - f_2(y_l) }{ ( g(y_l))^9 }.
\]

We have a trivial estimate similar to \eqref{est_2nd}
\beq\label{est_1st}
 \max_{ x \in I }|f(x)|  \leq \max( |f(x_l) | , |f(x_u)| ) +  \f{h}{2} || f_x||_{L^{\inf} (I) },
\eeq
which is useful if we do not have bound for $f_{xx}$.

Based on the above estimates, using the estimates \eqref{est_2nd}, \eqref{est_1st}, ideas in Section \ref{app:piece_pol_1D}, and evaluating $\rho_p$ on some grid points, we can obtain piecewise sharp bounds for $\pa_x^i \rho_p$ for $i \leq 8$.

\section{Piecewise $C^{1/2}$ and Lipschitz estimates}\label{app:linf_est}

In this section, we estimate the piecewise $C^{1/2}$ bound and Lipschitz bound for a function.

\subsection{H\"older estimate of the functions}\label{app:hol_func1}

In the following two sections, we estimate the H\"older seminorms $ [f ]_{C_x^{1/2}}$ or $ [f ]_{C_y^{1/2}}$ of some function $f$, e.g. $f = (\pa_t - \cL) \wh W$ in \eqref{eq:lin_evo_err1}, based on the previous $L^{\inf}$ estimates. We will develop two approaches. 

Below, we will assume $x, y \in \R_2^{++}$ since our function $f(x)$ defined on $x \in \R_2^+ (x_2 \geq 0)$ is either even or odd in $x_1$ and we can reduce essentially all estimates to the case of $\R^2_{++}$ using symmetry. Suppose that we have bounds for $ \pa_x f , \pa_y f  $ and $f $. 
Firstly, we consider the $C_x^{1/2}$ estimate. For $x_1 < y_1$ and $x_2 = y_2$,  we have \[
I = \f{ | f(x) - f(y) |}{ |x-y|^{1/2}}
\leq |x-y|^{1/2} \f{1}{|x-y|} \int_{x_1}^{y_1} | f_x(z_1, x_2) | d z_1.
\]
We further bound the average of $f_x$ piecewisely using the method in Appendix \ref{app:piece_deri} to obtain the first estimate. We have a second estimate 
\[
\bal
|I| & = \B| \int_{x_1}^{y_1} f_x( z_1,  x_2 ) dz \B| \cdot \f{1}{ |x-y|^{1/2}}
\leq || f_x  x^{1/2} ||_{\inf} \int_{x_1}^{y_1} z_1^{-1/2} d z_1  \cdot \f{1}{ |x-y|^{1/2}} \\
& \leq || f_x  x^{1/2} ||_{\inf} 2 \f{ y_1^{1/2} - x_1^{1/2}} { |x-y|^{1/2}}
= || f_x  x^{1/2} ||_{\inf} \f{ 2 \sqrt{ y_1 - x_1}}{  \sqrt{ x_1} + \sqrt{y_1}}.
\eal
\]

We also have a trivial $L^{\inf}$ estimate 
\[
\bal
|I| & \leq  || f  x_1^{-1/2}||_{\inf}  \f{ x_1^{1/2}  + y_1^{1/2} }{ |x - y |^{1/2}} , \quad
|I| \leq  || f||_{\inf} \f{2}{ |x - y|^{1/2}} .  \\
\eal
\]

Similar $L^{\inf}$ and Lipschitz  estimates apply to $ || f||_{C_y^{1/2}}$. 

Near the origin,  optimizing the above estimates, for $x_2 = y_2$, we obtain \[
 \B|\f{ f(x) - f(y)}{ |x- y|^{1/2}}\B|
 \leq \min( || f_x x^{1/2}||_{\inf} 2t, \ || f x_1^{-1/2}||_{\inf} t^{-1}   ), \quad t = \f{ \sqrt{y_1 - x_1}}{ \sqrt{x_1} + \sqrt{y_1}}.
\]

In the $Y-$direction, $x_1 = y_1, x_2 \leq y_2$, we use 
\[
\bal
 I_Y &= \B|\f{ f(x) - f(y)}{ |x- y|^{1/2}}\B|
 \leq \f{1}{ |x_2 - y_2|^{1/2}} \int_{x_2}^{y_2}  |f_y(x_1, z_2)| |z|^{1/2} \cdot |z|^{-1/2} d z_2
 \leq || f_y |x|^{1/2}||_{\inf}  \f{|x_2-y_2|^{1/2}}{ |x|^{1/2}} \teq A t, \\
  I_Y &\leq (|f(x) x_1| + |f(y) x_1 |^{1/2} ) (\f{x_1}{|x|})^{1/2} 
\cdot  \f{ |x|^{1/2}} { |x_2 - y_2|^{1/2}} \teq B t^{-1},
\quad t \teq \f{  |x_2 - y_2|^{1/2}}{|x|^{1/2}} , \ I_Y \leq \min(A t, B t^{-1} )
 \eal
\]
Since $x_1 \leq |x|$, $A, B$ are not singular near $x=0$. We derive the piecewise bounds for $A, B$ and then optimize two estimates to estimate $I_Y$. 

From the above estimates, to obtain sharp H\"older estimate of $f$, we estimate the piecewise bounds of $f, f x_1^{-1/2}, f |x|^{-1/2}, f_x, f_y$, $ f_x |x_1|^{1/2}, f_y |x|^{1/2}$, which are local quantities. These estimates can be established using the piecewise bounds of $\pa_x^i \pa_y^j f$ and the methods in Section {\applinfestsupp} in the supplementary material II \cite{ChenHou2023bSupp}.

\subsubsection{The second approach of H\"older estimate}\label{app:hol_func2}


We develop an additional approach to estimate  $I(f) = \f{ |f(x) - f(z) |}{ |x-z|^{1/2}}$ that is sharper if $|x-z|$ is not small and $f$ is smooth. We 
need the grid point values and derivative bounds of $f$. 

We estimate $I(f) = \f{ |f(x) - f(z) |}{ |x-z|^{1/2}}$ for $x \in [x_l, x_u], z \in [z_l, z_u]$. Denote by $\hat f$ the linear approximation of $f$ with $\hat f(x_i) =f(x_i)$ on the grid point $x_i$. We have the following Lemma.

\begin{lem}\label{lem:lin_interp_hol}
Suppose that $ f$ is linear on $[x_l, x_u], [z_l, z_u]$ and $x_l \leq x_u \leq z_l \leq z_u$. Then we have
\[
\max_{ x \in [x_l, x_u], z \in [z_l, z_u]}  \f{ | f(x) -  f(z)| }{ |x-z|^{1/2}}
= \max_{\al, \b \in \{l, u\} } \f{ |  f(x_{\al}) -  f(z_{\b})| }{ | x_{\al} - z_{\b}|^{1/2}} .
\]
\end{lem}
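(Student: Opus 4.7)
The plan is to reduce the two-variable optimization to its four corners by showing that, for each fixed value of one variable, the ratio is maximized at an endpoint of the other. Since $f$ is affine on $[x_l,x_u]$ and on $[z_l,z_u]$, write $f(x)=a_1x+b_1$ for $x\in[x_l,x_u]$ and $f(z)=a_2z+b_2$ for $z\in[z_l,z_u]$. Because $x\le x_u\le z_l\le z$, we have $z-x\ge 0$ throughout the rectangle, and it suffices to study
\[
H(x,z)\;\teq\;\frac{(f(x)-f(z))^2}{z-x}
\;=\;\frac{(a_1x-a_2z+b_1-b_2)^2}{z-x},
\]
on $[x_l,x_u]\times[z_l,z_u]$ (ignoring the harmless degenerate case $x_u=z_l$ at one corner, where the ratio is interpreted as a limit and either the numerator vanishes or the supremum is realized in the limit).

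First I would fix $x$ and view $H$ as a function of $z$. Setting $u=z-x>0$ and $\alpha=(a_1-a_2)x+(b_1-b_2)$, $\beta=-a_2$, one computes
\[
H=\frac{(\alpha+\beta u)^2}{u},
\qquad
\frac{dH}{du}=\frac{(\alpha+\beta u)(\beta u-\alpha)}{u^2}.
\]
The interior critical points occur at $u=-\alpha/\beta$ (where $H=0$, clearly a minimum) and at $u=\alpha/\beta$ (where $H=4\alpha\beta\ge 0$, but with $H\to+\infty$ both as $u\to 0^+$ and $u\to+\infty$, so this too is a local minimum). Hence on any closed subinterval of $(0,\infty)$ the maximum of $H(x,\cdot)$ is attained at an endpoint, i.e. at $z=z_l$ or $z=z_u$.

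Next I would repeat the argument with the role of the variables swapped: fix $z\in\{z_l,z_u\}$, set $u=z-x$ and $\gamma=(a_1-a_2)z+(b_1-b_2)$ to obtain $H=(\gamma-a_1u)^2/u$, which has the same structure. By the same calculus argument the maximum in $x\in[x_l,x_u]$ is attained at $x=x_l$ or $x=x_u$. Combining the two reductions, the supremum of $H$, and therefore of $|f(x)-f(z)|/|x-z|^{1/2}$, is attained at one of the four corners $(x_\alpha,z_\beta)$ with $\alpha,\beta\in\{l,u\}$, which is the claim. The only subtlety is the boundary case $x_u=z_l$: if $f(x_u)=f(z_l)$ the ratio extends continuously by $0$, and otherwise the supremum is $+\infty$ and is automatically attained (in the limit) at the corresponding corner, so the identity still holds.
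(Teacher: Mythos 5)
Your proof is correct, and it takes a genuinely different route from the paper's. You argue pointwise by calculus: fixing one variable, you show the single-variable function $u\mapsto(\alpha+\beta u)^2/u$ on $(0,\infty)$ has only interior minima (it blows up at both ends of its natural domain when $\alpha,\beta\neq 0$, is monotone when one of them vanishes), so the maximum over any closed subinterval sits at an endpoint; applying this first in $z$ and then in $x$ pins the maximizer to a corner. The paper instead writes $x=a_lx_l+a_ux_u$, $z=b_lz_l+b_uz_u$ and observes that linearity of $f$ (and of $g(x)=x$, $g(x)=1$) lets one write $f(z)-f(x)$ and $z-x$ as the \emph{same} convex combination $\sum_{\alpha,\beta}m_{\alpha\beta}(\cdot)$ of the corner quantities; then the triangle inequality plus the concavity of $t\mapsto t^{1/2}$ (via Cauchy--Schwarz and $\sum m_{\alpha\beta}=1$) directly gives $|f(x)-f(z)|\le M\,(z-x)^{1/2}$ with $M$ the corner maximum, no calculus required. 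Both methods are valid and roughly equally long here; the paper's convexity argument is more structural (it makes transparent why the conclusion carries over to the 2D extension Lemma~\ref{lem:lin_interp_hol2D}, where one just adds linearity in the extra variable), while yours is more elementary and makes the endpoint behavior fully explicit. One minor nit: in your first-pass calculus you list $u=-\alpha/\beta$ and $u=\alpha/\beta$ as ``the interior critical points,'' but of course only one of these lies in $u>0$ for a given sign of $\alpha\beta$; it would be cleaner to say a single critical point exists in $(0,\infty)$ (when $\alpha\beta\neq 0$) and that it is a minimum because $H\to+\infty$ at both ends. This does not affect the correctness of the conclusion.
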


The above Lemma shows that for the linear interpolation of $f$, the maximum of the Holder norm is achieved at the grid point.

\begin{proof}
Denote by $M$ the right hand side in the Lemma. Clearly, it suffices to prove that the left hand side is bounded by $M$. We fix $x \in [x_l, x_u], z \in [z_l, z_u]$. Suppose that 
\[
x = a_l x_l + a_u x_u, \quad z = b_l z_l + b_u z_u,  a_u + a_l =1, \quad b_l + b_u = 1,
\]
for $a_l , b_l \in [0, 1]$. Denote 
\[
m_{\al \b} = a_{\al} b_{\b}, \quad \al, \b \in \{ l, u \}.
\]

Since $ f(x)$ is linear on $[x_l, x_u]$ and $[z_l, z_u]$, we get 
\[
f(x) = a_l f(x_l) + a_u f(x_u), \quad f(z) = b_l f(z_l) + b_u f(z_u). 
\]

For any function $g$ linear on $[x_l, x_u], [z_l, z_u]$, e.g., $g(x) = 1, g(x) = x, g(x) = f(x)$, we have 
\beq\label{eq:lin_interp_hol_pf1}
g(x) = \sum_{\al, \b \in \{ l, u \} } m_{\al \b} g(x_{\al}) ,\quad
g(z) - g(x) = \sum_{\al, \b \in \{ l, u \} } m_{\al \b} ( g(z_{\b} ) - g(x_{\al}) ),
\quad 
\eeq

Using the above identities and the triangle inequality and the definition of $M$, we yield
\[
|f(x) - f(z)| = \B| \sum_{\al, \b \in \{ l, u \} } m_{\al \b} (f(x_{\al}) - f(z_{\b})) \B|
\leq \sum_{\al, \b \in \{ l, u \} } m_{\al \b} M |x_{\al} - z_{\b}|^{1/2}.
\]

Using the Cauchy-Schwarz inequality, $|x_{\al} - z_{\b}| = z_{\b} - x_{\al}$ and \eqref{eq:lin_interp_hol_pf1}, we establish 
\[
\bal
|f(x) - f(z)| 
&\leq  \sum_{\al, \b \in \{ l, u \} }  m_{\al \b}
\sum_{\al, \b \in \{ l, u \} } m_{\al \b} M |x_{\al} - z_{\b}|^{1/2}
= \sum_{\al, \b \in \{ l, u \} } m_{\al \b} M |x_{\al} - z_{\b}|^{1/2}
  \\
&= M \B( \sum_{\al, \b \in \{ l, u \} } m_{\al \b} (z_{\b} - x_{\al}) \B)^{1/2}
= M (z - x)^{1/2}.
\eal
\]
The desired result follows.
\end{proof}

We generalize Lemma \ref{lem:lin_interp_hol} to 2D as follows.
\begin{lem}\label{lem:lin_interp_hol2D}
Let $I_x = [x_l, x_u], I_z = [z_l, z_u], I_y = [y_l, y_u]$ with $x_l \leq x_u \leq z_l \leq z_u$.
Suppose that $ f$ is linear on $I_x \times I_y$ and $I_z \times I_y$. Then we have
\[
\max_{ x \in I_x, z \in I_z, y \in I_y}  \f{ | f(x, y) -  f(z, y)| }{ |x-z|^{1/2}}
= \max_{\al, \b , \g \in \{l, u\} } \f{ |  f(x_{\al}, y_{\g}) -  f(z_{\b}, y_{\g})| }{ | x_{\al} - z_{\b}|^{1/2}} .
\]
\end{lem}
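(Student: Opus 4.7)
The plan is to mimic the 1D proof in Lemma \ref{lem:lin_interp_hol} by introducing barycentric coordinates in all three variables $x, z, y$ and then using the bilinearity of $f$ on the two rectangles $I_x \times I_y$ and $I_z \times I_y$ to write $f(x,y)-f(z,y)$ as a convex combination of the sixteen differences $f(x_\alpha, y_\gamma)-f(z_\beta, y_\gamma)$. Concretely, I would write
\[
x = a_l x_l + a_u x_u, \quad z = b_l z_l + b_u z_u, \quad y = c_l y_l + c_u y_u,
\]
with $a_l+a_u = b_l+b_u = c_l+c_u = 1$ and all coefficients in $[0,1]$. Bilinearity on each rectangle gives
\[
f(x,y) = \sum_{\alpha, \gamma} a_\alpha c_\gamma f(x_\alpha, y_\gamma), \qquad f(z,y) = \sum_{\beta, \gamma} b_\beta c_\gamma f(z_\beta, y_\gamma).
\]
Inserting $\sum_\beta b_\beta = 1$ in the first sum and $\sum_\alpha a_\alpha = 1$ in the second gives the common representation with weights $m_{\alpha\beta\gamma} \teq a_\alpha b_\beta c_\gamma$, which are nonnegative and satisfy $\sum_{\alpha,\beta,\gamma} m_{\alpha\beta\gamma} = 1$.

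The remaining step is the exact analogue of the 1D argument. Letting $M$ denote the right-hand side of the displayed equation in the lemma, each grid-point difference is controlled by $M|x_\alpha - z_\beta|^{1/2}$, so
\[
|f(x,y) - f(z,y)| \leq M \sum_{\alpha,\beta,\gamma} m_{\alpha\beta\gamma}\, |x_\alpha - z_\beta|^{1/2}.
\]
Applying the Cauchy--Schwarz inequality with weights $m_{\alpha\beta\gamma}$ and using $|x_\alpha - z_\beta| = z_\beta - x_\alpha$ (since $x_\alpha \leq x_u \leq z_l \leq z_\beta$) yields
\[
\sum_{\alpha,\beta,\gamma} m_{\alpha\beta\gamma}\, |x_\alpha - z_\beta|^{1/2} \leq \Bigl( \sum_{\alpha,\beta,\gamma} m_{\alpha\beta\gamma}(z_\beta - x_\alpha) \Bigr)^{1/2}.
\]
The inner sum collapses through the marginal identities: summing first over $\gamma$ eliminates the $c_\gamma$ factor, and then $\sum_\beta b_\beta z_\beta - \sum_\alpha a_\alpha x_\alpha = z - x$. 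Hence $|f(x,y) - f(z,y)| \leq M (z-x)^{1/2} = M|x-z|^{1/2}$, proving that the LHS of the claimed identity is $\leq M$. The reverse inequality is immediate since the grid points are contained in $I_x \times I_y$ and $I_z \times I_y$.

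I do not anticipate a real obstacle: since $y$ is held fixed between $f(x,y)$ and $f(z,y)$, the $y$-direction plays only the passive role of contributing an extra convex weight $c_\gamma$ that integrates to $1$ in every marginal sum used in the 1D proof. The only minor bookkeeping point is to make sure that bilinearity on each rectangle (rather than joint linearity on the union) is enough to write both $f(x,y)$ and $f(z,y)$ in the common form with weights $m_{\alpha\beta\gamma}$, which is exactly what the insertion of $\sum_\beta b_\beta = 1$ and $\sum_\alpha a_\alpha = 1$ accomplishes.
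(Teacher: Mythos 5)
Your proof is correct, but you take a slightly different route from the paper. The paper observes that for fixed $x, z$, the function $I(x,z,y) = \frac{f(x,y)-f(z,y)}{|x-z|^{1/2}}$ is linear in $y$ (since $f$ is linear in $y$ on each rectangle), so $|I(x,z,\cdot)|$ attains its maximum at $y = y_l$ or $y = y_u$; this reduces the claim to the 1D Lemma~\ref{lem:lin_interp_hol} applied at each of the two $y$-endpoints, which the paper then invokes as a black box. You instead carry the $y$-variable along explicitly with barycentric weight $c_\gamma$ and re-run the full convexity plus Cauchy--Schwarz argument in all three indices $\alpha,\beta,\gamma$ at once. Both arguments rest on the same mechanism (convex representation of $f$ at the query points followed by weighted Cauchy--Schwarz, exploiting that the weights sum to $1$ and that $|x_\alpha - z_\beta| = z_\beta - x_\alpha$ is affine in the weights), so the underlying idea is identical; the paper's version is more modular and shorter because it disposes of $y$ up front and reuses the 1D lemma, while yours is self-contained and does not require the 1D result but pays in an extra index. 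One small bookkeeping note: you should make explicit that the coefficients $a_\alpha, b_\beta, c_\gamma$ are nonnegative (i.e.\ $a_l, b_l, c_l \in [0,1]$), since positivity of the weights $m_{\alpha\beta\gamma}$ is what licenses the Cauchy--Schwarz step; you do state ``all coefficients in $[0,1]$'', so this is fine, just worth keeping visible.
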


\begin{proof}
Note that the function $I(x,  z, y) = \f{ f(x, y) - f(z, y)}{ |x-z|^{1/2}}$ is linear in $y$. We get 
\[
|I(x, z, y | = \max( | I(x, z, y_l )|, | I(x, z, y_u)|).
\]
Applying Lemma \ref{lem:lin_interp_hol} completes the proof.
\end{proof}

Let $\hat f$ be the linear interpolation of $f$. Suppose that $x \in  I_x, z\in  I_z, y \in I_y$ with $x_u \leq z_l$.  Using the above estimates and notations,  we can bound $I(f)$ as follows 
\[
\bal
I(f)  & =  \f{ |f(z, y) - f(x, y)|}{ |x-z|^{1/2}}
\leq \f{ |\hat f(x, y) - f(x, y)| + |\hat f(z,y) - f(z,y)|}{|x-z|^{1/2}}
+\max_{\al, \b ,\g \in \{l, u\} } \f{ |  f(x_{\al}, y_{\g}) -  f(z_{\b}, y_{\g})| }{ | x_{\al} - z_{\b}|^{1/2}}  \\
& \leq  \B( \f{ h_x^2}{8} || f_{xx}||_{I_x \times I_y} 
+ \f{ h_y^2}{8}  ( || f_{yy}||_{I_x \times I_y} + || f_{yy}||_{I_z \times I_y}   )
+ \f{ h_z^2}{8} || f_{xx}||_{I_z \times I_y} \B) |x-z|^{-1/2} 
+ M.
\eal
\]

\subsection{Piecewise derivative bounds}\label{app:piece_deri}

In this section, we discuss how to obtain the sharp bound of $\f{ p(b) - p(a)}{b-a}$ using piecewise derivative bounds of $p$.

Suppose that $|p^{\pr} (y)| \leq C_i , y \in I_i = [y_i, y_{i+1}]$. For any $a \in I_k, b \in I_l, a < b$,  we have the bound 
\begin{eqnarray*}
\bal
& |p(b) - p(a)| \leq \int_{a}^{b} | p^{\pr}(y) | dy   
\leq | y_{k+1} - a| C_k  + |b - y_l | C_l 
+ \sum_{ k+ 1 \leq m \leq l-1} C_m ( y_{m+1} - y_m) \\
= & ( y_{k+1} - a )  C_k  + ( b - y_l ) C_l  + M_{kl}  ( y_l - y_{k+1} )  \one_{ l\geq k+1} ,\\
\eal
\end{eqnarray*}
where $M_{kl}$ is defined below:
\beq\label{eq:Lip_Mkl}
 M_{kl} =   | y_l - y_{k+1} |^{-1} \B( \sum_{ k+ 1 \leq m \leq l-1} C_m | y_{m+1} - y_m| \B). 
\eeq

Next, we want to bound $ \f{ |p(b) - p(a) |}{ |b-a|}$. If $ l - k \leq 1$, we get 
\[
|p(b) - p(a)| \leq (b - a) \max( C_k, C_l ). 
\]
Otherwise, if $l \geq k+2$, we have 
\[
 | p(b) - p(a) | \leq (y_{k+1} - a) (C_k - M_{kl})
+ (b- y_l) (C_l - M_{kl}) +  M_{kl} (b-a).
\]
Since $\f{y_{k+1} - a}{ b- a}$ is decreasing in $a$ and $b$, $\f{ b-y_l}{b-a}$ is increasing in $b$ and $a$, we get 
\[
 0 \leq \f{y_{k+1} - a}{ b- a} \leq \f{ y_{k+1} - y_k }{ y_l - y_k }, 
\quad  0 \leq \f{ b-y_l}{b-a} \leq \f{y_{l+1} - y_l}{ y_{l+1} - y_{k+1}}.
\]
Using the above estimates, for $a \in I_k, b \in I_l$, we obtain 
\beq\label{eq:Lip_piece}
\f{  |p(b) - p(a) | }{|b-a| } \leq 
\max( C_k - M_{kl}, 0) \f{ y_{k+1} - y_k }{ y_l - y_k } 
+ \max(  C_l - M_{kl},  0) \f{ y_{l+1} - y_l}{ y_{l+1} - y_{k+1}} + M_{kl}.
\eeq

For uniform mesh, i.e. $y_{i+1} - y_i = h$, we can simplify the above estimate as follows 
\[
\bal
\f{  |p(b) - p(a) | }{|b-a| } &\leq 
\f{  ( 
\max( C_k - M_{kl}, 0) + \max(   C_l - M_{kl},  0) )  }{ l-k }
+ M_{kl}, \quad 
M_{kl}  =  \f{1}{l-k-1} \sum_{k+1 \leq m \leq l-1 } C_m.
\eal
\]

The same argument applies to obtain piecewise 
bounds of $ J(a,b)= \f{p(b) - p(a)}{b-a} $. 
We use piecewise upper bounds $ p^{\prime}(y) \leq C_i, y \in I_i = [y_i, y_{i+1}]$ and obtain the same upper bounds as \eqref{eq:Lip_piece}. To get lower bounds of $J(a,b)$, we use piecewise 
lower bounds $ p^{\prime}(y) \geq C_i$ and \eqref{eq:Lip_Mkl} to get
\[
\f{  p(b) - p(a)  }{b-a } \geq
\min( C_k - M_{kl}, 0) \f{ y_{k+1} - y_k }{ y_l - y_k } 
+ \min(  C_l - M_{kl},  0) \f{ y_{l+1} - y_l}{ y_{l+1} - y_{k+1}} + M_{kl}.
\]

\section{Notations}\label{app:nota}


For the reader's convenience, we collect the main notations used in this paper. 

\vspace{0.1in}
\paragraph{\bf{Weights}}

We use the following weights defined in \eqref{wg:hol}, \eqref{wg:linf}, \eqref{wg:lin_evo} for the estimates
\[
\psi_1, \psi_2, \psi_3, \psi_{du}, \psi_u,  \quad \vp_1 , \vp_{g1}, \vp_{elli}, 
 \vp_{evo, i}, i=1,2,3, \quad  \rho_{10}, \rho_{20}, \rho_3, \rho_4 .
\]

We use $f_{\lam}(x) = f(\lam x)$ for rescaled function \eqref{eq:flam}. 

\vspace{0.1in}
\paragraph{\bf{Cutoff functions}}
We use various cutoff functions to construct the approximate solutions. 

$\chi_{ij}, i=1,2, 3, j = 1,2$ are defined in \eqref{eq:solu_cor}. 

$ \chi_{\bar \e}, \chi_{\hat \e}, f_{\chi, i}, i=1,2,3$ are defined in \eqref{eq:cutoff_near0_all}, \eqref{eq:cutoff_psi_near0}. 


\vspace{0.in}
\paragraph{\bf{Operators}}

We use $\cL_{\cdot}$ to denote various linear operators. $\cL_i$ is the full linearized operator around the approximate steady state. We decompose $\cL_i$ into $\cL_i^e, \cL_i^{\bar e}, \cL_i^N$ \eqref{eq:decomp_L}.  

$\cB_{op,i}$ \eqref{eq:Blin_gen}, \eqref{eq:Blin} denotes bilinear operators related to the linearized operators. 

$\cR_{\cdot}$ \eqref{eq:lin_evo_err_def}, \eqref{eq:lin_evo_main2} denotes residual error in the construction of the approximate solution to the linearized equations.

\vspace{0.1in}
\paragraph{\bf{Velocity and kernels}}

We use $K_i$ to denote the kernels of the velocity, e.g. $K_1, K_2, K_f, f = u, v, u_x, v_x, u_y$ \eqref{eq:kernel_du}. We use $K^{sym}$ for the symmetrized kernel \eqref{int:ker_sym}, $K_{ux0}, K_{00}$ \eqref{eq:u_appr_near0_coe} for the kernel of the approximation terms near $x=0$

We use $f = u, v, u_x, u_y, v_x, v_y$ to denote the original velocity and its derivatives, $\hat f$ for its finite rank approximation, and $f_A = f - \hat f$. See the beginning of Section \ref{sec:int_method}.

\vspace{0.1in}
\paragraph{\bf{Regions for integrals}}

We use $B_{lm}(r)$ \eqref{int:box_B} to denote different grids and $R_{\cdot}(\cdot)$ to denote various singular region: $R(x, k)$ \eqref{eq:rect_Rk}, $R_s(x, k), R_{s, i}(x, k)$ \eqref{eq:rect_Rsk}, $R^{\pm}(x, k)$ \eqref{eq:rect_Rk+}, $R(x, k, \al), \al = N, E, S, W$ \eqref{eq:rect_NE}

\vspace{0.1in}
\paragraph{\bf{Approximate profiles and solutions}}
We use $\om, \eta, \xi, \phi$ to denote the vorticity, $\th_x, \th_y$ ($\th$ is the density \eqref{eq:bous1}), and the stream functions, respectively. We use $\bar f$ to denote the approximate profile for $f$, e.g. $\bar \om, \bar \th$, and use $\hat f$ to denote the numeric solution, e.g. $\wh W$ \eqref{eq:solu_full} and $\hat G$ \eqref{eq:lin_evo_main1}. 

We use $\bar F_{\om}, \bar F_{\th}, \bar \cF_{i}$ \eqref{eq:bous_err} to denote the residual error of the profile. 


\vspace{0.1in}
\paragraph{\bf{Mesh}}

To construct 
the approximate profile, we use the adaptive mesh $y_i$ \eqref{eq:ASS_mesh}. To estimate the integrals $\int f(x, y) d y$ 
in Section \ref{sec:vel_comp}, we use mesh $y_i$ \eqref{eq:int_mesh_y} with mesh size $h_x, h$ \eqref{int:para1}.

\vspace{0.1in}
\paragraph{\bf{Differential operators}} 
We denote \eqref{eq:diff_op} $D^2 = (D^2_1, D^2_2, D^2_3) = (\pa_{xy}, \pa_{xy}, \pa_x^2)^T$.

\vspace{0.1in}
{\bf Acknowledgments.} The research was in part supported by NSF Grants DMS-1907977 and DMS-2205590. We would like to acknowledge the generous support from Mr. K. C. Choi through the Choi Family Gift Fund and the Choi Family Postdoc Gift Fund. We 
are grateful to Drs. Pengfei Liu and De Huang for a number of stimulating discussions in the early stage of this project. JC is grateful to Mr. Xiaoqi Chen for several suggestions on coding and the use of High Performance Computing. Part of the computation in this paper was performed using the Caltech IMSS High Performance Computing. The support from its staff is greatly appreciated.

\bibliographystyle{plain}
\bibliography{selfsimilar}

\end{document}